\theoremstyle{definition}
\newcommand{\R}{\mathbb{R}}
\newcommand{\Z}{\mathbb{Z}}
\newcommand{\Q}{\mathbb{Q}}
\newcommand{\0}{\emptyset}
\newtheorem{theorem}{Theorem}[section]
\newtheorem{claim}[theorem]{Claim}
\newtheorem{corollary}[theorem]{Corollary}
\newtheorem{definition}[theorem]{Definition}
\newtheorem{example}[theorem]{Example}
\newtheorem{lemma}[theorem]{Lemma}
\newtheorem{proposition}[theorem]{Proposition}
\newtheorem{remark}[theorem]{Remark}
\newcommand{\cork}{\text{cork}}
\newcommand{\diam}{\text{diam}}
\newcommand{\Homeo}{\text{Homeo}}
\newcommand{\Maps}{\text{Maps}}
\newcommand{\MS}{\text{MaxShell}}
\newcommand{\Out}{\text{Out}}
\newcommand{\PHE}{\text{PHE}}
\newcommand{\rk}{\text{rk}}
\patchcmd{\epigraph}{\@epitext{#1}}{\itshape\@epitext{#1}}{}{}
\title{Dense Conjugacy Classes and Stability of Locally Finite Graphs}
\author{Rachmiel Klein}
\date{December 2025} 
\begin{document}

\maketitle

\begin{abstract}
    Mapping class groups of locally finite graphs are the analogue of those of infinite-type surfaces, and serve as a ``big'' version of $\Out(F_n)$. In this paper, we investigate which of these mapping class groups have a dense conjugacy class. We obtain a complete classification for self-similar locally finite graphs, and show that a large class of mapping class groups do not have a dense conjugacy class. One of the main tools we develop is flux homomorphisms, which we define for a broad class of locally finite graphs. Along the way, we develop a combinatorial notion for locally finite graphs, and we use it to provide a simple criterion for determining whether a locally finite graph is stable.
\end{abstract}

\section{Introduction}

The mapping class group of a locally finite graph $X$, denoted $\Maps(X)$, is the group of proper homotopy equivalences of $X$, up to proper homotopy. It was first introduced by Algom-Kfir and Bestvina in \cite{AK-B} with two main motivations: one coming from the study of outer automorphisms of free groups, and the other coming from the study of infinite-type surfaces. On the outer automorphism side, if $X$ is any finite graph, then $\Maps(X)\cong \Out(F_{\rk(\pi_1(X))})$, and hence, in general, $\Maps(X)$ serves as an infinite-type analogue of Out$(F_n)$. On the surface side, there is a natural correspondence between infinite-type surfaces and graphs up to proper homotopy which may be seen by taking the boundary of a regular neighborhood of $X$. For example, a torus corresponds to a circle and a bi-infinite cylinder corresponds to a line. In general, $\Maps(X)$ has a natural Polish topology with many nice properties. See the work of Algom-Kfir and Bestvina \cite{AK-B}, Domat, Hoganson, and Kwak \cite{DHK}, and Hill, Kopreski, Rechkin, Shaji, and Udall \cite{HKRSU} for some examples.

A topological group with a dense conjugacy class has the \emph{Rokhlin property}. Examples include the group of homeomorphisms of Cantor space and that of the Hilbert Cube \cite{GW}, as well as the symmetric group of the natural numbers \cite{M}. For more examples, history, and context, see Kechris and Rosendal \cite{KR}. A complete classification of which mapping class groups of connected orientable surfaces have a dense conjugacy class was obtained by Lanier and Vlamis \cite{LV} and Hern\'andez Hern\'andez, Hru\v{s}\'ak, Morales, Randecker, Sedano, and Valdez \cite{HHMRSV}: the mapping class group of a connected orientable $2$-manifold has the Rokhlin property if and only if the manifold is either the $2$-sphere or a non-compact manifold whose genus is either zero or infinite and whose end space is self-similar with a unique maximal end.

Given the duality between graphs and surfaces, it is natural to ask which mapping class groups of locally finite graphs have a dense conjugacy class. The classification obtained for surfaces does not hold as stated for the setting of locally finite graphs; there are many graphs and corresponding surfaces whose mapping class groups either both contain dense conjugacy classes or both do not, but there are also many graphs for which this does not hold. In this paper, we take the first step towards classifying graphs $X$ such that $\Maps(X)$ has a dense conjugacy class. We obtain a complete classification among graphs with self-similar end spaces; for these graphs, the result is similar to the surface setting.

\begin{restatable}{theorem}{mRr}\label{thm: main Rokhlin result}
    Let $X$ be a self-similar locally finite graph. The mapping class group $\Maps(X)$ has a dense conjugacy class if and only if $X$ is proper homotopy equivalent to the Cantor tree, or the genus of $X$ is either zero or infinite and $X$ has a unique maximal end. See Figure~\ref{fig: self-similar flowchart}.
\end{restatable}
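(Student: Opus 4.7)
The plan is to prove the classification by handling the two directions separately. For sufficiency, I would invoke the Kechris--Rosendal framework, which characterizes density of a conjugacy class in a Polish group via a joint-embedding/weak-amalgamation property on a suitable class of ``finite approximations'' to group elements. For each of the three sufficient configurations (Cantor tree; unique maximal end with zero genus; unique maximal end with infinite genus), I would identify a class of compactly supported mapping classes whose underlying combinatorial pieces amalgamate up to conjugation using the self-similarity of the end space. For necessity, I would deploy the flux homomorphisms developed earlier in the paper to produce continuous, conjugation-invariant homomorphisms from $\Maps(X)$ into a nontrivial Hausdorff abelian group whenever the listed conditions fail, immediately ruling out density. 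This mirrors the strategy used by Lanier--Vlamis and Hern\'andez Hern\'andez et al.\ in the surface setting.

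In more detail, for the Cantor tree case, the end space is a Cantor set acted on homogeneously by $\Maps(X)$, and the argument parallels the classical proof that $\Homeo$ of the Cantor set has a dense conjugacy class: every partial finite approximation can be absorbed into a self-similar extension, and the absence of genus means no handle-bookkeeping is needed. The unique-maximal-end, zero-genus case is analogous but performed relative to the distinguished end, exploiting self-similarity of the complementary end space to amalgamate approximations that fix the maximal end appropriately. The infinite-genus case requires more care, since compactly supported approximations now carry loops, but the fact that every neighborhood of the maximal end contains countably infinite genus means one can always absorb a given handle rearrangement into a common model by routing the construction through a shared infinite-genus region. For the reverse direction, if $X$ has finite positive genus or more than one maximal end (and is not the Cantor tree), I would use the flux homomorphism to distinguish two mapping classes that cannot be simultaneously approximated by conjugates of one element: the flux takes distinct values on these classes, is continuous, and is conjugation-invariant on $\Maps(X)$, so a dense conjugacy class is impossible.

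I expect the main obstacle to be the amalgamation step in the infinite-genus case, where one must simultaneously control the end structure and the handle distribution at each inductive stage and ensure compatibility with proper homotopy equivalence at the limit. A secondary difficulty is choosing the correct target group for the flux homomorphism in the multiple-maximal-end setting so that it is genuinely nontrivial on $\Maps(X)$; the framework developed earlier in the paper should reduce this to selecting the right pair of maximal ends and the right summing convention, but verifying continuity in the Polish topology and checking that the classification of self-similar end spaces leaves no uncovered case will require careful bookkeeping of the interaction between genus, end type, and self-similarity.
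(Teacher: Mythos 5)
Your sufficiency direction matches the paper's in outline: the paper verifies the JEP (equivalent to a dense conjugacy class for Polish groups by Kechris--Rosendal) for a graph with a unique stable maximal end and genus $0$ or $\infty$ via the Lanier--Vlamis absorption argument (Proposition~\ref{prop: unique maximal end}), and for the Cantor tree it simply invokes $\Maps(C)\cong\Homeo(\partial C)$ and Glasner--Weiss rather than re-running an amalgamation. That part of your plan is workable, though the genus-$0$ and genus-$\infty$ cases are handled uniformly and do not require the separate handle-bookkeeping you anticipate.

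The genuine gap is in your necessity direction. For a self-similar graph the classification (Proposition~\ref{prop: classification of local structures}) forces a \emph{unique maximal end type}, of size one or infinity; so the only graphs you must rule out are those with finite positive genus and those whose maximal ends form a Cantor set but which are not the Cantor tree (e.g.\ $o(C)$ or $1\to C$). Flux homomorphisms do not exist in the latter situation: the construction of Proposition~\ref{prop: flux map for ends} requires two maximal ends admitting a gcd $\lambda$ such that every end type above $\lambda$ is dominated by one distinguished maximal type \emph{but not both}, and when all maximal ends are equivalent (a single Cantor-set orbit) this condition cannot be met --- there is no way to separate the graph into two sides between which a $\Z$-valued count of migrating ends or loops is finite and well defined. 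The paper instead rules out these graphs by a direct failure of the JEP (Proposition~\ref{prop: babel}): writing $X\cong(Y\vee 1)\vee(Y\vee 1)\vee Z$ (or with $R_1$ in place of $1$), one takes $U=U_K$ and $V=[g]U_K$ for a swap $[g]$ of the two distinguished copies, and observes that every element of every conjugate of $U$ fixes some end (or loop class) in the relevant orbit while no element of $V$ does. Without this fixed-point/swap argument, or some substitute for it, your proof does not cover the central case of the theorem; the flux machinery only yields Theorem~\ref{thm: obstructions to dense conjugacy classes}~(1), which concerns non-self-similar graphs with two inequivalent maximal ends. (The finite-positive-genus case is also handled not by a flux map but by the homomorphism $\Maps(X)\to\Out(\pi_1(X))$ with open kernel, though there your "proper open normal subgroup" principle is the right one and the fix is routine.)
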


\begin{figure}
    \centering
    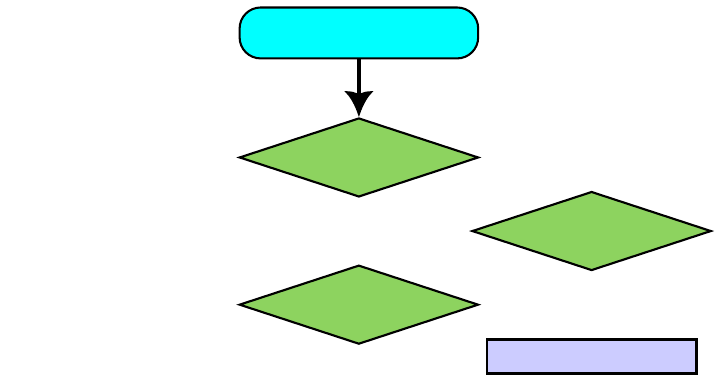
    \caption{Flowchart for self-similar locally finite graphs.}
    \label{fig: self-similar flowchart}
\end{figure}

When $X$ is not self-similar, the situation for locally finite graphs becomes more complicated than for surfaces, and a full classification is currently out of reach. For example, one obstruction to having a dense conjugacy class in the setting of surfaces is the existence of a non-displaceable subsurface. There is no meaningful analogue of a non-displaceable subsurface in the setting of locally finite graphs. Other obstructions, such as containing a subgroup that is proper, open, and normal, do apply to this setting. We show that such subgroups exist in a broad variety of settings by constructing non-trivial group homomorphisms to discrete groups including $\Z$, $\Out(F_n)$, and the symmetric group on $n$ letters. The homomorphisms from $\Maps(X)$ to $\Z$, which we call \emph{flux maps}, generalize the flux maps defined by \cite{DHK}, which in turn are analogous to the flux maps of \cite[Lemma~6.7]{MRlong}.


The flux maps of \cite{DHK} count the number of loops passing from one end to another and are defined on the subgroup of $\Maps(X)$ consisting of mapping classes which induce the identity on the end space of $X$. We extend the flux map construction to apply to all of $\Maps(X)$ and to count ends in addition to loops. Intuitively, the ends we count must satisfy a maximality condition, which we call being a \emph{greatest common divisor}, or \emph{gcd}, to ensure that the flux maps will be finite-valued. See Definitions~\ref{def: gcd} and~\ref{def: flux map for ends} for the precise definitions of gcds and flux maps respectively, and see Section~\ref{sec: order on ends} for a discussion of end types and the order on ends.

\begin{theorem}[Obstructions to dense conjugacy classes]\label{thm: obstructions to dense conjugacy classes}
    Let $X$ be a locally finite graph such that at least one of the following holds.
    \begin{enumerate}[(1)]
        \item All maximal ends are stable and there exist two maximal ends with a gcd which is not of Cantor type.
        \item There exists an end type $E(\mu)$ with $1<|E(\mu)|<\infty$.
        \item The genus of $X$ is finite.
    \end{enumerate}
    Then $\Maps(X)$ does not have a dense conjugacy class.
\end{theorem}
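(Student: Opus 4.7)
My plan is to handle the three cases uniformly by constructing, in each case, a non-trivial continuous homomorphism from $\Maps(X)$ onto a discrete group. Any such homomorphism is an obstruction to the Rokhlin property: its kernel is a proper open normal subgroup, so if some element had dense conjugacy class then its image in the discrete quotient would also have dense conjugacy class, which is possible only when the quotient is trivial. As indicated in the introduction, the natural targets are a symmetric group for case (2), the group $\Out(F_n)$ for case (3), and $\Z$ (via the flux homomorphism) for case (1).

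For case (2), any mapping class induces a type-preserving homeomorphism of the end space, hence permutes the finite set $E(\mu)$, yielding a continuous homomorphism $\Maps(X) \to \text{Sym}(E(\mu))$. When $|E(\mu)| \geq 2$, I would show non-triviality by explicitly constructing a mapping class that swaps two ends of type $E(\mu)$, using that ends of the same type have proper-homotopy-equivalent neighborhoods so that a local swap can be realized. When $|E(\mu)| = 1$ the permutation target is trivial, and I would instead use a flux-type invariant associated to the unique fixed end to build a homomorphism to $\Z$. For case (3), with finite positive genus $g$, the loops of $X$ are concentrated in a finite subgraph of rank $g$, and $\Maps(X)$ acts on conjugacy classes in $F_g$ in a proper-homotopy invariant way, giving a continuous homomorphism $\Maps(X) \to \Out(F_g)$. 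Non-triviality would follow by exhibiting mapping classes realizing any prescribed Nielsen move on the finite core and extending by the identity.

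For case (1), I would apply the flux homomorphism construction advertised in the introduction. The gcd $e$ of two maximal ends picks out a canonical (up to proper homotopy) separating set in $X$ between them, and the flux map records the signed count of loops, or of sub-ends lying below $e$, that are transported across this separator by a mapping class. The hypothesis that $e$ is not of Cantor type is exactly what forces these counts to be finite, so the construction produces a well-defined homomorphism $\Maps(X) \to \Z$. Non-triviality would follow from constructing a mapping class that pushes a loop or a suitable end from one side of the separator to the other, which is possible because both separated ends are maximal and therefore carry enough structure near $e$.

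The main obstacle is the flux construction in case (1), which is the technical heart of the argument. Well-definedness requires pinning down the separator in a canonical, proper-homotopy invariant way, checking invariance of the count under the many choices involved, and verifying finiteness under the non-Cantor-type hypothesis. Producing an explicit mapping class with non-zero flux is also delicate since it must be engineered to interact correctly with the gcd structure. By comparison, the non-triviality arguments in cases (2) and (3), and the $|E(\mu)|=1$ subcase of (2), should reduce to standard flexibility of $\Maps(X)$ — realizing local swaps near ends of the same type, Nielsen moves supported on a finite core, or a simpler flux near a single fixed end — once the right mapping classes are shown to exist.
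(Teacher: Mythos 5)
Your overall strategy coincides with the paper's: in each case one produces a continuous homomorphism from $\Maps(X)$ to a discrete group (a symmetric group for (2), $\Out(\pi_1(X))$ for (3), and $\Z$ via a flux homomorphism for (1)), and the kernel is then a proper open normal subgroup, which rules out a dense conjugacy class by Lemma~\ref{lem: proper open normal subgrp}. Your treatment of (3) and of (2) when $|E(\mu)|\geq 2$ matches Theorems~\ref{thm: finite genus} and~\ref{thm: finite end type} (and you correctly insert the needed positivity of the genus), and your sketch of the flux map for (1) --- counting ends equivalent to the gcd, or loops when the gcd is $R_1$, that cross a separating wedge splitting, with finiteness forced by the non-Cantor-type hypothesis --- is the content of Propositions~\ref{prop: flux map for ends} and~\ref{prop: flux map for loops}.

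There are two genuine gaps. First, your patch for the subcase $|E(\mu)|=1$ cannot succeed: the tree $\omega+1$ has a unique maximal end, hence an end type of cardinality one, yet $\Maps(\omega+1)$ has a dense conjugacy class by Proposition~\ref{prop: unique maximal end}, so no nontrivial homomorphism to $\Z$ or to any discrete group exists there. The paper's own proof of (2) assumes $1<|E(\mu)|$, and the lower bound $0$ in the statement must be read as $1$; a ``flux-type invariant attached to the unique fixed end'' has nothing finite and nonzero to count. Second, in case (1) the two maximal ends need not be inequivalent. When $\mu_1\sim\mu_2$ with $E(\mu_1)$ a Cantor set, there is no wedge splitting $Y_1\vee Y_2$ with $E(\mu_i)\subset\partial Y_i$, since $E(\mu_1)=E(\mu_2)$, so the separator your flux map needs does not exist. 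The paper's Theorem~\ref{thm: item 1} handles this by a different mechanism: either the gcd is shared with a third, inequivalent maximal end (reducing to the flux case), or one shows directly that the joint embedding property fails as in Proposition~\ref{prop: babel} and Remark~\ref{rmk: generalized babel}, by distinguishing two ends of the gcd type and comparing an open set whose elements fix them with a translate by a swap. You would need to add an argument of this kind. Two smaller cautions: the flux construction as executed in the paper also assumes the maximal ends are stable (this is what produces the wedge decomposition in Step~1), and the separator is far from canonical --- gcds themselves can be uncountable in number --- so well-definedness must come from the admissible-pair bookkeeping rather than from canonicity, as you anticipated but did not supply.
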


The methods we employ to prove that Theorem~\ref{thm: obstructions to dense conjugacy classes} (1) implies the conclusion generalize to a much less restrictive class of locally finite graphs $X$, as well as many subgroups of $\Maps(X)$; see Theorem~\ref{thm: flux map general} for the general statement. In particular, the following corollary follows immediately from Theorem~\ref{thm: flux map general}.

\begin{restatable}{corollary}{PMaps}\label{cor: PMaps}
    Let $X$ be a locally finite graph with more than one end accumulated by genus. Then P$\Maps(X)$ does not contain a dense conjugacy class.
\end{restatable}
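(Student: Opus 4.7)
The plan is to derive the corollary by invoking Theorem~\ref{thm: flux map general} with $\text{P}\Maps(X)$ playing the role of the relevant subgroup of $\Maps(X)$, and then to extract the topological consequence that a continuous homomorphism to a discrete abelian group forbids dense conjugacy classes. The hypothesis supplies two distinct ends $e_{1}, e_{2}$ of $X$ that are each accumulated by genus. Pure mapping classes by definition fix every end of $X$ pointwise, so $e_{1}$ and $e_{2}$ are individually preserved by every element of $\text{P}\Maps(X)$, which is exactly the setup needed to define a flux map counting loops shuttled between the two ends.

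With that setup in hand, the first concrete step is to construct a homomorphism $\Phi : \text{P}\Maps(X)\to \Z$ that counts, with sign, the net number of essential loops pushed from a neighborhood of $e_{1}$ into the complementary region containing $e_{2}$ by a representative proper homotopy equivalence. Because both ends are accumulated by genus, there is an unbounded supply of loops limiting to each end, so the construction of \cite{DHK} (and its generalization in this paper) produces a well-defined $\Z$-valued map that is independent of the choice of representative and of the compact separating set used to measure flux. The second step is surjectivity: given $n\in\Z$, one exhibits an explicit pure mapping class supported on a neighborhood of $e_{1}\cup e_{2}$ that transfers $|n|$ genus-carrying loops from the $e_{1}$ side to the $e_{2}$ side (or the reverse for negative $n$), using the fact that each end has infinitely many genus-loops arbitrarily deep in its neighborhood basis. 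Both of these steps are precisely what Theorem~\ref{thm: flux map general} asserts in general, so the corollary proof amounts to observing that the two-ends-accumulated-by-genus hypothesis fits its framework.

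The final step is purely topological and abstract. The map $\Phi$ is continuous into the discrete group $\Z$, so $K := \ker(\Phi)$ is a proper, open, normal subgroup of $\text{P}\Maps(X)$. Since $\Z$ is abelian, $\Phi$ is constant on conjugacy classes, and therefore every conjugacy class in $\text{P}\Maps(X)$ is contained in some single coset $\Phi^{-1}(n)$. Each such coset is clopen and proper, so no conjugacy class has dense closure in $\text{P}\Maps(X)$.

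The main obstacle is not in this deduction but in ensuring that the flux map genuinely lives on all of $\text{P}\Maps(X)$ and takes every integer value; once Theorem~\ref{thm: flux map general} is granted, the proof of Corollary~\ref{cor: PMaps} is essentially the short topological argument above. The subtle point I would want to double-check is that the flux map is well-defined on proper homotopy classes (not merely on representatives), and that it remains nonzero after passing from $\Maps(X)$ down to $\text{P}\Maps(X)$, both of which are handled by the requirement that $e_{1}$ and $e_{2}$ be fixed pointwise, which holds automatically for pure mapping classes.
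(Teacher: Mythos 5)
Your proposal is correct and follows essentially the same route as the paper: the paper's proof is a one-line application of Theorem~\ref{thm: flux map general} with $A_i=\{\nu_i\}$ for two distinct ends accumulated by genus, $H=\text{P}\Maps(X)$, and the loop-counting flux map (gcd $R_1$), after which Lemma~\ref{lem: proper open normal subgrp} finishes the argument exactly as in your final paragraph. The only cosmetic difference is that you argue for surjectivity of the flux map where mere non-triviality of its image suffices to make the kernel proper.
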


Another group naturally associated to a locally finite graph $X$ is $\Homeo(\partial X,\partial X_g)$, the group of homeomorphisms of the end space of $X$. In Section~\ref{sec: homeo end space}, we give conditions under which $\Homeo(\partial X,\partial X_g)$ does and does not have a dense conjugacy class.

Many mapping class groups of locally finite graphs have a dense conjugacy class if and only if the groups of their corresponding surfaces do, but a full classification for graphs must have different hypotheses. We provide an uncountable class of mapping class groups that have a dense conjugacy class but whose surface counterparts do not.

\begin{restatable}{theorem}{Tdp}\label{thm: trees and direct products}
    Let $X$ be a locally finite tree with a countable end space and a unique maximal end. If $C$ is a locally finite tree whose end space is homeomorphic to Cantor space, then $\Maps(X\vee C)$ has a dense conjugacy class.
\end{restatable}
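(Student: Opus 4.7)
The plan is to verify the topological-transitivity criterion for dense conjugacy classes in a Polish group: $\Maps(X \vee C)$ has a dense conjugacy class if and only if for any two nonempty basic open sets $U_1, U_2 \subseteq \Maps(X \vee C)$, there exists $\phi$ with $\phi U_1 \phi^{-1} \cap U_2 \neq \emptyset$. Taking as a neighborhood basis of the identity the subgroups $V_K = \{[h] : h \text{ is properly homotopic to a map fixing the compact subgraph } K \text{ pointwise}\}$, it suffices to show: for every $f, g \in \Maps(X \vee C)$ and every compact subgraph $K \subseteq X \vee C$, there exists $\phi \in \Maps(X \vee C)$ such that $\phi f \phi^{-1}$ agrees with $g$ on $K$ up to proper homotopy.

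The key structural observation is that $\partial(X \vee C) = \partial X \sqcup \partial C$, where $\partial X$ is countable (containing a unique maximal end $\mu$) and $\partial C$ is homeomorphic to Cantor space. Since end types are preserved by mapping classes, every element of $\Maps(X \vee C)$ preserves this splitting setwise. After enlarging $K$ to include the wedge point $p$ and the boundary of a suitable compact neighborhood of $p$, I may assume each connected component of $(X \vee C) \setminus K$ lies wholly in $X$ or wholly in $C$, giving finitely many \emph{$X$-components} (one of which, the \emph{main} one, contains $\mu$) and finitely many \emph{$C$-components} (each a subtree of $C$ with Cantor end space, after absorbing any finite ones into $K$).

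I then construct $\phi$ independently on each side of the wedge. On the $C$-side, since $C$ is a Cantor tree, the Cantor-tree case of Theorem~\ref{thm: main Rokhlin result} gives that $\Maps(C)$ has a dense conjugacy class; this yields $\phi_C \in \Maps(C)$ fixing a neighborhood of $p$ such that $\phi_C (f|_C) \phi_C^{-1}$ agrees with $g|_C$ on $K \cap C$ up to proper homotopy. On the $X$-side, both $f|_X$ and $g|_X$ fix $\mu$ and induce type-preserving permutations of the finitely many peripheral $X$-components; I construct $\phi_X \in \Maps(X)$ fixing a neighborhood of $p$ and realizing the required matching by a back-and-forth argument that exchanges like-typed peripheral subtrees. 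Because both $\phi_X$ and $\phi_C$ are the identity near $p$, they glue across the wedge to yield $\phi \in \Maps(X \vee C)$ with the desired property.

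The main obstacle is the $X$-side construction: I need to show that any two type-preserving permutations of finitely many peripheral subtrees of $X$, anchored at $\mu$, can be conjugated by a mapping class of $X$ which fixes a neighborhood of $p$. I expect this to follow from an induction on the Cantor--Bendixson rank of $\partial X$, using the homogeneity supplied by the unique maximal end $\mu$: at each rank, any two like-typed peripheral subtrees are exchangeable by a mapping class supported away from $p$, and the inductive step uses that the main $X$-component (which contains $\mu$) admits arbitrary such swaps far out toward $\mu$. Once this combinatorial fact is established, combining with the Cantor-side density from Theorem~\ref{thm: main Rokhlin result} yields the theorem.
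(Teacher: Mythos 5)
Your high-level architecture --- split $\partial(X\vee C)$ into the countable part $\partial X$ and the perfect part $\partial C$, observe that every mapping class preserves this splitting, and treat the two sides independently --- is essentially the paper's route. The paper packages it as a topological group isomorphism $\Maps(X\vee C)\cong\Maps(X)\times\Maps(C)$ (via Proposition~\ref{AK-B 2.3}, since these are trees), notes that a product has a dense conjugacy class if and only if both factors do (Lemma~\ref{lem: direct product of groups}), and then quotes Proposition~\ref{prop: unique maximal end} for the $X$ factor and Proposition~\ref{prop: Cantor tree} for the $C$ factor. In particular, the gluing issue you worry about (arranging $\phi_X$ and $\phi_C$ to be the identity near $p$) evaporates once the product decomposition is in hand.

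However, there is a genuine error in your opening reduction. You claim it suffices to show that for all $f,g$ and every compact $K$ there is a single $\phi$ with $\phi f\phi^{-1}$ agreeing with $g$ on $K$ up to proper homotopy, i.e., $\phi f\phi^{-1}\in g\,U_K$ in the notation of Proposition~\ref{AK-B 4.7}. This is strictly stronger than the JEP and is false for the graphs at hand. Concretely, let $\partial X\cong\omega+1$ (a ray with a ray $r_i$ attached at each vertex), let $f$ act on the isolated ends of $X$ by a fixed-point-free involution and trivially on $\partial C$, let $g=\mathrm{id}$, and let $K$ be large enough that some component of $(X\vee C)\setminus K$ contains exactly one (isolated) end $\partial r_1$. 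Every element of $g\,U_K=U_K$ must fix $\partial r_1$, but $\phi f\phi^{-1}$ has no fixed isolated end for any $\phi$, since possessing a fixed isolated end is a conjugation invariant. The JEP only asks for $\phi(fU_K)\phi^{-1}\cap gU_K\neq\0$: you are allowed to first replace $f$ by some $f'\in fU_K$, agreeing with $f$ on $K$ but modified far out toward $\mu$ (e.g., made trivial on the main component), and this extra freedom is exactly what the Lanier--Vlamis argument recorded as Proposition~\ref{prop: unique maximal end} exploits. Your $X$-side ``back-and-forth'' step inherits the same defect --- conjugation cannot change the cycle type of the permutation $f$ induces on like-typed peripheral components, so no choice of $\phi_X$ alone can make $\phi_X f\phi_X^{-1}$ match $g$ on $K$ --- and in any case that step is only asserted, not proved, whereas the paper supplies it ready-made as Proposition~\ref{prop: unique maximal end}. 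The same over-claim occurs on the $C$-side, where density of a conjugacy class in $\Maps(C)$ does not produce a conjugator for the specific element $f|_C$.
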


Any graph $X\vee C$ as in Theorem~\ref{thm: trees and direct products} is an example of a graph such that the mapping class group of the corresponding surface does not have a dense conjugacy class. This is an uncountable class of graphs, because it follows from \cite{MS} and the constructions in Section~\ref{sec: signatures} that such graphs are naturally indexed by countable ordinal numbers.

To facilitate our investigation of dense conjugacy classes, we develop a combinatorial notation for locally finite graphs based on their end space, called a \emph{signature}. The definition is technical, so we instead refer to Figure~\ref{fig: examples of graphs} for examples and refer readers to Definition~\ref{def: signature}. We then develop the stronger notion of an \emph{ordered signature}, which explicitly encodes the order on ends and is equivalent to the graph being stable. Stability (Definition~\ref{def: stability}) is a natural condition on the end space of a locally finite graph that ensures that it is ``well-behaved.'' First introduced by Mann and Rafi \cite{MRlong}, many papers have restricted to the class of stable locally finite graphs or surfaces, including works of Fanoni, Ghaswala, and McLeay \cite{FGM} and works of Bar-Natan and Verberne \cite{BV}, along with many others.

\begin{figure}
    \centering
    \includegraphics[width=1\linewidth]{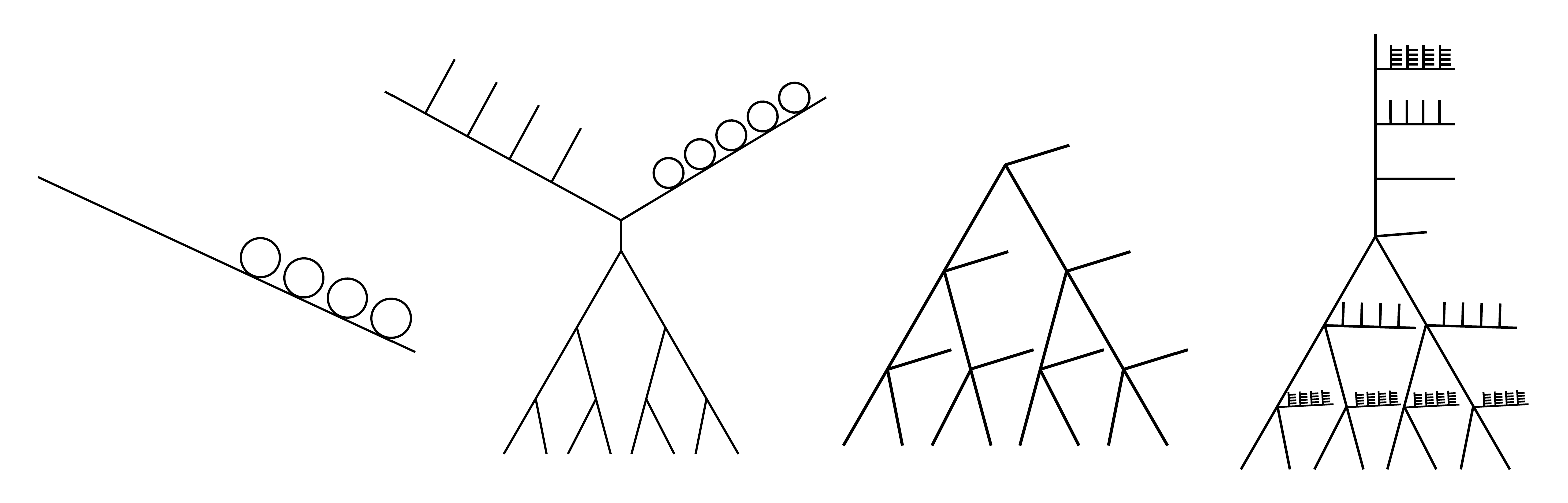}
    \caption{The signatures of the graphs depicted, from left to right, are $1\vee o(1)$, $(\omega+1)\vee C\vee o(1)$, $1\to C$, and $\{\omega^n+1\}_n\to(1\vee C)$. In signatures, the symbols $1$ and $C$ represent a ray and a Cantor tree respectively. Arrows roughly represent convergence towards a greater end, and $o(-)$ represents an end accumulated by genus. Throughout the paper, all ends of line segments in figures are assumed to be rays; we often drop arrows for neater drawings.}
    \label{fig: examples of graphs}
\end{figure}

\begin{theorem}\label{thm: ordered signatures intro}
    A locally finite graph is stable if and only if it has an ordered signature.
\end{theorem}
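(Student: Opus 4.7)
The plan is to prove both directions by induction on the structure of the end space, using the fact that an ordered signature, by design, records the order on ends while a signature only records the homeomorphism type. The general strategy is to show that stability is exactly the condition that lets one refine a signature so that its recursive pieces line up with the strata in the order on ends.

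For the forward direction, I would assume $X$ is stable and build an ordered signature from the top down. Stability implies that the set of maximal ends is partitioned into finitely or countably many homogeneous types $E(\mu)$, each of which is either a singleton, a finite discrete set, or self-similar, and that at each such end the ``sub-end space below'' is itself controlled in a stable way. I would peel off the maximal type as the outermost layer of the signature (recording in the ordering which layer it is), then restrict to complementary clopen pieces where the maximal type has been removed and iterate. The self-similarity that stability provides at each level is what guarantees the peeled-off pieces look like one of the basic building blocks (point, ray, end accumulated by genus, Cantor tree, etc.) that the signature syntax allows. Iterating through the countable well-ordered stratification of end types and taking care at limit stages produces an ordered signature, with the ``ordered'' part coming for free because we constructed layers in the order given by $\mu$.

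For the reverse direction, I would argue by induction on the ordered signature that the end space of $X$ is stable. The base cases (a point, a ray, a Cantor tree, an end accumulated by genus, and small finite joins) are clearly stable by inspection. For the inductive step, I need to check that the operations that build ordered signatures ($\vee$ and $\to$, together with the countable versions $\{-\}_n\to -$) preserve stability provided the orderings of the pieces are compatible, and that the explicit ordering data in an ordered signature rules out the pathological configurations (e.g., an end type accumulating itself in a non-self-similar way) that violate stability.

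The main obstacle I expect is in the forward direction, at limit stages of the transfinite decomposition. Stability is a local and essentially inductive condition, but assembling the local data into a single global ordered signature requires that countable sequences of lower-order types converge to higher-order types in a controlled way, which in turn depends on the precise behavior of gcds introduced earlier in the paper. I would expect to need a lemma saying that in a stable end space, a descending sequence of types has a well-defined limit type that can be ``named'' inside the signature grammar; without such a lemma the recursion might produce a tree of decompositions that is not itself a signature. Once this bookkeeping is in place, both directions should reduce to verifying the definitions at each inductive level.
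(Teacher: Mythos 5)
Your high-level strategy (peel off the maximal stratum for the forward direction, structural induction on the signature for the reverse) matches the paper's, but several of the steps you treat as routine are where the actual content lies, and one of your proposed tools is the wrong one. In the forward direction, the paper first needs that a stable graph has only \emph{finitely} many maximal end types (Lemma~\ref{lem: stable -> finite max end types}) so that the wedge decomposition of Proposition~\ref{prop: wedge decomposition} is a finite wedge; your ``finitely or countably many'' would break the signature grammar, since wedge may only be applied finitely often. More seriously, ``peeling off the maximal type'' does not by itself produce an ordered signature: condition 3(a) of Definition~\ref{def: ordered signature} requires the pieces to the left of an arrow to be \emph{nested}, $\partial Y_1\subseteq\partial Y_2\subseteq\cdots$, and arranging this is the technical heart of the direction (Lemma~\ref{lem: invocation of ordered spread}). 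It is done by an explicit infinite sequence of proper homotopy equivalences that ``push'' attached subgraphs toward the maximal ends and accumulate them into cumulative wedges $Y_k=(\bigvee_i X_{a(w_{k,i})})\vee Y_{k-1}$, with convergence of the sequence justified by the fact that each stage is supported farther and farther from the basepoint. Nothing in your sketch supplies this. Finally, your anticipated obstacle at ``limit stages'' is a misdiagnosis: there are no limit stages. The recursion terminates on every branch because of the descending chain condition (Proposition~\ref{prop: descending chain condition}), which says a strictly descending chain of local structures below a stable self-similar graph must stabilize. The lemma you propose instead --- that a descending sequence of types has a nameable limit type --- is not what is needed and is not obviously true; what saves the construction is that such sequences cannot descend forever, not that their limits can be named.

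In the reverse direction, the base cases are indeed easy, but the inductive step you describe as ``check that the operations preserve stability'' is precisely the nontrivial claim. The paper's Claim~\ref{cl: pass left of arrows} shows that if $\{Y_n\}_n$ are stable with $\partial Y_n$ embedding clopenly into $\bigsqcup_{k>n}\partial Y_k$ for all but finitely many $n$, and $Y$ is stable, then $\{Y_n\}_n\to Y$ is stable; the proof requires re-expressing the graph as a doubly-indexed convergence $\{\{Y_{n+k_l}\}_n\to Z_l\}_l\to 1$ in order to verify stability of the ends coming from $\partial Y$ inside the ambient graph, and this is where the nesting hypothesis is actually used. Without an argument of this kind your induction does not close: stability of an end of $Y$ in $Y$ does not immediately give stability of that end in $\{Y_n\}_n\to Y$, since its neighborhoods in the larger graph also contain tails of the $Y_n$'s.
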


While ordered signatures are the most structured signatures, there is another condition on signatures that in practice is easier to verify and is equivalent to stability as well; see Theorem~\ref{thm: ordered signatures}. This allows us to verify that complex graphs, such as one with signature
$$\{\{o(\omega^n+1)\}_n\to \bigvee_{i=1}^m((1\to C)\to(\omega^{\omega^i}+1))\}_m\to (1\vee C),$$
are stable. The arrow notation in a signature was created to mimic convergence towards a greater end, and Theorem~\ref{thm: ordered signatures intro} intuitively states that a locally finite graph is stable if and only if it has a signature where every arrow captures this motivation.

Returning to dense conjugacy classes, a natural question is whether having a dense conjugacy class is inherited by subgroups or super-groups. Even among stable locally finite graphs, this is not the case:

\begin{restatable}{theorem}{Rn}\label{thm: Rokhlin nesting}
    Let $X$ be a locally finite graph. There exist two self-similar locally finite graphs $X_Y$ and $X_N$ such that there exists an embedding of $\Maps(X)$ as a closed subgroup into both $\Maps(X_Y)$ and $\Maps(X_N)$, with $\Maps(X_Y)$ having a dense conjugacy class and $\Maps(X_N)$ not having a dense conjugacy class. Moreover, this embedding is induced by an embedding of end spaces.
\end{restatable}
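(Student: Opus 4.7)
The plan is to realize both $X_Y$ and $X_N$ as self-similar enlargements of $X$ in which $X$ sits as a clopen subgraph, so that the group embedding comes from the canonical ``extend by the identity'' homomorphism, and the end-space embedding inducing it is the clopen inclusion $E_X\hookrightarrow E_{X_\bullet}$. The choice of enlargement determines which side of Theorem~\ref{thm: main Rokhlin result} the resulting graph lands on.

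For $X_Y$, I would build a graph whose signature has a single distinguished end $*$ accumulated by genus, by countably many disjoint copies of $X$, and by a Cantor tree of ends accumulated by genus. Self-similarity of the associated pointed end space is straightforward: every clopen neighborhood of $*$ contains cofinitely many copies of $X$ together with a Cantor's worth of genus-accumulated ends, so it is homeomorphic, as a pointed end space, to the whole of $E_{X_Y}$. Since $X_Y$ has a unique maximal end $*$ and infinite genus, Theorem~\ref{thm: main Rokhlin result} gives $\Maps(X_Y)$ a dense conjugacy class.

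For $X_N$, I would modify this construction to land on the ``no dense conjugacy'' side of Theorem~\ref{thm: main Rokhlin result}. The cleanest recipe is to repeat the $X_Y$ construction but strip the infinite genus at $*$ and place a single handle in the finite core of the graph, so that the result has finite positive genus and falls under obstruction~(3) of Theorem~\ref{thm: obstructions to dense conjugacy classes}. When $X$ itself has ends accumulated by genus, this is unavailable; in that case I would use the signature formalism of Section~\ref{sec: signatures} to engineer an end type of finite positive cardinality, triggering obstruction~(2), while preserving self-similarity and the clopen inclusion of $E_X$. In either case, picking a distinguished copy of $X$ inside $X_N$ gives a subgraph inclusion with finite boundary, so that extending proper homotopy equivalences by the identity defines the required homomorphism $\Maps(X)\to\Maps(X_N)$.

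The routine parts are then injectivity and closedness of both embeddings. Injectivity follows from the existence of a proper retraction $X_\bullet\to X$ onto the distinguished copy, which pushes any proper homotopy to the identity in $X_\bullet$ back to a proper homotopy to the identity in $X$. Closedness follows because, in the Polish topology on $\Maps(X_\bullet)$, the image is the stabilizer of the clopen set $E_X\subseteq E_{X_\bullet}$ acting as the identity on its complement and preserving the genus-accumulation structure on $E_X$, which is a closed condition. The hard part, I expect, is the construction of $X_N$ when $X$ has infinite genus: the finite-genus recipe is then unavailable, and one must simultaneously arrange self-similarity, the clopen inclusion of $E_X$ in $E_{X_N}$, and a nontrivial obstruction from Theorem~\ref{thm: obstructions to dense conjugacy classes} such as a finite positive end type. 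Verifying that all three conditions can be met at once uniformly in $X$ is the crux.
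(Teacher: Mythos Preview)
Your $X_N$ construction has a genuine obstruction. When $X$ has infinite genus you propose to ``engineer an end type of finite positive cardinality'' while keeping $X_N$ self-similar, but these two requirements are incompatible. By Lemma~\ref{lem: orbits of max ends}, any end type $E(\nu)$ with $1<|E(\nu)|<\infty$ is closed, hence $\nu$ is maximal; but Proposition~\ref{prop: classification of local structures} forces a self-similar graph to have a unique maximal end type of size one or infinity. So obstruction~(2) of Theorem~\ref{thm: obstructions to dense conjugacy classes} can never be triggered inside a self-similar graph. Since you already noted that the finite-positive-genus recipe is unavailable when $g(X)=\infty$, your construction of $X_N$ has no working branch in that case.

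The paper avoids this by using a different obstruction entirely, one that \emph{is} compatible with self-similarity: it takes $X_N=Z^C:=(X\vee Z')\to C$, where $\partial Z'$ is a local structure incomparable to $\partial X$ (Lemma~\ref{lem: incomparable local structures}). This $Z^C$ is self-similar with a Cantor set of maximal ends, each dominating $\partial 1$, so Proposition~\ref{prop: babel} kills the dense conjugacy class. The same template with a ray in place of $C$ gives $X_Y=Z^1:=(X\vee Z')\to 1$, which has a unique maximal end and hence a dense conjugacy class by Proposition~\ref{prop: unique maximal end}. This works uniformly in $X$ with no case split on genus. Your $X_Y$ idea is in the right spirit but more complicated than needed; the incomparable $Z'$ is what cleanly forces the new end at the tip of the arrow to be the unique maximal one. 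The embedding and closedness arguments you sketch are essentially what the paper does (realizing the image as an intersection of the clopen subgroups $U_K$), so those parts are fine.
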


The locally finite graphs for which it is still unknown whether the mapping class has a dense conjugacy class or not roughly fall into three categories. The first category is comprised of graphs which are essentially too simple to have any predictive structure, such as $1\vee o(1)$ or $(\omega+1)\vee C\vee o(1)$. In these graphs, there do not exist ends (or loops) which are dominated by two maximal ends. The second category includes graphs such as $\{\omega^n+1\}_n\to(1\vee C)$. These are graphs where for any pair of maximal end types, there is an increasing sequence of end types dominated by those maximal end types. The third category are unstable graphs where the methods involving stable maximal ends do not apply. This includes the graphs constructed in \cite{MRshort} and the graph in Example~\ref{ex: keystone lfg}.

\subsection*{Outline of the paper}

In Section~\ref{sec: background}, we discuss the necessary background material for the paper. We start with properties of locally finite graphs and $\Maps(X)$ in Section~\ref{sec: lfgs and phes}, continue to ordinal numbers in Section~\ref{sec: ordinals}, and conclude with the order on ends in Section~\ref{sec: order on ends}. Section~\ref{sec: notation/lemmas} presents the notation of signatures. Section~\ref{sec: structure analysis} is about local structures (see Definition~\ref{def: local structure}), and their basic properties. Section~\ref{sec: classification of local structures} is a classification of local structures, Section~\ref{sec: poset of lfgs} describes the poset of local structures, and Section~\ref{sec: wedge decomposition} studies wedge decompositions, an important tool which will be used for the rest of the paper. In Section~\ref{sec: ordered signatures}, we define ordered signatures and prove Theorem~\ref{thm: ordered signatures intro}. In Section~\ref{sec: Rokhlin}, we turn our attention to the study of dense conjugacy classes in $\Maps(X)$. In Section~\ref{sec: finite genus and maximal end type}, we prove Theorem~\ref{thm: obstructions to dense conjugacy classes} (2) and (3), in Section~\ref{sec: Cantor type} we prove Theorems~\ref{thm: main Rokhlin result} and~\ref{thm: Rokhlin nesting}, in Section~\ref{sec: flux maps} we prove Theorem~\ref{thm: obstructions to dense conjugacy classes} (1) and Corollary~\ref{cor: PMaps}, and in Section~\ref{sec: trees and direct products} we prove Theorem~\ref{thm: trees and direct products}. In Section~\ref{sec: homeo end space}, we investigate $\Homeo(\partial X,\partial X_g)$.

\subsection*{Acknowledgments}

Thank you to the author's advisor Carolyn R. Abbott for her help and support throughout the project. Thank you to Ferr\'{a}n Valdez for suggesting the problem and for his advice, and thank you to Nicholas G. Vlamis for many helpful conversations and pointing the author in the direction of flux maps.  Thank you to George Domat for many helpful brainstorming sessions, Robbie Lyman for help visualizing intricate proper homotopy equivalences of graphs, and Kasra Rafi and Assaf Bar-Natan for conversations about complex end spaces. Thank you to Brian Udall for pointing out multiple errors in initial drafts of Proposition~\ref{prop: flux map for ends} and for his suggestions, and thank you to Arya Vadnere for help in proving $3\Rightarrow 4$ in Proposition~\ref{prop: classification of local structures}. The author was partially supported by NSF grants DMS-2106906 and DMS-2340341.

\section{Background}\label{sec: background}

\subsection{Locally finite graphs}\label{sec: lfgs and phes}

We give an overview of the definitions and  properties of locally finite graphs necessary for this paper. For further details see \cite{AK-B} and \cite{MRlong}. All graphs $G$ are assumed to be locally finite and connected, and $\mathcal{V}(G)$ denotes the set of vertices of $G$.

\begin{definition}
    The \emph{end space} of a locally finite graph $X$ is defined to be $\partial X:=\varprojlim_{K\subset X}\pi_0(X\setminus K)$ with the inverse limit topology, where the limit runs over all compact subsets $K\subset X$.
\end{definition}

The space $\partial X$ is totally disconnected, compact, and metrizable, and thus is a closed subset of Cantor space. There is a natural compact topology on $X\cup\partial X$, sometimes referred to as the end/Freudenthal compactification.

We next extend the notion of genus from the surface setting into the setting of locally finite graphs. With surfaces, genus intuitively counts how many holes there are. With graphs, genus intuitively counts how many loops there are. Recall that the fundamental group of a graph is free.

\begin{definition}
    The \emph{genus} $g(X)\in\Z_{\geq0}\cup\{\infty\}$ of $X$ is the rank of $\pi_1(X,x_0)$ for some $x_0\in X$. The \emph{core graph} $X_g\subseteq X$ is the smallest sub-graph containing all immersed loops.
\end{definition}

The end space $\partial X_g\subseteq\partial X$ is the subspace of ends ``accumulated by genus,'' or such that every neighborhood in $X\cup\partial X$ of the end has positive genus. The set $\partial X_g$ is always closed in $\partial X$ and is the empty set precisely when $g(X)<\infty$ \cite[Section~2]{AK-B}. A locally finite graph is \emph{finite-type} if $|\partial X|$ and $g(X)$ are both finite, and is \emph{infinite-type} otherwise.

There are several natural ways to define the mapping class group of a locally finite graph. One possibility is to use the same definition as in surfaces, but in this case, the resulting group would be quite restrictive. For example, this group is finite for finite graphs. On the other hand, the group of homotopy equivalences of a locally finite graph up to homotopy recovers the motivation coming from the theory of Out$(F_n)$. In fact, for any locally finite graph $X$, the group of homotopy equivalences of $X$ modulo homotopy is isomorphic to Out$(F_n)$, where $n=g(X)$. Howeaver, these groups do not give information about the topology of the space of ends when the graph is infinite. This naturally leads us to the following definition. Recall that a continuous map is \emph{proper} if the inverse image of all compact sets are compact.

\begin{definition}
    Two proper maps $f$ and $g$ from $X$ to $Y$ are \emph{properly homotopic} if there exists a map $H\colon X\times[0,1]\rightarrow Y$ such that $H(-,0)=f$, $H(-,1)=g$, and $H(-,t)\colon X\rightarrow Y$ is proper for all $t\in[0,1]$.
\end{definition}

\begin{definition}
    A \emph{proper homotopy equivalence} between two locally finite graphs $X$ and $Y$ is a proper map $f \colon X\rightarrow Y$ such that there exists another proper map $g \colon Y\rightarrow X$ such that $f\circ g$ and $g\circ f$ are properly homotopic to the identity. Let $\PHE(X)$ be the set of proper homotopy equivalences from $X$ to itself.
\end{definition}

Proper homotopy equivalences of graphs are analogous to homeomorphisms of surfaces, and the following is the analogue of mapping class groups of surfaces.

\begin{definition}[\cite{AK-B}]
    The \emph{mapping class group of $X$}, denoted $\Maps(X)$, is the quotient $\PHE(X)/\sim$, where $f\sim g$ if $f$ and $g$ are properly homotopic. We refer to elements $[f]$ of $\Maps(X)$ as \emph{mapping classes}.
\end{definition}

We further endow $\Maps(X)$ with the topology constructed in \cite{AK-B}, which we briefly describe. Let $K$ be a finite subgraph of $X$. Let $U_K$ be the set of mapping classes $[f]$ with a representative $f$ such that the following four conditions hold.
\begin{enumerate}
    \item $f|_K=id$,
    \item $f$ preserves each component of $X\setminus K$,
    \item there is a representative $g$ of $[f]^{-1}$ which satisfies the first two conditions, and
    \item there are proper homotopies $g\circ f\simeq id$ and $f\circ g\simeq id$ that are stationary on $K$ and preserve each component of $X\setminus K$. 
\end{enumerate}
The topology on $\Maps(X)$ is the coarsest topology such that all subsets of the form $U_K$ are open and such that multiplication and inversion are continuous. The following two propositions summarize results in \cite{AK-B} which are relevant to our discussion.

\begin{proposition}[\cite{AK-B}]\label{AK-B 4.7}
    Each $U_K$ is a clopen subgroup, and for every neighborhood $U$ of the identity, there exists $K$ such that $U_K\subseteq U$.
\end{proposition}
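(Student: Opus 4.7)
The plan is to prove two assertions in turn: that each $U_K$ is a clopen subgroup, and that the collection $\{U_K\}$ is a neighborhood basis at the identity. The main strategy rests on a standard principle for topological groups: if a group $G$ carries a family of subgroups closed under finite intersections, then the coarsest topology making those subgroups open while keeping multiplication and inversion continuous is precisely the group topology in which that family is a neighborhood base at the identity. My task therefore reduces to verifying the hypotheses of this principle for the $U_K$ and then reading off the conclusion.

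To verify that $U_K$ is a subgroup, I would work with representatives and check closure under multiplication and inversion. Given $[f],[g]\in U_K$ with representatives satisfying (1)--(4), the composition $f\circ g$ fixes $K$ pointwise and preserves each component of $X\setminus K$; an inverse representative is obtained by composing the representatives of $[f]^{-1}$ and $[g]^{-1}$ furnished by (3); and the required proper homotopies in (4) are obtained by concatenating those for $f$ and $g$, which remain stationary on $K$ and component-preserving. For inversion, condition (3) already produces a representative $g$ of $[f]^{-1}$ satisfying (1) and (2), with $f$ itself witnessing (3) for $g$ and the homotopies in (4) reversed. Closure under finite intersections then follows from the inclusion $U_{K'}\subseteq U_K$ whenever $K\subseteq K'$: a map fixing $K'$ pointwise certainly fixes $K$, and each component of $X\setminus K$ decomposes into finitely many components of $X\setminus K'$ glued along portions of $K'\setminus K$ that are fixed, so such components are preserved; analogous reasoning handles conditions (3) and (4).

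With these preliminaries, each $U_K$ is open by definition of the topology, and an open subgroup of any topological group is automatically closed because its complement is a union of cosets, each of which is open. The basis claim then follows from the principle cited above, since the topology the paper defines matches the unique group topology whose neighborhood base at the identity is $\{U_K\}$. The main obstacle I anticipate is the subgroup verification: the conditions defining $U_K$ are existential statements about representatives and accompanying homotopies, so the delicate point is arranging the composition's inverse representative and the stationary, component-preserving proper homotopies in (4) coherently rather than merely assembling arbitrary choices, so that the resulting mapping class lands back in $U_K$.
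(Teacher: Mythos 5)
The first half of your argument is sound: the verification that each $U_K$ is a subgroup, that $U_{K'}\subseteq U_K$ whenever $K\subseteq K'$ (so the family is a filter base under intersection), and that an open subgroup is closed because its complement is a union of open cosets, all go through as you describe. Note also that the paper itself gives no proof of this proposition --- it is quoted from \cite{AK-B} --- so the comparison here is with what a complete argument requires rather than with an in-paper proof.

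The gap is in the second half. The ``standard principle'' you invoke is missing a hypothesis: a filter base $\mathcal{B}$ of subgroups of $G$ is a neighborhood base at the identity for a group topology if and only if, in addition to being closed under finite intersections, it is conjugation-stable, i.e.\ for every $U\in\mathcal{B}$ and every $g\in G$ there is $V\in\mathcal{B}$ with $V\subseteq gUg^{-1}$. Without this condition the coarsest group topology making the members of $\mathcal{B}$ open has, as a neighborhood base at the identity, the finite intersections of \emph{conjugates} $g_1U_1g_1^{-1}\cap\cdots\cap g_nU_ng_n^{-1}$, and these need not contain any member of $\mathcal{B}$; the conclusion ``every neighborhood of the identity contains some $U_K$'' can then fail outright. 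Consequently, the entire content of the second assertion reduces to showing that for every finite subgraph $K$ and every $[g]\in\Maps(X)$ there is a finite subgraph $K'$ with $U_{K'}\subseteq[g]U_K[g]^{-1}$. That is the substantive geometric step: one takes $K'$ to be a finite subgraph containing the image of $K$ under a representative of $[g]$, and must then adjust representatives and the proper homotopies in condition (4) so that $[g]^{-1}[h][g]$ genuinely satisfies the four conditions for $K$ --- delicate precisely because a representative of $[g]^{-1}$ composed with one of $[g]$ is only properly homotopic to the identity, not equal to it. Your proposal never addresses this, so as written it does not establish the neighborhood-basis claim.
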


\begin{proposition}[\cite{AK-B}]\label{AK-B Polish}
    The topological group $\Maps(X)$ is Polish. If $\Maps(X)$ is of finite type, then it is isomorphic as a topological group to Out$(F_{g(X)})$ equipped with the discrete topology. Otherwise, it is homeomorphic to the set of irrational numbers.
\end{proposition}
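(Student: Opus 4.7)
The plan is to decompose the proposition into three independent assertions: (i) $\Maps(X)$ is Polish, (ii) when $X$ is of finite type the topology is discrete and $\Maps(X)\cong\Out(F_{g(X)})$, and (iii) when $X$ is of infinite type the space $\Maps(X)$ is homeomorphic to the irrationals. The three parts are logically separate and can be attacked one by one.

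For (i), I would first use Proposition~\ref{AK-B 4.7} together with the countability of finite subgraphs of the locally finite graph $X$ to obtain a countable neighborhood basis $\{U_K\}$ of clopen subgroups at the identity, making $\Maps(X)$ a second-countable non-Archimedean topological group. Hausdorffness reduces to showing $\bigcap_K U_K=\{\mathrm{id}\}$: a mapping class lying in every $U_K$ is properly homotopic to the identity on arbitrarily large compact sets in a compatible way, so it equals $\mathrm{id}$. Metrizability then follows from Birkhoff--Kakutani. To upgrade to complete metrizability, I would exploit the non-Archimedean structure: for each Cauchy sequence $[f_n]$ and each finite $K$, the coset $[f_n]U_K$ eventually stabilizes, and I would patch compatible representatives together along an exhaustion of $X$ by finite subgraphs to construct a proper homotopy equivalence $f$ realizing the limit.

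For (ii), I would pick a finite subgraph $K_0$ containing the (finite) core graph $X_g$ along with one vertex on each of the finitely many rays that make up $X\setminus X_g$, and argue that any $[f]\in U_{K_0}$ is trivial: a representative is the identity on $K_0$ and preserves each remaining ray, and a ray has no nontrivial proper self-equivalence fixing its endpoint up to proper homotopy. Hence $U_{K_0}=\{\mathrm{id}\}$ and $\Maps(X)$ is discrete. To identify $\Maps(X)$ with $\Out(F_{g(X)})$, I would collapse each ray of $X$ to its attaching vertex, which is a proper homotopy equivalence from $X$ to a finite graph of rank $g(X)$, and then invoke the standard identification $\Maps(\text{finite graph of rank }n)\cong\Out(F_n)$ recorded in the introduction.

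For (iii), I would invoke the Alexandrov--Urysohn characterization of the irrationals: they form the unique nonempty Polish space that is zero-dimensional and nowhere locally compact. Zero-dimensionality is immediate, since $\{U_K\}$ and their translates form a clopen basis. By left translation, nowhere local compactness reduces to showing that no $U_K$ is compact. Given $K$, I would produce some $K'\supset K$ such that $U_{K'}$ has infinite index in $U_K$, using that $X$ is of infinite type: if $g(X)=\infty$ I would exhibit infinitely many mutually inequivalent loop-twist mapping classes supported outside $K$, while if $|\partial X|=\infty$ I would realize infinitely many distinct end-permutations supported outside $K$. Either way, $U_K$ decomposes into infinitely many pairwise disjoint clopen cosets of $U_{K'}$, ruling out compactness. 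The hardest step will be this last one: outside a given $K$, the combinatorial structure of $X$ can be intricate, and I must check that the constructed twists or permutations genuinely extend to proper homotopy equivalences of all of $X$ and remain distinguishable modulo $U_{K'}$ rather than collapsing under the ambient relation of proper homotopy.
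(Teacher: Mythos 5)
This proposition is not proved in the paper at all: it is quoted from \cite{AK-B}, so there is no in-house argument to compare against, and your proposal has to stand on its own. Its overall architecture (clopen subgroup basis $\{U_K\}$, Birkhoff--Kakutani, Alexandrov--Urysohn for the irrationals) is the right skeleton and matches how such statements are usually proved, but there is one step that is genuinely wrong and two that are substantially underpowered.

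The wrong step is in (ii): collapsing a ray to its attaching vertex is \emph{not} a proper map (the preimage of a compact set containing the image vertex contains the entire ray), so it is not a proper homotopy equivalence, and indeed a finite-type graph with rays cannot be proper homotopy equivalent to its core, since they have non-homeomorphic end spaces (Theorem~\ref{AK-B 2.2}). The identification with $\Out(F_{g(X)})$ therefore cannot be obtained by your collapsing argument; it has to come from a direct analysis of the homomorphism $\Maps(X)\to\Out(\pi_1(X))$ together with the action on the finite end space, and this interaction is exactly where the subtlety lies (a proper homotopy equivalence may permute the finitely many ends, and such a permutation is detected by the end space rather than by $\pi_1$ --- compare the bi-infinite line, whose flip is nontrivial in $\Maps$). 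Two further gaps: in (i), a countable neighborhood basis at the identity gives first countability, not second countability, so separability of $\Maps(X)$ is unaddressed --- the standard repair is to show each $U_K$ has countable index and take coset representatives, and together with the completeness "patching" this is where the real content of Polishness lives; in (iii), your reduction of non-local-compactness to exhibiting infinitely many classes supported outside $K$ that remain distinct modulo $U_{K'}$ is correct, but in the case $g(X)<\infty$ and $|\partial X|=\infty$ you still owe an argument that infinitely many distinct end-space homeomorphisms can be realized there (via the surjectivity of $\sigma$ in Proposition~\ref{AK-B 2.3} and the existence of infinitely many equivalent ends accumulating at a maximal end), which you flag but do not supply.
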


There exists a classification of locally finite graphs, analogous to the classification for infinite-type surfaces.

\begin{definition}
    The \emph{characteristic pair} of a locally finite graph $X$ is $(\partial X,g(X))$ if $g(X)<\infty$, and $(\partial X,\partial X_g)$ otherwise. Two characteristic pairs $(\partial X,\partial X_g)$ and $(\partial X',\partial X'_g)$ are homeomorphic if there exists a homeomorphism from $\partial X$ to $\partial X'$ inducing a homeomorphism from $\partial X_g$ to $\partial X'_g$. Similarly, characteristic pairs $(\partial X,n)$ and $(\partial X',m)$ are homeomorphic if $\partial X\cong\partial X'$ and $n=m$.
\end{definition}

We will frequently abuse notation in the following way. If $U,V\subseteq\partial X$, then $U_g:=U\cap\partial X_g$. By $U\cong V$ we mean $(U,U_g)\cong(V,V_g)$, and by $U\subseteq V$ we mean that $U\subseteq V$ and $U_g\subseteq V_g$. In addition, unless otherwise specified, the notation $U\subseteq V$ will assume that $U$ is clopen in $V$.

\begin{theorem}[\cite{ADMQ}]\label{AK-B 2.2}
    Two connected locally finite graphs are proper homotopy equivalent if and only if their characteristic pairs are homeomorphic.
\end{theorem}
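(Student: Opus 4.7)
The proof splits into a straightforward forward direction and a delicate reverse direction built on canonical models and a back-and-forth construction.

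\textbf{Forward direction.} My plan is to use the functoriality of the Freudenthal compactification. Given a proper homotopy equivalence $f\colon X\to Y$ with proper homotopy inverse $g$, I would extend $f$ continuously to $\bar{f}\colon X\cup\partial X\to Y\cup\partial Y$; properness is exactly what ensures $\bar{f}(\partial X)\subseteq\partial Y$. A proper homotopy restricts to a homotopy on end spaces, which is necessarily constant since $\partial X$ is totally disconnected, so $\bar{f}|_{\partial X}$ and $\bar{g}|_{\partial Y}$ are mutually inverse homeomorphisms. Preservation of $\pi_1$-rank gives $g(X)=g(Y)$, and when $g(X)=\infty$ I would check that $\bar{f}$ carries $\partial X_g$ onto $\partial Y_g$ by arguing that, because $f$ is proper, small clopen neighborhoods of $e\in\partial X_g$ of positive genus map to small clopen neighborhoods of $\bar{f}(e)$ that still contain essential loops.

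\textbf{Reverse direction: canonical models.} For each admissible characteristic pair I would construct a canonical representative. I would use the standard fact that any closed subset $C$ of Cantor space is the end space of some locally finite tree $T_C$ (built, for example, from an inverse system of finite clopen partitions of $C$). Given a pair $(\partial X,\partial X_g)$, I would define $Z(\partial X,\partial X_g)$ by taking $T_{\partial X}$ and wedging a loop at each vertex $v$ whose subtree closure meets $\partial X_g$; because $\partial X_g$ is closed, the resulting graph has exactly $\partial X_g$ as its set of ends accumulated by genus. For a pair $(\partial X,n)$ with $n$ finite, I would let $Z(\partial X,n):=T_{\partial X}$ with $n$ loops wedged at the root.

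\textbf{Reverse direction: main construction.} It then suffices to prove (i) every locally finite graph $X$ is properly homotopy equivalent to its canonical model, and (ii) canonical models built from homeomorphic characteristic pairs are properly homotopy equivalent. For (i), I would exhaust $X$ by finite connected subgraphs $X_1\subset X_2\subset\cdots$ and inductively replace each infinite component of $X\setminus X_n$ by a standard tree-with-loops model, using a proper homotopy equivalence that is the identity on $X_n$; these modifications together assemble into a proper homotopy equivalence onto the canonical model, since the change performed at stage $n$ lives inside a shrinking end-neighborhood. For (ii), a homeomorphism $\varphi\colon(\partial X,\partial X_g)\to(\partial Y,\partial Y_g)$ refines to a sequence of compatible finite clopen partitions, and I would realize $\varphi$ by a back-and-forth construction matching the finite subtrees and loops of $T_{\partial X}$ and $T_{\partial Y}$ at successive depths, using the freedom to wedge on and collapse trivial arcs and loops to reconcile any combinatorial discrepancy at a given stage.

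\textbf{Main obstacle.} The technical heart is the back-and-forth construction. Producing both a proper forward map and a proper inverse requires organizing the finite-stage matchings so that neither direction collapses infinitely many components onto a single image component; this must be arranged simultaneously with the requirement that $\partial X_g$ is sent into $\partial Y_g$ and vice versa, so that loops always land in the correct clopen neighborhoods and no end in $\partial X\setminus\partial X_g$ is identified with an end of $\partial Y_g$. Maintaining all of these invariants while nesting matchings through the induction is where the delicate bookkeeping lives.
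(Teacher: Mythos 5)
This statement is quoted in the paper from the reference [ADMQ]; the paper gives no proof of it, so there is no in-paper argument to compare yours against. On its own terms, your outline follows what is essentially the standard strategy for this classification (and the one the present paper implicitly relies on elsewhere): the forward direction via functoriality of the end compactification and invariance of $\pi_1$-rank, and the reverse direction via canonical models in ``standard form'' together with the fact that a homeomorphism of end spaces of trees can be realized by a proper homotopy equivalence. This is consistent with the toolkit the paper imports from \cite{AK-B} (Propositions~\ref{AK-B 2.6}, \ref{AK-B 2}, and the tree case of Proposition~\ref{AK-B 2.3}), and your canonical model (wedge a loop at each vertex whose subtree has an end in $\partial X_g$) does realize exactly the prescribed pair, using that $\partial X_g$ is closed.

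Two places in the sketch carry essentially all of the content and are not yet arguments. First, in step (i) the infinite sequence of stage-$n$ modifications must be shown to converge to a single proper homotopy equivalence together with a proper homotopy inverse; the right formulation is that the maps eventually stabilize on every compact set (as in the convergence argument of Lemma~\ref{lem: invocation of ordered spread} in this paper), and you should say this rather than only that each change ``lives inside a shrinking end-neighborhood.'' Second, the back-and-forth realizing $\varphi$ is the theorem: the properness of both the map and its homotopy inverse, and the compatibility with $\partial X_g$, are exactly what must be engineered, and you have named the difficulty without resolving it. One smaller caveat: with the paper's definition of proper homotopy (each slice $H(-,t)$ proper, rather than $H$ itself proper), the claim that the induced homotopy on end spaces ``is necessarily constant since $\partial X$ is totally disconnected'' is not literally an argument --- that properly homotopic maps induce the same map on ends is a genuine (if standard) lemma, and you should either cite it or prove that $t\mapsto \overline{H(-,t)}|_{\partial X}$ is locally constant. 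As a blind reconstruction of a cited external theorem, the plan is sound, but the proof as written is a roadmap rather than a proof.
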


Characteristic pairs, which determine a locally finite graph, can in turn be characterized using closed subsets of Cantor space.

\begin{proposition}[\cite{AK-B}]\label{AK-B 2}
    Given a closed subset $B$ of Cantor space, a closed subset $A$ of $B$, and given $n\in\Z_{\geq0}$, there exist connected locally finite graphs $X$ and $Y$ such that $(B,A)$ is the characteristic pair of $X$ and $(B,n)$ is the characteristic pair of $Y$.
\end{proposition}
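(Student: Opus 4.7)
The plan is to reduce both cases to a single underlying construction: realize $B$ as the end space of a locally finite tree, and then attach loops appropriately to realize the desired genus data.

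First I would show that every non-empty closed $B \subseteq \{0,1\}^\N$ arises as $\partial T$ for some locally finite tree $T$. Let $S$ be the set of finite binary words that appear as prefixes of elements of $B$, and let $T$ be the tree with vertex set $\{\ast\}\cup S$, with edges from $\ast$ to each length-$1$ word in $S$ and from each $w \in S$ to $w0,w1 \in S$ (whenever these lie in $S$). Compactness of $B$ makes $T$ locally finite, $T$ has no finite branches by the definition of $S$, and the infinite rays from $\ast$ correspond to sequences in $\{0,1\}^\N$ all of whose prefixes lie in $S$, which, since $B$ is closed, is exactly $B$; hence $\partial T = B$. For the pair $(B,n)$, I would take such a $T$ and wedge $n$ circles at $\ast$; the resulting $Y$ is a connected locally finite graph with $\partial Y = B$ and $\pi_1(Y)$ free of rank $n$, so $g(Y) = n$.

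For the pair $(B,A)$ with $A \subseteq B$ closed, I would instead attach a single loop at each $w \in S$ for which the cylinder $P_w := \{x \in B : w \text{ is a prefix of } x\}$ meets $A$, producing a connected locally finite graph $X$ with $\partial X = B$. The key step is to verify that $\partial X_g = A$. For $e \in A$, the cylinders $P_{w_n}$ along the ray to $e$ all meet $A$, so each subgraph sitting below $w_n$ contains a loop, and every neighborhood of $e$ in $X\cup\partial X$ has positive genus. For $e \in B \setminus A$, since $A$ is closed there is a cylinder $P_w \ni e$ with $P_w \cap A = \emptyset$; then every sub-cylinder $P_{w'} \subseteq P_w$ also misses $A$, so the subtree rooted at $w$ contains no loops, giving a genus-zero neighborhood of $e$.

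The main subtlety to watch for is that loops have to be attached at cylinder-vertices rather than along individual rays into $A$: if one instead attached loops at vertices on a ray converging to a point of $A$, that ray could also limit on points of $B \setminus A$ and spuriously mark them as accumulated by genus. Using the whole cylinder as the attachment criterion forces the distinction between being and not being accumulated by genus to respect the topology of the pair $(B,A)$, and together with the construction of $T$ this produces the desired graphs $X$ and $Y$.
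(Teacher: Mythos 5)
The paper does not prove this proposition; it is stated as a citation to \cite{AK-B}, so there is no in-paper argument to compare against. Your construction --- realizing $B$ as the end space of the subtree of the rooted binary tree spanned by prefixes of points of $B$, then wedging $n$ circles at the root for $(B,n)$ and attaching a loop at exactly those vertices $w$ whose cylinder $P_w$ meets $A$ for $(B,A)$ --- is correct and is essentially the standard construction from \cite{AK-B}. Two cosmetic remarks: local finiteness follows simply because every vertex of a subtree of the binary tree has degree at most $3$ (compactness of $B$ is not what is doing the work there), and the ``subtlety'' you flag is vacuous, since attaching loops at all prefixes of points of $A$ is literally the same set of vertices as attaching at all $w$ with $P_w\cap A\neq\0$; closedness of $A$ is what guarantees a point of $B\setminus A$ has a whole loop-free cylinder around it, which you do use correctly.
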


Not only are characteristic pairs invariant up to proper homotopy equivalence of a locally finite graph, but so is the mapping class group:

\begin{proposition}[{\cite[Corollary~4.5]{AK-B}}]\label{AK-B 4.5}
    Let $X$ and $Y$ be two connected locally finite graphs which are proper homotopy equivalent. Then $\Maps(X)$ and $\Maps(Y)$ are isomorphic as topological groups.
\end{proposition}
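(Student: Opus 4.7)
The plan is to define the candidate isomorphism $\Phi\colon\Maps(X)\to\Maps(Y)$ by $\Phi([f])=[\phi\circ f\circ\psi]$, where $\phi\colon X\to Y$ is a fixed proper homotopy equivalence and $\psi\colon Y\to X$ is a proper homotopy inverse of $\phi$. First I would verify well-definedness: a composition of proper maps is proper, a composition of proper homotopy equivalences is a proper homotopy equivalence, and if $f\simeq f'$ properly via some $H$, then $\phi\circ H\circ(\psi\times\mathrm{id}_{[0,1]})$ is a proper homotopy between $\phi\circ f\circ\psi$ and $\phi\circ f'\circ\psi$. So $\Phi$ is a well-defined set map.

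Next I would show $\Phi$ is a group homomorphism by computing
\[
\Phi([f][g])=[\phi\circ f\circ g\circ\psi],\qquad \Phi([f])\Phi([g])=[\phi\circ f\circ\psi\circ\phi\circ g\circ\psi],
\]
which agree because $\psi\circ\phi\simeq\mathrm{id}_X$ properly lets us insert a properly homotopic identity in the middle. The analogous map $\Psi([h])=[\psi\circ h\circ\phi]$ is a two-sided inverse to $\Phi$, using $\phi\circ\psi\simeq\mathrm{id}_Y$ and $\psi\circ\phi\simeq\mathrm{id}_X$ properly; this establishes that $\Phi$ is a group isomorphism.

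The main obstacle is continuity, because the basic neighborhoods $U_K$ from Proposition~\ref{AK-B 4.7} are defined by representatives restricting to the identity on $K$, whereas $\phi\circ f\circ\psi$ only restricts to $\phi\circ\psi|_L$ on a given finite subgraph $L\subseteq Y$ when $f|_K=\mathrm{id}$ and $\psi(L)\subseteq K$. To handle this, I would first invoke the fact that for any finite subgraph $L\subseteq Y$ there exist representatives of $\phi$ and $\psi$ and a proper homotopy $H\colon\phi\circ\psi\simeq\mathrm{id}_Y$ that is stationary on $L$ (after modifying within a compact neighborhood of $L$, using the simplicial approximation theorem and the freedom in choosing representatives of $[\phi]$ and $[\psi]$). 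Then I would choose $K$ to be a finite subgraph of $X$ containing the compact set $\psi(L)$ and mapping under $\phi$ together with the support of $H$ into a neighborhood large enough that each component of $X\setminus K$ is sent inside a single component of $Y\setminus L$. Under these choices, if $[f]\in U_K$, then $\phi\circ f\circ\psi|_L=\phi\circ\psi|_L=\mathrm{id}_L$, the component-preserving condition follows from the component-preservation of $f$ on $X\setminus K$, and the remaining two conditions follow symmetrically by applying $\phi,\psi$ to the representative of $[f]^{-1}$ and the homotopies guaranteed by $[f]\in U_K$. Thus $\Phi(U_K)\subseteq U_L$, giving continuity at the identity and, since $\Phi$ is a homomorphism, continuity everywhere. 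Symmetry gives continuity of $\Psi$, and therefore $\Phi$ is an isomorphism of topological groups.

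The subtle step above is the initial normalization ensuring $\phi\circ\psi$ is stationary on the chosen $L$; if one wishes to sidestep this manipulation, one could instead note that both $\Maps(X)$ and $\Maps(Y)$ are Polish groups by Proposition~\ref{AK-B Polish}, and apply Pettis' theorem (a Borel homomorphism between Polish groups is continuous) to the algebraic isomorphism $\Phi$, combined with a routine Borel-measurability check on each $U_L$-preimage.
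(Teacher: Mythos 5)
The paper does not actually prove this proposition; it is imported verbatim from \cite{AK-B}, so there is no in-paper argument to compare yours against. Your overall strategy --- conjugating by a fixed proper homotopy equivalence $\phi$ and a proper homotopy inverse $\psi$ --- is the standard one, and the algebraic part (well-definedness of $\Phi$, the homomorphism property via inserting $\psi\circ\phi\simeq\mathrm{id}_X$, and the two-sided inverse $\Psi$) is correct as written.

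The gap is in the continuity step, and it sits exactly where the content of the proposition lies. Everything is reduced to the claim that, for a given finite subgraph $L\subseteq Y$, one may choose representatives of $\phi$ and $\psi$ together with a proper homotopy $\phi\circ\psi\simeq\mathrm{id}_Y$ that is \emph{stationary on $L$}. This is asserted, not proved, and it is not automatic: even after arranging $\phi\circ\psi|_L=\mathrm{id}_L$ pointwise, a free proper homotopy to $\mathrm{id}_Y$ need not be deformable to one that is stationary on $L$ --- on the level of $\pi_1$ this is the difference between the trivial outer automorphism and the trivial automorphism, and a homotopy that drags a point of $L$ around an essential loop cannot be made rel $L$ without changing the map. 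One genuinely has to exploit the freedom to modify $\phi$ and $\psi$ within their proper homotopy classes, for instance by putting $X$ and $Y$ in standard form and realizing $\phi$ simplicially (vertices to vertices, edges to edge paths), or by factoring $\phi$ through forest collapses and expansions for which the normalization is transparent; after that one still has to verify all four conditions defining $U_L$, in particular component preservation, which requires tracking where $\psi$ sends each component of $Y\setminus L$ relative to $K$. None of this is fatal --- the statement is true and your skeleton is the right one --- but the crucial lemma is currently a placeholder. The proposed fallback via Pettis' theorem does not sidestep the issue: Pettis requires $\Phi$ to be Baire- or Borel-measurable, and showing that $\Phi^{-1}(U_L)$ is Borel is essentially the same task as the direct estimate $\Phi(U_K)\subseteq U_L$; a purely abstract isomorphism of Polish groups need not be continuous, so some definability input cannot be declared ``routine.''
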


It is often convenient to work with a specific graph which is proper homotopy equivalent to the original graph, leading to the following definition:

\begin{definition}[{\cite[Section~2]{AK-B}}]
    Let $X$ be a connected locally finite graph. We say that $X$ is in \emph{standard form} if either 
    \begin{enumerate}
        \item $X$ has no vertices of valence one and $X$ is formed by attaching edges to a tree (the \emph{underlying tree of $X$}) such that both ends of each attached edge are attached to the same vertex, or
        \item $X$ is homeomorphic to a ray.
    \end{enumerate}
\end{definition}

\begin{proposition}[{\cite[Corollary~2.6]{AK-B}}]\label{AK-B 2.6}
    Every locally finite connected infinite graph is proper homotopy equivalent to a graph in standard form.
\end{proposition}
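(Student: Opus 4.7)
The plan is to realize the characteristic pair of $X$ by an explicit graph $X'$ in standard form and then invoke Theorem~\ref{AK-B 2.2} to conclude that $X$ and $X'$ are proper homotopy equivalent. The degenerate case $|\partial X|=1$ and $g(X)=0$ is immediate: a ray realizes the pair $(\{\mathrm{pt}\},0)$ and already lies in case~(2) of standard form. In every other case I produce a graph of case~(1).

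To construct $X'$, first build its underlying tree $T$. Choose a sequence of finite clopen partitions $\mathcal{P}_1, \mathcal{P}_2, \ldots$ of $\partial X$ with $\mathcal{P}_{n+1}$ refining $\mathcal{P}_n$ and mesh tending to zero. Let $T$ be the tree whose vertex set is the disjoint union of the $\mathcal{P}_n$'s together with a root vertex, with an edge joining each $U \in \mathcal{P}_{n+1}$ to the unique member of $\mathcal{P}_n$ containing it; then $\partial T$ is canonically identified with $\partial X$. Collapse every interior valence-two vertex, and whenever a valence-one vertex $v$ is not meant to receive a loop, amalgamate its incident edge with the edges along the maximal ray leaving $v$ so that its initial vertex has valence at least two. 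None of these moves alter $\partial T$.

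Next, attach loops. If $g(X) = n$ is finite, attach $n$ loops at a single vertex $v_0$; when $n \geq 1$ we may take $v_0$ to be a remaining valence-one vertex and thereby eliminate it. If $g(X) = \infty$, fix a countable basis $\{W_k\}$ for the topology on the closed subset $\partial X_g \subseteq \partial T$ by clopen sets, and for each $k$ attach a loop at some vertex of $T$ lying inside the clopen subtree corresponding to $W_k$. Every end $e \in \partial X_g$ then sees a loop in each of its clopen neighborhoods, while an end $e' \notin \partial X_g$ admits a clopen neighborhood disjoint from $\partial X_g$ and hence from all but finitely many $W_k$; so the set of ends accumulated by genus in the resulting graph $X'$ is exactly $\partial X_g$.

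By construction, $X'$ is in standard form and has the same characteristic pair as $X$, so Theorem~\ref{AK-B 2.2} delivers the desired proper homotopy equivalence. The main obstacle is the tree construction: eliminating every valence-one vertex while preserving $\partial T \cong \partial X$ and leaving enough flexibility to place loops so that their accumulation set is precisely $\partial X_g$. This is essentially the inverse-limit argument underlying Proposition~\ref{AK-B 2}, adapted so that the output graph sits in standard form automatically.
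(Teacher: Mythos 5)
The paper does not prove this statement --- it is quoted from \cite[Corollary~2.6]{AK-B} --- so there is no internal proof to compare against. Your strategy (realize the characteristic pair of $X$ by an explicit standard-form graph built from a refining sequence of finite clopen partitions of $\partial X$, then invoke Theorem~\ref{AK-B 2.2}) is the natural one and coincides with the realization construction the paper elsewhere calls ``the construction in \cite[Definition~2.5]{AK-B}'' (cf.\ Proposition~\ref{AK-B 2}); it is not circular, since the classification theorem is due to \cite{ADMQ} and does not presuppose standard form. Two steps, however, need repair before the argument closes.

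First, ``amalgamate its incident edge with the edges along the maximal ray leaving $v$'' does not remove the valence-one vertex $v$: concatenating a chain of edges into one longer edge leaves $v$ as an endpoint of valence one. What you need is to \emph{contract} (properly deformation retract) the finite initial segment from $v$ to the first vertex of valence at least three; this is a proper homotopy equivalence and genuinely deletes $v$. If no branch vertex exists the tree is a ray, which you must route to case (2) or fix by attaching a loop at the origin. Second, the loop placement for $g(X)=\infty$ is underspecified in a way that can break the conclusion that the ends accumulated by genus in $X'$ are exactly $\partial X_g$. Each $W_k$ is clopen in $\partial X_g$ but generally not clopen in $\partial X$, so ``the clopen subtree corresponding to $W_k$'' is undefined; any clopen neighborhood $\widetilde{W}_k$ of $W_k$ in $\partial X$ may contain partition vertices $U$ with $U\cap\partial X_g=\emptyset$, and if loops land on such vertices arbitrarily deep below some end $e'\notin\partial X_g$, then $e'$ becomes accumulated by genus in $X'$. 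The fix is to attach loops only at vertices $U\in\mathcal{P}_n$ meeting $\partial X_g$, say one loop at every such vertex: then each $e\in\partial X_g$ has a loop at the vertex $U_n(e)$ containing it for every $n$, while for $e'\notin\partial X_g$ the closedness of $\partial X_g$ together with the shrinking mesh yields an $n$ with $U_n(e')\cap\partial X_g=\emptyset$, so no loop lies below $U_n(e')$. With these two adjustments (and the routine check that only finitely many loops are attached per vertex, preserving local finiteness) the proof is complete.
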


A mapping class $[f]\in\Maps(X)$ induces a map on $\partial X$. We abuse notation and refer to this induced map by $[f]$. In addition, the induced map on ends preserves $\partial X_g$, so mapping classes induce homeomorphisms on characteristic pairs.

\begin{proposition}[{\cite{AK-B}}]\label{AK-B 2.3} 
    There exists a surjective, continuous, and open group homomorphism 
    $$\sigma\colon\Maps(X)\to\Homeo(\partial X,\partial X_g)$$
    sending a mapping class to the induced map on the characteristic pair. If $X$ is a tree, then $\sigma$ is an isomorphism of topological groups.
\end{proposition}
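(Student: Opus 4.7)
The plan is to construct $\sigma$ by the universal extension of proper maps to the end (Freudenthal) compactification $X\cup\partial X$. Any proper map $f\colon X\to X$ extends continuously to a map $\bar f\colon X\cup\partial X\to X\cup\partial X$ sending $\partial X$ to $\partial X$, because for any clopen partition of $\partial X$ induced by a compact separating set $K$, properness forces all but finitely many components of $X\setminus K$ to be mapped into a single component of $X\setminus f^{-1}(K)$. If $f$ is a proper homotopy equivalence, then $\bar f|_{\partial X}$ is a homeomorphism with inverse $\bar g|_{\partial X}$ where $g$ is any proper homotopy inverse. Two properly homotopic maps induce the same map on $\partial X$: a proper homotopy $H\colon X\times[0,1]\to X$ extends over the ends because $H(-,t)$ is proper for every $t$, and on $\partial X$ the induced family is constant by connectedness of $[0,1]$. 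Finally, $\bar f$ preserves $\partial X_g$ because the property of being accumulated by genus is invariant under proper homotopy equivalence (the rank of $\pi_1$ of any neighborhood of an end is a proper-homotopy invariant). Thus $\sigma([f])=\bar f|_{\partial X}$ is well-defined, and functoriality of the extension makes $\sigma$ a group homomorphism.

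For continuity, I would show that $\sigma^{-1}$ of a subbasic open set around the identity is open. A basic neighborhood of the identity in $\Homeo(\partial X,\partial X_g)$ is the pointwise stabilizer of some clopen partition $\mathcal{P}$ of $\partial X$ (refining any prescribed finite collection of clopens). By compactness of $\partial X$, one can find a finite subgraph $K\subseteq X$ such that the components of $X\setminus K$ realize $\mathcal{P}$. Then $\sigma(U_K)$ is contained in the stabilizer of $\mathcal{P}$, giving continuity.

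The main obstacle is surjectivity. By Proposition~\ref{AK-B 2.6}, I may assume $X$ is in standard form, so $X$ is the union of an underlying tree $T$ and a collection of loops attached at single vertices. Given $h\in\Homeo(\partial X,\partial X_g)$, I would build a representative of $\sigma^{-1}(h)$ by an exhaustion argument. Fix an exhaustion of $X$ by finite subgraphs $K_0\subseteq K_1\subseteq\cdots$ with each $X\setminus K_i$ having finitely many components. At stage $i$, the components of $X\setminus K_i$ partition $\partial X$ into clopen sets; the homeomorphism $h$ must permute these sets once $i$ is large enough that the partition is refined enough to see how $h$ acts on the corresponding clopens at stage $i-1$. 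Using Theorem~\ref{AK-B 2.2} applied to pairs of components which $h$ matches, I can construct proper homotopy equivalences between them respecting genus (since $h$ preserves $\partial X_g$, matched pairs have homeomorphic characteristic pairs). Piecing these together with a combinatorial map on the ``tree skeleton'' of $K_i$ gives a proper map $f_i$ on a larger subgraph, and a diagonal choice produces a proper map $f\colon X\to X$. An analogous construction for $h^{-1}$ together with the fact that compositions agree with the identity on each component of the exhaustion up to proper homotopy yields the proper homotopy inverse. Hence $\sigma([f])=h$.

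For openness, I would directly verify that $\sigma(U_K)$ equals the stabilizer in $\Homeo(\partial X,\partial X_g)$ of the partition of $\partial X$ by components of $X\setminus K$, intersected with the subgroup fixing each such clopen setwise; surjectivity applied componentwise exhibits every such homeomorphism as $\sigma([f])$ for some $[f]\in U_K$, and this subgroup is clopen by Proposition~\ref{AK-B 4.7}. Finally, if $X$ is a tree, injectivity of $\sigma$ is a standard fact: on a tree there are no loops, and two proper maps between trees that agree on $\partial X$ are properly homotopic because one can straighten each to the unique embedded-path representative determined by its behavior on ends. Combined with Proposition~\ref{AK-B Polish} (so $\Maps(X)$ is Polish) and the open mapping theorem for Polish groups, the bijective continuous open homomorphism $\sigma$ is a topological group isomorphism.
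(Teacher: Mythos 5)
This proposition is quoted from \cite{AK-B} and the paper gives no proof of its own, so there is nothing internal to compare against; judged on its own terms, your outline follows the standard argument (extend proper maps over the Freudenthal compactification, check invariance under proper homotopy, realize an arbitrary homeomorphism of the characteristic pair by an exhaustion argument using the classification theorem, and treat the tree case via the absence of loops). The construction of $\sigma$, the continuity argument via $U_K$, the openness argument, and the tree case are all sound.

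The one step that is genuinely too quick is surjectivity, in the clause ``matched pairs have homeomorphic characteristic pairs (since $h$ preserves $\partial X_g$).'' Preservation of $\partial X_g$ only controls where \emph{infinite} genus accumulates; it says nothing about the finite genus sitting inside a single component of $X\setminus K_i$. If a component $C_1$ has genus $3$ and $h$ carries $\partial C_1$ onto $\partial C_2$ where $C_2$ has genus $5$, the characteristic pairs $(\partial C_1,3)$ and $(\partial C_2,5)$ are not homeomorphic and Theorem~\ref{AK-B 2.2} does not apply to the pair as stated. The fix is to normalize first: put $X$ in standard form and arrange (by a preliminary proper homotopy equivalence) that all loops not needed to witness accumulation by genus are absorbed into the compact exhaustion, or slide loops between matched components so that the finite genera agree before invoking the classification theorem; a proper homotopy equivalence of $X$ is free to move loops across the frontier of a component even though a homeomorphism of a surface could not. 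Relatedly, the ``diagonal choice'' at the end of the exhaustion should be replaced by an honest inductive construction in which $f_{i+1}$ agrees with $f_i$ on $K_i$, so that the union is a well-defined proper map; a diagonal extraction by itself does not produce a single coherent map. Both issues are repairable and do not change the overall strategy.
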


The map $\sigma$ is analogous to the group homomorphism from $\Maps(X)$ to Out$(\pi_1(X))$ which sends a mapping class to its induced map on the level of fundamental groups, but unlike $\sigma$, this map is not surjective when $g(X)$ is infinite \cite[Example~4.1]{AK-B}.

\subsection{Ordinal numbers}\label{sec: ordinals}

Ordinal numbers throughout this paper are endowed with the order topology: inductively, $0$ is the empty topological space, if $\alpha$ is an ordinal then $\alpha+1$ is the one-point compactification of $\alpha$, and limit ordinals are endowed with the inductive limit topology. 

\begin{example}
    Some examples of ordinals as topological spaces include:
    \begin{itemize}
        \item The ordinal $n\in\Z_{\geq0}$ represents a topological space with $n$ discrete points, because the one point compactification of a compact space $A$ is $A$ with an added isolated point.
        \item The first infinite ordinal $\omega$ is given the topology of the natural numbers, and $\omega+1$ is given the topology of the set $\{\frac{1}{n}\,:\,n\in\Z_{>0}\}\cup\{0\}$.
        \item The ordinal $\omega^2+1$ is homeomorphic to $\left\{(\frac{1}{ni},\frac{1}{i})\,:\,n,i\in\Z_{>0}\right\}\cup\left\{(0,\frac{1}{i})\,:\,i\in\Z_{>0}\right\}\cup\{(0,0)\}$ in $\R^2$.
    \end{itemize}
\end{example}

The following definition concerns any topological space $\mathcal{X}$. In this paper, $\mathcal{X}$ will typically be (a subset of) the end space of a locally finite graph.

\begin{definition}
    For any topological space $\mathcal{X}$, the derived set of $\mathcal{X}$, denoted $\mathcal{X}'$, consists of the set of accumulation points of $\mathcal{X}$. Given an ordinal $\alpha$, the \emph{$\alpha^{\text{th}}$ Cantor-Bendixon derivative} of $\mathcal{X}$, denoted $\mathcal{X}^\alpha$, is defined using transfinite induction on ordinal numbers as follows.
    \begin{enumerate}
        \item $\mathcal{X}^0=\mathcal{X}$.
        \item For successor ordinals, $\mathcal{X}^{\alpha+1}=(\mathcal{X}^\alpha)'$.
        \item If $\lambda$ is a limit ordinal, then $\mathcal{X}^\lambda=\bigcap_{\alpha<\lambda}\mathcal{X}^\alpha$.
    \end{enumerate}
    The \emph{Cantor-Bendixon rank} of a point $x\in\mathcal{X}$ is the minimal ordinal $\alpha$ such that $x\in \mathcal{X}^\alpha$ but $x\not\in \mathcal{X}^{\alpha+1}$, if such an ordinal exists.
\end{definition}

We will often cite the following theorem due to Mazurkiewicz and Sierpi\'{n}ski.

\begin{theorem}[\cite{MS}]\label{MS 1}
    Let $P$ be a closed, bounded, and countably infinite subset of Euclidean space of any dimension. Then $P$ is homeomorphic to a well-ordered set. Moreover, if $\alpha$ is the minimal ordinal such that the cardinality of $P^\alpha$ is finite, then $P$ is homeomorphic to $\omega^\alpha\cdot n+1$, where $|P^\alpha|=n$.
\end{theorem}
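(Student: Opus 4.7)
The plan is to combine Heine-Borel with Cantor-Bendixson theory and then run a transfinite induction on $\alpha$. Since $P$ is closed and bounded in Euclidean space it is compact and metrizable; since it is also countable, the Cantor-Bendixson theorem applied to $P$ (viewed as a Polish space) forces its perfect kernel to be empty, as every nonempty perfect Polish space is uncountable. Hence $P$ is scattered, and the descending chain of derivatives $P \supseteq P' \supseteq P'' \supseteq \cdots$ must strictly decrease until it hits $\emptyset$ at some countable ordinal. Compactness guarantees that the moment before termination the derivative must be \emph{finite}, since any infinite compact subset contains an accumulation point. So the minimal $\alpha$ with $|P^\alpha| < \infty$ exists, and I set $n := |P^\alpha|$.

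I would first reduce the general case to the case $n=1$. Because $P$ is compact, metrizable, and totally disconnected, I can separate the $n$ points of $P^\alpha$ by pairwise disjoint clopen neighborhoods $U_1,\ldots,U_n$ whose union is $P$. Each $U_i$ is a countable compact metric space with a unique point of Cantor-Bendixon rank $\alpha$; if I can show each $U_i \cong \omega^\alpha+1$, then concatenating in order gives
$$P \cong \underbrace{(\omega^\alpha+1) + \cdots + (\omega^\alpha+1)}_{n} = \omega^\alpha \cdot n + 1,$$
using $1+\omega^\alpha = \omega^\alpha$ (for $\alpha \geq 1$) to absorb intermediate $+1$'s. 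So the crux is the $n=1$ lemma: if $Q$ is a countable compact metric space with $Q^\alpha = \{q\}$ and $Q^{\alpha+1} = \emptyset$, then $Q \cong \omega^\alpha+1$.

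I would prove this lemma by transfinite induction on $\alpha$. The base case $\alpha=1$ is immediate: $Q\setminus\{q\}$ is discrete, countable, and has $q$ as its unique accumulation point, making $Q$ a convergent sequence with its limit, i.e.\ $\omega+1$. For a successor $\alpha = \beta+1$, the set $Q^\beta$ has $q$ as its only accumulation point, so by the base case $Q^\beta$ is a sequence $q_i\to q$; I choose disjoint clopen neighborhoods $V_i \ni q_i$ in $Q\setminus\{q\}$ with diameters shrinking to $0$, each containing only $q_i$ from $Q^\beta$. By the inductive hypothesis $V_i \cong \omega^\beta+1$, and $Q$ is the one-point compactification of $\bigsqcup_i V_i$ at $q$, which is homeomorphic to $\omega^\beta\cdot\omega+1 = \omega^\alpha+1$. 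For a limit ordinal $\alpha$, I take a nested basis of clopen neighborhoods $W_i \ni q$ with $\bigcap_i W_i = \{q\}$; each compact set $Q\setminus W_i$ misses $q$, so has rank $\beta_i < \alpha$, and by the inductive hypothesis $Q\setminus W_i \cong \omega^{\beta_i}\cdot n_i+1$. Arranging the $W_i$ so that the $\beta_i$ form a strictly increasing cofinal sequence in $\alpha$, I piece together $Q\setminus\{q\}$ as an increasing union and identify its one-point compactification at $q$ with $\omega^\alpha+1$.

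The main obstacle is the limit step: matching the pieces $Q\setminus W_i$ to an initial segment of $\omega^\alpha$ so that the adjoined point $q$ becomes the top of $\omega^\alpha+1$ canonically. The technical tool there is that a compact scattered ordinal space is determined up to homeomorphism by the CB-rank of its maximum point and the cardinality of its top derivative, which lets me construct the homeomorphism step by step on the $Q\setminus W_i$ and take a limit. Everything else is routine Cantor-Bendixson bookkeeping.
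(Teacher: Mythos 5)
The paper does not prove this statement at all: it is the classical Mazurkiewicz--Sierpi\'nski classification of countable compact Hausdorff spaces, imported with a citation to \cite{MS} and used as a black box. So there is no in-paper argument to compare against, and your proposal has to stand on its own. It does: what you describe is the standard modern proof via Cantor--Bendixson derivatives and transfinite induction, and the structure is sound. The preliminary observations (perfect kernel empty, hence scattered; $P^\alpha$ finite and nonempty by compactness, whether $\alpha$ is a successor or a limit; $\alpha\geq 1$ since $P$ is infinite), the reduction to $n=1$ by clopen separation together with the absorption identity $1+\omega^\alpha=\omega^\alpha$, and the three cases of the induction are all correct. Two points are glossed and deserve a sentence each in a full write-up. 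First, in the successor step you need the chosen $V_i$ to actually partition $Q\setminus\{q\}$, not merely to be disjoint neighborhoods of the $q_i$; the clean fix is to partition $Q\setminus\{q\}$ into clopen annuli $W_j\setminus W_{j+1}$ from a neighborhood basis at $q$ and group consecutive annuli into blocks each meeting $Q^\beta$, absorbing rank-$<\beta$ leftovers using $\omega^\gamma\cdot m+1+\omega^\beta+1\cong\omega^\beta+1$. Second, both the successor and limit steps apply the inductive hypothesis to pieces with several top-rank points, so the induction on $\alpha$ must carry the full $n$-point statement, not just the $n=1$ lemma; this is harmless since your clopen-separation reduction works at every rank, but the induction should be set up that way. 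With those adjustments the argument is complete.
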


If $X$ is any locally finite graph, then the end space $\partial X$ is a closed and bounded subset of Cantor space. Theorem~\ref{MS 1} implies that if $\partial X$ is countable, it is homeomorphic to $\omega^\alpha\cdot n+1$ for some $n\in\Z_{\geq 0}$ and countable ordinal $\alpha$.

\begin{remark}\label{rmk: one}
    If $P$ is a set of finite cardinality $n$, then $P$ is homeomorphic to the ordinal $n$. Howeaver, if we tried to apply Theorem~\ref{MS 1} to $P$, we would get that $P$ is homeomorphic to $\omega^0\cdot n+1=n+1$. Given this, we abuse notation throughout the paper by setting $\omega^0+1$ equal to $1$.
\end{remark}

\subsection{Order on ends}\label{sec: order on ends}

Ends in a locally finite graph can be endowed with a preorder \cite{MRlong}. We state the definition and then give the relevant background.

\begin{definition}[Preorder on ends]\label{def: order on ends}
    Given $\nu,\nu'\in\partial X$, we say that $\nu\preceq\nu'$ if every neighborhood of $\nu'$ contains a homeomorphic copy of a neighborhood of $\nu$. We say $\nu$ and $\nu'$ are \emph{equivalent}, or that they are of the same \emph{type}, and write $\nu\sim\nu'$, if $\nu\preceq\nu'$ and $\nu'\preceq\nu$. We also write $\nu\prec\nu'$ if $\nu\preceq\nu'$ and $\nu\not\sim\nu'$. We define $E(\nu)=\{\nu'\in\partial X\,:\,\nu'\sim\nu\}$ to be the \emph{end type of $\nu$}.
\end{definition}

The relation $\sim$ is an equivalence relation, and thus, the preorder $\prec$ descends to a partial order on the space of end types. We often conflate the preorder with the partial order. See Figure~\ref{fig: order on ends}.

\begin{definition}
    We say an end $\nu\in\partial X$ is of \emph{Cantor type} if $|E(\nu)\cap U|>1$ for every neighborhood $U$ of $\nu$.
\end{definition}

The following results concerning the order on ends were proven by Mann and Rafi in the setting of infinite-type surfaces, but the proofs are the same in the setting of locally finite graphs.

\begin{proposition}[\cite{MRlong}]\label{MR 4.7}
    The partial order $\prec$ has maximal elements. Furthermore, for every maximal end $\mu$, the equivalence class $E(\mu)$ is either finite or a Cantor set.
\end{proposition}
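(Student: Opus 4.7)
The plan is to prove existence of maximal ends via a compactness plus Zorn's lemma argument, and then identify $E(\mu)$ by showing it is nonempty, closed, totally disconnected, metrizable, and either finite or perfect, so that Brouwer's characterization of the Cantor set applies.

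The first step is the key lemma that every strictly $\prec$-increasing chain in $\partial X$ has an upper bound. Given a countable chain $\nu_1 \prec \nu_2 \prec \cdots$, compactness of $\partial X$ yields a convergent subsequence $\nu_{n_k} \to \nu_\infty$. For any neighborhood $U$ of $\nu_\infty$ and any sufficiently large $k$, the clopen basis on $\partial X$ provides a clopen neighborhood $W$ of $\nu_{n_k}$ with $W \subseteq U$; the inclusion $W \hookrightarrow U$ exhibits $U$ as containing a pair-homeomorphic copy of a neighborhood of $\nu_{n_k}$, so $\nu_{n_k} \preceq \nu_\infty$, and transitivity along the chain gives $\nu_m \preceq \nu_\infty$ for every $m$. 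For a transfinite chain, second countability of $\partial X$ together with a pigeonhole argument on its countable clopen basis produces a single clopen element containing cofinally many chain members, reducing to the countable case. Zorn's lemma then yields maximal elements.

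Next I would show that $E(\mu)$ is closed whenever $\mu$ is maximal, by the same clopen-shrinking technique: a limit $\nu$ of a sequence $(\nu_n)$ in $E(\mu)$ satisfies $\mu \sim \nu_n \preceq \nu$, and maximality of $\mu$ forces $\nu \sim \mu$. If $E(\mu)$ is finite, we are done. Otherwise $E(\mu)$ is an infinite closed subset of the compact space $\partial X$, so it is compact and therefore contains some accumulation point $\xi \in E(\mu)$.

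The main work is promoting $\xi$ to show that every $\nu \in E(\mu)$ is an accumulation point of $E(\mu)$. Given any neighborhood $U$ of $\nu$, the relation $\xi \preceq \nu$ furnishes a neighborhood $V$ of $\xi$ and a pair-homeomorphism $\phi \colon V \to V' \subseteq U$. Since pair-homeomorphisms preserve germs of neighborhoods, they preserve the relation $\sim$ pointwise, so $\phi$ sends $V \cap E(\mu)$ into $V' \cap E(\mu) \subseteq U \cap E(\mu)$. The set $V \cap E(\mu)$ is infinite because $\xi$ is an accumulation point of $E(\mu)$, so $U$ contains infinitely many points of $E(\mu)$, forcing $\nu$ to be a limit point. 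Combined with closedness, total disconnectedness, metrizability, and nonemptiness, Brouwer's theorem identifies $E(\mu)$ with the Cantor space. The main technical obstacle is the transfinite case in the existence step, which I expect to resolve cleanly by exploiting second countability of $\partial X$.
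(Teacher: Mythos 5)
The paper does not actually prove this proposition: it is imported verbatim from Mann--Rafi \cite{MRlong} with the remark that the surface proofs carry over, so there is no internal argument to compare against. Your reconstruction is essentially correct and follows the standard route: for existence, upper bounds for chains via cluster points plus Zorn; for the dichotomy, $E(\mu)$ is closed, homeomorphisms between clopen sets preserve end types, so an infinite $E(\mu)$ is perfect, and Brouwer's characterization finishes. The second half in particular is exactly the Mann--Rafi argument, and your key observation that a pair-homeomorphism $\phi\colon V\to V'$ between clopen sets satisfies $\eta\sim\phi(\eta)$ is the right lemma (it does require the convention, implicit in Definition~\ref{def: order on ends}, that the ``homeomorphic copies'' are onto clopen subsets of $\partial X$; an embedding onto a non-open image would not preserve types).

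Two spots need tightening, though neither is fatal. First, in the countable chain step, the inclusion $W\hookrightarrow U$ only witnesses the defining condition of $\nu_{n_k}\preceq\nu_\infty$ for those neighborhoods $U$ that happen to contain $\nu_{n_k}$; for a small $U\ni\nu_\infty$ avoiding $\nu_{n_k}$ you must instead pick $k'>k$ with $\nu_{n_{k'}}\in U$, take a clopen $W\subseteq U$ around $\nu_{n_{k'}}$, and use $\nu_{n_k}\prec\nu_{n_{k'}}$ to plant a copy of a neighborhood of $\nu_{n_k}$ inside $W$. So prove $\nu_m\preceq\nu_\infty$ for each fixed $m$ directly by this route rather than asserting $\nu_{n_k}\preceq\nu_\infty$ ``for sufficiently large $k$'' first. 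Second, the transfinite case is not literally a ``reduction to the countable case'': when the chain has uncountable cofinality, a single clopen set containing cofinally many members gives no countable cofinal subchain. What you actually want is a cluster point $\nu_\infty$ of the net $(\nu_\alpha)$ (which exists by compactness, or by iterating your pigeonhole argument over a shrinking sequence of clopen sets); every neighborhood of such a $\nu_\infty$ contains $\nu_\beta$ for cofinally many $\beta$, and then the same argument as in the countable case shows $\nu_\alpha\preceq\nu_\infty$ for every $\alpha$. With those repairs the proof is complete.
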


\begin{theorem}[\cite{MRshort}]\label{MR 1.2}
    Let $\nu\sim\nu'$ be two ends of the same type in a locally finite graph $X$. There exists an element of $\Maps(X)$ taking $\nu$ to $\nu'$.
\end{theorem}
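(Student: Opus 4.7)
The plan is to reduce the problem to constructing a homeomorphism of the characteristic pair and then lift. Concretely, I would first build a homeomorphism
\[
h \colon (\partial X, \partial X_g) \longrightarrow (\partial X, \partial X_g)
\]
satisfying $h(\nu) = \nu'$, and then invoke Proposition~\ref{AK-B 2.3}: the map $\sigma \colon \Maps(X) \to \Homeo(\partial X,\partial X_g)$ is surjective, so any preimage of $h$ under $\sigma$ is a mapping class whose induced action on ends sends $\nu$ to $\nu'$. The lifting step is immediate, so all the content is in constructing $h$.

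To build $h$, I would run a back-and-forth construction adapted to the pair $(\partial X, \partial X_g)$. Fix disjoint clopen neighborhood bases $\{U_n\}_{n\geq 1}$ of $\nu$ and $\{V_n\}_{n\geq 1}$ of $\nu'$, with $U_1 \cap V_1 = \emptyset$, and consider the clopen partitions
\[
\partial X = \{\nu\}\sqcup A_0 \sqcup A_1\sqcup A_2\sqcup\cdots,
\qquad
\partial X = \{\nu'\}\sqcup B_0 \sqcup B_1\sqcup B_2\sqcup\cdots,
\]
where $A_0 = \partial X\setminus U_1$, $A_n = U_n\setminus U_{n+1}$, and similarly for the $B_n$. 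The hypothesis $\nu\preceq\nu'$ tells me that, after shrinking, every $V_n$ contains a clopen piece pair-homeomorphic to some $U_k$, and symmetrically using $\nu'\preceq\nu$. I would alternate as follows: at the forward step, choose a clopen $V'\subseteq V_n$ pair-homeomorphic (via the convention $U\cong V$ meaning $(U,U_g)\cong(V,V_g)$) to the outermost unused $A_m$, and declare $h|_{A_m}$ to be that homeomorphism. Then do the corresponding backward step with $B_n$'s and $U_m$'s. After the sequence of moves, every $A_m$ and $B_m$ has been exhausted, and the remaining target pieces shrink into $\nu$ and $\nu'$, so extending by $h(\nu)=\nu'$ yields a continuous homeomorphism of the pair.

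The main obstacle is the bookkeeping to keep the back-and-forth consistent with the pair structure $(\partial X,\partial X_g)$. The definition of $\sim$ only guarantees the existence of pair-embeddings of \emph{some} neighborhood, not of any prescribed one, so when a chosen $V'\subseteq V_n$ is strictly smaller than $V_n$ the leftover $V_n\setminus V'$ must itself be partitioned into clopen pair-pieces that can be matched with clopen pair-subsets of the later $A_m$'s. This is the familiar subtlety in Cantor-space back-and-forth arguments; here it is amplified because every partition and every embedding must preserve the distinguished closed subset $\partial X_g$. I would handle this by refining the $\{A_m\}$ and $\{B_m\}$ partitions as needed during the construction, using at each stage that a clopen neighborhood of a point in $\partial X_g$ (respectively in $\partial X\setminus \partial X_g$) can be chosen entirely inside $\partial X_g$ (respectively its complement), so that the pair structure is locally trivial on the pieces being matched. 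Ensuring convergence of the shrinking target neighborhoods to $\nu$ and $\nu'$ is then a standard bookkeeping step: at odd stages, insist that the ``unused'' forward target sits inside $V_n$ for the current $n$, and at even stages inside $U_n$.
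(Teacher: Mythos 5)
Your high-level reduction is sound and is the right way to transport the result to graphs: build a homeomorphism $h$ of the pair $(\partial X,\partial X_g)$ with $h(\nu)=\nu'$ and lift it through the surjection $\sigma$ of Proposition~\ref{AK-B 2.3}. Be aware, though, that the paper does not prove this statement at all --- it cites \cite{MRshort}, where the end-space homeomorphism is a main theorem of that paper, not a routine exercise; all of the content sits in the step you compress into ``bookkeeping.''

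The construction as written fails at its first move. Your forward step asks for a clopen copy of the outermost unused piece inside some $V_n$, and the outermost piece is $A_0=\partial X\setminus U_1$. Take $X=1\vee C$ with $\nu,\nu'$ two ends in the Cantor part: $A_0$ contains the isolated end, while every clopen $V'\subseteq V_n$ is a clopen subset of a Cantor set and hence perfect, so no copy of $A_0$ exists there. More structurally, your forward steps push every $A_m$ into $V_1$, so the part of $\partial X$ outside $U_1\cup V_1$ cannot be shuffled this way; the correct setup fixes $\partial X\setminus(U_1\cup V_1)$ pointwise and runs the exchange only inside $U_1\sqcup V_1$. Even after that repair, two genuine difficulties remain that your sketch does not engage. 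First, the preorder only guarantees that $V_n$ contains a copy of $U_k$ for $k$ \emph{large} (depending on $V_n$), whereas the forward step needs a copy of the annulus $A_m=U_m\setminus U_{m+1}$ for the \emph{smallest} unused $m$; since $A_m\not\subseteq U_k$ for $k>m$, the guaranteed embeddings do not contain the piece you must place, and fixing this forces you to build the nested sequences and the embeddings simultaneously and adaptively rather than fixing neighborhood bases in advance. Second, none of the guaranteed embeddings control where basepoints go, so convergence of the tails to $\nu'$ is not a ``standard bookkeeping step''; handling this is precisely what makes \cite{MRshort} nontrivial. Finally, your proposed way of keeping the pair structure locally trivial is false in one direction: $\partial X_g$ is closed but generally not open, so a point of $\partial X_g$ need not have any clopen neighborhood contained in $\partial X_g$ (e.g.\ the maximal end of $1\to o(1)$), and the matching of pieces must genuinely respect a nontrivial pair structure on each piece.
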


Theorem~\ref{MR 1.2} shows that for any end $\nu\in\partial X$, the orbit $\Maps(X)\cdot\nu$ of $\nu$ contains $E(\nu)$. In Lemma~\ref{lem: caste system}, we show the other inclusion, so that end types are exactly $\Maps(X)$-orbits. Note that Theorem~\ref{MR 1.2} also shows that if $\nu_1\sim\nu_2$, then $\nu_1$ is of Cantor type if and only if $\nu_2$ is.

\begin{figure}[h]
    \centering
\begingroup%
  \makeatletter%
  \providecommand\color[2][]{%
    \errmessage{(Inkscape) Color is used for the text in Inkscape, but the package 'color.sty' is not loaded}%
    \renewcommand\color[2][]{}%
  }%
  \providecommand\transparent[1]{%
    \errmessage{(Inkscape) Transparency is used (non-zero) for the text in Inkscape, but the package 'transparent.sty' is not loaded}%
    \renewcommand\transparent[1]{}%
  }%
  \providecommand\rotatebox[2]{#2}%
  \newcommand*\fsize{\dimexpr\f@size pt\relax}%
  \newcommand*\lineheight[1]{\fontsize{\fsize}{#1\fsize}\selectfont}%
  \ifx\svgwidth\undefined%
    \setlength{\unitlength}{292.26967639bp}%
    \ifx\svgscale\undefined%
      \relax%
    \else%
      \setlength{\unitlength}{\unitlength * \real{\svgscale}}%
    \fi%
  \else%
    \setlength{\unitlength}{\svgwidth}%
  \fi%
  \global\let\svgwidth\undefined%
  \global\let\svgscale\undefined%
  \makeatother%
  \begin{picture}(1,0.29782756)%
    \lineheight{1}%
    \setlength\tabcolsep{0pt}%
    \put(0,0){\includegraphics[width=\unitlength,page=1]{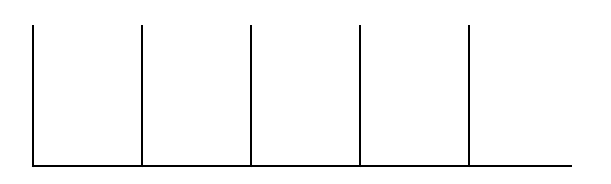}}%
    \put(0.06428633,0.26403225){\color[rgb]{0,0,0}\makebox(0,0)[t]{\smash{\begin{tabular}[t]{c}$\nu_0$\end{tabular}}}}%
    \put(0.24090257,0.26377694){\color[rgb]{0,0,0}\makebox(0,0)[t]{\smash{\begin{tabular}[t]{c}$\nu_1$\end{tabular}}}}%
    \put(0.42082828,0.26377458){\color[rgb]{0,0,0}\makebox(0,0)[t]{\smash{\begin{tabular}[t]{c}$\nu_2$\end{tabular}}}}%
    \put(0.59943279,0.26364741){\color[rgb]{0,0,0}\makebox(0,0)[t]{\smash{\begin{tabular}[t]{c}$\nu_3$    \end{tabular}}}}%
    \put(0.7782762,0.26373329){\color[rgb]{0,0,0}\makebox(0,0)[t]{\smash{\begin{tabular}[t]{c}$\nu_4$\end{tabular}}}}%
    \put(0.95957221,0.02261054){\color[rgb]{0,0,0}\makebox(0,0)[t]{\smash{\begin{tabular}[t]{c}$\mu$\end{tabular}}}}%
  \end{picture}%
\endgroup%

    \caption{For all $i$ and $j$, $\nu_i\sim\nu_j$, and $\nu_i\prec\mu$. $\mu$ is the unique maximal end.}
    \label{fig: order on ends}
\end{figure}

We next define stability, which is a natural condition that rules out many pathological locally finite graphs, defined in \cite[Section~4]{MRlong}.

\begin{definition}\label{def: stability}
    Let $\nu\in\partial X$. A neighborhood $U\subseteq\partial X$ of $\nu$ is \emph{stable} if for any smaller neighborhood $U'\subseteq U$, there exists a homeomorphic copy of $U$ in $U'$. An end is \emph{stable} if it has a stable neighborhood, and a locally finite graph is \emph{stable} if every end is stable. 
\end{definition}

\begin{example}\label{ex: stability}
    The following are examples of stability.
    \begin{enumerate}
        \item If $X$ is a tree with countable end space, then $X$ is stable \cite[Remark~5.5]{FGM}.
       \item The locally finite graph formed from a stable tree by attaching loops to each vertex of $X$ is stable.
       \item The Cantor tree is stable.
       \item Non-stable locally finite graphs are more complicated and it will be easier to give examples after Definition~\ref{def: signature}. Intuitively, if there is a sequence of ends $\{\nu_n\}_n$ converging to an end $\mu$, none of which are dominated by $\mu$ and which are all of different types, then $\mu$ must not be stable. For concrete examples, see Example~\ref{ex: keystone lfg} and the first bullet point under part three in Definition~\ref{def: ordered signature}. There are other ways to violate stability as well; the graphs in \cite[Theorem~1.1]{MRshort} are not stable and contain a unique maximal end type.
    \end{enumerate}
\end{example}

We next define the related notion of self-similarity. In Section~\ref{sec: classification of local structures}, we detail the relationship between self-similarity and stability.

\begin{definition}
    A locally finite graph $X$ or its end-space $\partial X$ is \emph{self-similar} if given any clopen decomposition of the end space $\partial X=E_1\sqcup\cdots\sqcup E_n$, there exists $i\in\{1,...,n\}$ such that $E_i$ contains a homeomorphic copy of $\partial X$.
\end{definition}

Mann and Rafi \cite[Section~4]{MRlong} show that, without loss of generality, one can take $n=2$. 

\begin{example} 
    The following are examples of self-similarity.
    \begin{enumerate}
        \item The Cantor tree is self-similar, because a small clopen neighborhood of a point in Cantor space is homeomorphic to Cantor space.
        \item The graph in Figure~\ref{fig: order on ends} is self-similar, as any clopen neighborhood of the maximal end is homeomorphic to the full end space.
        \item A line is not self-similar, for a decomposition separating the two ends does not satisfy the condition. A Cantor tree with one ray attached is not self-similar for similar reasons.
    \end{enumerate}
\end{example}



\section{Notation and preliminary lemmas}\label{sec: notation/lemmas}

\epigraph{``The foundational letters were engraved, permuted, transformed, and with them, it was depicted all that was formed and all that would be formed. Drawn in water, lit up in fire, rattled in the wind, they were created from profound chaos. From here on, go out and calculate that which the mouth cannot speak and the ear cannot hear.''\par\raggedleft$\sim$ Sefer Yetzirah\textup{, adapted} $\sim$}

In Section~\ref{sec: signatures}, we give the definition of signatures, which is presented as a list of symbols for certain fundamental graphs and operations from which all other graphs are pieced together. In Section~\ref{sec: building blocks}, we prove lemmas which will be used throughout the paper, culminating in showing that all locally finite graphs have signatures in Proposition~\ref{prop: building blocks}.

\subsection{Signatures}\label{sec: signatures}

The goal of this section is to develop a notation to be able to describe any locally finite graph $X$, which we call a \emph{signature}. The name ``Loch Ness Monster'' is commonly used to refer to the graph with one end accumulated by loops, and while useful as a nickname, this name does not suggest how this graph is constructed or what similarities it has to other graphs. From the rules of Definition~\ref{def: signature} below, the Loch Ness Monster graph has ``$o(1)$'' as a signature, because it is a ray, hence the ``$1$'', which is accumulated by loops, hence the ``$o(-)$''.

\begin{definition}\label{def: signature} We define \emph{signatures} inductively. Points $1$ through $3$ are the base cases, and points $4$ through $7$ are ways of building new signatures; each can be applied finitely many times. Recall that all graphs are assumed to be connected.
\begin{enumerate}
    \item (Rose) The signature of a graph proper homotopy equivalent to $\bigvee_{i=1}^n S^1$ is $R_n$. Observe that the signature of a graph consisting of one vertex is $R_0$.
    
    \item (Cantor Tree) The signature of a graph proper homotopy equivalent to a rooted infinite binary splitting tree is $C$.
    
    \item (Countable Ordinals) Any tree with countably many ends and a unique maximal end has end space homeomorphic to some ordinal of the form $\omega^\alpha+1$, where $\alpha$ is some countable ordinal, by Theorem~\ref{MS 1}. Given this, the signature of a tree with countably many ends and a unique maximal end is $\omega^\alpha+1$ where $\alpha$ is the corresponding ordinal. Given Remark~\ref{rmk: one}, a single ray is represented by $1$.
    
    \item (Genus) If $G'$ is a locally finite graph proper homotopy equivalent to the graph formed by attaching a copy of $S^1$ to each vertex of a locally finite graph $G$, if $G$ has signature $X$, then $G'$ has signature $o(X)$.
    
    \item (Wedge) Given two locally finite graphs $G$ and $G'$ with signatures $X$ and $Y$ respectively, the signature of a locally finite graph proper homotopy equivalent to the wedge product of $G$ and $G'$ is $X\vee Y$.
    
    \item (Convergence) If $\{G_n\}_{n\in\Z_{\geq0}}$ is a sequence of locally finite graphs with signatures $\{Y_n\}_{n\in\Z_{\geq0}}$ respectively, and if $G$ is another locally finite graph whose signature is given, then $\{Y_n\}_n\rightarrow (G,x_0)$ is a the signature of a locally finite graph proper homotopy equivalent to one constructed in the following way. Choose a base vertex $x_0\in\mathcal{V}(G)$, and at each vertex $v\in\mathcal{V}(G)$, attach a copy of $G_{d(x_0,v)}$ to $G$. If $G$ has signature $X$, then $\{Y_n\}_n\rightarrow X$ is a signature when the choice of $x_0\in\mathcal{V}(G)$ does not matter, and $Y\to X$ is a signature when all the $Y_n$ are equal.
    
    \item (Spread) Suppose $\{G_n\}_{n\in\Z_{\geq0}}$ is a sequence of locally finite graphs with signatures $\{Y_n\}_{n\in\Z_{\geq0}}$,  respectively, such that $Y_0\cong R_0$, and suppose $G$ is another locally finite graph whose signature is given. If $a \colon \mathcal{V}(G)\rightarrow\Z_{\geq0}$, then $(\{Y_n\}_n,a)\rightarrow G$ is the signature of a locally finite graph which is proper homotopy equivalent to one formed from $G$ by attaching a copy of $G_{a(v)}$ to each vertex $v\in\mathcal{V}(G)$. Note that since $Y_0\cong R_0$, $Y_0\vee G\cong G$, i.e., vertices of $G$ need not change valence under the attaching map $a$.
\end{enumerate}
\end{definition}

Convergence is a special case of spread in which the function $a$ is the distance function from a particular vertex. We chose to separate convergence from spread because, as we will see in Theorem~\ref{thm: ordered signatures}, spread is not needed in stable graphs. We discuss other redundancies in the notation in Section~\ref{sec: building blocks}.

There are two important subtleties in spread and hence convergence. First of all, note that these signatures are built relative to a particular locally finite graph. To avoid ambiguity arising from how many of each $Y_n$ graphs are attached to the underlying graph $G$, in general, it is necessary to be given the vertex set of $G$ before attaching the other graphs. In Theorem~\ref{thm: ordered signatures}, we show that when a graph is stable, there exists a signature involving only other signatures and not any particular locally finite graphs.

The second important subtlety is the requirement that the signature of the graph $G$ appearing to the right of an arrow is given. This inductive requirement is included in order to avoid degenerate signatures which give no information about the graphs they represent. For example, had this requirement not been included, every locally finite graph $G$ would have a signature $R_0\to(G,v)$ where $v\in\mathcal{V}(G)$; this gives no information on the structure of $G$. In Proposition~\ref{prop: building blocks}, we show that every locally finite graph has a signature, and thus, can be constructed from simpler graphs whose signatures are simpler as well.

By construction, signatures are collections of symbols associated to particular locally finite graphs which are invariant under proper homotopy. Throughout the paper, we sometimes mention signatures and actually mean a particular graph with that signature; for example, we may refer to the graph $(\omega^3+1)\vee C$. Also note that a proper homotopy equivalence of graphs induces a homeomorphism of their end spaces \cite[Section~2]{AK-B}. Thus, for example, by an abuse of notation, we may refer to the end space of any graph with signature $(\omega^3+1)\vee C$ by $\partial((\omega^3+1)\vee C)$. 

In the context of $(\{Y_n\}_n,a)\to\mathcal{C}$, where each of the $Y_n$ are arbitrary locally finite trees, $\mathcal{C}$ is an infinite binary splitting tree, and $a$ is a specific bijection, a notation that is similar in flavor capturing behavior of the end space was developed by \cite[Definition~0.4]{Ket}.

\begin{example}\label{ex: signatures}
    Some examples of signatures were given in Figure~\ref{fig: examples of graphs}; and we present additional examples here. See Figure~\ref{fig:examples of signatures} for pictures of each, from left to right. Note that all ends of line segments in figures throughout the paper are assumed to continue to infinity; we often drop arrows for neater drawings.
    \begin{enumerate}
        \item $R_3\vee 1\vee1\vee1$.
        \item $(\{Y_n\}_n,a)\rightarrow \mathcal{C}$, where $Y_0\cong R_0$ and $X_n\cong (\omega^{n-1}+1)\rightarrow o(1)$ for $n>0$, $a$ is injective and $\mathcal{C}$ is an infinite binary splitting tree.
        \item $R_1\to (K_5,x_0)$ is proper homotopy equivalent to $R_{11}$, where $K_5$ is the complete graph with five vertices which has signature $R_6$ and where $x_0\in\mathcal{V}(K_5)$. Note that there are graphs with one vertex whose signature is $R_6$ just like $K_5$, and attaching $R_1$ to the vertex of one of these graphs results in $R_7$, not $R_{11}$. Thus, the proper homotopy equivalence type of a graph may only be well defined if graphs appear to the right of arrows as opposed to signatures appearing to the right of arrows.
    \end{enumerate}
\end{example}

\begin{figure}[t]
    \centering
\begingroup%
  \makeatletter%
  \providecommand\color[2][]{%
    \errmessage{(Inkscape) Color is used for the text in Inkscape, but the package 'color.sty' is not loaded}%
    \renewcommand\color[2][]{}%
  }%
  \providecommand\transparent[1]{%
    \errmessage{(Inkscape) Transparency is used (non-zero) for the text in Inkscape, but the package 'transparent.sty' is not loaded}%
    \renewcommand\transparent[1]{}%
  }%
  \providecommand\rotatebox[2]{#2}%
  \newcommand*\fsize{\dimexpr\f@size pt\relax}%
  \newcommand*\lineheight[1]{\fontsize{\fsize}{#1\fsize}\selectfont}%
  \ifx\svgwidth\undefined%
    \setlength{\unitlength}{377.19498762bp}%
    \ifx\svgscale\undefined%
      \relax%
    \else%
      \setlength{\unitlength}{\unitlength * \real{\svgscale}}%
    \fi%
  \else%
    \setlength{\unitlength}{\svgwidth}%
  \fi%
  \global\let\svgwidth\undefined%
  \global\let\svgscale\undefined%
  \makeatother%
  \begin{picture}(1,0.37019673)%
    \lineheight{1}%
    \setlength\tabcolsep{0pt}%
    \put(0,0){\includegraphics[width=\unitlength,page=1]{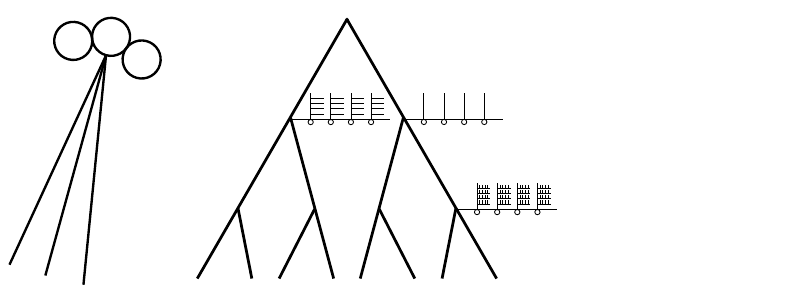}}%
    \put(0.47947462,0.35053197){\color[rgb]{0,0,0}\makebox(0,0)[t]{\smash{\begin{tabular}[t]{c}$n=0$\end{tabular}}}}%
    \put(0.52522392,0.23371749){\color[rgb]{0,0,0}\makebox(0,0)[t]{\smash{\begin{tabular}[t]{c}$1$\end{tabular}}}}%
    \put(0.35484051,0.23370671){\color[rgb]{0,0,0}\makebox(0,0)[t]{\smash{\begin{tabular}[t]{c}$2$\end{tabular}}}}%
    \put(0.56028132,0.10571779){\color[rgb]{0,0,0}\makebox(0,0)[t]{\smash{\begin{tabular}[t]{c}$3$\end{tabular}}}}%
    \put(0.46598681,0.10571779){\color[rgb]{0,0,0}\makebox(0,0)[t]{\smash{\begin{tabular}[t]{c}$4$\end{tabular}}}}%
    \put(0.38077882,0.10571779){\color[rgb]{0,0,0}\makebox(0,0)[t]{\smash{\begin{tabular}[t]{c}$5$\end{tabular}}}}%
    \put(0.28054826,0.10540061){\color[rgb]{0,0,0}\makebox(0,0)[t]{\smash{\begin{tabular}[t]{c}$6$\end{tabular}}}}%
    \put(0,0){\includegraphics[width=\unitlength,page=2]{examples_of_signatures.pdf}}%
  \end{picture}%
\endgroup%

    \centering
    \caption{The graphs in Example~\ref{ex: signatures}. In the middle graph, there is a copy of $(\omega^{n-1}+1)\rightarrow o(1)$ attached to each vertex $n$. To avoid over-complicating the figure, we have included only the first three.}
    \label{fig:examples of signatures}
\end{figure}

\subsection{Building blocks}\label{sec: building blocks}

This section discusses signatures in greater depth and introduces many lemmas which will be useful throughout the paper. It culminates with proving that every locally finite graph has a signature in Proposition~\ref{prop: building blocks}.

There are many signatures for a given locally finite graph. For example, for a countable successor ordinal $\alpha+1$, the signatures $\omega^{\alpha+1}+1$ and $(\omega^\alpha+1)\rightarrow 1$ represent proper homotopy equivalent graphs. For a countable ordinal $\alpha$ and any sequence $\{\beta_n\}_{n\in\Z_{\geq0}}$ of countable ordinals such that $\alpha=\sup\{\beta_1,\beta_2,...\}$, the graph $\omega^\alpha+1$ has signature $\{\omega^{\beta_n}+1\}_n\rightarrow 1$. Additionally, genus may be represented by an application of convergence, because $o(X)\cong R_1\rightarrow X$. Many parts of the notation above were introduced to simplify notation, and as we will see in Section~\ref{sec: ordered signatures}, give more structural information about the order on the space of ends.

It is natural to require only finitely many applications of genus, wedge, convergence, and spread. In the case of wedge, if $\{X_n\}_{n\in\Z_{\geq0}}$ is a sequence of locally finite graphs, then the degree of the wedge point of the locally finite graph $\bigvee_{i=0}^{n}X_i$ is at least $n+1$. Thus, $X_0\vee X_1\vee X_2\vee\cdots $ is not representative of a locally finite graph. Similar issues arise with genus, for example with $\cdots o(o(o(X)))\cdots$, and nested arrows to the left, as in $\cdots(X_3\rightarrow(X_2\rightarrow(X_1\rightarrow X_0)))\cdots$. Infinite nested arrows to the right does not lead to these issues, but are not straightforward and can be avoided. For example, we have that $\cdots((1\rightarrow 1)\rightarrow 1)\rightarrow\cdots\cong\omega^\omega+1$. Proposition~\ref{prop: building blocks} implies that these restrictions do not reduce the class of locally finite graphs that are describable in this way.

We now turn our attention to proving lemmas which will be useful throughout the paper. The following lemma is an implication of \cite[Lemma~4.17]{MRlong}, and we include a proof for completeness.

\begin{lemma}[Stable neighborhoods are all the same]\label{lem: 4.17}
    If $U$ is a clopen stable neighborhood of $\nu\in\partial X$, then any sub-neighborhood $\nu\in V\subset U$ is stable. Moreover, if $\nu'\sim\nu$, then $\nu'$ is stable too, and any clopen stable neighborhood $U'$ of $\nu'$ is homeomorphic to $U$. Often, we will apply this in the case that $\nu=\nu'$.
\end{lemma}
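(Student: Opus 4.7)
The plan is to verify the three assertions of the lemma in sequence: first that sub-neighborhoods of a stable neighborhood remain stable, then that the equivalence class $E(\nu)$ inherits stability, and finally that any two stable neighborhoods within the class are homeomorphic as pairs.

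For the first assertion, I fix $\nu\in V\subseteq U$ and any smaller clopen sub-neighborhood $W\subseteq V$. Applying stability of $U$ directly to the sub-neighborhood $W\subseteq U$ yields a clopen subset $U_{*}\subseteq W$ together with a pair-homeomorphism $\phi\colon U\to U_{*}$ (respecting the genus substructure). Then $\phi(V)$ is a clopen subset of $U_{*}$, hence of $W$, homeomorphic to $V$ as a pair. This realizes the defining property of stability for $V$ and requires only the definition plus one application of stability of $U$.

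For the second and third assertions I use the hypothesis $\nu'\sim\nu$. From $\nu\preceq\nu'$, every neighborhood of $\nu'$ contains a homeomorphic copy of some neighborhood of $\nu$; by intersecting such a neighborhood with $U$ and invoking stability of $U$, I upgrade this to the stronger statement that \emph{every} clopen neighborhood of $\nu'$ contains a clopen copy of $U$ itself. Dually, from $\nu'\preceq\nu$, I fix once and for all a clopen neighborhood $V_{0}$ of $\nu'$ admitting a clopen embedding into $U$. Now given any sub-neighborhood $W\subseteq V_{0}$ of $\nu'$, the first observation produces a clopen copy $U_{*}\subseteq W$ of $U$; pulling the embedding $V_{0}\hookrightarrow U$ across this copy yields a clopen subset of $U_{*}\subseteq W$ homeomorphic to $V_{0}$. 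Thus $V_{0}$ is a stable neighborhood of $\nu'$, proving (2). By part (1), every sub-neighborhood of $V_{0}$ containing $\nu'$ is stable as well.

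For the final clause, let $U'$ be any clopen stable neighborhood of $\nu'$. Applying the argument above to $U'$ (in place of $V_{0}$) shows $U'$ contains a clopen copy of $U$, and the symmetric argument with the roles of $\nu,\nu'$ reversed shows $U$ contains a clopen copy of $U'$. I then invoke the Schroeder--Bernstein theorem for characteristic pairs: two closed subsets of Cantor space together with distinguished closed sub-pairs that clopen-embed into one another are homeomorphic as pairs. This is a standard back-and-forth argument exploiting that both $U$ and $U'$ are compact, zero-dimensional, and metrizable, with their genus loci being closed subspaces preserved by the embeddings. I expect this last step to be the only genuinely non-trivial part of the proof, since parts (1) and (2) are short unpackings of the definitions and the preorder, whereas (3) rests on the somewhat more delicate Schroeder--Bernstein principle for pairs — the key technical point being to track the genus sub-pair simultaneously with the ambient structure so that the resulting homeomorphism respects both.
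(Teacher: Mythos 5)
Your handling of the first two assertions is correct: part (1) is a clean one-line application of the definition, and part (2) correctly combines the two directions of $\nu\sim\nu'$ with stability of $U$ to produce a stable neighborhood of $\nu'$. The paper instead derives the entire lemma from \cite[Lemma~4.17]{MRlong} (taking $x=y$ there), so your elementary route for those parts is a legitimate alternative.

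The gap is in the final step. The principle you invoke --- that two characteristic pairs which are compact, zero-dimensional, metrizable, and clopen-embed into one another must be homeomorphic via a ``standard back-and-forth'' --- is false, and stability is exactly the missing hypothesis. By Stone duality this is the Cantor--Schr\"oder--Bernstein problem for countable Boolean algebras, where counterexamples going back to Hanf are known (an algebra $A$ with $A\cong A\times A\times A$ but $A\not\cong A\times A$ gives two Stone spaces, each a clopen subspace of the other, that are not homeomorphic). Concretely, the set-theoretic back-and-forth applied to a nested chain $U=A_0\supseteq B_0\supseteq A_1\supseteq B_1\supseteq\cdots$ with $A_i\cong U$ and $B_i\cong U'$ yields a bijection that acts as a shift on some of the clopen difference pieces and as the identity on others; these pieces accumulate on $\bigcap_i A_i$, and the resulting map is continuous there only if that intersection is controlled --- for instance, if it is the single point $\nu$ and the $A_i$ form a neighborhood basis of $\nu$. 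Arranging that is where stability must be used, and your argument does not use it at this stage: mutual embeddability alone is not enough. To close the gap you should either run the back-and-forth with the nested copies chosen (via stability) to shrink to $\nu$, or simply quote \cite[Lemma~4.17]{MRlong} as the paper does; note also that the cited lemma produces a homeomorphism carrying $\nu$ to $\nu'$, a refinement the paper uses later and which a bare Schr\"oder--Bernstein argument would not supply.
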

\begin{proof}
    First assume that $V$ is contained within $U$. It follows by taking $x=y$ in \cite[Lemma~4.17]{MRlong} that all sufficiently small neighborhoods of $\nu$ are homeomorphic to $U$. In particular, if $W$ is a sufficiently small neighborhood of $\nu$, then there exists a homeomorphism $f:U\rightarrow W$ fixing $\nu$ and a homeomorphic copy $f(V)$ of $V$ in $W$. The set $f(V)$, being a subset of $W$, is also a sufficiently small neighborhood of $\nu$, and is therefore homeomorphic to $U$. Thus $U$ is homeomorphic to $V$. For the moreover statement, suppose that $\nu'\sim\nu$. By \cite[Lemma~4.17]{MRlong}, all sufficiently small neighborhoods of $\nu'$ are homeomorphic to $U$ via a homeomorphism taking $\nu$ to $\nu'$, which immediately gives the result.
\end{proof}

We now give a complete classification of the orbits of maximal ends, expanding on Proposition~\ref{MR 4.7}. Recall that the orbit of an end $\nu\in\partial X$ under $\Maps(X)$ is equal to $E(\nu)$ by the discussion after Theorem~\ref{MR 1.2}.

\begin{lemma}[Orbits of maximal ends]\label{lem: orbits of max ends}
    Let $\nu\in\partial X$ be an end of the locally finite graph $X$. The following are equivalent.
    \begin{enumerate}
        \item $\nu$ is maximal.
        \item $E(\nu)$ is finite or homeomorphic to Cantor space.
        \item $E(\nu)$ is closed.
    \end{enumerate}
\end{lemma}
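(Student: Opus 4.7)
The plan is to prove the cycle $(1)\Rightarrow(2)\Rightarrow(3)\Rightarrow(1)$, using Proposition~\ref{MR 4.7} for the first implication and elementary topological arguments for the rest.

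First, $(1)\Rightarrow(2)$ is essentially a restatement of Proposition~\ref{MR 4.7}: once $\nu$ is maximal, the cited result says $E(\nu)$ is either finite or homeomorphic to a Cantor set. No extra work is needed here.

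Next, $(2)\Rightarrow(3)$ follows from general topology. The end space $\partial X$ is Hausdorff (it is a closed subset of Cantor space). In either case of (2), the set $E(\nu)$ is compact: finite sets are trivially compact, and a set homeomorphic to Cantor space is compact. A compact subset of a Hausdorff space is closed, which gives (3).

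The main content is $(3)\Rightarrow(1)$, which I would prove by contrapositive. Suppose $\nu$ is not maximal, so there exists $\mu\in\partial X$ with $\nu\prec\mu$. By the definition of $\preceq$, every neighborhood $U$ of $\mu$ contains a homeomorphic copy of a neighborhood $V$ of $\nu$; write $\varphi\colon V\hookrightarrow U$ for such an embedding. Then $\varphi(\nu)\in U$ has a neighborhood (namely $\varphi(V)$) homeomorphic to a neighborhood of $\nu$, so $\varphi(\nu)\preceq\nu$; the reverse inequality $\nu\preceq\varphi(\nu)$ is automatic since $\varphi(V)\subseteq U$ exhibits a copy of $V$ in any neighborhood of $\varphi(\nu)$. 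Hence $\varphi(\nu)\in E(\nu)$. Because $\nu\not\sim\mu$, we have $\mu\notin E(\nu)$, so $\varphi(\nu)\neq\mu$, meaning every neighborhood of $\mu$ contains a point of $E(\nu)\setminus\{\mu\}$. Thus $\mu\in\overline{E(\nu)}\setminus E(\nu)$, contradicting the assumption that $E(\nu)$ is closed.

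The only subtle step is the argument that $\varphi(\nu)\in E(\nu)$: one must be careful that the embedded copy $\varphi(V)$ actually witnesses the equivalence $\varphi(\nu)\sim\nu$ in both directions, but this follows tautologically from $\varphi$ being a homeomorphism onto its image together with $\varphi(V)$ being a neighborhood of $\varphi(\nu)$ inside $U$. Everything else reduces to standard facts about compactness and Hausdorff spaces, so I would expect the proof to be short.
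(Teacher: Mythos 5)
Your proof is correct and takes essentially the same approach as the paper: $(1)\Rightarrow(2)$ via Proposition~\ref{MR 4.7}, $(2)\Rightarrow(3)$ as an immediate compactness observation, and $(3)\Rightarrow(1)$ by contrapositive, showing that a strictly dominating end $\mu$ is a limit point of $E(\nu)$ that does not lie in $E(\nu)$. The only difference is that you spell out the step the paper leaves implicit — that the embedded copy of a neighborhood of $\nu$ inside a neighborhood of $\mu$ sends $\nu$ to a point equivalent to $\nu$ — and your justification of that step is sound.
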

\begin{proof}
    The implication $1$ implies $2$ was shown in Proposition~\ref{MR 4.7}, and the implication $2$ implies $3$ is immediate. For the implication $3$ implies $1$, if $\nu$ was dominated by some $\mu\in\partial X$, then there would exist ends of the same type as $\nu$ in arbitrarily small neighborhoods of $\mu$, and hence $\mu$ would be a limit point of the set of ends equivalent to $\nu$. Hence the orbit of $\nu$ would not be closed.
\end{proof}

Our next result shows that a wedge of locally finite graphs corresponds to a clopen decomposition of end spaces up to proper homotopy. We will use this often as a tool for going back and fourth between a graph and its end space.

\begin{lemma}[Wedges and clopen sets]\label{lem: wedges and clopens}
    Let $X$ be a locally finite graph. There are locally finite graphs $Y$ and $Z$ such that $X\cong Y\vee Z$ if and only if there is a clopen decomposition of the end space $\partial X\cong A\sqcup B$ such that $A\cong\partial Y$ and $B\cong\partial Z$. 
\end{lemma}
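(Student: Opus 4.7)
The plan is to translate the statement into one about characteristic pairs, so that the wedge operation on graphs corresponds to disjoint union on end spaces (and on the $g$-subspaces) plus addition of genera. With that in hand, Proposition~\ref{AK-B 2} realizes prescribed characteristic pairs by graphs, and Theorem~\ref{AK-B 2.2} identifies locally finite graphs up to proper homotopy with their characteristic pairs, so both directions reduce to bookkeeping.

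For the forward implication I would realize $Y \vee Z$ at a basepoint $p$ and fix a compact subgraph $K \subseteq Y \vee Z$ containing $p$. Then $(Y \vee Z) \setminus K = (Y \setminus K_Y) \sqcup (Z \setminus K_Z)$, where $K_Y = K \cap Y$ and $K_Z = K \cap Z$. Passing to the inverse limit over such $K$ gives $\partial(Y \vee Z) = \partial Y \sqcup \partial Z$ as a clopen decomposition. Since every sufficiently small neighborhood of an end $\nu \in \partial Y$ inside $(Y \vee Z) \cup \partial(Y \vee Z)$ is contained in $Y \cup \partial Y$, the end $\nu$ is accumulated by genus in $Y \vee Z$ if and only if it is in $Y$; hence $\partial(Y \vee Z)_g = \partial Y_g \sqcup \partial Z_g$. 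Transporting this decomposition through the proper homotopy equivalence $X \cong Y \vee Z$ produces the required clopen $A, B \subseteq \partial X$.

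For the converse, given a clopen decomposition $\partial X = A \sqcup B$, I would construct $Y$ and $Z$ so that $Y \vee Z$ has the same characteristic pair as $X$, and then apply Theorem~\ref{AK-B 2.2}. Let $A_g = A \cap \partial X_g$ and $B_g = B \cap \partial X_g$, and split on whether $g(X)$ is finite. If $g(X) = \infty$, then $\partial X_g \neq \emptyset$, so without loss of generality $A_g \neq \emptyset$; Proposition~\ref{AK-B 2} supplies $Y$ and $Z$ with characteristic pairs $(A, A_g)$ and $(B, B_g)$, and since $\partial Y_g \neq \emptyset$ we have $g(Y) = \infty$, so $Y \vee Z$ has characteristic pair $(\partial X, \partial X_g)$. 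If $g(X) < \infty$, then $A_g = B_g = \emptyset$; pick any partition $g(X) = m + n$ with $m, n \geq 0$ and use Proposition~\ref{AK-B 2} to obtain $Y, Z$ with characteristic pairs $(A, m)$ and $(B, n)$, so that $Y \vee Z$ has characteristic pair $(\partial X, g(X))$.

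The only real subtlety is this genus bookkeeping in the converse: the characteristic pair of $X$ takes two formally different shapes depending on whether $g(X)$ is finite, so the construction must case-split and choose an allocation that reproduces $X$'s genus information. This is clean thanks to the fact, recalled in Section~\ref{sec: lfgs and phes}, that $\partial X_g = \emptyset$ if and only if $g(X) < \infty$, which in the infinite-genus case forces at least one of $A_g, B_g$ to be nonempty and so forces one wedge factor to carry the full $\partial X_g$-structure.
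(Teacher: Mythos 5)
Your proof is correct and follows essentially the same route as the paper: the forward direction uses the fact that the wedge point separates $Y\vee Z$ so that $\partial Y$ and $\partial Z$ form a clopen partition of $\partial(Y\vee Z)$, and the converse builds $Y$ and $Z$ realizing the characteristic pairs $(A,A_g)$ and $(B,B_g)$ via Proposition~\ref{AK-B 2} and concludes with Theorem~\ref{AK-B 2.2}. The only cosmetic difference is in the finite-positive-genus bookkeeping, where you split $g(X)=m+n$ between the two factors while the paper takes both factors of genus zero and wedges $R_{g(X)}$ onto one of them; both are fine.
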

\begin{proof}
    For the forward direction, up to proper homotopy, we may assume that $Y\vee Z$ has a vertex $x_0$ with valence two, with one adjacent vertex in $Y$ and the other in $Z$. Let $U$ be a small connected open neighborhood of $x_0$ not containing any other vertices. Then $(Y\vee Z)\setminus U$ has two components, with $Y$ and $Z$ in different components. Furthermore, each component is closed, and thus the end spaces of $Y$ and $Z$ are both closed in the end space of $Y\vee Z$. Thus $\partial Y$ and $\partial Z$ are clopen in $\partial(Y\vee Z)$, and we can take $A=\partial Y$ and $B=\partial Z$.
    
    For the backward direction, suppose $\partial X\cong A\sqcup B$. Note that $A_g$ is closed in $\partial X$, because $A$ and $\partial X_g$ are closed in $\partial X$. Define $Y$ to be a locally finite graph with characteristic pair $(A,A_g)$ such that $g(Y)$ is zero if $A_g=\0$ and infinity otherwise. Likewise define $Z$ to be a locally finite graph with characteristic pair $(B,B_g)$ with $g(Z)$ defined analogously. Thus $A=\partial Y$ and $B=\partial Z$. By construction, $Y\vee Z$ has the same characteristic pair as $X$. If $0<g(X)<\infty$, then we may replace $Y$ with $Y':=Y\vee R_{g(X)}$. After this, $Y\vee Z$ is proper homotopy equivalent to $X$ by Theorem~\ref{AK-B 2.2}.
\end{proof}

The following result shows that closed subsets of $\partial X$ have arbitrarily small clopen neighborhoods. We will often use this result in tandem with Lemma~\ref{lem: wedges and clopens} to separate ends into different wedge components.

\begin{lemma}[Small clopen neighborhoods]\label{lem: small clopen neighborhoods}
    If $A\subseteq\partial X$ is a closed subset of an end space, and if $V\subset \partial X\setminus A$ is closed, then there exists a clopen neighborhood $U$ of $A$ with $U\cap V=\0$.
\end{lemma}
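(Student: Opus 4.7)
The plan is to use that $\partial X$ is a compact, metrizable, totally disconnected space (a closed subset of Cantor space, as noted after the definition of end space), and hence has a basis of clopen sets. In such a space, any two disjoint closed subsets can be separated by a clopen set, and this is exactly what the statement claims.

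First I would handle point-versus-closed-set separation: fix $a \in A$, and for each $v \in V$ choose (using the clopen basis of $\partial X$) a clopen set $W_v$ with $a \in W_v$ and $v \notin W_v$. The collection $\{\partial X \setminus W_v : v \in V\}$ is an open cover of $V$, and since $V$ is closed in the compact space $\partial X$, it is compact, so finitely many $W_{v_1}, \dots, W_{v_k}$ suffice. Their intersection $W_a := W_{v_1} \cap \cdots \cap W_{v_k}$ is a finite intersection of clopens and hence clopen, contains $a$, and is disjoint from $V$.

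Next I would bootstrap from points to the closed set $A$: the family $\{W_a : a \in A\}$ is an open cover of $A$, which is compact (closed in the compact space $\partial X$), so there exist $a_1, \dots, a_n \in A$ with $A \subseteq W_{a_1} \cup \cdots \cup W_{a_n}$. Set $U := W_{a_1} \cup \cdots \cup W_{a_n}$; this is a finite union of clopen sets, hence clopen, contains $A$, and is disjoint from $V$ because each $W_{a_i}$ is.

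There is no real obstacle here; the proof is essentially the standard argument that zero-dimensional compact Hausdorff spaces are normal via separation by clopen sets. The only thing to be a little careful about is citing (or briefly recalling) that $\partial X$ has a basis of clopen sets, which follows from its identification with a closed subset of Cantor space noted in Section~\ref{sec: lfgs and phes}. No appeal to the graph structure of $X$ is needed; the statement is purely topological about $\partial X$.
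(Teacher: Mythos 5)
Your proof is correct. Both you and the paper handle the passage from a single point to the full closed set $A$ in exactly the same way (cover $A$ by clopen sets disjoint from $V$, pass to a finite subcover, take the union), but your treatment of the point-versus-closed-set case is genuinely different. The paper works with the graph: it puts $X$ in standard form, follows a non-backtracking path of vertices $\{x_n\}$ limiting to the end $\nu$, and uses the explicit clopen sets $\{\nu\}\cup\bigl(\bigcup_{i\geq n}\partial Y_i\bigr)$ coming from complementary components, concluding via the observation that $V$ closed forces it to miss all but finitely many of these. You instead argue purely topologically: $\partial X$ is a compact, totally disconnected, metrizable space (a closed subset of Cantor space), hence has a basis of clopen sets, and the standard compactness argument separates a point from a disjoint closed set by a finite intersection of basic clopens. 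Your route is shorter and more general --- it proves the statement for an arbitrary closed subset of Cantor space with no reference to graphs --- while the paper's route has the side benefit of exhibiting the separating neighborhoods concretely as unions of end spaces of complementary components, a description that is reused elsewhere in the paper. The one hypothesis you lean on, that total disconnectedness plus compact Hausdorff (or the embedding into Cantor space) yields a clopen basis, is standard and is already implicitly available from the paper's remark that $\partial X$ is a closed subset of Cantor space, so there is no gap.
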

\begin{proof}
    We first prove the statement when $A=\{\nu\}$ is a singleton. Without loss of generality, assume that $X$ is in the standard form of \cite[Section~2]{AK-B}. Let $x_0$ be a vertex in $X$, and consider a non-backtracking path based at $x_0$ in $X$ limiting to $\nu$ with vertices $\{x_n\}_n$. Because $X$ is in standard form, $X\cong\{Y_n\}_n\rightarrow 1$, where each $Y_n$ is attached to the path limiting to $\nu$ via $x_n$. For each $n$, the set $\{\nu\}\cup(\bigcup_{i\geq n} \partial Y_i)$ is clopen in $\partial X$. If, for contradiction, $V$ intersected infinitely many of these sets, then $\nu$ would be a limit point of $V$, contradicting that $V$ is closed. Thus take $U$ to be one of these sets for suitably large $n$.

    Now let $A$ be any closed subset of $\partial X$. For each $\nu\in A$, consider a clopen neighborhood $U_\nu$ of $\nu$ such that $U_\nu\cap V=\0$, which exists by the above argument. Thus $\{U_\nu\,:\,\nu\in A\}$ is an open cover of $A$, which is a closed subset of a compact space, hence compact. We may pass to a finite sub-cover and observe that a finite union of clopen sets is clopen. Thus, the union of sets in this sub-cover is a clopen neighborhood of $A$ disjoint from $V$.
\end{proof}

\begin{lemma}[Cantor or discrete]\label{lem: Cantor or discrete}
    Let $X$ be a locally finite graph. If $\partial X$ is nonempty and not homeomorphic to Cantor space, then $X$ has a discrete end.
\end{lemma}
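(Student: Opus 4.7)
The plan is to reduce this to the classical characterization of Cantor space due to Brouwer: a nonempty compact metrizable totally disconnected space is homeomorphic to Cantor space if and only if it is perfect (i.e., has no isolated points). Since the paper has already noted (right after the definition of $\partial X$ in Section~\ref{sec: lfgs and phes}) that $\partial X$ is totally disconnected, compact, and metrizable, the hypotheses of Brouwer's theorem are satisfied.

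First I would recall these three properties of $\partial X$ explicitly and then invoke Brouwer's theorem. Taking the contrapositive: because $\partial X$ is nonempty and not homeomorphic to Cantor space, it fails to be perfect, which means it contains at least one isolated point. By definition, an isolated point of $\partial X$ is precisely an end $\nu \in \partial X$ admitting a clopen neighborhood (in $\partial X$) equal to $\{\nu\}$, i.e., a discrete end.

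The only potential subtlety is whether we are happy citing Brouwer's theorem as a black box; if we wanted to avoid it, we could argue directly. Suppose for contradiction that $\partial X$ has no isolated points. Fix a metric on $\partial X$ coming from its embedding in Cantor space. Since $\partial X$ is compact and totally disconnected, every point has arbitrarily small clopen neighborhoods (by Lemma~\ref{lem: small clopen neighborhoods}, applied to singletons and their complements within $\partial X$). Using this, one can inductively build, for each $n$, a finite clopen partition $\partial X = U^n_1 \sqcup \cdots \sqcup U^n_{2^n}$ refining the previous one, with all pieces nonempty and of diameter at most $2^{-n}$, by repeatedly splitting each piece into two nonempty clopen subsets (possible because each piece is nonempty and has no isolated point, hence contains at least two points and can be separated). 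The tree of these partitions yields a homeomorphism from $\partial X$ onto the standard Cantor set, contradicting our hypothesis. Hence some point of $\partial X$ is isolated, and $X$ has a discrete end.

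I expect the main (minor) obstacle to be ensuring each piece in the inductive partition remains nonempty and splittable; this is what uses both compactness and the absence of isolated points. Either way — citing Brouwer or giving the self-contained splitting argument — the proof is short, and no new machinery from this paper is required beyond the already-noted topological properties of $\partial X$.
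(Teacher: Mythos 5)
Your proposal is correct and matches the paper's own proof, which likewise notes that $\partial X$ is compact, metrizable, and totally disconnected (via its embedding into Cantor space) and then invokes Brouwer's characterization to extract an isolated point, i.e., a discrete end. The self-contained splitting argument you sketch as a backup is a reasonable expansion of the black box but is not needed.
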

\begin{proof}
    The end space $\partial X$ is compact and Hausdorff. By \cite[Definition~2.5]{AK-B}, it embeds into Cantor space, and therefore has a countable basis consisting of clopen sets. If $\partial X$ is not homeomorphic to Cantor space, then by Brouwer's characterization of Cantor space, it has an isolated point. Hence $X$ has a discrete end.
\end{proof}

Our next result is more technical and is used to show that every locally finite graph has a signature. It makes precise the notion that the operation of spread is general, in that every locally finite graph can be written as a spread applied to a simpler locally finite graph.

\begin{lemma}[The invocation of spread]\label{lem: invocation of spread}
    Let $V$ be a closed subset of an end space $\partial X$. Let $Z$ be a sub-graph of $X$ with characteristic pair $(V,V_g)$. Then there exists a sequence of sub-graphs $\{Y_n\}_n$ of $X$ such that $X\cong(\{Y_n\}_n,a)\rightarrow Z$ for some bijective function $a \colon \mathcal{V}(Z)\rightarrow\Z_{> 0}$.
\end{lemma}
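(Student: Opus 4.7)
The plan is to exhibit $X$, up to proper homotopy, as the result of attaching sub-graphs of $X$ to each vertex of $Z$, and recognize this as the spread construction applied to $Z$. By Proposition~\ref{AK-B 2.6} and Proposition~\ref{AK-B 4.5}, I first replace $X$ by a properly homotopy equivalent graph in standard form. Since $Z$ is determined up to proper homotopy by $(V,V_g)$ (Theorem~\ref{AK-B 2.2}), I may simultaneously arrange $Z$ to be in standard form with underlying tree $T_Z$ sitting inside the underlying tree $T_X$ of $X$ as a sub-tree; this is achieved by first putting $Z$ into standard form and then extending to a compatible standard form for $X$ via added edges and loops. Subdividing edges of $T_Z$ if necessary (a move that preserves proper homotopy type) ensures $\mathcal{V}(Z)$ is countably infinite, so that it admits a bijection with $\Z_{>0}$.

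Next, I decompose $X$ relative to $Z$. Since $T_Z$ is a sub-tree of $T_X$, each connected component of the closure of $T_X \setminus T_Z$ attaches to $T_Z$ at exactly one vertex, and each loop of $X$ not already in $Z$ is attached at a single vertex of $T_X$. For each $v \in \mathcal{V}(Z)$, let $W(v)$ be the sub-graph of $X$ consisting of $\{v\}$ together with every component of the closure of $X \setminus Z$ whose intersection with $Z$ is $\{v\}$. By construction, $X$ is the graph obtained from $Z$ by attaching $W(v)$ at $v$ for each $v \in \mathcal{V}(Z)$. Enumerating $\mathcal{V}(Z) = \{v_n\}_{n \in \Z_{>0}}$ bijectively and setting $Y_n := W(v_n)$, $a(v_n) := n$, I obtain the desired bijection $a\colon \mathcal{V}(Z) \to \Z_{>0}$ and sequence of sub-graphs $\{Y_n\}_n$ of $X$ such that $X \cong (\{Y_n\}_n, a) \to Z$.

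The main obstacle is arranging $X$ and $Z$ to be in compatible standard forms with $T_Z \subseteq T_X$, so that the decomposition into pieces $W(v)$ is well-defined and yields genuine sub-graphs of $X$; this combines Proposition~\ref{AK-B 2.6} with the uniqueness up to proper homotopy afforded by Theorem~\ref{AK-B 2.2}. A subsidiary subtlety is that certain vertices of $Z$ may carry no natural attachment from $X \setminus Z$, so that $W(v) \cong R_0$; this is permitted because the definition of spread does not require the graphs $Y_n$ to be non-trivial for $n > 0$, so no modification is needed to make $a$ bijective onto $\Z_{>0}$.
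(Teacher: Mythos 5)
Your overall strategy --- realize $X$, after passing to a suitable model, as $Z$ with a connected piece $W(v)$ glued on at each vertex $v$, and read this off as an instance of spread --- is sound, and once the setup is in place the decomposition step is correct: if $T_Z$ is a subtree of $T_X$, every component of $\overline{X\setminus Z}$ meets $Z$ in a single vertex, the pieces sharing a given vertex assemble into a connected $W(v)$, and the subdivision trick together with the $R_0$ convention legitimately handles bijectivity of $a$. This is also a genuinely different execution from the paper's: the paper never produces a relative standard form, but instead embeds $\partial X$ into $\R$, chops $\partial X\setminus V$ into countably many clopen pieces $U_m$ indexed compatibly with the order on $\R$, assigns each $U_m$ to a vertex of $Z$ incident to the region of the upper half-plane containing it, and then checks that the resulting model has the same characteristic pair as $X$, \emph{including} the convergence of ends of $\partial X\setminus V$ to ends of $V$.

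The gap is exactly the step you flag as the main obstacle: ``first putting $Z$ into standard form and then extending to a compatible standard form for $X$ via added edges and loops.'' Neither Proposition~\ref{AK-B 2.6} nor Theorem~\ref{AK-B 2.2} gives a relative statement of this kind: Theorem~\ref{AK-B 2.2} classifies single graphs by their characteristic pairs, and does not say that a chosen standard form of $Z$ extends to a standard form $X'$ of $X$ in which $\partial Z$ is carried to $V$ under the identification $\partial X'\cong\partial X$. To perform such an extension you must decide, for each piece of $X$ outside $Z$, at which vertex of $Z$ to attach it, and these choices must be made so that the ends of $\partial X\setminus V$ accumulate onto the correct points of $V$; otherwise the resulting graph need not be properly homotopy equivalent to $X$ at all, since the end space of a spread depends on where the pieces are attached. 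This is precisely the content supplied by the paper's interval-and-adjacency bookkeeping, so as written your argument assumes the heart of the lemma. The gap is fillable within your framework by reversing the order of operations: put $X$ in standard form first, and construct a model of $Z$ \emph{inside} $T_X$ as the union of the rays from a fixed basepoint to the ends of $V$, together with finite segments reaching (and including) enough loops of $X$ to realize $V_g$; this yields a subgraph with the correct characteristic pair whose underlying tree is a subtree of $T_X$, after which your decomposition applies verbatim.
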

\begin{proof}
    By Lemma~\ref{lem: small clopen neighborhoods}, for all $\nu\in \partial X\setminus V$, there exists a clopen set $U_\nu\ni\nu$ contained in $\partial X\setminus V$. Hence, $\{U_\nu\}_{\nu\in\partial X}$ is a clopen cover of $\partial X\setminus V$. We modify this cover to ensure that it is countable and disjoint. Because $\partial X$ embeds into Cantor space, it is both second countable and metrizable, and therefore has a countable sub-cover $\{U_m''\}_{m\in\Z_{>0}}$. For disjointness, let $U_1'=U_1''$, and for $m>1$, inductively define $U_m'=U_m''\setminus(U_1'\cup\cdots\cup U_{m-1}')$. Re-index so that all $m$ such that $U_m'=\0$ form a tail end of the sequence. Note that if there are infinitely many non-empty $U_m'$, then this re-indexing ensures that none of the $U_m'$ are empty.

    Using the construction in \cite[Definition~2.5]{AK-B}, we fix an embedding of $\partial X$ into $\R$. For any clopen subset of the end space $U\subseteq\partial X$, there exist finitely many disjoint open intervals $I_k$ in $\R$ such that $\partial X\cap(\bigcup_k I_k)=U$. Indeed, by Lemma~\ref{lem: small clopen neighborhoods} and compactness of $U$, there are finitely many open intervals in $\R$ covering $U$ with $\partial X\cap(\bigcup_k I_k)=U$, and we may assume that they are disjoint because if two overlapped, we could combine them into one open interval. Thus for each $m$ we obtain finitely many disjoint open intervals $I_{m,1},...,I_{m,k_m}$ covering $U_m'$ that are disjoint from $\partial X\setminus U_m'$. $\{I_{m,i}\,:\,m,i\in\Z_{>0}$ and $1\leq i\leq k_m\}$ is a set of pairwise disjoint open intervals in $\R$, and thus we assume that they are indexed in an order-preserving way by $\Z$. Define $U_m:= I_m\cap\partial X$ so that $\{U_m\}_{m\in\Z}$ is a clopen decomposition of $\partial X\setminus V$.

    \begin{figure}[t]
        \centering
        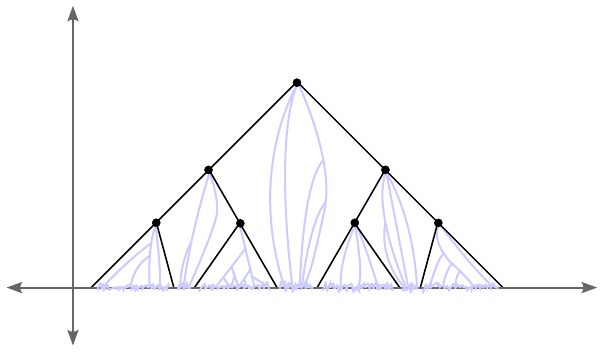
        \centering
        \caption{The graph $X$ viewed as a subspace of $\R^2$ with $\partial X$ on the x-axis. Here, $Z$ is in black and the rest of $X$ is in periwinkle.}
        \label{fig: image of fuzz}
    \end{figure}

    Next, we construct the collection $\{Y_n\}_{n\in\Z_{\ge0}}$ and the attaching map $a$. View $X$ as a topological subset of the upper half plane of $\R^2$ with $\partial X$ in $\R\times\{0\}$ using the embedding fixed above. See Figure~\ref{fig: image of fuzz}. Every connected component of $\{(x,y)\in\R^2\,:\,y\geq0\}\setminus(Z\cup\partial Z)$ intersects a countable set of vertices of $Z$ which limit to two points in $V$ (one point if $Z$ is a ray). Fix an enumeration $\{x_n\}_{n\in Z_{>0}}$ of the vertices of $Z$. For each $m$, we assign $U_m$ to a vertex $x_n$ of $Z$ incident to the connected component of $\{(x,y)\in\R^2\,:\,y\geq0\}\setminus (Z\cup\partial Z)$ containing $U_m$ in such a way that any pair of adjacent open sets $U_m,U_{m+1}$ in the same connected component correspond to adjacent vertices incident to that component. See Figure~\ref{fig: construction of attaching map}. Let $Y_0:=\{*\}$, and for $n>0$, let $Y_n$ be a sub-graph such that $\partial Y_n$ is homeomorphic to the disjoint union of all the $U_m$ corresponding to $x_n$ if such a disjoint union is nonempty, and $R_0$ otherwise. Define $a(x_n)=n$.

    \begin{figure}[h]
        \centering
        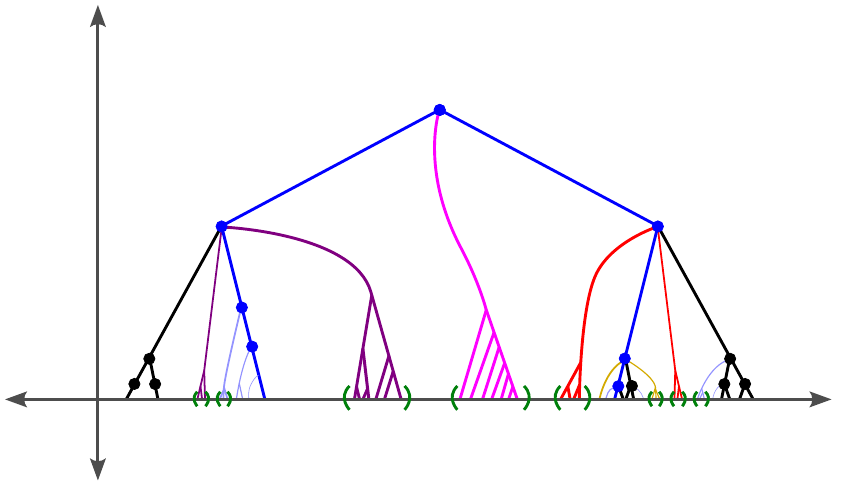
        \centering
        \caption{The path formed from the countable collection of vertices adjacent to the large central region which limits to $\nu_L$ and $\nu_R$ is colored blue. The subgraphs $Y_n$ for $1\leq n\leq 4$ are depicted in the same color as the $x_n$ of the same index.}
        \label{fig: construction of attaching map}
    \end{figure}

    Lastly, $X$ is proper homotopy equivalent to $(\{Y_n\}_n,a)\rightarrow Z$ for $a \colon \mathcal{V}(Z)\rightarrow\Z_{>0}$, because both of these graphs have sub-spaces of their end spaces homeomorphic to $Z$, and the rest of their end spaces are covered by clopen sets homeomorphic to the set of $U_n$. Moreover, we constructed $a$ so that for a sequence of ends in $\partial X\setminus V$ that converge to an end in $V$, the corresponding ends in $(\{Y_n\}_n,a)\rightarrow Z$ converge to the corresponding end.
\end{proof}

Using the previous lemma, we are now able to show that every graph has a signature. In Section~\ref{sec: ordered signatures}, we will show that when a graph is stable, it has an ordered signature, which reveals much more information about the properties of the graph.

\begin{proposition}\label{prop: building blocks}
    Let $X$ be a locally finite graph. Then $X$ has a signature.
\end{proposition}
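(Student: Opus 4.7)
The plan is to proceed by a layered induction, using Proposition~\ref{AK-B 2.6} to reduce to standard form, handling base cases directly, and then invoking Lemma~\ref{lem: invocation of spread} together with Lemma~\ref{lem: wedges and clopens} as the main decomposition tools. The easy cases are: $X$ a ray (signature $1$), $X$ finite (signature $R_{g(X)}$), and $\partial X$ nonempty but finite, in which case Lemma~\ref{lem: small clopen neighborhoods} and Lemma~\ref{lem: wedges and clopens} let me peel off each end into its own wedge summand, each of type $1$ or $o(1)$ depending on whether it is accumulated by genus, with any residual finite genus absorbed into a wedge factor $R_n$.

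For $\partial X$ infinite, I would first handle the genus structure. If $\partial X_g$ is clopen in $\partial X$, Lemma~\ref{lem: wedges and clopens} splits $X$ as $X_o \vee X_t$ where every end of $X_o$ is accumulated by genus (so $X_o$ is proper homotopy equivalent to $o(T_1)$ for a tree $T_1$ with $\partial T_1 = \partial X_g$, using Theorem~\ref{AK-B 2.2}), and $X_t$ is a tree with $\partial X_t = \partial X \setminus \partial X_g$. If $\partial X_g$ is not clopen, I would instead apply Lemma~\ref{lem: invocation of spread} with $V=\partial X_g$ and $Z$ chosen of the form $o(T_1)$: this puts the genus-accumulated structure on a spine and makes each $Y_n$ a subgraph with end space disjoint from $\partial X_g$, which by Theorem~\ref{AK-B 2.2} can be taken with no genus. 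In either case the problem reduces to showing every locally finite tree has a signature.

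For a tree $T$, I would induct on the end-space complexity. If $\partial T$ is countable, Theorem~\ref{MS 1} gives $\partial T \cong \omega^\alpha\cdot n+1$; Lemma~\ref{lem: small clopen neighborhoods} separates the $n$ points of Cantor-Bendixson rank $\alpha$ into disjoint clopen neighborhoods each homeomorphic to $\omega^\alpha+1$, and Lemma~\ref{lem: wedges and clopens} realizes this as a wedge of $n$ trees with signature $\omega^\alpha+1$. If $\partial T$ is Cantor, the signature is $C$. Otherwise Cantor--Bendixson gives $\partial T = P\sqcup U$ with $P$ a nonempty Cantor perfect kernel and $U$ countable open; applying Lemma~\ref{lem: invocation of spread} with $V=P$ produces $T\cong (\{Y_n\}_n,a)\to Z$ where $Z$ has signature $C$ and each $Y_n$ has end space a finite disjoint union of clopen subsets of the countable set $U$, hence countable, and therefore has a signature by the countable subcase already handled.

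The main obstacle I anticipate is avoiding a circular induction: the $Y_n$ produced by spread must be strictly simpler than $X$ in some well-founded sense. The stratification above is designed to make this work: the general case reduces to trees, the tree case with uncountable ends reduces via spread to the tree case with countable ends, and the countable case is grounded by transfinite induction on the Cantor-Bendixson rank supplied by Theorem~\ref{MS 1}. The other delicate point is verifying that the subgraph $Z$ required by Lemma~\ref{lem: invocation of spread} can always be realized inside $X$ with the prescribed characteristic pair $(V,V_g)$; this follows because one may build such a $Z$ directly inside $X$ in standard form from a suitable spanning collection of rays to the ends of $V$, together with appropriate loops at vertices accumulating to $V_g$.
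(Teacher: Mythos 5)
Your argument is correct, and for trees it coincides with the paper's proof: both use the Cantor--Bendixson decomposition, Theorem~\ref{MS 1} for the countable case, and Lemma~\ref{lem: invocation of spread} with $V$ the perfect kernel and $Z=\mathcal{C}$ for the mixed case. Where you diverge is in the treatment of genus. The paper simply puts $X$ in standard form via Proposition~\ref{AK-B 2.6} and writes $X\cong(\{Y_n\}_n,a)\to T$ with $T$ the underlying tree, $Y_0=R_0$, and $Y_n=R_1$ for $n>0$; that is, it spreads single loops over the tree whose signature was already constructed. You instead isolate $\partial X_g$ first, wedge-splitting when $\partial X_g$ is clopen and otherwise spreading over a subgraph of the form $o(T_1)$. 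Your route produces signatures that record genus via the $o(-)$ operation rather than via a spread of $R_1$'s, which is arguably more legible, but it costs you a case split that the paper avoids, plus the extra obligation of realizing a subgraph with characteristic pair $(\partial X_g,\partial X_g)$ inside $X$ (your sketch of this is fine, but the paper's standard-form device makes it unnecessary). Two small points: the appeal to ``transfinite induction on Cantor--Bendixson rank'' in the countable case is not needed, since $\omega^\alpha+1$ is a base-case signature by Definition~\ref{def: signature}(3); and your clopen case degenerates when $0<g(X)<\infty$ (so $\partial X_g=\emptyset$), where one should just write $X\cong T\vee R_{g(X)}$ for a spanning tree $T$, as you already do for finite end spaces.
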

\begin{proof}
    If $X$ is a finite graph, then $X\cong R_{g(X)}$. We now show the statement assuming $X$ is a tree. By the Cantor-Bendixon Theorem, the end space $\partial X$ splits uniquely into $U$ and $V$ where $V$ is perfect and $U$ is countable. If $V=\0$, then $X\cong\bigvee_{i=1}^k(\omega^\alpha+1)$ by Lemma~\ref{lem: wedges and clopens} and Theorem~\ref{MS 1}, where $k$ is the number of maximal ends. Otherwise, if $V\neq\0$, then $V$ must be homeomorphic to Cantor space. In this case, let $\mathcal{C}$ denote the infinite binary splitting tree. Then $\mathcal{C}$ has signature $C$, and we may apply Lemma~\ref{lem: invocation of spread} to get that $X\cong(\{Y_n\}_n,a)\to\mathcal{C}$ for some sequence $\{Y_n\}_n$ of subgraphs of $X$, each with countable end space, and some function $a\colon\mathcal{V}(\mathcal{C})\to\Z_{>0}$. Thus, we may write $X$ more explicitly as $(\{\omega^{\alpha_n}+1\}_n,a)\to\mathcal{C}$, where all the $\alpha_n$ are countable ordinals, and $a\colon\mathcal{V}(\mathcal{C})\to\Z_{>0}$. This shows that all locally finite trees have signatures.

    As signatures are invariant under proper homotopy equivalence and by Proposition~\ref{AK-B 2.6}, we may assume that $X$ is in standard form. Let $T$ be the underlying tree. Then the signature of $T$ was already constructed above, and $X\cong(\{Y_n\}_n,a)\to T$ for some $a\colon\mathcal{V}(T)\to\Z_{\geq0}$, where $Y_0=R_0$ and $Y_n=R_1$ for $n>0$.
\end{proof}

Intuitively, the proof of Proposition~\ref{prop: building blocks} shows that every locally finite graph can be constructed by taking wedge products of $C$, $R_1$, and $\omega^\alpha+1$ for countable ordinals $\alpha$.

\section{Local Structures}\label{sec: structure analysis}

In this section, we study stable ends. By Lemma~\ref{lem: 4.17}, all stable neighborhoods of a given stable end are homeomorphic, and thus when discussing a stable end, it often suffices to discuss an arbitrary stable neighborhood of that end. Stable neighborhoods are assumed to be clopen, and thus are the end space of a certain locally finite graph. For example, in the graph $\omega^\omega+1$, any stable neighborhood of an end of Cantor-Bendixon rank two has stable neighborhoods homeomorphic to the end space of the graph $\omega^2+1$ (see Figure~\ref{fig: local structure}). In fact, many properties about the end $\nu$ correspond to analogous properties about the graph $\omega^2+1$. For this reason, we say that the end $\nu$ has the local structure of $\partial (\omega^2+1)$. We give the general definition below.

\begin{figure}[h]
    \centering
\begingroup%
  \makeatletter%
  \providecommand\color[2][]{%
    \errmessage{(Inkscape) Color is used for the text in Inkscape, but the package 'color.sty' is not loaded}%
    \renewcommand\color[2][]{}%
  }%
  \providecommand\transparent[1]{%
    \errmessage{(Inkscape) Transparency is used (non-zero) for the text in Inkscape, but the package 'transparent.sty' is not loaded}%
    \renewcommand\transparent[1]{}%
  }%
  \providecommand\rotatebox[2]{#2}%
  \newcommand*\fsize{\dimexpr\f@size pt\relax}%
  \newcommand*\lineheight[1]{\fontsize{\fsize}{#1\fsize}\selectfont}%
  \ifx\svgwidth\undefined%
    \setlength{\unitlength}{368.04328979bp}%
    \ifx\svgscale\undefined%
      \relax%
    \else%
      \setlength{\unitlength}{\unitlength * \real{\svgscale}}%
    \fi%
  \else%
    \setlength{\unitlength}{\svgwidth}%
  \fi%
  \global\let\svgwidth\undefined%
  \global\let\svgscale\undefined%
  \makeatother%
  \begin{picture}(1,0.32213331)%
    \lineheight{1}%
    \setlength\tabcolsep{0pt}%
    \put(0,0){\includegraphics[width=\unitlength,page=1]{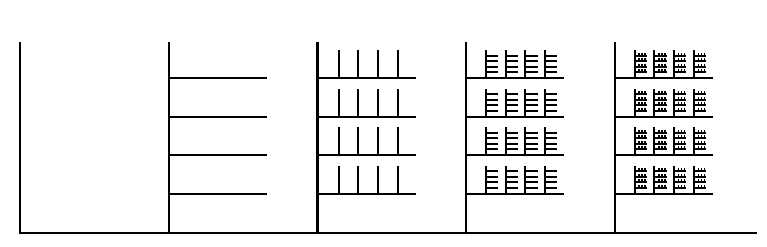}}%
    \put(0.41045904,0.28219827){\color[rgb]{0,0,0}\makebox(0,0)[t]{\smash{\begin{tabular}[t]{c}$\nu$\end{tabular}}}}%
    \put(0,0){\includegraphics[width=\unitlength,page=2]{local_structure.pdf}}%
  \end{picture}%
\endgroup%

    \caption{The end $\nu$, contained in the graph $\omega^\omega+1$, has stable neighborhoods homeomorphic to $\partial(\omega^2+1)$. One such stable neighborhood is the set of ends in the periwinkle box.}
    \label{fig: local structure}
\end{figure}

\begin{definition}\label{def: local structure}
    The end space $\partial Z$ of a locally finite graph $Z$ is called a \emph{local structure} if there is some locally finite graph $X$ and some stable end $\nu\in\partial X$ such that any stable neighborhood of $\nu$ is homeomorphic to $\partial Z$.
\end{definition}

In tandem with Definition~\ref{def: local structure}, we extend the definition of the order on ends to involve local structures.

\begin{definition}
    Let $\nu\in \partial X$ be any end, and let $\partial Z$ be a local structure.
    \begin{enumerate}
        \item We say that \emph{$\nu$ is of type $\partial Z$} and write $\nu\sim\partial Z$ if $\nu$ has a stable neighborhood homeomorphic to $\partial Z$. Given a locally finite graph $X$, we write $E(\partial Z)$ for the set $\{\nu\in\partial X\,:\,\nu\sim\partial Z\}$. 
        \item We say that \emph{$\nu$ dominates $\partial Z$}, and write $\nu\succeq\partial Z$, if every neighborhood of $\nu$ contains a homeomorphic copy of $\partial Z$.
        \item We say that \emph{$\nu$ is dominated by $\partial Z$}, and write $\nu\preceq\partial Z$, if $E(\nu)$ intersects a stable neighborhood of an end of type $\partial Z$ in $Z\vee X$.
    \end{enumerate}
\end{definition}

If $\nu\sim\partial Z$, we can view $\nu$ as a maximal end in $Z$, because $\nu$ has a stable clopen neighborhood $U$ with $U\cong\partial Z$ such that all points in $U\setminus\{\nu\}$ have orbits accumulating to $\nu$.

\begin{example}
    An end $\nu\in\partial X$ is of type $\partial 1$ or $\partial o(1)$ if and only if $\nu$ is is an isolated point in $\partial X$. An end $\nu\in\partial X$ is of type $\partial C$ or $\partial o(C)$ if and only if there exists a clopen neighborhood in $\partial X$ of $\nu$ which is homeomorphic to Cantor space.
\end{example}

In Section~\ref{sec: classification of local structures}, we fully characterize local structures. In Section~\ref{sec: poset of lfgs}, we define and study the structure of the poset of local structures. Section~\ref{sec: wedge decomposition} describes a canonical decomposition of any locally finite graph with stable maximal ends into the wedge product of finitely many local structures, and Section~\ref{sec: ordered signatures} gives a criterion for determining if a given locally finite graph is stable.

\subsection{Classification of local structures}\label{sec: classification of local structures}

In this section, we show that the end space of a locally finite graph is a local structure if and only if it is self-similar. Showing this involves exploring the relationship between stability and self-similarity, and we will see in a precise sense that stability is a local version of self-similarity.

\begin{proposition}[Classification of local structures]\label{prop: classification of local structures}
    Let $Z$ be any locally finite graph. Then the following are equivalent.
    \begin{enumerate}
        \item $\partial Z$ is a local structure.
        \item $Z$ has a self-similar end space.
        \item There is a unique maximal end type of size either one or infinity in $\partial Z$, and all maximal ends are stable.
        \item The whole end space $\partial Z$ is a stable neighborhood of a maximal end in $\partial Z$.
    \end{enumerate}
\end{proposition}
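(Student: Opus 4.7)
Plan. I would prove the four conditions equivalent by the cycle $1 \Leftrightarrow 4 \Rightarrow 2 \Rightarrow 3 \Rightarrow 4$. The equivalence $1 \Leftrightarrow 4$ is essentially tautological. From $4$, take $X = Z$ in Definition~\ref{def: local structure}. From $1$, use Lemma~\ref{lem: wedges and clopens} to realize the stable clopen neighborhood $U \cong \partial Z$ of $\nu \in \partial X$ as the end space of a subgraph proper homotopy equivalent to $Z$; the stability of $U$ as a neighborhood of $\nu$ transports through the homeomorphism to show that $\partial Z$ is a stable neighborhood of the corresponding end, which must be maximal because every neighborhood of it contains a homeomorphic copy of $\partial Z$, forcing it to dominate every other end.

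Implication $4 \Rightarrow 2$ is direct: given a clopen decomposition $\partial Z = A \sqcup B$, the distinguished maximal end $\mu$ lies in one piece (say $A$), so $A$ is a neighborhood of $\mu$, and stability yields a copy of $\partial Z$ inside $A$. For $2 \Rightarrow 3$, maximal ends exist by Proposition~\ref{MR 4.7}, and uniqueness of the maximal type and the size-one-or-infinity conclusion both follow from Lemma~\ref{lem: small clopen neighborhoods} by separation arguments: separating two distinct maximal types, or separating a single maximal end from finitely many others, into different pieces of a clopen decomposition leaves no piece capable of hosting a homeomorphic copy of $\partial Z$, since types and maximality are invariants of homeomorphism. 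For stability of a maximal end $\mu$, I would show any neighborhood $U \ni \mu$ contains a copy of $\partial Z$: in the size-one case the complement contains no maximal end and hence no copy, so the copy furnished by self-similarity lives in $U$, and the Cantor-size case is handled by a refined iterated-partition argument.

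The main technical step is $3 \Rightarrow 4$, and this is the principal obstacle. Let $\mu$ be a stable maximal end with stable neighborhood $W \subseteq \partial Z$. By Lemma~\ref{lem: 4.17}, every stable neighborhood of $\mu$ is homeomorphic to $W$ and is contained, up to homeomorphism, in every sufficiently small neighborhood of $\mu$, so it suffices to prove $W \cong \partial Z$; this equivalence would then immediately give that $\partial Z$ is itself a stable neighborhood of $\mu$. The key input is that $\mu$ is the unique maximal type, so every $\eta \in \partial Z$ satisfies $\eta \preceq \mu$ and neighborhoods of $\eta$ embed into $W$; moreover, the Cantor-Bendixon rank of $\mu$ and the local characteristic pair at $\mu$ are the same whether computed in $W$ or in $\partial Z$, since these invariants are local. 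A back-and-forth argument organized by Cantor-Bendixon rank --- using Theorem~\ref{MS 1} on the countable layers to match them with copies of $\omega^\alpha+1$, and Brouwer's characterization of Cantor space on the perfect layers --- should then assemble local homeomorphisms into a global homeomorphism $W \to \partial Z$ respecting the genus subspace $\partial Z_g$. The delicate point is arranging this back-and-forth so that the resulting map is a homeomorphism on the nose and that the accumulations at $\mu$ in $W$ match those in $\partial Z$; this is where the argument is most technical.
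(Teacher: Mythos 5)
The skeleton of your cycle is fine and the easy implications ($1\Leftrightarrow 4$, $4\Rightarrow 2$, and the separation arguments for uniqueness of the maximal type in $2\Rightarrow 3$) match what the paper does. But both places where you defer to an unspecified technique are exactly where the content lives, and in the crucial case your proposed technique is the wrong tool.

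First, in $2\Rightarrow 3$, your argument for stability of a maximal end only works in the size-one case; for the Cantor case you invoke ``a refined iterated-partition argument'' without giving it. The paper avoids this entirely by arguing by contradiction with compactness: if some (hence, by Lemma~\ref{lem: 4.17}, every) maximal end is unstable, each maximal end has a clopen neighborhood containing no copy of $\partial Z$; since the set of maximal ends is closed (Lemma~\ref{lem: orbits of max ends}), finitely many such neighborhoods cover it, and after disjointifying, the leftover piece contains no maximal ends and hence no copy of $\partial Z$ either, contradicting self-similarity. You should either supply your iterated-partition argument or switch to this covering argument.

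Second, and more seriously, your plan for $3\Rightarrow 4$ --- a back-and-forth constructing a homeomorphism $W\to\partial Z$ layer by layer in Cantor--Bendixson rank --- would not work as described. Cantor--Bendixson rank is far too coarse an invariant here: two ends of the same rank can have entirely different types (compare the ends of $(\omega^n+1)\to o(1)$ for varying $n$, which are pairwise incomparable by Lemma~\ref{lem: class of incomparables}), the end space need not be countable, and the homeomorphism must respect both the preorder $\preceq$ and the genus subspace $\partial Z_g$, neither of which is visible to the rank. The mechanism you are missing is absorption via the definition of the preorder: writing $V=\partial Z\setminus U$ for a stable neighborhood $U$ of $\mu$, every $\nu\in V$ is either strictly dominated by $\mu$ (so, by Definition~\ref{def: order on ends}, a neighborhood of $\nu$ embeds into $U$) or equivalent to $\mu$ with $E(\mu)$ a Cantor set (so Theorem~\ref{MR 1.2} moves a neighborhood of $\nu$ into $U$ off of $\mu$); compactness of $V$ gives a finite cover $V_{\nu_1},\dots,V_{\nu_n}$, one embeds these disjointly into $U$ one at a time using Lemma~\ref{lem: 4.17} to see that $U$ minus a previously embedded copy is still homeomorphic to $U$, and then $U=V'\sqcup(U\setminus V')\cong V\sqcup U=\partial Z$. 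No global back-and-forth is needed, and attempting one without this absorption step leaves the ``delicate point'' you flag genuinely unresolved.
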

\begin{proof}
    To show $1\Rightarrow 2$, let $\nu\sim\partial Z$ where $\nu\in\partial X$, and let $U\cong\partial Z$ be a clopen stable neighborhood of $\nu$ in $\partial X$. Let $f\colon \partial Z\to U$ be a homeomorphism. Suppose that $E_1\sqcup\cdots\sqcup E_n$ is a clopen decomposition of $\partial Z$ and that $f^{-1}(\nu)\in E_i$. Then $f(E_i)$ contains $\nu$ and is a clopen subset of $U$. Thus we have that $E_i\cong f(E_i)\cong U\cong \partial Z$, where the middle homeomorphism follows from Lemma~\ref{lem: 4.17}. Therefore, $\partial Z$ is self-similar.

    We now show $2\Rightarrow 3$. First we show that $Z$ must have a unique maximal end type of size one or infinity. Assume for contradiction that $\mu_1,\mu_2\in\partial Z$ are two maximal ends such that $\mu_1\not\sim\mu_2$. By Lemma~\ref{lem: orbits of max ends}, both $E(\mu_1)$ and $E(\mu_2)$ are closed. By Lemma~\ref{lem: small clopen neighborhoods}, there exists a clopen neighborhood $E_1$ of $E(\mu_1)$ such that $E_1\cap E(\mu_2)=\0$. Let $E_2:=\partial Z\setminus E(\mu_2)$. Then $E_1\sqcup E_2$ is a clopen decomposition of $\partial Z$ such that there is no homeomorphic copy of $\partial Z$ in either $E_1$ or $E_2$, for such a copy would have to contain elements of both $E(\mu_1)$ and $E(\mu_2)$. Thus $\partial Z$ must have a unique maximal end type. Likewise, if $Z$ had more than one but finitely many maximal ends, we would be able to follow a similar argument as above, where $E_1$ and $E_2$ separate maximal ends to reach a similar contradiction.

    Next we show that every maximal end must be stable. Assume for contradiction that there exists a non-stable maximal end. Then since there exists a unique maximal end type, by Lemma~\ref{lem: 4.17}, there are no stable maximal ends. Then for each maximal end $\mu\in\partial Z$, there exists a clopen neighborhood $V_\mu'$ of $\mu$ which does not contain any homeomorphic copy of all of $\partial Z$. By Lemma~\ref{lem: orbits of max ends}, the set of all maximal ends is compact, and thus is covered by finitely many of these sets, say $V_{\mu_1}',...,V_{\mu_n}'$. We modify them to make them disjoint in the following way. Let $V_{\mu_1}:=V_{\mu_1}'$, and for $i>1$ let $V_{\mu_i}=V_{\mu_i}'\setminus\{V_{\mu_1}\cup\cdots\cup V_{\mu_{i-1}}\}$. Then it is still true that none of the $V_{\mu_i}$ contain a homeomorphic copy of $\partial Z$. If $V=\partial Z\setminus\{V_{\mu_1}\cup\cdots\cup V_{\mu_n}\}$, then $V$ is also a clopen set and cannot contain a homeomorphic copy of $\partial Z$, as it does not contain maximal ends. Thus, $V_{\mu_1}\sqcup\cdots\sqcup V_{\mu_n}\sqcup V$ is a clopen decomposition which gives a contradiction to self-similarity.

    We now show $3\Rightarrow 4$. Let $U\subsetneq\partial Z$ be a stable neighborhood of a maximal end $\mu\in\partial Z$. Let $V=\partial Z\setminus U$. If we can show that there is a homeomorphic copy $V'\subset U$ of $V$ which does not contain $\mu$, then the result would follow because $U\setminus V'\cong U$ by Lemma~\ref{lem: 4.17} and so $U=V'\sqcup(U\setminus V')\cong V\sqcup U=\partial Z$.

    \begin{claim}
        For each $\nu$ in $V$, there is an open neighborhood $V_\nu\subset V$ of $\nu$ such that $U$ contains a homeomorphic copy of $V_\nu$ which does not contain $\mu$.
    \end{claim}
    \begin{proof}
        If $\nu$ is non-maximal in $\partial Z$, then $\nu\prec\mu$, and so the result follows by the definition of the order on ends. If $\nu$ is maximal, then $\nu\sim\mu$ by the assumption that there is a unique maximal end type. In this case, there is more than one end in $E(\mu)$, and so $E(\mu)$ must be homeomorphic to Cantor space by Proposition~\ref{MR 4.7}. Then $E(\mu)\cap U$ contains more than one element, and thus the result follows from Theorem~\ref{MR 1.2}.
    \end{proof}

    Let $\{V_\nu\}_{\nu\in V}$ be the clopen cover of $V$ obtained from the above claim. Because $U$ is clopen, so is $V$, and thus there exists a finite sub-cover $\{V_{\nu_1},...,V_{\nu_n}\}$ of $V$. There is a homeomorphic copy $V_1$ of $V_{\nu_1}$ in $U$, and a homeomorphic copy $V_2$ of $V_{\nu_2}$ in $U\setminus V_1\cong U$ (the homeomorphism follows from Lemma~\ref{lem: 4.17}), and so on. Thus $U\cong\partial Z$.

    The implication $4\Rightarrow 1$ follows immediately from definitions.
\end{proof}

Proposition~\ref{prop: classification of local structures} suggests that stability is a local version of self-similarity. Indeed, the bi-implication $1\Leftrightarrow 2$ is saying that a neighborhood of an end is stable if and only if a locally finite graph with end space equal to that neighborhood has self-similar end space. There is a construction in \cite{MRshort} of graphs with non-stable end spaces which contain a unique maximal end type such that this end type is of size one or infinity. Thus, the requirement in statement $3$ that maximal ends are stable is crucial. The graphs in \cite{MRshort} are not self-similar, and do not satisfy any of the conditions in Proposition~\ref{prop: classification of local structures}.

\subsection{The poset of local structures}\label{sec: poset of lfgs}

In this section, we investigate the poset of local structures. We start by proving that the set of local structures has a natural partial ordering on it, and then give properties of this poset. Throughout, we compare and contrast this poset with the poset of countable ordinals.

\begin{definition}
    Let $\partial Z$ and $\partial Z'$ be local structures. We say that $\partial Z\preceq \partial Z'$ if there is a clopen homeomorphic copy of $\partial Z$ in $\partial Z'$.
\end{definition}

To show that the set of local structures together with the relation $\preceq$ defined above gives a poset, we need the following lemma.

\begin{lemma}[Translating local structures to ends]\label{lem: translate local structures to ends}
    Let $\partial Z$ and $\partial Z'$ be local structures. Then $\partial Z\preceq\partial Z'$ if and only if there exists a locally finite graph $X$ with stable ends $\nu\sim\partial Z$ and $\nu'\sim\partial Z'$ such that $\nu\preceq\nu'$ in $\partial X$.
\end{lemma}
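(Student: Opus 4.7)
The plan is to prove both directions by moving between the ``local structure picture'' (a clopen embedding of $\partial Z$ into $\partial Z'$) and the ``end picture'' (two stable ends sharing a common ambient graph), using Proposition~\ref{prop: classification of local structures} to identify a local structure with a stable neighborhood of its maximal end, and Lemma~\ref{lem: 4.17} to freely pass between stable neighborhoods of the same end.

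For the forward direction, assume $\partial Z \preceq \partial Z'$, so some clopen $U \subseteq \partial Z'$ is homeomorphic to $\partial Z$. I would take $X := Z'$. By Proposition~\ref{prop: classification of local structures}(4), $\partial Z'$ itself is a stable neighborhood of a maximal end $\nu' \in \partial Z'$, so $\nu' \sim \partial Z'$. Applying Proposition~\ref{prop: classification of local structures}(4) to $Z$, the whole space $\partial Z$ is a stable neighborhood of a maximal end of $Z$; transporting this end through the given homeomorphism $\partial Z \cong U$ produces a point $\nu \in U \subseteq \partial Z'$ of which $U$ is a stable clopen neighborhood, so $\nu \sim \partial Z$. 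To check $\nu \preceq \nu'$, I would note that by Lemma~\ref{lem: 4.17} every sufficiently small clopen neighborhood of $\nu'$ in $\partial Z'$ is homeomorphic to $\partial Z'$, and hence contains a clopen homeomorphic copy of $U$, which is itself a neighborhood of $\nu$.

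For the backward direction, suppose $X$, $\nu \sim \partial Z$, $\nu' \sim \partial Z'$, and $\nu \preceq \nu'$ are given. Let $U \cong \partial Z$ and $U' \cong \partial Z'$ be clopen stable neighborhoods of $\nu$ and $\nu'$ respectively. By the definition of the preorder (Definition~\ref{def: order on ends}), the clopen neighborhood $U'$ of $\nu'$ contains a clopen homeomorphic copy of some clopen neighborhood $V \subseteq U$ of $\nu$; I may shrink $V$ if necessary so that $V \subseteq U$. By Lemma~\ref{lem: 4.17}, $V$ is stable, and since $U$ is also a stable neighborhood of $\nu$, we have $V \cong U$. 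Thus $U' \cong \partial Z'$ contains a clopen copy of $U \cong \partial Z$, giving $\partial Z \preceq \partial Z'$ directly.

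The only subtle point, and the main place to be careful, is the meaning of ``homeomorphic copy'' in the definition of $\nu \preceq \nu'$: I need that the image of $V$ inside $U'$ is clopen (not merely a topological embedding), which is immediate because $V$ is compact and the end space has a clopen basis, so an embedded clopen neighborhood remains clopen in its ambient space. Once this is settled, both directions are short; no further technical obstacle arises.
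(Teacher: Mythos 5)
Your argument is correct, but the forward direction takes a genuinely different route from the paper's. The paper sets $X\cong Z\vee Z'$, takes $\nu$ and $\nu'$ to be maximal ends of the two wedge components, and deduces $\nu\preceq\nu'$ dynamically: arbitrarily small stable neighborhoods of $\nu'$ are copies of $\partial Z'$, each contains a copy of $\partial Z$ and hence an end equivalent to $\nu$, so $\nu'$ is a limit point of $E(\nu)$. You instead take $X=Z'$ itself and locate $\nu$ as the maximal end of the embedded copy $U\cong\partial Z$, verifying $\nu\preceq\nu'$ straight from Definition~\ref{def: order on ends} via nested stable neighborhoods. Your version is slightly more economical (no wedge needed, and it exhibits both ends inside $Z'$); the paper's version has the advantage that it never needs to check that stability of the pair $(U,\nu)$ is intrinsic to the clopen subset $U$ rather than a property of the ambient space --- a point you use implicitly and which does hold since $U$ is clopen, but which is worth a sentence. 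The backward directions are essentially the same argument: both reduce to producing a clopen copy of $\partial Z$ inside a stable neighborhood $U'\cong\partial Z'$ of $\nu'$ and invoking Lemma~\ref{lem: 4.17}.

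One justification in your final paragraph is wrong as stated: compactness of $V$ plus a clopen basis does \emph{not} imply that an embedded homeomorphic copy of $V$ is clopen in its ambient space (a Cantor set embeds in a Cantor set as a nowhere dense closed subset). This does not sink the proof, because under the paper's standing convention a ``homeomorphic copy'' of a clopen neighborhood is itself taken to be clopen, so there is nothing to check; alternatively, you can avoid the issue entirely the way the paper does, by using that $\nu\preceq\nu'$ forces $E(\nu)$ to meet $U'$ in some $\lambda\sim\nu$, and then taking a small stable (hence clopen) neighborhood of $\lambda$ inside $U'$, which is homeomorphic to $\partial Z$ by Lemma~\ref{lem: 4.17}.
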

\begin{proof}
    First suppose that $\partial Z\preceq\partial Z'$, and let $X\cong Z\vee Z'$. Then the maximal ends in $Z$ are of type $\partial Z$, and the maximal ends in $Z'$ are of type $\partial Z'$. Let $\nu\sim\partial Z$ and $\nu'\sim\partial Z'$ in $X$. By stability of $\nu'$, there exist arbitrarily small copies of $\partial Z'$ around $\nu'$, and each one contains a homeomorphic copy of $\partial Z$. Each of these neighborhoods contains an end of the same type as $\nu$, and thus $\nu'$ is a limit point of $E(\nu)$, which implies that $\nu\preceq\nu'$ in $\partial X$. Conversely, suppose that $\nu\sim\partial Z$, $\nu'\sim\partial Z'$, and $\nu\preceq\nu'$ in $\partial X$. Let $U$ be a stable neighborhood of $\nu'$. Then there exists $\lambda\sim\nu$ such that $\lambda\in U$ containing arbitrarily small neighborhoods homeomorphic to $\partial Z$. Thus, $\partial Z\preceq\partial Z'$.
\end{proof}

One benefit of studying local structures is that they allow us to compare stable ends on different graphs. We will later show that for any locally finite graph $X$, there exists an end type which is larger than all the end types in $X$, which suggests that the poset of stable ends in a given locally finite graph is one small part of a larger poset. The previous lemma gives a method of translating between these two posets. We are now ready to show that the relation $\preceq$ indeed gives a partial ordering on local structures, a fact which we will use without reference after its proof.

\begin{proposition}[Poset of local structures]\label{prop: poset of end types}
    The relation defined on local structures above is a poset, where equivalence is taken up to homeomorphism of end spaces.
\end{proposition}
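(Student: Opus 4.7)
The plan is to verify the three axioms of a partial order: reflexivity, transitivity, and antisymmetry (modulo homeomorphism of end spaces). The first two are immediate, and the whole content is in antisymmetry.

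For reflexivity, $\partial Z$ is clopen inside itself, so $\partial Z \preceq \partial Z$. For transitivity, if $f \colon \partial Z \hookrightarrow \partial Z'$ and $g \colon \partial Z' \hookrightarrow \partial Z''$ are clopen embeddings witnessing $\partial Z \preceq \partial Z'$ and $\partial Z' \preceq \partial Z''$, then $g \circ f$ is a clopen embedding of $\partial Z$ into $\partial Z''$, so $\partial Z \preceq \partial Z''$.

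For antisymmetry, suppose $\partial Z \preceq \partial Z'$ and $\partial Z' \preceq \partial Z$. The idea is to pull both inequalities into a single ambient graph so that we can use the fact (Lemma~\ref{lem: 4.17}) that stable neighborhoods of equivalent stable ends are homeomorphic. Form the ambient graph $X := Z \vee Z'$ and pick stable ends $\nu \in \partial X$ of type $\partial Z$ (coming from the $Z$ summand) and $\nu' \in \partial X$ of type $\partial Z'$ (coming from the $Z'$ summand); both exist by construction, and both are stable with stable neighborhoods $\partial Z$ and $\partial Z'$ respectively by Lemma~\ref{lem: wedges and clopens}. Applying Lemma~\ref{lem: translate local structures to ends} with the hypothesis $\partial Z \preceq \partial Z'$ produces stable ends in some graph $X_1$ of types $\partial Z, \partial Z'$ with one dominating the other; since the order on ends depends only on the homeomorphism type of (stable) neighborhoods, this order relation transfers to the pair $(\nu, \nu')$ in $X$, giving $\nu \preceq \nu'$. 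The symmetric hypothesis $\partial Z' \preceq \partial Z$ gives $\nu' \preceq \nu$ in $X$ by the same argument.

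Hence $\nu \sim \nu'$ as ends of $X$, and both are stable. By Lemma~\ref{lem: 4.17}, any clopen stable neighborhood of $\nu$ is homeomorphic to any clopen stable neighborhood of $\nu'$; since $\partial Z$ is such a neighborhood of $\nu$ and $\partial Z'$ is such a neighborhood of $\nu'$, we conclude $\partial Z \cong \partial Z'$, establishing antisymmetry. The only mild obstacle is making sure that the domination relations obtained from Lemma~\ref{lem: translate local structures to ends} apply to a common pair of ends in a common graph; this is handled by the wedge construction together with the observation that $\preceq$ on ends is an invariant of the type of the end, so it is enough to know it for one representative of each type.
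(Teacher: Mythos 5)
Your proposal is correct and follows essentially the same route as the paper: reflexivity and transitivity are immediate, and antisymmetry is handled by passing to $Z\vee Z'$, invoking Lemma~\ref{lem: translate local structures to ends} in both directions to get $\nu\preceq\nu'$ and $\nu'\preceq\nu$, and then applying Lemma~\ref{lem: 4.17} to the resulting equivalent stable ends. Your extra care about transferring the domination relation from the witness graph of the lemma to $Z\vee Z'$ is fine but unnecessary, since the forward direction of that lemma's proof already produces its witness inside $Z\vee Z'$ itself.
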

\begin{proof}
    It is straightforward to verify that the relation is reflexive and transitive. For anti-symmetry, suppose that $\partial Z\preceq\partial Z'$ and that $\partial Z'\preceq\partial Z$. Let $\mu$ and $\mu'$ be of types $\partial Z$ and $\partial Z'$ respectively in the locally finite graph $Z\vee Z'$. By Lemma~\ref{lem: translate local structures to ends}, we have that $\mu\preceq\mu'$ and $\mu'\preceq\mu$ in $Z\vee Z'$. Thus, because the order on ends gives a poset, $\mu\sim\mu'$. By Lemma~\ref{lem: 4.17}, they must have homeomorphic stable neighborhoods, and thus $\partial Z\cong\partial Z'$.
\end{proof}

We now show that there are only four minimal local structures. To show this, we make use of the fact that local structures, similar to ordinals, are \emph{downward closed} in the following sense. If $\alpha$ and $\beta$ are ordinals, then $\alpha\leq\beta$ implies that $\alpha\subseteq\beta$. Similarly, if $\partial Z$ and $\partial Z'$ are local structures, then $\partial Z\preceq\partial Z'$ implies that $\partial Z\subseteq\partial Z'$. Ordinals and local structures both contain all predecessors, and thus, the minimal local structures are precisely those where every end is of the same type.

\begin{lemma}[Minimal local structures]\label{lem: minimal local structures}
    The set of minimal elements of the local structure poset is precisely given by $$\{\partial 1,\partial o(1),\partial C,\partial o(C)\}.$$
\end{lemma}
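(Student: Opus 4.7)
The plan is to split the argument into two parts: first verify that the four candidates are local structures and are minimal in the poset, and then show that every other local structure strictly dominates one of them.

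For the first part, by the classification of local structures (Proposition~\ref{prop: classification of local structures}), to see that $\partial 1$, $\partial o(1)$, $\partial C$, and $\partial o(C)$ are local structures it suffices to check that each is self-similar. The key observation is that in each of the four cases, every nonempty clopen subset is homeomorphic to the whole space as a characteristic pair: for $\partial 1$ and $\partial o(1)$ this is trivial because the end space is a single point; for $\partial C$ and $\partial o(C)$, any nonempty clopen subset of Cantor space has no isolated points (any point isolated in a clopen subset is isolated in the ambient space) and so is itself a Cantor space by Brouwer's characterization, and inherits the correct $g$-structure (empty in the first case, everything in the second). Given any clopen decomposition, a pigeonhole argument forces one of the pieces to be nonempty and so homeomorphic to the whole, establishing self-similarity. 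The same observation also establishes minimality: if $\partial Z' \preceq \partial Z$ for $\partial Z$ among the four, then by definition there is a clopen copy of $\partial Z'$ in $\partial Z$, and this copy is homeomorphic to all of $\partial Z$, so $\partial Z' \cong \partial Z$. Finally, the four are pairwise distinct: $\partial 1$ and $\partial o(1)$ differ from $\partial C$ and $\partial o(C)$ by cardinality, and within each pair the characteristic pairs are distinguished by whether $\partial Z_g$ is empty or everything.

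For the second part, suppose $\partial Z$ is a local structure not homeomorphic to any of the four. I will find a strict predecessor in the poset. Split into cases based on whether $\partial Z$ contains an isolated point. If some $\nu \in \partial Z$ is isolated, then $\{\nu\}$ is a clopen subset whose characteristic pair is either $\partial 1$ (if $\nu \notin \partial Z_g$) or $\partial o(1)$ (if $\nu \in \partial Z_g$); since $\partial Z$ is not equivalent to either of these, we must have $|\partial Z| > 1$, so $\{\nu\} \prec \partial Z$ strictly. Otherwise $\partial Z$ has no isolated points, and being a nonempty compact metrizable totally disconnected space with no isolated points, it is homeomorphic to Cantor space. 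Now examine the closed subset $\partial Z_g$. If $\partial Z_g = \emptyset$, then $\partial Z \cong \partial C$, a contradiction; if $\partial Z_g = \partial Z$, then $\partial Z \cong \partial o(C)$, also a contradiction. Hence $\partial Z_g$ is a proper nonempty closed subset. Pick any $\nu \in \partial Z \setminus \partial Z_g$, and apply Lemma~\ref{lem: small clopen neighborhoods} with $A = \{\nu\}$ and $V = \partial Z_g$ to produce a clopen neighborhood $U$ of $\nu$ disjoint from $\partial Z_g$. This $U$ is a clopen subset of a perfect space, hence perfect, hence a Cantor space, and its $g$-part is empty, so $U \cong \partial C$. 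Since $\partial Z_g \neq \emptyset$ we have $\partial Z \not\cong \partial C$, so $\partial C \prec \partial Z$ strictly.

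The main obstacle here is really bookkeeping: one must remember that the preorder $\preceq$ on local structures compares characteristic pairs and not just topological types, so at every step the $g$-structure of any extracted clopen subset must be tracked. The argument is otherwise a fairly direct case analysis powered by the classification of local structures together with Brouwer's characterization of Cantor space and the existence of small clopen neighborhoods. No transfinite induction or delicate convergence arguments are needed.
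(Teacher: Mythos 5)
Your proof is correct, and it reaches the statement by a genuinely different (if parallel) route from the paper's. The paper first observes that local structures are downward closed, so that a local structure is minimal exactly when all of its ends are of the same type; it then notes the four candidates clearly have this property, and derives a contradiction from any fifth minimal element by reducing to the case where $Z$ is a tree or every end is accumulated by genus and applying the Cantor--Bendixson decomposition (perfect part empty gives an isolated point and hence $\partial 1$; countable part empty gives Cantor space and hence $\partial C$). You instead prove minimality of the four directly from the observation that every nonempty clopen subset of each is homeomorphic, as a characteristic pair, to the whole space, and you exhibit an explicit strict predecessor --- $\partial 1$, $\partial o(1)$, or $\partial C$ --- inside any other local structure via a case analysis on isolated points and on whether $\partial Z_g$ is empty, all of $\partial Z$, or a proper nonempty closed subset. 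The underlying tools (Brouwer's characterization of Cantor space, Lemma~\ref{lem: small clopen neighborhoods}, and careful tracking of the $g$-part of extracted clopen sets) are the same in both arguments. What your version buys is self-containment: it does not lean on the informal ``minimal iff all ends of the same type'' discussion preceding the lemma, and it handles the mixed case where $\partial Z$ is a Cantor set with $\emptyset \neq \partial Z_g \subsetneq \partial Z$ explicitly rather than excluding it by homogeneity. What the paper's version buys is brevity and reuse of the downward-closure observation, which it wants anyway for the analogy with ordinals.
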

\begin{proof}
    By the discussion in the previous paragraph, a local structure is minimal if and only if every end in it is of the same type. This is clearly the case with $\partial 1,\partial o(1),\partial C$, and $\partial o(C)$. Now suppose for contradiction that $\partial Z$ is another minimal local structure. It suffices to assume that $Z$ is either a tree or that every end in $Z$ is accumulated by genus, or else there would be multiple end types in $\partial Z$. Suppose $Z$ is a tree. The Cantor-Bendixon theorem guarantees the existence of a decomposition $\partial Z=A\sqcup B$ where $A$ is a perfect set and where $B$ is at most countable. As every end in $\partial Z$ must be of the same type, either $A$ or $B$ must be empty. If $A$ is empty, then $\partial Z$ has an isolated point by Lemma~\ref{lem: Cantor or discrete}, and so $\partial Z$ must equal $\partial 1$ as $\partial Z$ is minimal, which is a contradiction. If $B$ is empty, then $A$ is homeomorphic to Cantor space by Brouwer's characterization of Cantor space, and so $\partial Z\cong\partial C$, which is a contradiction. The proof is analogous when every end of $\partial Z$ is accumulated by genus.
\end{proof}

The next proposition shows that the poset of stable local structures satisfies the descending chain condition. The countable ordinals also satisfy this condition by virtue of being well-ordered, but it is not as obvious that arbitrary descending chains of local structures have this property when they are not of the form $\omega^\alpha+1$ for some countable ordinal $\alpha$.

\begin{proposition}[Descending chain condition]\label{prop: descending chain condition}
    Let $Z_0$ be a stable and self-similar locally finite graph, and let $\partial Z_0\succeq \partial Z_1\succeq \partial Z_2\succeq\cdots$ be a descending sequence of local structures. Then there exists $n\in\Z_{\geq0}$ such that for all $m\geq n$, we have that $\partial Z_n\cong \partial Z_m$.
\end{proposition}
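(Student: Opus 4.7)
The plan is to assign an ordinal rank to each local structure that strictly decreases along $\prec$ and then invoke well-foundedness of the ordinals. First I would observe that the only local structures whose maximal end type is Cantor are $\partial C$ and $\partial o(C)$ themselves. Indeed, a stable neighborhood of a maximal end of type $\partial C$ (respectively $\partial o(C)$) is homeomorphic as a pair to Cantor space with empty (respectively full) genus subspace, and every non-empty clopen subset of such a pair is again of the same type. Hence any end dominated by a Cantor-type maximal end is itself of that Cantor type, so $\partial Z$ must equal $\partial C$ or $\partial o(C)$, both of which are minimal in the poset by Lemma~\ref{lem: minimal local structures}.

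For every other local structure $\partial Z$ the maximal ends have a well-defined Cantor--Bendixon rank in $\partial Z$, which I will denote $\alpha(\partial Z)$. The key technical step is to show that in such a $\partial Z$ every end $\nu \prec \mu$ strictly below a maximal end $\mu$ has Cantor--Bendixon rank strictly less than that of $\mu$. For such $\nu \prec \mu$, the definition of $\preceq$ places inside every sufficiently small clopen neighborhood of $\mu$ a clopen embedded copy of a stable neighborhood of $\nu$. Tracing through Lemma~\ref{lem: 4.17}, the image of $\nu$ under this embedding is an end of type $E(\nu)$, and since $\mu \not\sim \nu$ it must be distinct from $\mu$. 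Cantor--Bendixon rank is a topological local invariant that is constant on end types, so this image has the same rank as $\nu$. Thus every neighborhood of $\mu$ contains a point of the same rank as $\nu$ distinct from $\mu$, which by transfinite induction on the derivative sequence forces the rank of $\mu$ to exceed that of $\nu$.

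With this in hand, suppose $\partial Z' \prec \partial Z$ are both non-minimal local structures of non-Cantor maximal type. By Lemma~\ref{lem: translate local structures to ends} a copy of $\partial Z'$ sits clopen inside $\partial Z$, so a maximal end of that copy is an end of $\partial Z$ of type $\partial Z' \prec \partial Z$; since Cantor--Bendixon rank is preserved under passing to clopen subsets, the previous paragraph yields $\alpha(\partial Z') < \alpha(\partial Z)$. Now to conclude, if some $\partial Z_k$ in the chain has Cantor-type maximum then the chain is constant from $\partial Z_k$ onward by the first paragraph; otherwise every $\partial Z_k$ has non-Cantor maximum and $\alpha(\partial Z_0) \geq \alpha(\partial Z_1) \geq \cdots$ with strict drop whenever $\partial Z_{k+1} \prec \partial Z_k$. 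By well-foundedness of the ordinals, only finitely many strict drops occur, and the same rank-strict-decrease step shows that equal ranks force equal local structures, so the chain stabilizes.

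The main obstacle is the rank-strict-decrease step. Care is needed because distinct end types in a single local structure can share the same Cantor--Bendixon rank (for instance, ends of type $\partial(\omega+1)$ and ends of type $\partial o(\omega+1)$ can coexist at rank one), so the argument cannot compare types by rank alone. What makes the step work is that $\nu \prec \mu$ forces a \emph{distinct} witness of $\nu$'s rank to sit inside every arbitrarily small neighborhood of $\mu$, producing a strict jump in derivative level at $\mu$.
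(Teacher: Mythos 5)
There is a genuine gap, and it is in the very first step. You claim that the only local structures whose maximal ends are of Cantor type are $\partial C$ and $\partial o(C)$. This is false: the graph $1\to C$ (a Cantor tree with a ray attached at every vertex) is self-similar, so $\partial(1\to C)$ is a local structure by Proposition~\ref{prop: classification of local structures}; its maximal ends form a Cantor set and are therefore of Cantor type, yet $\partial(1\to C)\not\cong\partial C$ since it contains isolated points. The error is a conflation of ``$\nu$ is of Cantor type'' (meaning $E(\nu)$ accumulates at $\nu$) with ``$\nu$ is of type $\partial C$'' (meaning $\nu$ has a stable neighborhood homeomorphic to Cantor space); an end dominated by a Cantor-type maximal end need not itself be of Cantor type, as the isolated ends of $1\to C$ show. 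This is fatal to the strategy, because for any local structure with uncountable end space --- equivalently, by Lemma~\ref{lem: end classification} and Proposition~\ref{MR 4.7}, any local structure whose maximal ends are of Cantor type but which is not $\partial C$ or $\partial o(C)$, such as $(\omega^\alpha+1)\to C$ --- the maximal ends lie in the perfect kernel, so they belong to $\mathcal{X}^\alpha$ for every ordinal $\alpha$ and the Cantor--Bendixson rank $\alpha(\partial Z)$ you want to use is simply undefined. Your rank-decrease argument is sound for local structures with countable end space, but the proposition must also rule out infinite strictly descending chains running through uncountable local structures, and your invariant says nothing about those.

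For contrast, the paper avoids ordinal ranks entirely: it embeds $\partial Z_0$ into $\R$, realizes a strictly descending chain as a nested sequence of nonempty compact clopen sets $U_i\cong\partial Z_i$ with $\diam(U_i)\to 0$, and observes that the intersection is a single end $\nu$, which is stable because $Z_0$ is (this is where the stability hypothesis, unused in your argument, enters). Any stable neighborhood of $\nu$ eventually contains some $V_i$, forcing $V_i$ itself to be a stable neighborhood of $\nu$ by Lemma~\ref{lem: 4.17}, whence $\nu\sim\partial Z_i$; but then $V_{i+1}\cong V_i$ by the same lemma, contradicting strict descent. If you want to salvage a rank-based proof, you would need an ordinal invariant defined on \emph{all} local structures (for instance, one built from the derived-order structure of end types rather than from Cantor--Bendixson derivatives of the point set), and proving that such an invariant strictly decreases is not easier than the compactness argument.
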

\begin{proof}
    Suppose for contradiction that the $\partial Z_i$ form a strictly decreasing sequence of local structures. We will inductively define a descending sequence $\{U_i\}_i$ of subsets of $\R$. We can homeomorphically map $\partial Z_0$ to some subset $U_0\subset\R$ of the real numbers by the construction in \cite[Definition~2.5]{AK-B}, and moreover, we may assume that the diameter of $U_0$, which we refer to as $\diam(U_0)$, is less than one in $\R$. Now suppose we have defined $U_0,...,U_{i-1}$. Because $\partial Z_{i}\prec \partial Z_{i-1}$, there exists $\nu_i\in U_{i-1}$ such that $\nu_i\sim\partial Z_i$. We can realize a homeomorphic copy of $\partial Z_i$ as a subset $U_i$ of $U_{i-1}$ which is clopen in $U_{i-1}$ and which contains $\nu_i$. Furthermore, by stability, we may assume that $\diam(U_i)\leq\frac{1}{i+1}$. Thus, the $U_i$ are descending, and they are all compact and non-empty, with diameter approaching zero. This implies that $\bigcap_i U_i$ consists of a single point $x\in\R$. Let $\nu$ be the end in $\partial Z_0$ which maps to $x$, and let $\{V_i\}_i$ be a descending sequence of clopen subsets of $\partial Z_0$ such that $V_i\cong U_i$ for all $i\in\Z_{\geq 0}$. Note that as $Z_0$ is stable, the end $\nu$ must be stable too.
    
    Suppose that there exists $i\in\Z_{\geq0}$ such that $\nu\sim \partial Z_i$. But then as $\nu$ is stable, Lemma~\ref{lem: 4.17} implies that $V_{i+1}$, being a clopen subset of $V_i$ which contains $\nu$, must be homeomorphic to $V_i$. This would be a contradiction to the assumption that $\partial Z_i\not\cong\partial Z_{i+1}$. Thus, $\nu\not\sim\partial Z_i$ for any $i$. 
    
    $\nu$ is a stable end and thus has a stable neighborhood. Because $\diam(U_i)$ approaches zero, any stable neighborhood of $\nu$ contains $V_i$ for some $i$. Thus, as $\nu\in V_i$, $V_i$ is a stable neighborhood of $\nu$ by Lemma~\ref{lem: 4.17}. But this implies that $\nu\sim \partial Z_i$, a contradiction.
\end{proof}

The poset of local structures is much larger than the poset of ends in any given locally finite graph. In fact, as we show in the next lemma, there is no locally finite graph that contains all local structures, and no locally finite graph whose end space can embed as a clopen set into all local structures. Thus, the poset of stable ends in a given locally finite graph is a strict sub-poset of the poset of local structures.

\begin{lemma}[Incomparable local structures]\label{lem: incomparable local structures}
    Let $X$ be any locally finite graph. Then there exists a local structure $\partial Z$ such that $\partial X$ and $\partial Z$ do not contain homeomorphic copies of each other.
\end{lemma}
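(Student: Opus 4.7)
The plan is to exhibit $\partial Z$ as one of two explicit local structures — either Cantor space $\partial C$ or a sufficiently tall countable ordinal $\partial(\omega^\alpha+1)$ — depending on whether $\partial X$ is countable or uncountable. Both of these are local structures by Proposition~\ref{prop: classification of local structures}: the first is self-similar with a single maximal end type of Cantor size, and the second is a tree with a unique stable maximal end (every end of a tree with countable end space is stable, by Example~\ref{ex: stability}).

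First, suppose $\partial X$ is countable and nonempty. I would take $\partial Z = \partial C$. Clopen inclusions fail in both directions for cardinality reasons: $\partial C$ is uncountable and so cannot inject into $\partial X$, while any nonempty clopen subset of $\partial C$ is compact, metrizable, totally disconnected, and perfect, hence homeomorphic to $\partial C$ by Brouwer's characterization, and therefore cannot equal the countable set $\partial X$.

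Next, suppose $\partial X$ is uncountable. By the Cantor--Bendixon theorem, $\partial X$ has a nonempty perfect kernel $K$, reached at some countable stage $\beta$, so $(\partial X)^\beta = K$. I would pick any countable $\alpha > \beta$ and set $\partial Z = \partial(\omega^\alpha+1)$. Since $\partial Z$ is countable while $\partial X$ is not, $\partial X$ cannot embed into $\partial Z$. For the other direction, suppose for contradiction that a clopen $V \subseteq \partial X$ were homeomorphic to $\omega^\alpha+1$, and let $p \in V$ correspond to the top point. A transfinite induction shows that $V^\gamma = V \cap (\partial X)^\gamma$ for every ordinal $\gamma$, using that $V$ is open in $\partial X$. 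Then $\{p\} = V^\alpha = V \cap K$; but $K$ is perfect and $V$ is an open neighborhood of $p \in K$, so $V$ must contain other points of $K$, a contradiction.

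The main technical point is the identity $V^\gamma = V \cap (\partial X)^\gamma$ for clopen $V$ in the uncountable case: the successor stage follows from the fact that, for $V$ open in $Y$, a point of $V$ accumulates in $V$ if and only if it accumulates in $Y$, and the limit stage is formal. Once this identity is in hand, perfectness of $K$ delivers the contradiction immediately, and the countable case reduces to Brouwer's characterization together with a cardinality count.
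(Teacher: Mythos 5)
Your proof is correct, and it follows the same skeleton as the paper's: split on whether $\partial X$ is countable, take $\partial Z=\partial C$ in the countable case, and take $\partial Z=\partial(\omega^\alpha+1)$ for a suitably large countable $\alpha$ in the uncountable case. The difference is in how $\alpha$ is chosen and how the non-existence of a clopen copy of $\partial(\omega^\alpha+1)$ inside $\partial X$ is established. The paper takes $\alpha$ to exceed every countable $\beta$ for which some end of $\partial X$ has type $\partial(\omega^\beta+1)$ (countability of this set of $\beta$'s coming from the Cantor--Bendixson theorem), and then observes that a clopen copy of $\partial(\omega^\alpha+1)$ would force an end of type $\partial(\omega^\alpha+1)$ to exist -- an argument that quietly leans on the end-type and stability machinery (one must know that the top point of such a copy has stable neighborhoods homeomorphic to $\partial(\omega^\alpha+1)$). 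You instead take $\alpha$ larger than the Cantor--Bendixson rank at which the perfect kernel $K$ stabilizes, prove the relativization identity $V^\gamma=V\cap(\partial X)^\gamma$ for clopen $V$, and derive a contradiction from $V\cap K$ being a singleton while $K$ is perfect. Your route is purely point-set topological and entirely self-contained -- it never invokes end types, stability, or Theorem~\ref{MR 1.2} -- at the cost of carrying out the transfinite induction explicitly; the paper's route is shorter on the page but imports more of the surrounding framework. Both arguments are sound, and you also supply the Brouwer/cardinality details for the countable case that the paper leaves implicit. (Neither you nor the paper addresses the degenerate case $\partial X=\emptyset$, where the statement is only true if one excludes the empty clopen subset as a ``copy''; this does not affect how the lemma is used.)
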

\begin{proof}
    If $\partial X$ is countable, then $Z:=C$ proves the statement, and so we assume that $\partial X$ is uncountable. Let $\alpha$ be the minimal ordinal such that if $\nu\in\partial X$ is of type $\partial (\omega^\beta+1)$ for some countable ordinal $\beta$, then $\alpha>\beta$. Note that $\alpha$ must be a countable ordinal, because the set of ends of type $\partial(\omega^\beta+1)$ for some countable ordinal $\beta$ must be countable by the Cantor-Bendixson Theorem, and the supremum of an ascending chain of countable ordinals must be countable. We now claim that $\partial X$ and $\partial (\omega^\alpha+1)$ do not contain homeomorphic copies of the other. Indeed there is no copy of $\partial X$ in $\partial (\omega^\alpha+1)$ because $\partial X$ is uncountable by assumption whereas $\partial(\omega^\alpha+1)$ is not. Conversely there is no homeomorphic copy of $\partial (\omega^\alpha+1)$ in $\partial X$ because there is no end in $\partial X$ which is of type $\partial (\omega^\alpha+1)$ by construction.
\end{proof}

\begin{lemma}[Immediate successors]\label{lem: immediate successors}
    Let $X$ be a locally finite graph. There exists a local structure $\partial Z^1$ with a unique maximal end that contains a homeomorphic copy of $\partial X$, but such that $X$ does not contain a homeomorphic copy of $\partial Z^1$. Moreover, if $\partial X$ is a local structure, then there is no local structure $\partial Y$ such that $\partial X\prec\partial Y\prec\partial Z^1$. There also exists a local structure $\partial Z^C$ with infinitely many maximal ends satisfying the same conditions.
\end{lemma}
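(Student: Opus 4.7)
The plan is to construct $Z^1$ and $Z^C$ uniformly using an auxiliary local structure $\partial Z_0$ incomparable to $\partial X$, guaranteed by Lemma~\ref{lem: incomparable local structures}. Define $Z^1 := (X \vee Z_0) \to 1$, obtained by attaching a copy of $X \vee Z_0$ at each vertex of a ray, and let $\mu$ be the end of the ray at infinity. Every neighborhood of $\mu$ contains all but finitely many of these attached copies, hence clopen copies of both $\partial X$ and $\partial Z_0$. If some $\nu \in \partial X$ satisfied $\mu \preceq \nu$, then a neighborhood of $\nu$ would contain a clopen copy of $\partial Z_0$, contradicting incomparability; the symmetric argument rules out $\mu$ being dominated by any end of $\partial Z_0$, so $\mu$ is strictly greater than every other end of $\partial Z^1$. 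Moreover, every clopen neighborhood of $\mu$ contains a tail of the attached copies, which is itself homeomorphic to $\partial Z^1$, so $\mu$ is stable with stable neighborhood equal to all of $\partial Z^1$; Proposition~\ref{prop: classification of local structures} then certifies that $\partial Z^1$ is a local structure.

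Each attached copy of $X$ realizes $\partial X$ as a clopen subset of $\partial Z^1$, giving $\partial X \preceq \partial Z^1$, while a hypothetical clopen copy of $\partial Z^1$ in $\partial X$ would force $\partial X$ to have an end whose neighborhoods contain $\partial Z_0$, again violating incomparability. For the immediate successor claim, assume a local structure $\partial Y$ satisfies $\partial X \prec \partial Y \prec \partial Z^1$ via a clopen embedding $\iota \colon \partial Y \hookrightarrow \partial Z^1$. If $\mu \in \iota(\partial Y)$, then $\iota(\partial Y)$ is a clopen sub-neighborhood of $\mu$ in $\partial Z^1$, and Lemma~\ref{lem: 4.17} identifies it with $\partial Z^1$, contradicting $\partial Y \prec \partial Z^1$. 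Otherwise $\iota(\partial Y)$ is disjoint from a neighborhood of $\mu$ (by Lemma~\ref{lem: small clopen neighborhoods}) and thus lies in a finite disjoint union of attached copies of $\partial X$ and $\partial Z_0$. The maximal ends of $\partial Y$ form one equivalence class of size one or Cantor type, and by decomposing this class across the finite union, at least one maximal end $\nu$ of $\partial Y$ has $\iota(\nu)$ in a single attached copy. Choosing a clopen sub-neighborhood of $\nu$ in $\partial Y$ small enough to map inside that attached copy, Lemma~\ref{lem: 4.17} applied at $\nu$ shows this sub-neighborhood is homeomorphic to $\partial Y$; this produces a clopen copy of $\partial Y$ inside either $\partial X$ or $\partial Z_0$, contradicting $\partial X \prec \partial Y$ or the incomparability of $\partial X$ and $\partial Z_0$, respectively.

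For $Z^C$, define $Z^C := (X \vee Z_0) \to \mathcal{C}$ by attaching a copy of $X \vee Z_0$ at each vertex of an infinite binary splitting tree $\mathcal{C}$. The Cantor space of ends of $\mathcal{C}$ forms a single equivalence class of maximal ends in $\partial Z^C$, each strictly dominating $\partial X \cup \partial Z_0$ by the argument above and admitting $\partial Z^C$ as a stable neighborhood, so Proposition~\ref{prop: classification of local structures} yields that $\partial Z^C$ is a local structure. The immediate successor analysis is parallel, splitting on whether a clopen embedding $\iota \colon \partial Y \hookrightarrow \partial Z^C$ meets the Cantor set of maximal ends of $\partial Z^C$ (in which case Lemma~\ref{lem: 4.17} forces $\partial Y \cong \partial Z^C$) or avoids them (reducing to the finite-attached-copies argument). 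The main subtlety lies in the pigeonhole step, namely showing that the Cantor equivalence class of maximal ends of $\partial Y$, when embedded into a finite disjoint union of clopen subsets of attached copies, has nonempty Cantor intersection with at least one component; this follows because a Cantor set admits no partition into finitely many countable clopen pieces, so at least one intersection is an uncountable clopen subset of a Cantor set, hence again Cantor.
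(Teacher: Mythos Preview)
Your proof is correct and follows the same construction as the paper: both take an incomparable local structure $\partial Z_0$ (the paper's $\partial Z'$) and set $Z^1=(X\vee Z_0)\to 1$ and $Z^C=(X\vee Z_0)\to C$, then rule out an intermediate $\partial Y$ by locating where its maximal end lands in $\partial Z^1$. Your treatment is more explicit than the paper's in verifying that $\partial Z^1$ is a local structure and in handling the embedding $\iota$, but the final ``Cantor pigeonhole'' paragraph is unnecessary: once $\iota(\partial Y)$ misses the maximal ends of $\partial Z^C$, any single maximal end of $\partial Y$ already lands in one attached clopen copy, and Lemma~\ref{lem: 4.17} finishes the job without needing the intersection to be a Cantor set.
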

\begin{proof}
    By Lemma~\ref{lem: incomparable local structures}, there exists a local structure $\partial Z'$ which does not contain a clopen copy of $\partial X$ and vice-versa. Let $Z^1:=(X\vee Z')\rightarrow 1$. The end space of $Z^1$ is a local structures, and it is immediate that there is a homeomorphic copy of $\partial X$ in $\partial Z^1$. There are no ends in $X$ of type $\partial Z'$, but there are in $\partial Z^1$, and hence the first result is shown. Now suppose that $\partial X$ is a local structure. Every end in $\partial Z^1$ is either contained in a copy of $\partial Z'$, in a copy of $\partial X$, or is of type $\partial Z^1$. If there exists a local structure $\partial Y$ such that $\partial X\prec\partial Y\prec\partial Z^1$, then there must be an end in $\partial Z^1$ of type $\partial Y$. Such an end $\nu$ cannot be contained in a copy of $\partial X$, or else by the definition of the poset, we would have that $\partial X\prec\nu\preceq\partial X$, a contradiction. If such a $\nu$ were in a copy of $\partial Z'$, then since $\partial X\prec\partial Y$, we would have that $\partial X\prec\nu\preceq\partial Z'$, a contradiction to $\partial Z'$ being incomparable to $\partial X$. Lastly, if such a $\nu$ were of type $\partial Z^1$, then we would have that $\partial Y\cong\partial Z$, a contradiction. An identical argument works for $Z^C:=(X\vee Z')\rightarrow C$.
\end{proof}

Because local structures all have immediate successors, by Lemma~\ref{lem: immediate successors}, both $\partial Z^1$ and $\partial Z^C$ function similarly to successor ordinals. On the other hand, many local structures, such as $\omega^\omega+1$, are not immediate successors to any other local structures and function similarly to limit ordinals. These are minimal upper bounds of certain sequences of locally finite graphs. In the following proposition, we show that minimal upper bounds exist for any countable collection of local structures.

\begin{proposition}[Minimal upper bounds]\label{prop: minimal upper bounds}
    Let $\{\partial Z_n\}_{n\in\Z_{>0}}$ be a countable collection of local structures of cardinality greater than one. There exists a local structure $\partial Z^1$ with a unique maximal end and the property that $\partial Z^1\succ\partial Z_n$ for all $n$, and such that there is no $\partial Y$ such that $\partial Z_n\prec\partial Y\prec\partial Z^1$. There also exists a local structure $\partial Z^C$ with infinitely many maximal ends satisfying the same conditions.
\end{proposition}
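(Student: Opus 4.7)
The plan is to mimic the construction in Lemma~\ref{lem: immediate successors}, extended to a countable family. For each $n \in \Z_{>0}$, set $W_n := Z_1 \vee Z_2 \vee \cdots \vee Z_n$, which has signature given by iterated wedge. Let $Z^1$ be a locally finite graph with signature $\{W_n\}_n \to 1$: pick a ray with vertices $x_0, x_1, x_2, \ldots$ and attach a copy of $W_n$ at $x_n$. For $Z^C$, replace the ray with an infinite binary splitting tree $\mathcal{C}$ having a basepoint $c_0$, and attach a copy of $W_n$ at every vertex at distance $n$ from $c_0$; since $\mathcal{C}$ is vertex-transitive the choice of $c_0$ is immaterial. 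I describe the $Z^1$ case in detail; the $Z^C$ case is analogous.

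First I would verify that $\partial Z^1$ is a local structure with a unique maximal end $\mu$, namely the end at infinity of the ray. Every other end of $\partial Z^1$ lies in some copy of $\partial W_n$, hence in a copy of some $\partial Z_i$, and therefore has a stable neighborhood of type $\partial Z_i$ inherited from that copy. Since $Z_i$ appears at every vertex $x_n$ with $n \geq i$, copies of $\partial Z_i$ lie cofinally close to $\mu$, so each such end is strictly dominated by $\mu$. Moreover, any clopen neighborhood $U$ of $\mu$ contains a tail of the form $\{W_n\}_{n \geq k} \to 1$ for some $k$, and removing finitely many $W_j$ leaves the same $Z_i$ appearing cofinally near $\mu$, so this tail is homeomorphic to $\partial Z^1$. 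Hence $\mu$ is a stable, unique maximal end of $\partial Z^1$, and $\partial Z^1$ is a local structure by condition~(4) of Proposition~\ref{prop: classification of local structures}.

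Next, for each $n$ the clopen inclusion $\partial Z_n \subseteq \partial W_n \subseteq \partial Z^1$ gives $\partial Z_n \preceq \partial Z^1$; strictness follows because $\partial Z^1$ contains an end of type $\partial Z^1$ (namely $\mu$), whose stable neighborhoods contain clopen copies of every $\partial Z_m$ simultaneously, a feature that $\partial Z_n$ does not possess. For the minimality claim, suppose for contradiction that $\partial Y$ is a local structure with $\partial Z_n \prec \partial Y \prec \partial Z^1$ for every $n$. By Lemma~\ref{lem: translate local structures to ends}, realize $\partial Y$ as a clopen subset of $\partial Z^1$. The set $\partial Y$ cannot contain $\mu$, for otherwise by condition~(4) of Proposition~\ref{prop: classification of local structures} we would have $\partial Y \cong \partial Z^1$, contradicting $\partial Y \prec \partial Z^1$. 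Thus $\partial Y$ is a closed subset of $\partial Z^1 \setminus \{\mu\} = \bigsqcup_n \partial W_n$ disjoint from the closed set $\{\mu\}$, and Lemma~\ref{lem: small clopen neighborhoods} together with compactness force $\partial Y$ into a finite union $\bigsqcup_{j=1}^{k} \partial W_{n_j}$. The unique maximal end of $\partial Y$ then lies in some copy of $\partial Z_i$ with $i \leq n_k$, and its stable neighborhood, which equals $\partial Y$ by condition~(4), must be contained in that copy of $\partial Z_i$. Therefore $\partial Y \preceq \partial Z_i \prec \partial Y$, a contradiction.

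The main obstacle is the choice of $W_n$: attaching a single $Z_n$ at $x_n$ would leave each $\partial Z_i$ appearing only once in $\partial Z^1$, making $\mu$ merely dominated by (rather than strictly above) the supremum of the $\partial Z_n$, and wrecking stability. The growing wedges $W_n = Z_1 \vee \cdots \vee Z_n$ force every $\partial Z_i$ to appear cofinally near $\mu$, which simultaneously gives strict dominance $\partial Z_i \prec \partial Z^1$ and the self-similarity needed for $\mu$ to be stable. The $Z^C$ case differs only in that the tree $\mathcal{C}$ contributes a Cantor set of maximal ends, all of the same type (by homogeneity of $\mathcal{C}$), and the same arguments apply verbatim to any one of them.
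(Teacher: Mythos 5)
Your construction $Z^1=\{Z_1\vee\cdots\vee Z_n\}_n\to 1$ is exactly the one the paper uses, and your verification that it is a local structure with a unique maximal end, as well as your minimality argument, are essentially sound. However, there is a genuine gap in the strictness step. You assert that containing clopen copies of every $\partial Z_m$ simultaneously is ``a feature that $\partial Z_n$ does not possess,'' but this is false precisely when the collection contains a maximum element $\partial Z_N$ with $\partial Z_m\preceq\partial Z_N$ for all $m$: in that case $\partial Z_N$ itself contains clopen copies of every $\partial Z_m$. Worse, in that case your construction can genuinely collapse. Take $Z_1=1$ and $Z_n=1\to C$ for $n\geq 2$ (a valid collection of cardinality two, with $\partial 1\prec\partial(1\to C)$). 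Then $W_n\cong 1\to C$ for all $n\geq 2$, so $Z^1\cong(1\to C)\to 1$, which is proper homotopy equivalent to $1\to C$ itself (the paper notes this explicitly in the examples following Definition~\ref{def: ordered signature}: both end spaces are the compact metrizable space with dense isolated points whose perfect kernel is a Cantor set). Hence $\partial Z^1\cong\partial Z_2$ and $\partial Z^1\not\succ\partial Z_2$, so your $Z^1$ is not a strict upper bound.

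The paper avoids this by splitting into two cases: if the collection contains a maximum $\partial Z_N$, it instead applies Lemma~\ref{lem: immediate successors}, whose construction $((Z_N\vee Z')\to 1)$ wedges in a local structure $\partial Z'$ \emph{incomparable} to $\partial Z_N$ (supplied by Lemma~\ref{lem: incomparable local structures}); the presence of ends of type $\partial Z'$ forces $\partial Z^1\not\preceq\partial Z_N$ and hence strictness. Only when the collection has no maximum does the paper use your construction, and there your strictness argument does go through (if $\partial Z^1\cong\partial Z_n$ then $\partial Z_n$ would be a maximum, a contradiction). To repair your proof you need to add this case distinction, or otherwise modify the construction so that $\partial Z^1$ is guaranteed to contain an end type not realized in any $\partial Z_n$.
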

\begin{proof}
    If the collection $\{\partial Z_n\}_n$ contains an element $\partial Z_N$ such that $\partial Z_n\preceq\partial Z_N$ for all $n$, then we may apply Lemma~\ref{lem: immediate successors} to obtain the result. Otherwise, define $Z^1=\{Z_1\vee\cdots\vee Z_n\}_n\rightarrow 1$. Because $\{\partial Z_n\}_{n\in\Z_{>0}}$ does not contain a unique maximal local structure, there must be at least one end in $\partial Z^1$ which is not of type $\partial Z_n$ for any $n$. Since $\partial Z^1$ is self-similar, it is a local structure by Proposition~\ref{prop: classification of local structures}. Note that $\partial Z_n\preceq\partial Z^1$ for all $n$ by the construction of $Z^1$, and hence $\partial Z^1$ is an upper bound to this collection. To show that it is a minimal upper bound, observe that any end in $\partial Z^1$ is either of type $\partial Z^1$, in which case that end is maximal, or is contained in a copy of one of the $\partial Z_n$, and is therefore of a type which is not larger than the $\partial Z_n$ in which it embeds. Thus, $\partial Z^1$ is a minimal upper bounds of the sequence $\{\partial Z_n\}_{n\in\Z_{>0}}$. An identical argument works for $Z^C:=\{Z_1\vee\cdots\vee Z_n\}_n\rightarrow C$.
\end{proof}

Throughout this section, we have shown that the poset of local structures and that of countable ordinals share many similar properties. A natural question that arises from this is whether the set of countable ordinals is in bijection with the set of local structures. As the next proposition will show, this is only true when the continuum hypothesis, which states that $\aleph_1=2^{\aleph_0}$, is true: the set of countable ordinals is of cardinality $\aleph_1$, and Proposition~\ref{prop: cardinality of local structures} states that there are $2^{\aleph_0}$ local structures. We will need the following lemma.

\begin{lemma}\label{lem: class of incomparables}
    For each $n\in\Z_{\geq 0}$, let $X_n=(\omega^n+1)\rightarrow o(1)$ and let $Y_n=(\omega^n+1)\to C$. If $n\neq m$, then the pair $\partial X_n$ and $\partial X_m$, and the pair $\partial Y_n$ and $\partial Y_m$ are both incomparable in the poset of local structures.
\end{lemma}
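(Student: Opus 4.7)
The plan is to identify, for each family, a local topological invariant of the end spaces that differs between $\partial X_n$ and $\partial X_m$ (respectively $\partial Y_n$ and $\partial Y_m$) for $n \neq m$ and is preserved under clopen homeomorphic embeddings, hence obstructs the relation $\preceq$ on local structures in both directions. In both cases this invariant is extracted from the Cantor--Bendixson analysis, combined with either the genus structure (for $X_n$) or the perfect kernel (for $Y_n$).

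For the $X_n$ family, I first describe $\partial X_n$ explicitly. It is countable, and decomposes as a disjoint union of countably many clopen copies of $\partial(\omega^n+1)$ (one per vertex of the underlying $o(1)$-ray) together with the single end $\mu_n$ of that ray. This $\mu_n$ is the unique end of $\partial X_n$ accumulated by genus, and as a limit of the rank-$n$ maximal ends of the attached trees, its Cantor--Bendixson rank in $\partial X_n$ is exactly $n+1$, while every other end has rank at most $n$. If $\phi\colon \partial X_n \hookrightarrow \partial X_m$ were a clopen homeomorphic embedding (respecting the $g$-structure by the definition of $\preceq$ for local structures), then $\phi$ would have to carry $\mu_n$ to the unique $g$-accumulated end $\mu_m$ of $\partial X_m$. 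Since CB rank is a local invariant and $\phi(\partial X_n)$ is clopen in $\partial X_m$, this forces $n+1 = m+1$, a contradiction. Swapping $n$ and $m$ rules out the reverse embedding.

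The $Y_n$ case requires a finer invariant because $Y_n$ is a tree and carries no genus data. Here $\partial Y_n$ has $\partial C$ as its perfect kernel, and its scattered part is a countable disjoint union of clopen copies of $\partial(\omega^n+1)$. After verifying by transfinite induction the general identity $F^{(\alpha)} = F \cap Z^{(\alpha)}$ for any clopen $F \subseteq Z$, one sees that the minimal ordinal at which the CB derivative of $\partial Y_n$ stabilizes (to its perfect kernel) is exactly $n+1$. For $n < m$, this already precludes $\partial Y_m \preceq \partial Y_n$, since otherwise the clopen image would force $(\partial Y_m)^{(n+1)}$ to be perfect while it still contains the isolated rank-$m$ ends of attached trees. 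For the other direction $\partial Y_n \preceq \partial Y_m$, I would argue by contradiction: if $\phi\colon \partial Y_n \hookrightarrow \partial Y_m$ is a clopen embedding, then $\phi(\partial Y_n)$ must contain a Cantor-type end $\nu'$ of $\partial Y_m$ (applying the identity above to perfect kernels), so $\phi(\partial Y_n)$ is a clopen neighborhood of $\nu'$ in $\partial Y_m$ and hence contains ends of $\partial Y_m$ of every CB rank up to $m$, in particular of rank $n+1$. Such an end would be isolated in $(\partial Y_m)^{(n+1)}$, hence isolated in $(\phi(\partial Y_n))^{(n+1)} = \phi(\partial Y_n) \cap (\partial Y_m)^{(n+1)}$, contradicting the fact that this set is homeomorphic to the perfect kernel of $\partial Y_n$. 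The main obstacle is this final direction: verifying that every clopen neighborhood of a Cantor-type end of $\partial Y_m$ must meet attached trees of every rank below $m+1$, which is a careful combinatorial observation about the Cantor tree with uniform decorations and is the place where the geometric structure of $Y_m$ enters most forcefully.
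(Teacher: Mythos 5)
Your proof is correct and rests on the same key invariants as the paper's: the unique genus-accumulated end of $\partial X_n$ (respectively the perfect kernel of $\partial Y_n$) forces any clopen copy to contain the distinguished maximal end(s), and a Cantor--Bendixson rank count then rules out $n\neq m$. The only organizational difference is that the paper first upgrades ``comparable'' to ``homeomorphic'' via stability of neighborhoods (Lemma~\ref{lem: 4.17}) and then distinguishes the whole spaces by the ranks of their non-distinguished ends, whereas you compute Cantor--Bendixson derivatives of the clopen image directly; both routes are sound.
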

\begin{proof}
    For each $n$, there is only one local structure in $\partial X_n$ which is accumulated by genus. Thus, if $\partial X_m\preceq \partial X_n$ for some $m$ and $n$, then the homeomorphic copy of $\partial X_m$ in $\partial X_n$ would need to include maximal points, which would yield that $\partial X_m\cong \partial X_n$. In other words, if the local structures are not incomparable, they must be equal. If $m<n$, then $\partial X_n\not\preceq\partial X_m$ because $\partial X_n$ contains ends of Cantor-Bendixon rank $n$ which are not accumulated by genus and $\partial X_m$ does not. The proof for the $\partial Y_n$ is very similar, replacing the property of being accumulated by genus with being of Cantor type.
\end{proof}

\begin{proposition}\label{prop: cardinality of local structures}
    There are exactly $2^{\aleph_0}$ local structures with countable end space, exactly $2^{\aleph_0}$ local structures of genus zero, and exactly $\aleph_1$ local structures which fall in both categories. Moreover, there are exactly $2^{\aleph_0}$ stable locally finite graphs.
\end{proposition}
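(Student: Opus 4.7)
The proof has four parts. For the upper bounds, every local structure is the characteristic pair of a locally finite graph (Theorem~\ref{AK-B 2.2}), and every stable locally finite graph is also determined up to proper homotopy by its characteristic pair. Since Cantor space is second countable, it admits at most $2^{\aleph_0}$ closed subsets, and hence at most $2^{\aleph_0}$ characteristic pairs. This gives the upper bound of $2^{\aleph_0}$ for each of the three $2^{\aleph_0}$ claims.

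For the count of $\aleph_1$ local structures with both countable end space and genus zero, I would argue as follows. Such a local structure is a tree whose end space is countable; by Proposition~\ref{prop: classification of local structures}, it has a unique maximal end type of size either one or infinity, and countability forces size one. Theorem~\ref{MS 1} then identifies the end space with $\omega^\alpha+1$ for some countable ordinal $\alpha$, and conversely every such tree is self-similar and hence a local structure. This produces a bijection with the countable ordinals, giving the count $\aleph_1$.

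For the two $2^{\aleph_0}$ lower bounds, I would use the pairwise incomparable families $\{\partial X_n\}_n$ and $\{\partial Y_n\}_n$ from Lemma~\ref{lem: class of incomparables}. For each infinite $S = \{n_1 < n_2 < \cdots\}\subseteq\Z_{>0}$ set
$$Z_S^1 := \{X_{n_1}\vee\cdots\vee X_{n_k}\}_k \to 1, \qquad W_S^1 := \{Y_{n_1}\vee\cdots\vee Y_{n_k}\}_k \to 1,$$
both of which are local structures by Proposition~\ref{prop: minimal upper bounds}. The first has countable end space (a countable union of countable sets), and the second is a tree. To separate distinct $S, S'$, observe that the set of subordinate end types of $Z_S^1$ contains $\partial X_n$ for every $n\in S$, while for $n\notin S$ the type $\partial X_n$ fails to be subordinate to $Z_S^1$: by pairwise incomparability it cannot occur as a type in any $X_m$ with $m\in S$, and it cannot equal $\partial Z_S^1$ itself because the latter admits infinitely many pairwise incomparable subordinate types (the various $\partial X_m$, $m\in S$) whereas $\partial X_n$ admits only finitely many. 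The identical argument distinguishes the $W_S^1$. Since there are $2^{\aleph_0}$ infinite subsets of $\Z_{>0}$, this matches the upper bound in both cases.

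For the final claim on stable locally finite graphs, the upper bound is again the characteristic pair count. For the lower bound I would verify that each $W_S^1$ above is stable. First, each $Y_n = (\omega^n+1)\to C$ is stable: its non-maximal ends lie in attached $\omega^n+1$-trees (stable by Example~\ref{ex: stability}(1)), and its Cantor-type ends are stable by Proposition~\ref{prop: classification of local structures}(4) applied to $Y_n$ itself. Then $W_S^1$ is stable because every non-maximal end lies in a stable sub-copy of some $Y_n$, and its maximal end is stable by Proposition~\ref{prop: classification of local structures}(4) applied to $W_S^1$. The main technical obstacle is the distinguishing invariant argument in the previous paragraph: one must verify simultaneously that $\partial X_n$ occurs as a subordinate type of $Z_S^1$ exactly when $n\in S$, which relies both on the pairwise incomparability in Lemma~\ref{lem: class of incomparables} and on a careful description of the subordinate types appearing beneath the maximal end of a local structure built via Proposition~\ref{prop: minimal upper bounds}.
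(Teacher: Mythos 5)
Your proposal is correct and takes essentially the same route as the paper: the upper bound is a crude cardinality count, the $\aleph_1$ overlap comes from Theorem~\ref{MS 1}, and the lower bounds are obtained by attaching, for each subset $S\subseteq\Z_{>0}$, the wedges $\{X_{n_1}\vee\cdots\vee X_{n_k}\}_k\to 1$ (resp.\ the $Y_n$ version) built from the pairwise incomparable family of Lemma~\ref{lem: class of incomparables}. The only cosmetic differences are that you bound from above via characteristic pairs rather than adjacency functions on a countable vertex set, and you distinguish the graphs by the set of end types occurring in them (indexed by arbitrary infinite subsets), whereas the paper indexes by sets $S_r=f([r,r+1]\cap\Q)$ with nonempty symmetric differences so as to get genuine incomparability; both yield the required $2^{\aleph_0}$ pairwise non-homeomorphic stable examples.
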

\begin{proof}
    Every locally finite graph has countably many vertices, and thus the vertices can be enumerated by $\Z_{\geq0}$. This implies that a locally finite graph can be represented as a symmetric function $g\colon\Z_{\geq0}\times\Z_{\geq0}\to\{0,1\}$ where two nodes $a$ and $b$ have an edge between them if and only if $g(a,b)=1$. Thus, the number of locally finite graphs, stable or not, is bounded above by $2^{\Z_{\geq0}\times\Z_{\geq0}}=2^{\aleph_0}$.

    We next establish a bound from below. To do this, we construct a subset of $2^{\Z_{\geq 0}}$ which is in bijection with $\R$ such that any two elements $A$ and $B$ of this subset have the property that both $A\setminus B$ and $B\setminus A$ are non-empty. Fix a bijection $f\colon\Q\rightarrowtail \hspace{-1.9ex} \twoheadrightarrow\Z_{\geq 0}$, and for each $r\in\R$, define $S_r:=f([r,r+1]\cap\Q)$. Thus the set $\{S_r\,:\,r\in\R\}$ satisfies the desired properties. Let $\pi_r\colon\Z_{\geq 0}\rightarrowtail \hspace{-1.9ex} \twoheadrightarrow S_r$ be a bijection. Let $X_{S_r}:=\{X_{\pi_r(0)}\vee\cdots\vee X_{\pi_r(n)}\}_n\to 1$ and let $Y_{S_r}:=\{Y_{\pi_r(0)}\vee\cdots\vee Y_{\pi_r(n)}\}_n\to 1$, where $X_n$ and $Y_n$ are the graphs defined in Lemma~\ref{lem: class of incomparables}. Both of these graphs are stable because all the $X_i$ and $Y_i$ graphs are stable. We claim that for $r\neq r'$, the pair $\partial X_{S_{r}}$ and $\partial X_{S_{r'}}$ and the pair $\partial Y_{S_{r}}$ and $\partial Y_{S_{r'}}$ are incomparable. Indeed if $p\in S_r\setminus S_{r'}$ then $\partial X_p\preceq\partial X_{S_r}$ but $\partial X_p\not\preceq\partial X_{S_{r'}}$ so $\partial X_{S_{r'}}\not\preceq\partial X_{S_r}$. We also get that $\partial Y_{S_{r'}}\not\preceq\partial Y_{S_r}$ by a similar argument. A symmetric argument for $q\in S_{r'}\setminus S_r$ shows that $\partial X_{S_{r}}\not\preceq\partial X_{S_{r'}}$ and $\partial Y_{S_{r}}\not\preceq\partial Y_{S_{r'}}$. Thus we have found $2^{\aleph_0}$ stable local structures with countable end space, namely the set $\{X_{S_r}\,:\,r\in\R\}$, and $2^{\aleph_0}$ local structures of genus zero, namely the set $\{Y_{S_r}\,:\,r\in\R\}$. These local structures are stable, so we have also established the moreover statement.

    For the overlap statement, any local structure which has genus zero and countable end space, by Theorem~\ref{MS 1}, is of the form $\partial(\omega^\alpha+1)$, where $\alpha$ is some countable ordinal. Thus, these local structures are indexed by countable ordinals, of which there are $\aleph_1$ many.
\end{proof}

\subsection{Wedge decomposition}\label{sec: wedge decomposition}

In this section, we prove the existence and uniqueness of a natural wedge decomposition for graphs whose maximal ends are all stable. A version of this decomposition for the end space of a surface was done by Schaffer-Cohen in \cite[Lemma~4.5]{SC} (and \cite{BV}). We start by showing that locally finite graphs with stable maximal ends have finitely many maximal ends.

\begin{lemma}[Finitely many maximal end types] \label{lem: stable -> finite max end types}
    If every maximal end of $X$ is stable, then $X$ has finitely many maximal end types.
\end{lemma}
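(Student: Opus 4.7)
The plan is to prove the contrapositive: assume $X$ has infinitely many maximal end types and derive a contradiction by combining compactness of $\partial X$ with stability, where the crucial point is that stability lets us extract neighborhoods of the $\mu_n$ inside neighborhoods of the limit $\mu$.

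First I would pick a representative $\mu_n \in \partial X$ for each distinct maximal end type, producing a sequence of pairwise inequivalent (and hence pairwise distinct) maximal ends. Since $\partial X$ is compact and metrizable, I can pass to a subsequence, again called $\{\mu_n\}$, that converges to some $\mu \in \partial X$. Note in particular that $\mu \neq \mu_n$ for all but finitely many $n$, since the types of the $\mu_n$ are all distinct.

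Next I would exploit stability. Each $\mu_n$ is stable, so it has a clopen stable neighborhood $V_n^0$. Given any clopen neighborhood $W$ of $\mu$, eventually $\mu_n \in W$, and I can take $V_n := V_n^0 \cap W'$ for a small clopen neighborhood $W' \subseteq W$ of $\mu_n$; by Lemma~\ref{lem: 4.17}, $V_n$ is still a clopen stable neighborhood of $\mu_n$, and $V_n \subseteq W$. So I can arrange that the stable neighborhoods $V_n$ shrink into any preassigned neighborhood of $\mu$.

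The key step is to show $\mu_n \preceq \mu$ for all sufficiently large $n$. For any clopen neighborhood $W$ of $\mu$, the argument above produces, for every large enough $n$, a stable neighborhood $V_n \subseteq W$ of $\mu_n$, and $V_n$ is itself a homeomorphic copy of a neighborhood of $\mu_n$. By the definition of the preorder on ends (Definition~\ref{def: order on ends}), this gives $\mu_n \preceq \mu$. Since $\mu_n$ is maximal in $\partial X$, the relation $\mu_n \preceq \mu$ forces $\mu_n \sim \mu$. But this holds for all large $n$, contradicting the fact that the $\mu_n$ have pairwise distinct end types.

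The main obstacle I expect is the step of shrinking $V_n$ inside a neighborhood of $\mu$ while keeping it stable; once Lemma~\ref{lem: 4.17} is applied to secure this, the chain of implications from $V_n \subseteq W$ to $\mu_n \preceq \mu$ to $\mu_n \sim \mu$ is essentially immediate and delivers the contradiction.
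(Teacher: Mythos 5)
Your reduction to the key claim ``$\mu_n\preceq\mu$ for all sufficiently large $n$'' is where the argument breaks, and that claim does not follow from what you establish. By Definition~\ref{def: order on ends}, $\mu_n\preceq\mu$ for a \emph{fixed} $n$ requires that \emph{every} neighborhood of $\mu$ contain a homeomorphic copy of a neighborhood of $\mu_n$. Your construction only produces such a copy inside those neighborhoods $W$ of $\mu$ that already contain the point $\mu_n$: the threshold ``$n$ large enough'' depends on $W$ and tends to infinity as $W$ shrinks, so for no fixed $n$ are all neighborhoods of $\mu$ accounted for. Worse, the intermediate claim is genuinely false under the hypotheses you actually invoke (stability and maximality of the $\mu_n$ alone): in the graph $\{(\omega^n+1)\to o(1)\}_n\to o(1)$, the ends $\mu_n$ of type $\partial((\omega^n+1)\to o(1))$ are pairwise inequivalent, maximal, and stable, and they converge to the end $\mu$ of the underlying ray, yet $\mu_n\not\preceq\mu$ for all but at most one $n$, since a small enough neighborhood of $\mu$ meets only the sets $\partial((\omega^m+1)\to o(1))$ for large $m$, which contain no copy of $\partial((\omega^n+1)\to o(1))$ by Lemma~\ref{lem: class of incomparables}. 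This graph fails to contradict the lemma only because the limit end $\mu$ is maximal but \emph{not} stable.

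That identifies the missing ingredient: your proof never uses stability at the limit, which is where the hypothesis must actually be applied. The paper's argument takes an accumulation point $\nu$ of the $\mu_n$, passes to a maximal end $\mu\succeq\nu$ via Proposition~\ref{MR 4.7} so that representatives $\nu_k\sim\mu_{n_k}$ converge to $\mu$, and then combines stability of \emph{that} maximal end $\mu$ with the fact that $E(\mu_{n_k})$ is closed because $\mu_{n_k}$ is maximal (Lemma~\ref{lem: orbits of max ends}): every stable neighborhood of $\mu$, hence every sufficiently small neighborhood by Lemma~\ref{lem: 4.17}, meets $E(\mu_{n_l})$ for large $l$, so $\mu$ lies in the closed set $E(\mu_{n_l})$, forcing $\mu\sim\mu_{n_l}$ for infinitely many $l$ and contradicting pairwise inequivalence. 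Your outline can be repaired, but only by routing the argument through stability of a maximal end dominating the limit rather than through stability of the $\mu_n$ themselves.
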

\begin{proof}
    Suppose for contradiction that $X$ has infinitely many maximal end types. Then there exists a sequence of maximal ends $\{\mu_n\}_n$ such that $\mu_n\not\sim\mu_m$ for $n\neq m$. Since $\partial X$ is compact, there is an accumulation point $\nu$ of the sequence. By Proposition~\ref{MR 4.7}, there is a maximal end $\mu$ such that $\nu\preceq\mu$, and so therefore there is a subsequence $\{\mu_{n_k}\}_k$ of $\{\mu_n\}_n$ and $\nu_k\sim\mu_{n_k}$ for each $k$ such that $\nu_k\to\mu$ as $k\to\infty$. By the assumption that all the $\mu_n$ are of different types, we may assume that $\mu_{n_k}\not\sim\mu$ for all $k$ by passing to a subsequence if necessary. This means that $\mu$ is not an accumulation point of $E(\mu_{n_k})$ for any $k$. But as $\mu$ is maximal, it is stable too, which means that there must exist a $k$ such that for all $l>k$, we have that $E(\mu_{n_l})$ intersects any stable neighborhood of $\mu$ non-trivially. This would imply that $\mu$ is an accumulation point of $E(\mu_{n_l})$ for $l>k$, which is a contradiction.
\end{proof}

We now prove a technical lemma which will be useful in Proposition~\ref{prop: wedge decomposition}.

\begin{lemma}\label{lem: stable nbhds of wedge comps}
    Let $X$ be a locally finite graph and $\mu\sim\partial Z$ be a stable maximal end of $X$ which is of Cantor type. Then there exists a neighborhood $U$ of $E(\mu)$ such that for any $\lambda\in E(\mu)$, the set $U$ is a stable neighborhood of $\lambda$.
\end{lemma}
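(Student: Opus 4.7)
The plan is to build $U$ as a finite disjoint union of stable neighborhoods of points in $E(\mu)$, and then exploit the self-similarity of $\partial Z$ to check simultaneous stability at every $\lambda\in E(\mu)$.

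First I would record some structural facts about $E(\mu)$. Because $\mu$ is maximal, $E(\mu)$ is closed by Lemma~\ref{lem: orbits of max ends}, hence compact. Since $\mu$ is of Cantor type, $E(\mu)$ meets every neighborhood of every one of its points in more than one point, so $E(\mu)$ is perfect; together with the dichotomy of Proposition~\ref{MR 4.7}, this forces $E(\mu)$ to be a Cantor set. By Lemma~\ref{lem: 4.17}, every $\lambda\in E(\mu)$ is stable with stable neighborhoods homeomorphic to $\partial Z$, and by Proposition~\ref{prop: classification of local structures}, $\partial Z$ is self-similar with a unique maximal end type of infinite cardinality, realized as a Cantor set of maximal ends inside any stable neighborhood.

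Next I would construct $U$. For each $\lambda\in E(\mu)$ pick a clopen stable neighborhood $V_\lambda\cong\partial Z$; these cover the compact set $E(\mu)$, so finitely many $V_{\lambda_1},\dots,V_{\lambda_n}$ suffice. Disjointify by setting $V_i':=V_{\lambda_i}\setminus(V_{\lambda_1}\cup\cdots\cup V_{\lambda_{i-1}})$ and discarding those that miss $E(\mu)$; the remaining sets are pairwise disjoint, clopen, each contains a point $\lambda_i'\in E(\mu)$, and each sits inside the stable neighborhood $V_{\lambda_i}$ of $\lambda_i'$ (by Lemma~\ref{lem: 4.17} applied inside $V_{\lambda_i}$, any $V_{\lambda_i}$ is simultaneously a stable neighborhood of every point of $E(\mu)$ it contains). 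Hence by Lemma~\ref{lem: 4.17} each $V_i'\cong\partial Z$ and is stable at $\lambda_i'$. Let $U:=V_1'\sqcup\cdots\sqcup V_k'$ be the disjoint union of these surviving pieces; it is clopen and covers $E(\mu)$.

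The remaining task is to verify that $U$ is stable at every $\lambda\in E(\mu)$. Such $\lambda$ lies in some $V_j'$, and for any clopen $U'\subseteq U$ with $\lambda\in U'$, I need to embed $U$ into $U'$. The intersection $U'\cap V_j'$ is a clopen neighborhood of $\lambda$ inside the stable set $V_j'\cong\partial Z$, so by Lemma~\ref{lem: 4.17} it too is homeomorphic to $\partial Z$. Here I invoke the key splitting step: since the maximal ends of $\partial Z$ form a Cantor set, the Cantor subset $E(\mu)\cap(U'\cap V_j')$ admits a partition into $k$ nonempty clopen pieces, which I extend (using Lemma~\ref{lem: small clopen neighborhoods} to separate the compact pieces by disjoint clopen neighborhoods, then absorbing the remainder into any one block) to a clopen partition $A_1\sqcup\cdots\sqcup A_k=U'\cap V_j'$ with each $A_l$ containing a point of $E(\mu)$. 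Each $A_l$ is a clopen neighborhood of a point of $E(\mu)$ sitting inside the stable neighborhood $V_j'$, so by Lemma~\ref{lem: 4.17} again, $A_l\cong\partial Z\cong V_l'$. Therefore $A_1\sqcup\cdots\sqcup A_k\cong U$, producing the required homeomorphic copy of $U$ inside $U'$.

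The main obstacle I anticipate is the partition step in the last paragraph: verifying cleanly that a stable neighborhood homeomorphic to $\partial Z$ can be cut into any prescribed finite number of clopen pieces, each homeomorphic to $\partial Z$. This is where the hypothesis that $\mu$ is of Cantor type is essential (it guarantees Cantor-many maximal ends to distribute among the pieces), and where Proposition~\ref{prop: classification of local structures}~$(4)$ is used to upgrade ``contains a maximal end'' to ``is a stable neighborhood homeomorphic to $\partial Z$''. Everything else is compactness and Lemma~\ref{lem: 4.17}.
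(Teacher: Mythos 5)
Your proof is correct and follows essentially the same route as the paper's: cover the compact set $E(\mu)$ by finitely many stable neighborhoods and use the Cantor-type hypothesis together with Lemma~\ref{lem: 4.17} to identify the resulting union with copies of $\partial Z$. The only difference is organizational: the paper absorbs the extra pieces into a single stable neighborhood to get $U\cong U_\mu$ and then concludes by symmetry, whereas you disjointify first and verify stability at each $\lambda$ directly by splitting a sub-neighborhood into $k$ clopen copies of $\partial Z$ --- two directions of the same self-similarity argument.
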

\begin{proof}
    For each $\lambda\in E(\mu)$, let $U_\lambda$ be a stable neighborhood of $\lambda$. By Lemma~\ref{lem: orbits of max ends}, the set $E(\mu)$ is closed, and therefore covered by finitely many of these stable neighborhoods $U_{\lambda_1},...,U_{\lambda_n}$. We claim that $U:=U_{\lambda_1}\cup\cdots\cup U_{\lambda_n}$ is the desired set. We may assume, increasing $n$ if necessary, that $\mu=\lambda_1$ without loss of generality. Because $\partial Z$ is of Cantor type, any neighborhood of a point of type $\partial Z$ contains infinitely many other points of type $\partial Z$. Thus by Lemma~\ref{lem: 4.17}, there are non-intersecting homeomorphic copies of $U_{\lambda_2},...,U_{\lambda_n}$ in $U_{\lambda_1}$ which do not contain $\lambda_1$. By Lemma~\ref{lem: 4.17} we get that $U_{\mu}\cong U_{\lambda_1}\cup\cdots\cup U_{\lambda_n}$. To show that $U$ is a stable neighborhood of any other $\lambda\in E(\mu)$, we may follow the same argument as above, increasing $n$ if necessary, to assume that $\lambda=\lambda_1$.
\end{proof}

We are now ready to state the main result of this section.

\begin{proposition}[Wedge decomposition]\label{prop: wedge decomposition}
    Let $X$ be a locally finite graph with stable maximal ends and $g(X)\in\{0,\infty\}$. Then $X\cong X_1\vee\cdots\vee X_n$, where $\partial X_i$ is a local structure for each $i$, and for $i\neq j$, either $\partial X_i\cong\partial X_j$ and both have a unique maximal end, or they are incomparable. This decomposition is unique up to proper homotopy equivalence and ordering.
\end{proposition}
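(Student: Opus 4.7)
The plan is to first produce a clopen decomposition of $\partial X$ into local structures and then invoke Lemma~\ref{lem: wedges and clopens} to translate this back to a wedge decomposition of $X$. By Lemma~\ref{lem: stable -> finite max end types}, $X$ has only finitely many maximal end types, represented by $\mu_1, \ldots, \mu_k$; Proposition~\ref{MR 4.7} ensures that each orbit $E(\mu_i)$ is either finite or of Cantor type.

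For existence, I would select seed clopen pieces as follows: for each $\lambda$ in a finite orbit $E(\mu_i)$, take a stable clopen neighborhood $S_\lambda$ of $\lambda$; for each Cantor orbit $E(\mu_j)$, apply Lemma~\ref{lem: stable nbhds of wedge comps} to obtain a clopen $V_j \supseteq E(\mu_j)$ that is a stable neighborhood of every point of $E(\mu_j)$. Each such seed is a local structure by Lemma~\ref{lem: 4.17}. Use Lemma~\ref{lem: small clopen neighborhoods} to shrink the seeds so they are pairwise disjoint; they remain local structures of the correct type by Lemma~\ref{lem: 4.17}. The leftover set $L := \partial X \setminus (\bigsqcup_\lambda S_\lambda \sqcup \bigsqcup_j V_j)$ is clopen and consists entirely of non-maximal ends.

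The key step is absorbing $L$ into the seed pieces. Given $\nu \in L$, choose a maximal end $\mu(\nu) \succeq \nu$. Since any sequence $\nu_n \to \nu$ satisfies $\nu_n \preceq \nu$ directly from the definition of the preorder, a sufficiently small clopen neighborhood $N_\nu \subseteq L$ of $\nu$ has every end dominated by $\nu$, hence by $\mu(\nu)$. By compactness of $L$, finitely many $N_{\nu_1}, \ldots, N_{\nu_s}$ cover $L$; after disjointifying, attach each $N_i$ to the seed piece containing $\mu(\nu_i)$. Each augmented piece $W$ still has the same stable maximal ends with the same orbit cardinality (stability is a local property), and all newly added ends are dominated by the maximal end type of $W$. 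By Proposition~\ref{prop: classification of local structures}, $W$ remains a local structure with the same maximal end type, and Lemma~\ref{lem: 4.17} gives that $W$ is homeomorphic to the original seed. Lemma~\ref{lem: wedges and clopens} now yields $X \cong X_1 \vee \cdots \vee X_n$, with genus distributed using the hypothesis $g(X) \in \{0,\infty\}$; the incomparability and unique-maximal-end conditions between pieces follow immediately from the seed construction.

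For uniqueness, the requirement that matching pieces have a unique maximal end forces the distribution of maximal ends: a Cantor orbit must lie entirely inside a single piece (otherwise two pieces would share a local structure while each having infinitely many maximal ends, violating the condition), while a finite orbit of size $m$ must split into exactly $m$ pieces, each with exactly one of its maximal ends. This determines $n$ and, via Lemma~\ref{lem: 4.17} applied to each piece as a stable neighborhood of its maximal ends, determines each piece up to homeomorphism as a local structure, and hence each $X_i$ up to proper homotopy equivalence after the canonical genus distribution. The principal difficulty is the absorption step: one must verify that augmenting a seed with an arbitrary clopen set of lower-type ends preserves the local-structure property, which is handled by the small-neighborhood choice $N_\nu$ above combined with Proposition~\ref{prop: classification of local structures}.
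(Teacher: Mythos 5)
Your overall architecture (seed clopen neighborhoods around the maximal orbits, absorb the leftover clopen set $L$, translate back with Lemma~\ref{lem: wedges and clopens}, count pieces for uniqueness) matches the paper's proof, and your uniqueness argument is essentially the paper's. However, the key absorption step rests on a false principle. You assert that ``any sequence $\nu_n\to\nu$ satisfies $\nu_n\preceq\nu$ directly from the definition of the preorder.'' This is not what the definition gives: $\nu_n\preceq\nu$ requires that \emph{every} neighborhood of $\nu$ contain a homeomorphic copy of a neighborhood of the \emph{fixed} end $\nu_n$, whereas convergence only puts the large-index terms inside a given neighborhood. The implication does hold when a single equivalence class $E(\lambda)$ accumulates at $\nu$, but it fails when ends of infinitely many pairwise incomparable types converge to $\nu$ --- this is precisely the phenomenon behind non-stability in Example~\ref{ex: stability}(4) and the graphs built from Lemma~\ref{lem: class of incomparables}, and the hypothesis of the proposition only controls \emph{maximal} ends, so ends of $L$ may well be of this unstable kind. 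As written, the claim that a small $N_\nu$ consists entirely of ends dominated by $\nu$ is unjustified, and it is the load-bearing step: without it you cannot invoke Proposition~\ref{prop: classification of local structures} to conclude that the augmented piece $W$ is still a local structure.

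The conclusion you actually need --- that every end of a sufficiently small clopen neighborhood $N_\nu$ is dominated by the \emph{maximal} end $\mu(\nu)$ --- is true, but for a different reason: since $\nu\prec\mu(\nu)$, some clopen neighborhood $U_\nu$ of $\nu$ admits a homeomorphic copy inside every neighborhood of $\mu(\nu)$, and then for any $\lambda\in U_\nu$ the image of a neighborhood of $\lambda$ under that embedding witnesses $\lambda\preceq\mu(\nu)$. Taking $N_\nu\subseteq U_\nu$ repairs your argument, after which the verification that $W=U_i\sqcup N$ still satisfies condition (3) of Proposition~\ref{prop: classification of local structures} and hence $W\cong U_i$ by Lemma~\ref{lem: 4.17} goes through. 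Note that the paper sidesteps this issue entirely: rather than gluing $N_\nu$ on and re-certifying the local-structure property, it uses $\nu\prec\mu$ to embed a copy $V'$ of $U_\nu$ \emph{inside} the seed $U_1$ away from $\mu$, and then $U_1\sqcup U_\nu\cong V'\sqcup(U_1\setminus V')=U_1$ by Lemma~\ref{lem: 4.17}. Either route works once the domination claim is derived from the definition of $\prec$ rather than from convergence.
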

\begin{proof}
    Decompose the set of maximal ends into $M_1\sqcup\cdots\sqcup M_n$, where each $M_i$ consists of either a singleton or the orbit of a maximal end type which is homeomorphic to Cantor space. Then each $M_i$ is closed, and thus we may apply Lemmas~\ref{lem: small clopen neighborhoods} and~\ref{lem: stable nbhds of wedge comps} to produce clopen sets $U_i$ containing $M_i$ for each $1\leq i\leq n$ such that $U_i\cap M_j=\0$ for $i\neq j$ and such that each $U_i$ is a stable neighborhood of each $\mu\in M_i$. Moreover, by Lemma~\ref{lem: 4.17}, we may assume that the $U_i$ are disjoint. Let $U=\partial X\setminus(U_1\cup\cdots\cup U_n)$, and note that $U$ is clopen, and hence compact.

    We now show that $U_1\cup\cdots\cup U_n\cup U\cong U_1\cup\cdots\cup U_n$. For each $\nu\in U$, there exists a maximal end $\mu$ contained in one of the $U_i$, say $U_1$, such that $\nu\prec\mu$. It follows from the order on end spaces that there is a clopen homeomorphic copy of a neighborhood $U_\nu$ of $\nu$ inside of $U_1$. Thus $U_1\cup U_\nu\cong U_1$ by Lemma~\ref{lem: 4.17}. After obtaining a clopen cover $\{U_\nu\}_{\nu\in U}$ of $U$ in a similar form and passing to a finite sub-cover $U_{\nu_1},...,U_{\nu_m}$, and after shrinking some of these sets to ensure disjointness, we may follow a similar argument and obtain a homeomorphic copy of $U_{\nu_1}\sqcup\cdots\sqcup U_{\nu_m}$ in $U_1\cup\cdots\cup U_n$. Thus there is a homeomorphic copy of $U=U_{\nu_1}\cup\cdots\cup U_{\nu_m}$ in $U_1\cup\cdots\cup U_n$, and so $\partial X\cong U_1\sqcup\cdots\sqcup U_n$. Applying Lemma~\ref{lem: wedges and clopens}, we obtain that $X\cong X_1\vee\cdots\vee X_n$, where $\partial X_i\cong U_i$ for each $i$.

    We now argue for uniqueness. The $M_i$ were constructed in such a way that $n$ is the number of maximal end types of Cantor type plus the number of discrete maximal ends. Any decomposition of $X$ into the wedge product of a larger number of maximal local structures would thus have two wedge components equal to each other of Cantor type. These two local structures would then be able to be combined. Conversely, the existence of a decomposition of $X$ into the wedge product of a smaller number of maximal local structures would give a contradiction to the number of maximal end types and number of discrete maximal ends being unique. Thus, $n$ is unique.

    Lastly, suppose that $X_1\vee\cdots\vee X_n$ and $Y_1\vee\cdots\vee Y_n$ are two wedge decompositions of $X$. Up to relabeling, we can assume that $X_i$ and $Y_i$ intersect the same maximal end orbit in each decomposition for each $i$. As $\partial X_i$ and $\partial Y_i$ are both local structures, if $\mu\in \partial X_i$ is maximal, it is maximal in $\partial Y_i$ as well. Thus $\mu\sim \partial X_i$ and $\mu\sim \partial Y_i$. Then $\partial X_i\cong\partial Y_i$ by Lemma~\ref{lem: translate local structures to ends} and anti-symmetry in Proposition~\ref{prop: poset of end types}.
\end{proof}

\begin{definition}
    We refer to the decomposition $X\cong X_1\vee\cdots\vee X_n$ in Proposition~\ref{prop: wedge decomposition} as the \emph{wedge decomposition of $X$}.
\end{definition}

We give two corollaries of Proposition~\ref{prop: wedge decomposition}. The following corollary contextualizes the discussion on wedge decompositions in the framework of Proposition~\ref{prop: classification of local structures}.

\begin{corollary}\label{cor: self-similar wedge decomp}
    Let $X$ be a locally finite graph with stable maximal ends such that $g(X)\in\{0,\infty\}$. Then $X$ is self-similar if and only if the wedge decomposition of $X$ consists of only one wedge component.
\end{corollary}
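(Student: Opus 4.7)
The plan is to deduce the corollary directly from Propositions~\ref{prop: classification of local structures} and~\ref{prop: wedge decomposition}, by translating ``self-similar'' into the condition ``unique maximal end type'' and comparing it with the count $n$ produced by the wedge decomposition.

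For the backward direction, I would start by assuming that the wedge decomposition of $X$ consists of one wedge component, so $X \cong X_1$ up to proper homotopy. Then $\partial X \cong \partial X_1$ is, by construction in Proposition~\ref{prop: wedge decomposition}, a local structure. Applying the equivalence $1 \Leftrightarrow 2$ of Proposition~\ref{prop: classification of local structures} to $X_1$ immediately yields that $\partial X$ is self-similar, as desired.

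For the forward direction, assume $X$ is self-similar. By Proposition~\ref{prop: classification of local structures} (the equivalence $2 \Leftrightarrow 3$), the end space $\partial X$ has a unique maximal end type, and the orbit of this maximal type is either a single point or homeomorphic to Cantor space. Now recall from the proof of Proposition~\ref{prop: wedge decomposition} that the integer $n$ in the wedge decomposition equals the number of discrete maximal ends plus the number of maximal end types of Cantor type: each $M_i$ is either a singleton orbit (for a discrete maximal end) or the full orbit of a single Cantor-type maximal end. Under our hypothesis there is exactly one such $M_i$, so $n = 1$.

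The only subtlety I anticipate is making sure the two ways a maximal end type can appear, as a singleton or as a single Cantor orbit, really do collapse to a single wedge component in the decomposition rather than splitting across multiple components; this is already built into the construction of the $M_i$ in Proposition~\ref{prop: wedge decomposition}, so no extra work is needed. The uniqueness clause of Proposition~\ref{prop: wedge decomposition} guarantees that ``$n = 1$'' is a well-defined property of $X$, so the bi-implication is genuine. Overall, no step should be a real obstacle; the corollary is essentially a repackaging of the previous two propositions.
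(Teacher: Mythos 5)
Your proposal is correct and follows essentially the same route as the paper: both directions are read off from the equivalence of self-similarity with having a unique maximal end type of size one or infinity (Proposition~\ref{prop: classification of local structures}) together with the construction of the sets $M_i$ in Proposition~\ref{prop: wedge decomposition}. The paper compresses this into a single sentence, but the content is the same.
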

\begin{proof}
    As the end space of each wedge component of $X$ is a local structure, this follows immediately from Proposition~\ref{prop: classification of local structures}.
\end{proof}

We next give a corollary which allows us to extend the poset of local structures to one of the set of all locally finite graphs with stable maximal ends. It says that, because of the existence of a wedge decomposition, it is possible to determine if one end space $\partial X$ with stable maximal ends embeds as a clopen set into another $\partial Y$ based on whether $\partial Y$ contains the maximal local structures of $\partial X$.

\begin{corollary}\label{cor: stable lfg poset}
    Let $X$ and $Y$ be locally finite graphs, and suppose that $X$ has stable maximal ends. Then $\partial X$ embeds as a clopen subset of $\partial Y$ if and only if there exists $\nu_1,...,\nu_n\in\partial Y$ such that $\nu_i\sim \partial X_i$ for all $1\leq i\leq n$, where $X=X_1\vee\cdots\vee X_n$ is the wedge decomposition of $X$.
\end{corollary}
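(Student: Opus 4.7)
The two directions will be handled separately. For the forward direction, suppose $\phi\colon \partial X \hookrightarrow \partial Y$ is a clopen embedding of characteristic pairs. Inductively applying Lemma~\ref{lem: wedges and clopens} to the wedge decomposition $X = X_1 \vee \cdots \vee X_n$ yields a clopen decomposition $\partial X = \partial X_1 \sqcup \cdots \sqcup \partial X_n$. For each $i$, choose a maximal end $\mu_i \in \partial X_i$ and set $\nu_i := \phi(\mu_i) \in \partial Y$. Since $\partial X_i$ is a local structure, Proposition~\ref{prop: classification of local structures} (statement 4) shows that $\partial X_i$ is a stable neighborhood of $\mu_i$, so $\phi(\partial X_i)$ is a clopen stable neighborhood of $\nu_i$ in $\partial Y$ whose characteristic pair is isomorphic to $\partial X_i$; hence $\nu_i \sim \partial X_i$. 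The $\nu_i$ are pairwise distinct because $\phi$ is injective.

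For the backward direction, the goal is to produce pairwise disjoint clopen stable neighborhoods $V_i$ of $\nu_i$ in $\partial Y$ with $V_i \cong \partial X_i$; once this is done, $V := V_1 \sqcup \cdots \sqcup V_n$ is clopen in $\partial Y$ with characteristic pair $\partial X_1 \sqcup \cdots \sqcup \partial X_n \cong \partial X$, producing the required embedding. The core step is the separation claim: for $i \neq j$, no clopen stable neighborhood $V \cong \partial X_i$ of $\nu_i$ in $\partial Y$ contains $\nu_j$. I would prove this via the dichotomy of Proposition~\ref{prop: wedge decomposition}. If $\partial X_i$ and $\partial X_j$ are incomparable, then $\nu_j \in V$ together with the stability of $V$ implies that every neighborhood of $\nu_i$ contains a homeomorphic copy of $V$, hence a homeomorphic copy of a suitably small neighborhood of $\nu_j$; this gives $\nu_j \preceq \nu_i$ in $\partial Y$, which by Lemma~\ref{lem: translate local structures to ends} yields $\partial X_j \preceq \partial X_i$, contradicting incomparability. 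If instead $\partial X_i \cong \partial X_j$ with a unique maximal end, then $V \cong \partial X_i$ contains exactly one end of maximal type $\partial X_i$, namely $\nu_i$; since the type of $\nu_j$ in $V$ equals its type in $\partial Y$, namely $\partial X_j \cong \partial X_i$, we would conclude $\nu_j = \nu_i$, a contradiction.

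With the separation claim established, I would construct the $V_i$ iteratively. Start by picking an initial clopen stable neighborhood $W_1$ of $\nu_1$ with $W_1 \cong \partial X_1$, and use Lemma~\ref{lem: small clopen neighborhoods} to shrink to a clopen subneighborhood $V_1 \subseteq W_1$ of $\nu_1$ disjoint from the closed finite set $\{\nu_2, \ldots, \nu_n\}$; by Lemma~\ref{lem: 4.17}, $V_1 \cong \partial X_1$. Having built $V_1, \ldots, V_{i-1}$, the same procedure produces a clopen stable neighborhood $V_i$ of $\nu_i$ with $V_i \cong \partial X_i$ disjoint from the closed set $V_1 \cup \cdots \cup V_{i-1} \cup \{\nu_{i+1}, \ldots, \nu_n\}$, which does not contain $\nu_i$ by the separation claim and by the inductive disjointness. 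The main obstacle is the case $\partial X_i \cong \partial X_j$ of the separation claim: two distinct ends of the same Cantor-type maximal type in $\partial Y$ could a priori share a stable neighborhood, and the wedge-decomposition dichotomy is precisely what rules this out, since matching factors $\partial X_i \cong \partial X_j$ in the decomposition must have a unique maximal end, so each stable neighborhood of type $\partial X_i$ contains exactly one end of that maximal type.
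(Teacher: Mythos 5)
Your proposal is correct and takes essentially the same route as the paper: the forward direction is handled identically (the images of the clopen wedge pieces are stable neighborhoods of the chosen ends), and your backward direction simply supplies the details of the converse that the paper dismisses as immediate, namely producing pairwise disjoint stable neighborhoods $V_i\cong\partial X_i$ by shrinking via Lemma~\ref{lem: small clopen neighborhoods} and Lemma~\ref{lem: 4.17}. Your separation claim is correct but superfluous, since each $V_j$ is already constructed to be disjoint from all later $\nu_i$, so the inductive disjointness alone guarantees $\nu_i\notin V_1\cup\cdots\cup V_{i-1}$.
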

\begin{proof}
    Suppose that $\partial Y$ contains a clopen homeomorphic copy of $\partial X$. Then $\partial Y$ contains ends of type $\partial X_i$ for $1\leq i\leq n$, and stable neighborhoods of those points will be homeomorphic to $\partial X_i$ for $1\leq i\leq n$. The converse is immediate.
\end{proof}

We next give an example to show that the assumption in Proposition~\ref{prop: wedge decomposition} that maximal ends are stable is necessary. We define the \emph{Keystone locally finite graph} and show that it contains two maximal ends but no wedge decomposition.

\begin{example}[The Keystone locally finite graph]\label{ex: keystone lfg}
    Consider the set $\{X_n\}_n$ of locally finite graphs constructed in Lemma~\ref{lem: class of incomparables} and a bi-infinite line whose vertices are labeled in an order preserving bijection with $\Z$. Call the end in the positive direction $\mu_O$ and the end in the negative direction $\mu_E$. At vertex $1$, attach $X_2\vee X_1$, at vertex $2$ attach $X_4\vee(X_1\vee X_3)$, and at vertex $n>0$ attach $X_{2n}\vee(\bigvee_{i=1}^{n} X_{2i-1})$. At vertex $-1$, attach $X_1\vee X_2$, at vertex $-2$ attach $X_3\vee(X_2\vee X_4)$, and at vertex $-n$ attach $X_{2n-1}\vee(\bigvee_{i=1}^{n} X_{2i})$. The end $\mu_E$ dominates $\partial X_n$ for $n$ even, and $\mu_O$ dominates $\partial X_n$ for $n$ odd. See Figure~\ref{fig: keystone lfg}.

    \begin{figure}[h]
        \centering
            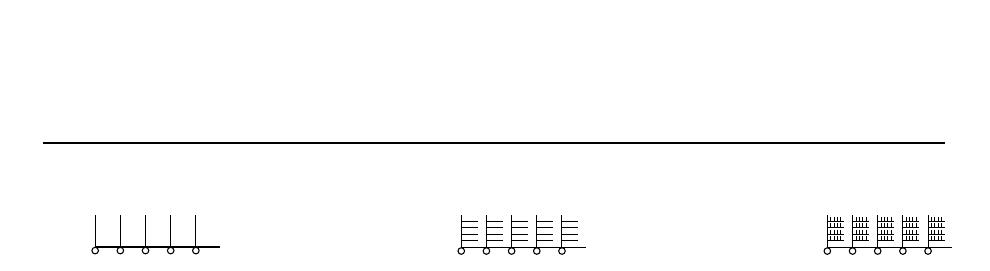
        \centering
        \caption{Keystone locally finite graph.}
        \label{fig: keystone lfg}
    \end{figure}
    
    The ends $\mu_E$ and $\mu_O$ are the only maximal ends. For any $x\in X$ such that $\mu_E$ and $\mu_O$ lie in in different components of $X\setminus\{x\}$, the $X_n$ for $n$ odd are maximal in the component containing $\mu_E$ and vice versa for the component containing $\mu_O$. Thus any decomposition of this locally finite graph as a wedge decomposition which separates the maximal ends results in two locally finite graphs, each with infinitely many maximal ends. Intuitively, to keep the number of maximal ends finite, the two maximal ends need each other similar to how a keystone keeps an arch from crumbling. Note that the Keystone graph also provides an example of a non-stable locally finite graph with finitely many maximal end types, showing that the converse of Lemma~\ref{lem: stable -> finite max end types} is false.
\end{example}

\medskip

We defined wedge decompositions among graphs with stable maximal ends by using local structures, but we can extend this definition to arbitrary graphs by defining a wedge decomposition of a locally finite graph $X$ to be $X_1\vee\cdots\vee X_n$, where each $X_i$ contains either a unique maximal end which is not of Cantor type or a Cantor space of maximal ends of the same type. It would be interesting to determine a necessary condition for the existence of wedge decompositions among arbitrary graphs. For example, any graph with a unique maximal end type of size one or infinity has a wedge decomposition consisting of a single component. Conversely, the Keystone graph does not have a wedge decomposition. The existence of a wedge decomposition can be thought of as a weakening of stability.

\subsection{Ordered signatures}\label{sec: ordered signatures}

Proposition~\ref{prop: building blocks} constructs signatures for arbitrary locally finite graphs, albeit the signatures constructed do not give very much information about the structure of the graphs. For example, Proposition~\ref{prop: building blocks} constructs signatures such as $(\{Y_n\}_n,a)\to\mathcal{C}$, where $\mathcal{C}$ is an infinite binary splitting tree, $Y_0\cong R_0$, $Y_n\cong \partial 1$ for $n>0$, and $a(v)=1$ along a line in $\mathcal{C}$ and equals zero elsewhere. It is not immediately obvious for signatures such as these how many maximal ends types there are and how to formulate the local structures of the stable ends in this graph. See Figure~\ref{fig: bad signature} for the graph and for a ``better'' signature. In this section, we define \emph{ordered signatures}, which are signatures giving as much information about the structure of ends as possible, and show that every stable locally finite graph has an ordered signature. The section culminates with Theorem~\ref{thm: ordered signatures}, which generalizes Theorem~\ref{thm: ordered signatures intro}.

    \begin{figure}[h]
        \centering
            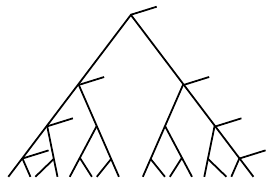
        \centering
        \caption{The graph $(\{Y_n\}_n,a)\to\mathcal{C}$, which can also be written as $((1\vee C)\to1)\vee ((1\vee C)\to1)$. In the latter signature, the self-similar locally finite graphs $1$, $C$, and $(1\vee C)\to 1$, whose end spaces are the three local structures in the depicted graph, all appear as wedge components.}
        \label{fig: bad signature}
    \end{figure}

\begin{definition}\label{def: ordered signature}
    Let $X$ be a stable locally finite graph. A signature for $X$ is \emph{ordered} if the following conditions hold. We give examples in bullet points.
    \begin{enumerate}
        \item (Finite genera are clumped) If $X$ has finite positive genus $n$, then $\bigvee_{i=1}^n R_1$ is included as a wedge in the signature.
        \item (Wedges lack order) The signature for $X$ is given by its wedge decomposition.
        \begin{itemize}
            \item $1\vee(1\rightarrow C)$ is not ordered because this is not the wedge decomposition of $X$, but $1\rightarrow C$ is.
        \end{itemize}
        \item (Arrows climbing towards maximality) There is no spread in the signature, only convergence. Also, there are no graphs to the right of arrows, only other signatures. Whenever $\{Y_n\}_n\rightarrow Y$ appears in the signature, then
        \begin{enumerate}
            \item $\partial Y_1\subseteq\partial Y_2\subseteq\cdots$ as clopen subsets;
            \item for all $\mu\in\partial Y$ and $\nu\in\partial Y_n$, we have that $\nu\prec\mu$ for any $n$; and
            \item if $\nu,\nu'\in\partial Y$, then $\nu\sim\nu'$ in the locally finite graph $Y$.
        \end{enumerate}
        Arrows climbing towards maximality will become important later, so we will give more care to the examples:
        \begin{itemize}
            \item $\{(\omega^n+1)\rightarrow o(1)\}_n\rightarrow 1$ violates $(a)$ by Lemma~\ref{lem: class of incomparables}.
            \item $C\rightarrow C$ and $C\rightarrow 1$ violate $(b)$. Instead, $C$ is ordered. The signature $(1\rightarrow C)\rightarrow 1$ is also not ordered for the same reason: an ordered signature is $1\rightarrow C$.
            \item $1\rightarrow (o(1)\vee 1)$ violates (c), but $(1\rightarrow 1)\vee(1\rightarrow o(1))$ or $(\omega+1)\vee(1\rightarrow o(1))$ are ordered. Also observe that by (c) the only locally finite graphs which may be immediately to the right of an arrow are $1,o(1),C$ and $o(C)$, which are the ones with minimal end spaces in the poset of local structures.
            \item $\{\omega^n+1\}_n\rightarrow(\{o(\omega^m+1)\}_m\rightarrow o(C))$ violates $(c)$. An ordered signature is $\{Y_n\}_n\rightarrow o(C)$, where $Y_0=\{\omega^m+1\}_m\rightarrow o(1)$, and $Y_{n+1}=Y_n\rightarrow o(1)$ for $n>0$. Note that auxiliary symbols such as $Y_n$ are allowed in ordered signatures.
        \end{itemize}
        \item (Simplicity of genus) If the ends in the locally finite graph to the right of an arrow are not an input to $o(-)$, then no ends to the left of that arrow are accumulated by genus.
        \begin{itemize}
            \item $o(1)\rightarrow 1$ is not ordered because the $1$ to the right of the arrow does not appear inside of an instance of $o(-)$, but $o(1\to 1)$ and $o(1)\rightarrow o(1)$ are ordered.
        \end{itemize}
        \item (Recursion) If a signature contains $Y_1\vee\cdots\vee Y_n$, $Y_1\rightarrow Y$, or $\{Y_n\}_n\rightarrow Y$, then all of the $Y_i$ and $Y$ are ordered signatures.
    \end{enumerate}
\end{definition}

To show that every stable locally finite graph has an ordered signature, we define $\MS(X)$, which captures the structure of the maximal ends of a graph $X$.

\begin{definition}\label{def: MaxShell}
    Let $X$ be a locally finite graph, and let $A=\{\nu\in\partial X\,:\,\nu$ is a maximal end in $X\}$. Suppose $X$ has characteristic pair $(\partial X,\partial X_g)$, where $\partial X_g=\0$ if $g(X)<\infty$. We define $\MS(X)$ to be the locally finite graph with characteristic pair $(\overline{A},\overline{A}_g)$; this is well defined by Theorem~\ref{AK-B 2.2}. We may refer to $\MS(X)$ as a subgraph of $X$, by which we mean the smallest subgraph of $X$ whose end space is precisely the set of maximal ends of $X$ and whose ends which are accumulated by genus in $X$ are accumulated by genus in $\MS(X)$.
\end{definition}

We took the closure of $A$ in this definition because the set of maximal ends may not be closed. For example, consider the locally finite graph $\{(\omega^n+1)\rightarrow C\}_n\rightarrow(\omega+1)$. The maximal ends in this locally finite graph are precisely the ends of Cantor type along with the maximal end of $\omega+1$. In particular, the other ends of $\omega+1$ are not maximal, but are limits of ends of Cantor type.

The next lemma shows that any graph whose end space is a local structure has a signature with an arrow climbing towards maximality (see Definition~\ref{def: ordered signature}).

\begin{lemma}[The invocation of convergence]\label{lem: invocation of ordered spread}
    If $\partial Z$ is a local structure, then 
    $$\MS(Z)\in\{1,o(1),C,o(C)\}$$
    and there exist locally finite graphs $\{Y_n\}_n$ with end spaces such that $\partial Y_1\subseteq \partial Y_2\subseteq\cdots$ and such that 
    $$\partial Z\cong\partial (\{Y_n\}_n\rightarrow\MS(Z)).$$
\end{lemma}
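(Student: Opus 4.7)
The plan is to prove the two assertions in order.

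For $\MS(Z)\in\{1,o(1),C,o(C)\}$, I would apply Proposition~\ref{prop: classification of local structures} to get that $\partial Z$ has a unique maximal end type of size $1$ or $\infty$ with all maximal ends stable. Writing $A$ for the orbit of maximal ends, Lemma~\ref{lem: orbits of max ends} makes $A$ closed, so $\partial\MS(Z)=\overline{A}=A$, which is either a singleton or a Cantor space by Proposition~\ref{MR 4.7}. Since all maximal ends lie in a single $\Maps(Z)$-orbit by Theorem~\ref{MR 1.2} and $\Maps(Z)$ preserves the set of ends accumulated by genus (via Proposition~\ref{AK-B 2.3}), either all maximal ends are accumulated by genus or none are. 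Combining with Theorem~\ref{AK-B 2.2}, the characteristic pair of $\MS(Z)$ is one of the four listed types.

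For the convergence identification, I would treat the unique and Cantor cases separately. In the unique maximal end case ($\MS(Z)\in\{1,o(1)\}$, with unique maximal end $\mu$), I would inductively construct a nested sequence $\partial Z=U_0\supsetneq U_1\supsetneq\cdots$ of stable clopen neighborhoods of $\mu$ with $\bigcap_n U_n=\{\mu\}$, such that each annulus $A_n:=U_{n-1}\setminus U_n$ contains a clopen homeomorphic copy of $A_{n-1}$. The inductive step uses Lemma~\ref{lem: 4.17} to transport a clopen copy of $A_n$ into $U_n\cong\partial Z$ away from $\mu$, while a diameter-shrinking choice (via metrizability of $\partial Z$) enforces the intersection condition. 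Setting $Y_n$ to be a locally finite graph with end space $A_n$ and matching genus structure (Proposition~\ref{AK-B 2}) yields the desired increasing sequence, and $\partial(\{Y_n\}_n\to\MS(Z))\cong\{\mu\}\sqcup\bigsqcup_n A_n\cong\partial Z$ follows from matching topologies (both sides topologize the annuli as converging to the maximal end).

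In the Cantor case ($\MS(Z)\in\{C,o(C)\}$), I would take a constant sequence. Choosing any clopen $V\subseteq\partial Z$ disjoint from the maximal ends, self-similarity and Proposition~\ref{prop: classification of local structures} give $\partial Z\setminus V\cong\partial Z$; splitting the Cantor space of maximal ends yields $\partial Z\cong\partial Z\sqcup\partial Z$, hence $\partial Z\cong V\sqcup\partial Z\sqcup\partial Z$. Taking $Y$ with $\partial Y=V$ and matching genus, and setting $Y_n:=Y$ for all $n$ makes the clopen-increasing condition trivial. The homeomorphism $\partial(Y\to\MS(Z))\cong\partial Z$ follows from both sides satisfying the same recursive self-similar equation $W\cong V\sqcup W\sqcup W$, verified by inductively constructing the homeomorphism level by level through the Cantor tree, transferring $V$-pieces at each recursive level via the homeomorphisms provided by self-similarity.

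The main obstacle is the inductive construction in the unique maximal end case, where the clopen annulus chain condition, the shrinking $\bigcap_n U_n=\{\mu\}$, and eventual inclusion of every non-maximal end must be coordinated simultaneously at each step; this is achieved by combining the structural flexibility of Lemma~\ref{lem: 4.17} with a metric on $\partial Z$ to separately guarantee the chain and shrinking requirements while enumerating a dense subset to ensure exhaustion.
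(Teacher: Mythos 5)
Your first assertion ($\MS(Z)\in\{1,o(1),C,o(C)\}$) is proved correctly and matches the paper. Your treatment of the unique-maximal-end case is also essentially sound, though it takes a different route from the paper: you build the nested stable neighborhoods $U_n$ and annuli $A_n$ purely at the level of end spaces and then invoke Theorem~\ref{AK-B 2.2}, whereas the paper first applies Lemma~\ref{lem: invocation of spread} to write $Z\cong(\{X_n\}_n,a)\to\MS(Z)$ and then constructs an explicit convergent sequence of proper homotopy equivalences (``pushing up'') that redistributes the attached pieces level by level, setting $Y_k=(\bigvee_i X_{a(w_{k,i})})\vee Y_{k-1}$. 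The key feature of both constructions is that $Y_k$ accumulates everything seen at earlier levels, which is what forces $\partial Y_1\subseteq\partial Y_2\subseteq\cdots$ and exhausts all non-maximal end types.

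The Cantor case of your argument has a genuine gap, for two separate reasons. First, a constant sequence does not suffice in general. Take $Z=\{\bigvee_{i=1}^n(\omega^i+1)\}_n\to C$, which is a local structure with $\MS(Z)=C$. Any clopen $V\subseteq\partial Z$ disjoint from the Cantor-type maximal ends is a compact subset of $\partial Z\setminus E(\mu)=\bigsqcup_v\partial(\bigvee_{i=1}^{d(v)}(\omega^i+1))$, hence is covered by finitely many of these clopen pieces and so contains ends of type $\partial(\omega^i+1)$ only for $i\leq N$ for some $N$. Then $\partial(V\to C)$ has no end of type $\partial(\omega^i+1)$ for $i>N$, while $\partial Z$ has them for every $i$; so no choice of constant $V$ works, and the $Y_n$ must genuinely grow. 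Second, even where a constant sequence happens to work, the justification ``both sides satisfy $W\cong V\sqcup W\sqcup W$'' is not a proof: that equation does not have a unique solution. For instance, $\partial(1\to C)$ and $\partial((\omega+1)\to C)$ both satisfy $W\cong\partial 1\sqcup W\sqcup W$ (the latter because $W\sqcup\partial(\omega+1)\cong W\sqcup\partial 1$ by absorbing the non-maximal isolated points toward the Cantor ends), yet they are not homeomorphic, since one has ends of Cantor--Bendixson rank one that are not of Cantor type and the other does not. The repair is to run the same increasing-annulus construction you used for the unique-maximal-end case, organized by distance from a base vertex of the Cantor tree, so that $\partial Y_k$ contains copies of everything attached at levels $\leq k$; the level-by-level homeomorphism you sketch then has actual content because the two sides agree on each level, not merely on a recursive equation.
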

\begin{proof}
    Because $\partial Z$ is a local structure, all maximal ends in $\partial Z$ are of the same type. In particular, there is either one or infinitely many maximal ends, all or none of which are accumulated by genus. This implies that $\MS(Z)\in\{1,o(1),C,o(C)\}$. By Lemma~\ref{lem: orbits of max ends}, $\partial \MS(Z)$ is closed in $\partial Z$. Thus, by Lemma~\ref{lem: invocation of spread}, there is a proper homotopy equivalence $Z\cong(\{X_n\}_n,a)\rightarrow\MS(Z)$ for some bijective $a \colon \mathcal{V}(\MS(Z))\rightarrow\Z_{>0}$.

    The next step of the proof is to show that we can write $Z$ without invoking spread. First, we define some terms. Let $v\in \mathcal{V}(\MS(Z))$, let $V_n:=\{w\in \mathcal{V}(\MS(Z))\,:\,d(v,w)=n)\}$, and let $h\colon\Z_{\geq0}\rightarrow\Z_{\geq0}$ be given by $h(n)=|V_n|$. For each $n$, label the elements of $V_n$ as $\{w_{n,1},...,w_{n,h(n)}\}$. Note that $V_0=\{w_{0,1}\}=\{v\}$. 
    
    Let $X$ be a subgraph of $Z$ such that $\partial X$ is clopen in $\partial Z\setminus\partial\MS(Z)$, and let $w_{k,i}\in\mathcal{V}(\MS(Z))$. The assumption that the maximal ends of $Z$ are stable implies that there are clopen homeomorphic copies of $\partial X$ arbitrarily close to any maximal end in $\partial Z$. Hence, there exists a copy of $\partial X$ which is ``further away from $v$ than $w_{k,i}$,'' i.e., such that any geodesic ray starting at $v$ which limits to an end in $\partial X$ passes through $w_{k,i}$. We define a proper homotopy equivalence, which we call \emph{pushing $X$ up to $w_{k,i}$}, that moves this copy of $\partial X$ so that it is attached as a wedge component to $w_{k,i}$; see Figure~\ref{fig: pushing up}. We then redefine the sequence $\{X_n\}_n$ so that $X_{a(w_{k,i})}$ has an additional wedge component proper homotopy equivalent to $X$ and so that all $X_n$ which intersected the copy of $X$ which was pushed up have appropriate ends taken away. This ensures that the redefined $X_n$ reflect the push up. There are infinitely many proper homotopy equivalences that would constitute pushing $X$ up to $w_{k,i}$ because there are copies of $\partial X$ arbitrarily close to any maximal end. For the purpose of this proof, any such proper homotopy equivalence will suffice.

    \begin{figure}[t]
        \centering
        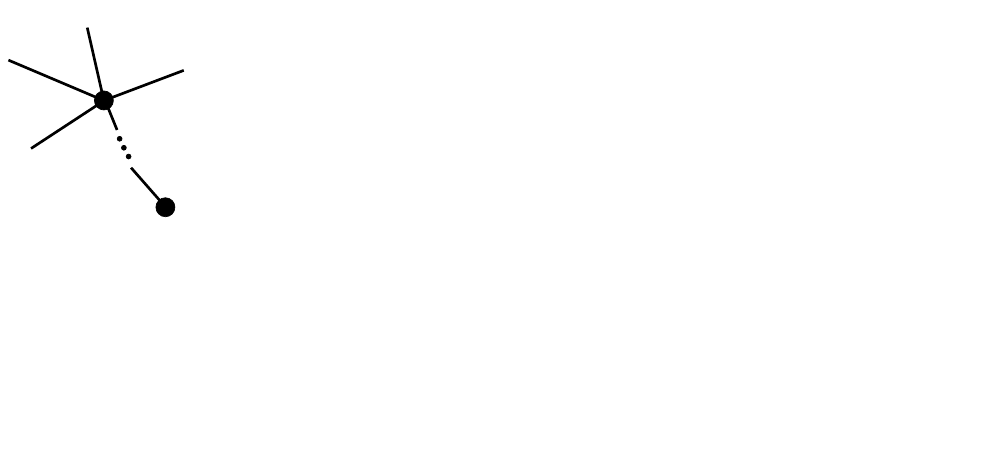
        \centering
        \caption{Example of pushing $X$ up to $w_{n,i}$. The locally finite graph $X'$ in the periwinkle circle, which is proper homotopy equivalent to $X$, goes from being attached to the vertex $w_{n',i'}$, which is further than $w_{n,i}$ from $v$, to being attached to $w_{n,i}$ via a proper homotopy equivalence which collapses a line segment between $w_{n',i'}$ and $X'$ to a line segment from $w_{n',i'}$ to $w_{n,i}$.} 
        \label{fig: pushing up}
    \end{figure}

    Note that pushing $X$ up to $w_{k,i}$ induces the identity map on the end space $\partial Z$, on $\MS(Z)$, and on the component of $Z\setminus V_k$ containing $v$. Hence a sequence of proper homotopy equivalences which push locally finite graphs up to vertices which are increasingly further from $v$ converges to a mapping class in $\Maps(Z)$, by completeness. We will use this fact to complete the proof of the lemma.

    \begin{claim}\label{cl: construction of the phe}
        The locally finite graph $(\{X_n\}_n,a)\rightarrow\MS(Z)$ is proper homotopy equivalent to one with a signature of the form $\{Y_n\}_n\rightarrow(\MS(Z),v)$, where $\partial Y_1\subseteq\partial Y_2\subseteq\cdots$.
    \end{claim}
    \begin{proof}
        We construct a proper homotopy equivalence from 
        $$(\{X_n\}_n,a)\rightarrow\MS(Z)$$ 
        to 
        $$\{Y_n\}_n\rightarrow\MS(Z)$$
        by inductively constructing a sequence of proper homotopy equivalences which will converge to the desired proper homotopy equivalence.
        
        First, let $Y_0=\{*\}$. Similarly to the definition of pushing up, we may apply a proper homotopy equivalence of $Z$ which induces the identity on $\partial Z$ so that $X_{a(v)}$ is wedged to a vertex in $V_1$ and $a(v)=0$. We essentially ``push $X_{a(v)}$ down'' to a vertex in $V_1$. Having done this, we get that $X_{a(v)}=Y_0$.

        For the induction step, we define $Y_k$ satisfying $\partial Y_0\subseteq\partial Y_1\subseteq\cdots \subseteq\partial Y_{k}$, and construct a proper homotopy equivalence from 
        $$(\{Y_0,...,Y_{k-1},X_k,X_{k+1},...\},a)\rightarrow\MS(Z)$$ to 
        $$(\{Y_0,...,Y_{k},X_{k+1},X_{k+2},...\},a')\rightarrow\MS(Z),$$ 
        where for all $i$, $a'(w_{l,i})=l$ when $l\leq k$. Let $Y_k:=(\bigvee_{i=1}^{h(k)} X_{a(w_{k,i})})\vee Y_{k-1}$, and for each $1\leq i \leq h(k)$, push 
        $$X_{a(w_{k,1})}\vee\cdots \vee X_{a(w_{k,i-1})}\vee X_{a(w_{k,i+1})}\vee\cdots \vee X_{a(w_{k,h(k)})}\vee Y_{k-1}$$ up to $w_{k,i}$; note that this graph is $Y_k$ with the $X_{a(w_k,i)}$ component removed. Then we have that $\partial Y_{k-1}\subseteq\partial Y_k$. Let $a'(w_{l,i})=l$ for $l\leq k$ so that $a'$ agrees with distance from $v$ for vertices up to distance $k$.

        The sequence of such proper homotopy equivalences converges to a proper homotopy equivalence, because the elements of the sequence agree on an exhaustion of compact sets. The limit has the property that for all vertices $w_{k,i}\in \mathcal{V}(\MS(Z))$, there is a copy of $Y_k$ attached to $Z$ at $w_{k,i}$, as desired.
    \end{proof}

    Lastly, given $v,v'\in\mathcal{V}(\MS(Z))$, there exists a proper homotopy equivalence from 
    $$\{Y_n\}_n\rightarrow(\MS(Z),v)$$
    to 
    $$\{Y_n\}_n\rightarrow(\MS(Z),v')$$
    by a similar argument to the one in Claim~\ref{cl: construction of the phe}. This concludes the proof of the lemma.
\end{proof}

Lemma~\ref{lem: invocation of ordered spread} shows that every stable locally finite graph has a signature without spread or base-points for convergence arrows. There do exist non-stable locally finite graphs, however, that have signatures not involving spread or base-points for arrows. For example, 
$$\{(\omega^n+1)\rightarrow o(1)\}_n\rightarrow o(1)$$ 
is not stable for the following reason. The set of end types $E(\partial((\omega^n+1)\rightarrow o(1)))$ are pairwise incomparable by Lemma~\ref{lem: class of incomparables}. They are also of cardinality one, and thus, by Lemma~\ref{lem: orbits of max ends}, each end of type $\partial((\omega^n+1)\rightarrow o(1))$ is maximal. As there are infinitely many maximal end types, the graph must be unstable by Lemma~\ref{lem: stable -> finite max end types}.

We are now ready to prove Theorem~\ref{thm: ordered signatures intro}, which we state as a three-way equivalence below.

\begin{theorem}\label{thm: ordered signatures}
    For a locally finite graph $X$, the following are equivalent.
    \begin{enumerate}
        \item $X$ is stable.
        \item $X$ has an ordered signature.
        \item $X$ has a signature without spread and such that whenever $\{Y_n\}_n\rightarrow Y$ appears in this signature, $Y$ is stable and there is a homeomorphic clopen copy of $\partial Y_i$ in $\bigsqcup_{n=i+1}^\infty \partial Y_n$  for all but finitely many $i$.
    \end{enumerate}
\end{theorem}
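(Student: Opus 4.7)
The plan is to prove the implications $(2)\Rightarrow(3)\Rightarrow(1)\Rightarrow(2)$.

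For $(2)\Rightarrow(3)$, this is essentially a direct verification. An ordered signature by definition contains no spread, and whenever $\{Y_n\}_n\to Y$ appears, $Y$ is either $1$, $o(1)$, $C$, or $o(C)$ (all of which are stable by Example~\ref{ex: stability}) or, applying recursion (condition 5 of Definition~\ref{def: ordered signature}) together with the implication $(2)\Rightarrow(1)$ proved in the other half, is stable. The clopen embedding condition on $\partial Y_i$ into $\bigsqcup_{n>i}\partial Y_n$ is immediate from condition 3(a): $\partial Y_i\subseteq\partial Y_{i+1}$ as a clopen subset, and $\partial Y_{i+1}$ is itself a clopen summand of $\bigsqcup_{n>i}\partial Y_n$.

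For $(3)\Rightarrow(1)$, I would proceed by structural induction on the signature. Base cases: $R_n$ has finite end space, hence is trivially stable; $C$ is stable (Example~\ref{ex: stability}); and any $\omega^\alpha+1$ is a tree with countable end space, also stable. Inductively, wedges of stable graphs are stable, since for any end $\nu$ in $X_1\vee X_2$ a stable neighborhood of $\nu$ in the appropriate $X_i$ (together with a small clopen piece from Lemma~\ref{lem: small clopen neighborhoods}) serves as a stable neighborhood in the wedge. The operation $o(-)$ preserves stability, since it induces the identity on the underlying end space up to the subset accumulated by genus. The main obstacle is the convergence case $X=\{Y_n\}_n\to Y$: ends sitting inside some $\partial Y_n$ have stable neighborhoods by induction on $Y_n$, but ends $\mu\in\partial Y$ need separate treatment. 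For such $\mu$, a neighborhood decomposes as $V\sqcup\bigsqcup_{v\in V\cap\mathcal{V}(Y)}\partial Y_{d(v_0,v)}$, where $V$ is a clopen neighborhood of $\mu$ in $\partial Y$; stability of $Y$ (by induction) lets one shrink $V$ so that any smaller neighborhood contains a homeomorphic copy of $V$, while the clopen embedding condition $\partial Y_i\subseteq\bigsqcup_{n>i}\partial Y_n$ allows one to realize the matching bouquet of $Y_n$'s arbitrarily deep within the tail, so that the full neighborhood is reproduced. A careful combinatorial argument (choosing a sufficiently large $N$ past which the embedding condition holds for every $i$ and tracking the distance function $d(v_0,-)$) yields the required homeomorphism.

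For $(1)\Rightarrow(2)$, assume $X$ is stable. First, if $0<g(X)<\infty$, separate a wedge factor $\bigvee_{i=1}^{g(X)}R_1$, satisfying condition 1 of Definition~\ref{def: ordered signature}. The remaining graph has $g\in\{0,\infty\}$, so Proposition~\ref{prop: wedge decomposition} gives its canonical wedge decomposition $X_1\vee\cdots\vee X_n$ with each $\partial X_i$ a local structure; this realizes condition 2. For each $X_i$, invoke Lemma~\ref{lem: invocation of ordered spread} to write $X_i\cong\{Y_{i,n}\}_n\to\MS(X_i)$ with $\MS(X_i)\in\{1,o(1),C,o(C)\}$ and $\partial Y_{i,1}\subseteq\partial Y_{i,2}\subseteq\cdots$; this gives condition 3(a). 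Conditions 3(b) and 3(c) follow from the fact that the maximal ends of $X_i$ lie in $\MS(X_i)$ and are all of the same type (since $\partial X_i$ is a local structure). For condition 4 (simplicity of genus), whenever an arrow $\{Y_{i,n}\}_n\to 1$ (resp.\ $\to C$) appears with some $Y_{i,n}$ carrying accumulated-by-genus ends, one rewrites by absorbing the genus: the whole local structure is proper homotopy equivalent to one whose signature has the form $o(\text{something})$ or has its genus inside an $o(-)$ on the right, since the accumulating loops can be propagated along the arrow. Finally, recurse on each $Y_{i,n}$, which is stable as a clopen subgraph of the stable graph $X$. Termination of the recursion is guaranteed by the descending chain condition on local structures (Proposition~\ref{prop: descending chain condition}), applied componentwise.

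The main obstacle, as mentioned, is the convergence step of $(3)\Rightarrow(1)$: converting the combinatorial embedding condition into an actual homeomorphism of shrunken neighborhoods of a limit end $\mu\in\partial Y$. The secondary delicate point is verifying condition 4 in $(1)\Rightarrow(2)$, which requires the graph-theoretic observation that attached loops on the domain of a convergence arrow can be absorbed into an $o(-)$ applied to the codomain without changing the proper homotopy type.
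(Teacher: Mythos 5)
Your proof follows essentially the same route as the paper's: $(1)\Rightarrow(2)$ via the wedge decomposition of Proposition~\ref{prop: wedge decomposition}, Lemma~\ref{lem: invocation of ordered spread}, and termination by the descending chain condition (Proposition~\ref{prop: descending chain condition}); $(2)\Rightarrow(3)$ by unwinding the definition; and $(3)\Rightarrow(1)$ by the same induction on the signature with the tail-embedding argument for ends of $\partial Y$ that the paper isolates as Claim~\ref{cl: pass left of arrows}. Two small remarks: in $(2)\Rightarrow(3)$ you should justify stability of $Y$ only by the fact that the graph to the right of an arrow in an ordered signature is one of $1,o(1),C,o(C)$ (appealing to ``$(2)\Rightarrow(1)$ from the other half'' is circular in your chosen cycle of implications, though unnecessary), and your concern about condition 4 in $(1)\Rightarrow(2)$ is moot, since in a local structure any end accumulated by genus forces the maximal ends to be accumulated by genus, so $\MS(X_i)$ is automatically $o(1)$ or $o(C)$ and no genus-absorbing rewrite is needed.
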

\begin{proof}
    We first show the implication $1\Rightarrow2$. Let $X$ be a stable locally finite graph. By Proposition~\ref{prop: wedge decomposition}, $X$ has a wedge decomposition $X=W_1\vee\cdots \vee W_n$, where $W_n=\bigvee_{i=1}^{g(X)}R_1$ if $0<g(X)<\infty$. It suffices to show that each $W_i$ has an order-preserving signature. Thus, without loss of generality, we may assume that $X$ has a unique maximal end type of size one or infinity. This will be a recursive proof, and so we let $X_1:=X$. If $X_1\cong C$, $o(C)$, $\omega^\alpha+1$, or $o(\omega^\alpha+1)$ for a countable ordinal $\alpha$, we are done. Otherwise, by Lemma~\ref{lem: invocation of ordered spread}, $X_1\cong\{Y_n\}_n\rightarrow\MS(X_1)$ where $\MS(X_1)$ is a minimal local structure. Note that if all the $Y_n$ are ordered signatures, then the signature of $X_1$ is ordered too. Each $Y_n$ has a wedge decomposition, so we let $\partial X_2$ be a maximal local structure in some $Y_n$. Note that $\partial X_2\prec \partial X_1$. By Proposition~\ref{prop: descending chain condition}, this process will eventually terminate, although not necessarily in a uniform number of steps across all $Y_n$. Therefore, $X$ has an ordered signature.

    The implication $2\Rightarrow3$ follows directly from the definition of ordered signatures.

    The rest of the proof shows the implication $3\Rightarrow1$. We first prove the following claim.

    \begin{claim}\label{cl: pass left of arrows}
        Let $\{Y_n\}_n$ be a sequence of stable locally finite graphs, and let $Y$ be another stable locally finite graph. If there is a homeomorphic clopen copy of $\partial Y_n$ in $\bigsqcup_{k=n+1}^\infty \partial Y_k$ for all but finitely many $n$, then $\{Y_n\}_n\to Y$ is stable.
    \end{claim}
    \begin{proof}
        Since any end in one of the $\partial Y_n$ is stable in $\partial Y_n$, it must be stable in $\{Y_n\}_n\to Y$, as $\partial Y_n$ is clopen in $\{Y_n\}_n\to Y$. Thus, it suffices to check that an arbitrary end $\nu$ in $\partial Y\subset\partial(\{Y_n\}_n\to Y)$ is stable. By assumption, there exists a number $N$ such that for $n>N$, all $\partial Y_n$ embed homeomorphically into $\bigsqcup_{k=n+1}^\infty \partial Y_k$. Observe that $Y\cong \{Z_m\}_m\rightarrow (1,x_0)$ for some sequence $\{Z_m\}_m$ and choice of base point $x_0$, where the $1$ represents $\nu$. By stability of $Y$ and by increasing $N$ if necessary, there is a homeomorphic copy of $\partial Z_m$ in $\bigsqcup_{k=m+1}^\infty \partial Z_k$ for all $m>N$.

        We have that $\{Y_n\}_n\rightarrow Y\cong\{Y_n\}_n\rightarrow((\{Z_m\}_m\rightarrow (1,x_0)),y_0)$, which is proper homotopy equivalent to $\{\{Y_{n+k_l}\}_n\rightarrow (Z_l,a_l)\}_l\rightarrow (1,z_0)$, where the $1$ represents $\nu$ and $\{k_l\}_l$ is the minimal $n$ such that $Y_n$ is attached to each $Z_l$ (see Figure~\ref{fig: signature stability criterion} for an example). Increasing $N$ if necessary, we may assume that for $l>N$, $k_l$ is strictly increasing. Then for $l>N$, $\partial (\{Y_{n+k_l}\}_n\rightarrow Z_l)\subseteq\bigsqcup_{j=l+1}^\infty \partial (\{Y_{n+k_{j}}\}_n\rightarrow Z_{j})$. This implies stability of $\nu$ in $\{Y_n\}_n\rightarrow Y$.
    \end{proof}
    
    By definition of signatures, any end in $\partial X$ in any signature is either represented in a sub-graph which is to the right of an arrow, or one of $\omega^\alpha+1$, $o(\omega^\alpha+1)$, $C$, or $o(C)$. Ends in the first case are stable in $\partial X$ by Claim~\ref{cl: pass left of arrows}, and ends in the latter cases are stable from earlier discussions. Because there are no ends to the right of infinitely many arrows, we may apply the above argument as many times as we need until the sub-graph with $\nu$ on the right is a wedge component of $X$. At this point, because stability is a local condition, this gives that $\nu$ is stable in $X$.
\end{proof}

The following corollary follows immediately from Theorem~\ref{thm: ordered signatures}.

\begin{corollary}\label{cor: stable -> no graphs}
    If a locally finite graph $X$ is a stable, then it has a signature which only involves other signatures, and not other graphs.
\end{corollary}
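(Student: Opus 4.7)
The plan is to read this off directly from Theorem~\ref{thm: ordered signatures} together with the structure of Definition~\ref{def: ordered signature}. Since $X$ is stable, the implication $1 \Rightarrow 2$ of Theorem~\ref{thm: ordered signatures} already produces an ordered signature for $X$. So the only work remaining is to verify that an ordered signature never mentions an actual graph as a piece of data; it is built entirely out of symbolic signatures.

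First I would run through Definition~\ref{def: signature} and identify exactly where an explicit graph is allowed to enter. The base cases $R_n$, $C$, and $\omega^\alpha+1$ are purely symbolic. The wedge and genus constructors take signatures as inputs and produce signatures. The only two constructors that actually reference a graph are convergence, where in its most general form $\{Y_n\}_n \to (G, x_0)$ the target $G$ is a locally finite graph together with a choice of base vertex, and spread, where the target $G$ of $(\{Y_n\}_n, a) \to G$ is always a graph equipped with an attaching function $a\colon \mathcal{V}(G)\to\Z_{\geq 0}$.

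Next I would invoke condition $(3)$ of Definition~\ref{def: ordered signature}: in an ordered signature, spread never appears, and whenever convergence $\{Y_n\}_n \to Y$ shows up, the target $Y$ is required to be a signature rather than a graph (equivalently, no vertex or base point of a graph is referenced). Combined with the recursion condition $(5)$, which demands that all $Y_n$ and $Y$ appearing in such an expression be themselves ordered signatures, one obtains by induction on the construction of the ordered signature that every constructor used is of the form $R_n$, $C$, $\omega^\alpha+1$, $o(-)$, $-\vee-$, or $\{-\}_n \to -$ applied to signatures only. Hence the entire expression is a signature built from signatures, with no graphs appearing anywhere.

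There is no real obstacle here: the content of the corollary is the elimination of spread (which always carries a graph) and the prohibition on graphs appearing to the right of convergence arrows, and both of these are already built into the definition of an ordered signature. The nontrivial work was done in proving Theorem~\ref{thm: ordered signatures}; the corollary is a one-line consequence of the fact that, once that theorem gives us an ordered signature, the clauses of Definition~\ref{def: ordered signature} forbid the only two places where a bare graph could have appeared.
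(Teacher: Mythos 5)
Your proposal is correct and takes essentially the same route as the paper, which simply cites Theorem~\ref{thm: ordered signatures} together with the definition of ordered signatures; you have merely spelled out the verification that clauses (3) and (5) of Definition~\ref{def: ordered signature} exclude the two places where a bare graph could appear. No issues.
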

\begin{proof}
    This follows immediately from Theorem~\ref{thm: ordered signatures}, given the definition of ordered signatures.
\end{proof}

The converse of Corollary~\ref{cor: stable -> no graphs} is not true, namely, there exists signatures that do not contain other graphs which unambiguously represent a unique graph that is not stable. For example, consider $\{Y_n\}_n\to 1$, where $Y_n=(\omega^n+1)\to C$. The end represented by $1$ in this signature is non-stable, because the $Y_n$ are incomparable by Lemma~\ref{lem: class of incomparables}.

It is possible to define ordered signatures in a way which does not allow for the genus operation $o(-)$ and treats $R_1$ with the same rules as a minimal end type. For example, an ordered signature for $o(1)$ is $R_1\to 1$. Under this system, for any local structure $Z$, $\MS(Z)\in\{1,C\}$. While this system eliminates more redundancies in Definition~\ref{def: signature}, it often results in graphs whose ordered signatures are much more difficult to write because of Definition~\ref{def: ordered signature} 3(c). For example, an ordered signature of $o(\omega^\omega+1)$ is $\{X_n\}_n\to 1$, where $X_0\cong R_1\to 1$ and $X_{n+1}\cong X_n\to 1$ for all $n>0$.

\begin{figure}[t]
    \centering
    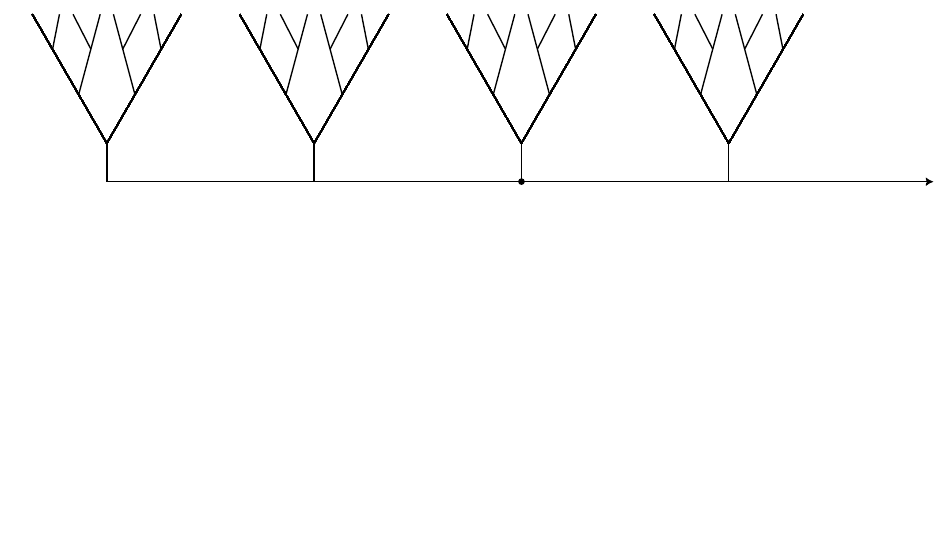
    \centering
    \caption{The graphs $\{Y_n\}_n\rightarrow((\{Z_m\}_m\rightarrow(1,x_0)),y_0)$ (on the top) and $\{\{Y_{n+k_l}\}_n\rightarrow (Z_l,a_l)\}_l\rightarrow (1,z_0)$ (on the bottom) in Theorem~\ref{thm: ordered signatures} when $Y\cong C$, and hence $Z_m\cong C$ for all $m$. In the top graph, the index of the $Y_n$ attached to a given vertex, given by a number in black, is equal to the distance from $y_0$ to that vertex. The value of $m$ at a given Cantor tree is given by the distance from that tree to $x_0$. In the bottom graph, at a given Cantor tree, the value of $l$ is equal to the distance from that tree to $z_0$, and the value of $k_l$ is the index of the $Y_n$ attached to $a_l$. These two graphs are proper homotopy equivalent via an expansion of the wedge points in the top graph. Also, the sequence $\{k_l\}_l$ is eventually strictly increasing.}
    \label{fig: signature stability criterion}
\end{figure}

\section{Dense conjugacy classes in $\Maps(X)$}\label{sec: Rokhlin}

We now turn our attention to the question of which locally finite graphs $X$ are such that $\Maps(X)$ has a dense conjugacy class. As mentioned in the introduction, there exists a complete classification in the surface setting due to \cite{LV} and \cite{HHMRSV}.

\begin{theorem}[Rokhlin property for surfaces]\label{thm: Rokhlin property for surfaces}
    The mapping class group of a connected orientable $2$-manifold has the Rokhlin property if and only if the manifold is either the $2$-sphere or a non-compact manifold whose genus is either zero or infinite and whose end space is self-similar with a unique maximal end.
\end{theorem}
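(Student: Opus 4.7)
The plan is to organize the proof around the Kechris-Rosendal framework: for a Polish group $G$ with a neighborhood basis of the identity consisting of open subgroups $\{U_K\}$, having a dense conjugacy class is equivalent to the conjugation action being topologically transitive, which translates into a joint embedding / weak amalgamation property for the cosets $gU_K$. For a surface $S$, the natural choice of $U_K$ is the pointwise stabilizer of a large compact subsurface $K$, in perfect analogy with Proposition~\ref{AK-B 4.7} in the graph setting. I would first dispose of $S=S^2$ (where $\Maps$ is trivial), and then split into the forward and backward direction for $S\neq S^2$.

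For the forward direction I would produce, under the failure of any of the listed hypotheses, a continuous homomorphism from $\Maps(S)$ to a nontrivial discrete group, which forces a proper open normal subgroup and thus obstructs a dense conjugacy class. If $S$ is compact and not $S^2$, then $\Maps(S)$ is itself a nontrivial countable discrete group. If $S$ is noncompact but $0<g(S)<\infty$, the action on $H_1$ of a genus-$g(S)$ subsurface containing all the handles yields a surjection to a finite-index subgroup of $\mathrm{Sp}(2g(S),\Z)$. If the end space $\partial S$ fails to be self-similar, then by the analogue of Proposition~\ref{prop: classification of local structures} on the surface side one can find a clopen partition of $\partial S$ where no piece absorbs a copy of $\partial S$, and the counting function of the resulting (finitely many) maximal end types gives a homomorphism to a symmetric group. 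If $\partial S$ is self-similar but has more than one maximal end type, the same counting argument applied to Proposition~\ref{MR 4.7} (which is valid for surfaces by \cite{MRlong}) provides the obstruction.

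For the backward direction I would verify a surface analogue of weak amalgamation directly. Fix a compact subsurface $K\subset S$; the connected components of $S\setminus K$ are finitely many infinite-type surfaces whose end spaces form a clopen partition of $\partial S\setminus\partial K$. The unique-maximal-end and self-similarity hypotheses, combined with the change-of-coordinates principle for infinite-type surfaces, imply that every such complementary component is homeomorphic to every other one and contains a clopen copy of any prescribed complementary component. Given two mapping classes $f,g\in U_K$, I would build a homeomorphism $h$ supported on $S\setminus K$ that displaces the support of $f$ into a single distinguished complementary component, while leaving enough room elsewhere to also conjugate $g$ into a disjoint complementary component, producing a mapping class whose $U_K$-conjugacy class meets both $fU_K$ and $gU_K$. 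Iterating over an exhaustion by compact subsurfaces via a Fra\"iss\'e-style back-and-forth yields the desired dense conjugacy class.

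The main obstacle is the displacement step in the backward direction. The hypothesis of a \emph{unique} maximal end is precisely what prevents the permutation action of $\Maps(S)$ on the (finitely many) equivalence classes of maximal ends from being a nontrivial obstruction, and it is what guarantees that the complementary components of $K$ can all be mapped into a single preferred component. Without it one recovers a nontrivial homomorphism to a symmetric group, so the hypothesis is tight. The technical work lies in choosing $K$ large enough to capture the finite genus (when $g(S)=0$ this is trivial; when $g(S)=\infty$ one uses self-similarity of the subspace of ends accumulated by genus) and in tracking end types through the displacement, for which the Mann-Rafi machinery cited in the paper is the essential tool.
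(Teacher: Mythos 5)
First, a point of comparison: the paper does not prove Theorem~\ref{thm: Rokhlin property for surfaces} at all. It is imported as a black box from \cite{LV} and \cite{HHMRSV}, so there is no internal proof to measure your sketch against. On its own terms, your backward direction is essentially the Lanier--Vlamis displacement argument (the same scheme the paper sketches for graphs in Proposition~\ref{prop: unique maximal end}), and your treatment of the compact and finite-positive-genus cases is fine in outline. The forward direction, however, has a genuine gap.

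The gap is the case of a non-compact surface of genus zero or infinity whose end space is self-similar but whose set of maximal ends is a Cantor set --- e.g.\ the sphere minus a Cantor set, or the blooming Cantor tree surface. Self-similarity already forces a unique maximal end \emph{type}, of size one or a Cantor set (Proposition~\ref{MR 4.7}, Proposition~\ref{prop: classification of local structures}), so your clause ``self-similar but more than one maximal end type'' is vacuous; the theorem excludes surfaces with more than one maximal \emph{end}, and when the unique maximal type is a Cantor set the orbit of a maximal end is infinite, so there is no counting homomorphism to a symmetric group and none of your listed obstructions applies. Moreover, no obstruction built purely from end-space dynamics can work here: the corresponding graph group $\Maps(C)$ \emph{does} have a dense conjugacy class (Proposition~\ref{prop: Cantor tree}), so the surface obstruction must be genuinely two-dimensional --- in \cite{LV} and \cite{HHMRSV} it runs through curves and non-displaceable subsurfaces, which the paper explicitly notes have no graph analogue. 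The same issue affects your ``not self-similar'' case: failure of self-similarity does not produce finitely many maximal end types with finite orbits (maximal types can be Cantor sets, and a singleton maximal type gives only the trivial permutation representation), and the literature again handles this case with non-displaceable subsurfaces rather than symmetric-group counting. As stated, your forward direction proves strictly less than the theorem claims.
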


It is not common for $\Maps(X)$ to have a dense conjugacy class, and thus, much of Section~\ref{sec: Rokhlin} will be focused on showing obstructions to having this property. Given Proposition~\ref{AK-B 4.5}, we will often use Proposition~\ref{AK-B 2.6} by first assuming that $X$ is in standard form. Then we will employ two main tools to show that $\Maps(X)$ does not have a dense conjugacy class. The first is the \emph{joint embedding property}.

\begin{definition}
    A topological group $G$ has the \emph{joint embedding property} (\emph{JEP}) if for all non-empty open sets $U$ and $V$ in $G$, there exists $g\in G$ such that $U\cap V^g\neq\0$, where $V^g=\{gfg^{-1}\,:\,f\in V\}$. In other words, $G$ has the JEP if the action of conjugation on itself is topologically transitive.
\end{definition}

The JEP is useful because if $G$ is a Polish group, as in the case of $\Maps(X)$ by Proposition~\ref{AK-B Polish}, then $G$ has a dense conjugacy class if and only if $G$ has the JEP \cite[Theorem~2.1]{KR}. The topology of $\Maps(X)$ is much more abstract than the topology of the graph $X$ itself, and when we will use the JEP to show that $\Maps(X)$ does not have a dense conjugacy class, the sets $U$ and $V$ are going to be constructed directly using the topology on $X$. Thus, the JEP serves as a dictionary between the topology of $\Maps(X)$ and the topology of $X$.

The second main tool that we will use to show that $\Maps(X)$ does not have a dense conjugacy class is the following lemma.

\begin{lemma}\label{lem: proper open normal subgrp}
    If a topological group contains a proper open normal subgroup, then it does not contain a dense conjugacy class.
\end{lemma}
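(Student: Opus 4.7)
The plan is to use the quotient $G/N$ as the obstruction. If $N$ is a proper open normal subgroup of $G$, then because $N$ is open, every coset $gN$ is open, so $G \setminus N$ is a union of cosets and hence open; thus $N$ is clopen and $G/N$ is a nontrivial discrete topological group. Let $q \colon G \to G/N$ denote the quotient map, which is continuous, open, and surjective.

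Next I would observe that conjugation in $G$ descends to conjugation in $G/N$: for any $f, g \in G$, the relation $q(gfg^{-1}) = q(g)q(f)q(g)^{-1}$ holds, so if $C \subseteq G$ is the conjugacy class of $f$, then $q(C)$ is precisely the conjugacy class of $q(f) = fN$ in $G/N$. Since $G/N$ is a nontrivial group, every conjugacy class in it is a proper subset: the conjugacy class of the identity $eN$ is just $\{eN\}$, while any other conjugacy class is disjoint from $\{eN\}$. Hence $q(C) \subsetneq G/N$.

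Finally, suppose for contradiction that $C$ is dense in $G$. Then because $q$ is continuous and surjective onto the discrete space $G/N$, the image $q(C)$ would be dense in $G/N$, and since $G/N$ is discrete, density forces $q(C) = G/N$. This contradicts the previous paragraph, so no conjugacy class of $G$ is dense, proving the lemma. The only subtle point is confirming that $G/N$ is discrete and that density descends through the open quotient map; both follow immediately from $N$ being open, so no step should be a serious obstacle.
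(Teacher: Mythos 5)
Your proof is correct and is essentially the same argument as the paper's, just packaged through the discrete quotient $G/N$: the paper argues directly that normality forces each conjugacy class to lie entirely in $H$ or entirely in the open complement $G\setminus H$, so no class can meet both nonempty open sets. Passing to the quotient adds nothing essential but is perfectly valid; all the auxiliary claims (discreteness of $G/N$, density descending through a continuous surjection, conjugacy classes in a nontrivial group being proper) check out.
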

\begin{proof}
    Let $G$ be a topological group, and $H$ be a proper open normal subgroup. The normality of $H$ implies that conjugacy classes in $G$ are either completely contained in $H$ or completely contained in $G\setminus H$. As the subgroup $H$ is open, so are all of its cosets, implying that $G\setminus H$ is open. Thus, there cannot exist a dense conjugacy class, as dense subsets must non-trivially intersect every open subset.
\end{proof}

In practice, the way we will find a proper open normal subgroup of $\Maps(X)$ is to find a group homomorphism $\Phi$ with an open kernel from $\Maps(X)$ to a discrete group $K$, and apply Lemma~\ref{lem: proper open normal subgrp} to the kernel of $\Phi$. See Section~\ref{sec: finite genus and maximal end type} for examples when $K$ is a symmetric group or Out$(\pi_1(X))$, and Section~\ref{sec: flux maps} for an example when $K=\Z$.

Theorem~\ref{thm: Rokhlin property for surfaces} implies that the mapping class group of a surface with a unique maximal end, with genus zero or infinity, and which has self-similar end space, has a dense conjugacy class. The proof in \cite{LV} of this statement is in fact the same in our setting. We give a sketch of their argument below.

\begin{proposition}[Unique maximal end]\label{prop: unique maximal end}
    Let $X$ be a locally finite graph with $g(X)\in\{0,\infty\}$ such that $X$ has a unique maximal end which is stable. Then $\Maps(X)$ has a dense conjugacy class.
\end{proposition}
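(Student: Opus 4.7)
The plan is to verify the joint embedding property (JEP), which for the Polish group $\Maps(X)$ (Proposition~\ref{AK-B Polish}) is equivalent to the existence of a dense conjugacy class by \cite[Theorem~2.1]{KR}. By Proposition~\ref{AK-B 4.7}, basic open sets take the form $[\phi]U_K$ for a finite subgraph $K$, so the task reduces to the following: for every $[\phi],[\psi]\in\Maps(X)$ and every finite subgraph $K\subseteq X$, produce $[g]\in\Maps(X)$ with $[g][\psi][g]^{-1}\in[\phi]U_K$. Unpacking the definition of $U_K$, this asks for representatives $\tilde\phi$ of $[\phi]$ and $\tilde g\tilde\psi\tilde g^{-1}$ of $[g][\psi][g]^{-1}$ that agree pointwise on $K$ and induce the same permutation on the components of $X\setminus K$.

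By Proposition~\ref{prop: classification of local structures}, the hypotheses that $X$ has a unique stable maximal end $\mu$ and $g(X)\in\{0,\infty\}$ force $\partial X$ itself to be a stable self-similar neighborhood of $\mu$. Combining Lemma~\ref{lem: 4.17} with Lemmas~\ref{lem: wedges and clopens} and~\ref{lem: small clopen neighborhoods}, for any finite subgraph $K_0\subseteq X$ I can produce a clopen set $E\subseteq\partial X$ disjoint from the ends accumulating into $K_0$ whose corresponding subgraph $X_E\subseteq X\setminus K_0$ has characteristic pair homeomorphic to $(\partial X,\partial X_g)$. This is the engine of the argument: any finite region of $X$ can be relocated into an arbitrarily small clopen subgraph that is itself a proper homotopy equivalent copy of $X$.

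Given this, the construction of $[g]$ proceeds as follows. Enlarge $K$ to a connected finite subgraph $K^+$ chosen so that representatives of $\phi^{\pm 1}$ and $\psi^{\pm 1}$ can be taken cellular and mapping $K$ into $K^+$; since $U_{K^+}\subseteq U_K$, replacing $K$ by $K^+$ only strengthens the claim. Choose $E$ and $X_E$ as above, disjoint from $K^+\cup\phi(K^+)\cup\psi(K^+)$, together with a homeomorphism $\gamma\colon K^+\to K^*\subseteq X_E$ onto a finite subgraph $K^*$. Define $g$ on $K^+\cup\phi(K^+)$ by $g|_{K^+}=\gamma$ and $g(\phi(v))=\psi(\gamma(v))$ for $v\in K^+$, and extend $g$ to a proper homotopy equivalence of $X$ by invoking Theorem~\ref{AK-B 2.2} component-by-component on $X\setminus(K^+\cup\phi(K^+))$, with the characteristic pair of each complementary component matched to that of the corresponding component of $X\setminus(K^*\cup\psi(K^*))$ using the fact that $X_E$ is a proper homotopy equivalent copy of $X$ into which the complementary pieces embed. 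By construction $g\psi g^{-1}$ and $\phi$ agree pointwise on $K^+$, and therefore on $K$.

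The main obstacle, and the place where self-similarity is genuinely needed, is the component-preservation clause in the definition of $U_K$: we must ensure that $\tilde\phi$ and $\tilde g\tilde\psi\tilde g^{-1}$ induce the same permutation on the components of $X\setminus K$. Each such component has a characteristic pair that is a clopen subset of $(\partial X,\partial X_g)$, and because the whole $\partial X$ is a stable self-similar neighborhood of $\mu$, any two such clopen subsets of matching maximal-end structure are homeomorphic; this freedom allows us to tailor the extension of $g$ so that the component pairing matches the pattern of $\phi$. With this final adjustment, $[\phi]^{-1}[g][\psi][g]^{-1}\in U_K$, establishing JEP. The whole argument is the direct analogue of the Lanier--Vlamis argument for surfaces in \cite{LV}, with Theorem~\ref{AK-B 2.2} standing in for the classification of infinite-type surfaces.
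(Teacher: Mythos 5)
There is a genuine gap at the very first step. You reduce the JEP to: \emph{for every} $[\phi],[\psi]\in\Maps(X)$ and every finite $K$ there is $[g]$ with $[g][\psi][g]^{-1}\in[\phi]U_K$. That statement asserts that \emph{every} conjugacy class is dense, which is false in any nontrivial topological group: take $[\psi]=\mathrm{id}$, so that $[g][\psi][g]^{-1}=\mathrm{id}$ for all $[g]$, and the condition becomes $[\phi]\in U_K$, which fails for most $[\phi]$. The JEP only requires that for basic open sets $U=[\phi]U_{K_1}$ and $V=[\psi]U_{K_2}$ some element of $V$ (not $[\psi]$ itself) conjugates into $U$; the freedom to replace $[\phi]$ and $[\psi]$ by other elements of their cosets is essential and is exactly what your argument discards. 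The same problem surfaces concretely in your construction: the two prescriptions $g|_{K^+}=\gamma$ and $g(\phi(v))=\psi(\gamma(v))$ are inconsistent whenever $K^+\cap\phi(K^+)\neq\emptyset$ (e.g.\ $\phi=\mathrm{id}$ forces $\gamma(v)=\psi(\gamma(v))$, i.e.\ $\psi$ would have to fix $K^*$ pointwise, which nothing guarantees), and when $\psi=\mathrm{id}$ and $\phi$ moves points of $K^+$ the prescription forces $g$ to be non-injective, so no invertible representative realizing ``$g^{-1}\psi g=\phi$ on $K^+$'' can exist.

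The paper's proof (following Lanier--Vlamis) supplies precisely the missing ingredient. One first shows that the subgroup $G$ of mapping classes admitting representatives equal to the identity on some $\Omega_e$ (the complementary component of an edge $e$ of the underlying tree whose end space contains $\mu$) is dense in $\Maps(X)$; since each $[f_i]U_{K_i}$ is a coset of an open subgroup, one may then assume $[f_1],[f_2]\in G$, i.e.\ both are supported in a common $\Sigma_e$ with $K_1,K_2\subseteq\Sigma_e$. Self-similarity (your Proposition~\ref{prop: classification of local structures} observation, which is correct) then yields $h$ with $h(\Sigma_e)\subset\Omega_e$, and conjugating by $h$ displaces the supports so that $[h][f_1]U_{K_1}[h]^{-1}\cap[f_2]U_{K_2}\neq\0$. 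Your idea of relocating a finite region into a small clopen copy of $X$ is the right geometric engine for the displacement step, but without first normalizing $[\phi]$ and $[\psi]$ inside their cosets to have ``support'' away from $\mu$, the conjugation cannot match $g^{-1}\psi g$ to $\phi$ near $K$.
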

\begin{proof}
    \cite{LV} technically shows this result in the setting where the end space is self-similar, howeaver, this is true in our setting by Proposition~\ref{prop: classification of local structures}. We may assume by Proposition~\ref{AK-B 4.5} that $X$ is in standard form. Let $\mu$ be the maximal end of $X$. Given any edge $e$ in the underlying tree of $X$, the complement in $X$ of the interior of $e$ consists of two connected components; let $\Omega_e$ be the component such that $\mu\in\partial\Omega_e$, and let $\Sigma_e$ be the other one. Let $G$ be the subgroup of $\Maps(X)$ consisting of elements with representatives restricting to the identity on some $\Omega_e$ for some edge $e$ in the underlying tree of $X$. The proof begins by proving the following claim.

    \begin{claim}
        The group $G$ is dense in $\Maps(X)$, and given any separating edge $e$ in $X$, there exists $h\in\PHE(X)$ such that $h(\Sigma_e)\subset\Omega_e$.
    \end{claim}
    \begin{proof}
        The existence of $h$ is deduced using self-similarity, and the density of $G$ is proven by showing that for any $[f]\in\Maps(X)$ and $K\subset X$ compact, the open set $[f]U_K$ intersects $G$ non-trivially, where $U_K$ is as constructed in the discussion preceding Proposition~\ref{AK-B 4.7}.
    \end{proof}

    After this claim, it can be shown that $\Maps(X)$ has the JEP, i.e., that for any open sets $U$ and $V$ in $\Maps(X)$, there exist $[h]\in\Maps(X)$ such that $[h]U[h]^{-1}\cap V\neq\0$. It suffices to assume without loss of generality that $U$ and $V$ are of the form $[f_1]U_{K_1}$ and $[f_2]U_{K_2}$ (See Proposition~\ref{AK-B 4.7}) for $[f_1],[f_2]\in\Maps(X)$ and $K_1$ and $K_2$ compact sets of $X$. Moreover, by density of $G$ and because each $[f_i]U_{K_i}$ is a coset, it suffices to assume that $[f_1]$ and $[f_2]$ are in $G$. Let $e$ be a separating edge such that both $[f_1]$ and $[f_2]$ have representatives which restrict to the identity on $\Omega_e$ and such that both $K_1$ and $K_2$ are contained in $\Sigma_e$. Then any mapping class with a representative $h$ such that $h(\Sigma_e)\subset\Omega_e$ witnesses the JEP for $U$ and $V$.
\end{proof}

Proposition~\ref{prop: unique maximal end} shows that it is at least as common for mapping class groups of locally finite graphs to have a dense conjugacy class as it is for surfaces.

\begin{corollary}
    Let Mod$(S)$ be the mapping class group of a connected orientable two manifold $S$, and let $X$ be a graph which is proper homotopy equivalent to a filled-in version of $S$ (i.e. $X$ is a graph corresponding to $S$). If Mod$(S)$ has a dense conjugacy class, then $\Maps(X)$ also does.
\end{corollary}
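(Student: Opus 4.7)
The plan is to unpack the conclusion of Theorem~\ref{thm: Rokhlin property for surfaces} under the hypothesis that $\mathrm{Mod}(S)$ has a dense conjugacy class, transfer the resulting conditions on $S$ to corresponding conditions on $X$ via the graph--surface correspondence described in the introduction (boundary of a regular neighborhood of $X$), and then apply Proposition~\ref{prop: unique maximal end}.

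First, suppose $\mathrm{Mod}(S)$ has a dense conjugacy class. Theorem~\ref{thm: Rokhlin property for surfaces} says that $S$ is either the $2$-sphere, or a non-compact manifold whose genus is zero or infinite and whose end space is self-similar with a unique maximal end. In the sphere case, the associated graph $X$ is proper homotopy equivalent to a single vertex, so by Proposition~\ref{AK-B Polish} the group $\Maps(X) \cong \Out(F_0)$ is trivial, and hence vacuously has a dense conjugacy class.

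In the remaining case, I would invoke the correspondence between $S$ and $X$: it identifies $\partial S$ homeomorphically with $\partial X$, sends the subspace of ends accumulated by genus of $S$ onto $\partial X_g$, and preserves whether the genus is zero, finite and positive, or infinite. Consequently, $X$ is a locally finite graph with $g(X) \in \{0,\infty\}$, with $\partial X$ self-similar, and with a unique maximal end $\mu$. By Proposition~\ref{prop: classification of local structures} (the equivalence of self-similarity with condition~(3) of that proposition), self-similarity of $\partial X$ forces every maximal end to be stable, so in particular $\mu$ is stable. The hypotheses of Proposition~\ref{prop: unique maximal end} are then satisfied, and it concludes that $\Maps(X)$ has a dense conjugacy class.

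There is essentially no conceptual obstacle: the heavy lifting is already done by Propositions~\ref{prop: classification of local structures} and~\ref{prop: unique maximal end}, and by Theorem~\ref{AK-B 2.2}, which guarantees that the proper homotopy type of $X$ depends only on the characteristic pair that is being transferred from $S$. The one thing to be careful about is to match the surface-side notion of ``end space is self-similar with a unique maximal end'' (implicitly involving the pair $(\partial S, \partial S_g)$) with the graph-side notion (involving $(\partial X, \partial X_g)$), but this is immediate from the correspondence of characteristic pairs.
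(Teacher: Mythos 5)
Your proposal is correct and follows essentially the same route as the paper: classify $S$ via Theorem~\ref{thm: Rokhlin property for surfaces}, dispose of the sphere case by noting $\Maps(X)$ is trivial, and apply Proposition~\ref{prop: unique maximal end} in the remaining case. Your extra step of invoking Proposition~\ref{prop: classification of local structures} to verify stability of the maximal end is a reasonable bit of added care that the paper leaves implicit (its proof of Proposition~\ref{prop: unique maximal end} already notes that self-similarity supplies the needed hypotheses).
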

\begin{proof}
    By \cite{LV}, $S$ must either be a two sphere or have genus either zero or infinite, a unique maximal end, and a self-similar end space. If $S$ is a 2-sphere, then $X$ is proper homotopy equivalent to a point, and hence has trivial mapping class group. Otherwise, $X$ must satisfy the conditions of Proposition~\ref{prop: unique maximal end}, in which case $\Maps(X)$ also has a dense conjugacy class.
\end{proof}

The following proposition provides an example of a graph which does not mirror Theorem~\ref{thm: Rokhlin property for surfaces}.

\begin{proposition}[Cantor tree]\label{prop: Cantor tree}
    $\Maps(C)$ has a dense conjugacy class.
\end{proposition}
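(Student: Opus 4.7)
The cleanest route is to reduce the statement to a classical theorem about $\Homeo$ of Cantor space. Since the Cantor tree $C$ is a tree with $g(C)=0$, we have $\partial C_g = \emptyset$, and Proposition~\ref{AK-B 2.3} then says that the induced map $\sigma\colon\Maps(C)\to\Homeo(\partial C,\partial C_g)=\Homeo(\partial C)$ is an isomorphism of topological groups. The end space $\partial C$ is homeomorphic to Cantor space, and by the result of Glasner and Weiss \cite{GW} cited in the introduction, $\Homeo(\text{Cantor space})$ has a dense conjugacy class. Transporting a dense conjugacy class across the isomorphism $\sigma$ gives one in $\Maps(C)$.

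A concrete verification that Glasner--Weiss applies in our formulation is essentially immediate from Proposition~\ref{AK-B 2.3}: a homeomorphism is a topological isomorphism of Polish groups, so it preserves the JEP, which by \cite[Theorem~2.1]{KR} is equivalent to having a dense conjugacy class in the Polish setting (recall Proposition~\ref{AK-B Polish}, which tells us that $\Maps(C)$ is Polish). Thus the entire argument consists of invoking the tree case of Proposition~\ref{AK-B 2.3} followed by \cite{GW}; no new analysis of proper homotopy equivalences is required.

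As a sanity check, one can also adapt the argument used in Proposition~\ref{prop: unique maximal end} directly, and this is probably the mental picture to keep in mind even if one cites Glasner--Weiss in the final write-up. Assume $C$ is in standard form. For any edge $e$ in $C$, let $\Omega_e$ and $\Sigma_e$ be the two components of $C$ with the interior of $e$ removed. Since $\partial C$ is self-similar (Cantor space contains a clopen copy of itself), both $\Omega_e$ and $\Sigma_e$ have end spaces homeomorphic to Cantor space, and using Theorem~\ref{AK-B 2.2} there is a proper homotopy equivalence $h$ of $C$ carrying $\Sigma_e$ into a proper clopen subset of $\Omega_e$. The subgroup $G$ of mapping classes that restrict to the identity on some $\Omega_e$ can be shown to be dense by the same argument as in Proposition~\ref{prop: unique maximal end}, and the displacing map $h$ witnesses the JEP for any pair of basic open cosets $[f_1]U_{K_1}$ and $[f_2]U_{K_2}$ with $[f_1],[f_2]\in G$.

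The only subtle point, and the main obstacle to merely copying Proposition~\ref{prop: unique maximal end}, is that the Cantor tree has no unique maximal end; instead all ends are maximal and pairwise equivalent. This is handled by choosing, for each separating edge $e$, \emph{either} side as the ``target'' of the displacement, rather than the side containing a distinguished maximal end; self-similarity of Cantor space guarantees that a clopen copy of the whole end space fits into either side. Once this replacement is made, every step of the proof of Proposition~\ref{prop: unique maximal end} goes through verbatim, so I would simply cite the tree isomorphism and \cite{GW} in the actual write-up.
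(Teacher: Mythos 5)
Your main argument — invoking the tree case of Proposition~\ref{AK-B 2.3} to identify $\Maps(C)$ with $\Homeo(\partial C)$ and then citing Glasner--Weiss — is exactly the paper's proof, and it is correct. The additional sketch adapting Proposition~\ref{prop: unique maximal end} is a reasonable sanity check but is not needed, since the final write-up rests on the same two citations the paper uses.
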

\begin{proof}
    Because $C$ is a tree, by Proposition~\ref{AK-B 2.3}, it suffices to show that the homeomorphisms of Cantor space has a dense conjugacy class. This was proven in \cite[Theorem~2.6]{GW}.
\end{proof}

Many of the results in this section will come in two versions: one for ends and one for loops. This is because there are many ways in which immersed loops behave like ends. For example, every mapping class has two induced morphisms which parallel one another: an induced topological homeomorphism on the space of ends, and an induced group homomorphism on the fundamental group of $X$ after choosing a base point. In Section~\ref{sec: flux maps}, we will describe a precise way in which immersed loops behave like minimal ends in the poset of ends. From this perspective, Lemma~\ref{lem: minimal local structures} says that the set of minimal elements of the poset of local structures is precisely $\{\partial 1, R_1,C\}$, as $o(1)$ can be thought of as a ray dominating loops. This is motivation for requiring local structures to have genus zero or infinity: when genus is positive and finite, by Lemma~\ref{lem: orbits of max ends}, the loops function similar to maximal ends. 

Another motivation to think of immersed loops as minimal ends comes from the following lemma.

\begin{lemma}\label{lem: end classification}
    Let $\nu\in\partial X$ be an end in some locally finite graph. Then at least one of the following are true.
    \begin{enumerate}
        \item $\partial 1\preceq\nu$
        \item $\nu\in\partial X_g$
        \item $\nu\sim\partial C$
    \end{enumerate}
\end{lemma}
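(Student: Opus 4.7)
The plan is to prove the contrapositive: assuming $\nu$ satisfies neither (1) nor (2), I will show (3) holds by exhibiting a stable Cantor-type neighborhood of $\nu$ disjoint from $\partial X_g$.

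Since (2) fails, $\nu \notin \partial X_g$, and because $\partial X_g$ is closed in $\partial X$, Lemma~\ref{lem: small clopen neighborhoods} yields a clopen neighborhood $V$ of $\nu$ with $V \cap \partial X_g = \emptyset$. Since (1) fails, there is a neighborhood of $\nu$ containing no clopen homeomorphic copy of $\partial 1$, i.e., no end that is isolated in $\partial X$; using total disconnectedness of $\partial X$, I shrink this to a clopen neighborhood $U$, noting that any point of $U$ isolated in $U$ would also be isolated in $\partial X$ and thus give a forbidden copy of $\partial 1$. In particular $\nu$ itself is not isolated, for otherwise $\{\nu\}$ would be a clopen copy of $\partial 1$ contained in every neighborhood of $\nu$, contradicting the failure of (1).

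Set $W := U \cap V$. Then $W$ is a clopen neighborhood of $\nu$ with no isolated points and with $W_g = W \cap \partial X_g = \emptyset$. As a nonempty compact, metrizable, totally disconnected, perfect space, $W$ is homeomorphic to Cantor space by Brouwer's characterization. Combined with $W_g = \emptyset$, the characteristic pair $(W, W_g)$ is homeomorphic to $(\partial C, \emptyset)$.

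To finish, I verify that $W$ is a stable neighborhood of $\nu$. Any clopen subneighborhood $W' \subset W$ of $\nu$ is again nonempty, compact, metrizable, totally disconnected, and perfect (an isolated point of $W'$ would be isolated in $W$, hence in $\partial X$), with $W'_g = \emptyset$. Thus $W' \cong \partial C \cong W$ as characteristic pairs, so $W'$ trivially contains a homeomorphic copy of $W$—namely, itself. Hence $W$ is stable of type $\partial C$, giving $\nu \sim \partial C$ and establishing (3). The only real subtlety is unpacking the mixed notation $\partial 1 \preceq \nu$ as "every neighborhood of $\nu$ contains an end isolated in $\partial X$"; once that is done, the argument is a direct application of Brouwer's theorem and Lemma~\ref{lem: small clopen neighborhoods}.
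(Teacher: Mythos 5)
Your proof is correct and follows essentially the same route as the paper: use Lemma~\ref{lem: small clopen neighborhoods} to find a clopen neighborhood avoiding $\partial X_g$, shrink it to eliminate isolated points using the failure of $\partial 1\preceq\nu$, and apply Brouwer's characterization of Cantor space. The only difference is that you explicitly verify stability of the resulting neighborhood, a step the paper treats as immediate since every clopen Cantor subset of a Cantor set is homeomorphic to it.
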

\begin{proof}
    Suppose neither 1 nor 2 hold. Because $\nu\not\in\partial X_g$ and because $\partial X_g$ is a closed subset of $\partial X$, Lemma~\ref{lem: small clopen neighborhoods} implies that there exists a clopen neighborhood $U$ of $\nu$ such that $U\cap\partial X_g=\0$. As $\partial 1\not\preceq\nu$, we may assume that $U$ contains no isolated points by shrinking $U$ if necessary. Then by Brouwer's characterization of Cantor space, the set $U$ must be homeomorphic to Cantor space, giving that $\nu\sim\partial C$.
\end{proof}

\subsection{Finiteness}\label{sec: finite genus and maximal end type}

In this section, we prove Theorem~\ref{thm: obstructions to dense conjugacy classes} (2) and (3). First, we prove a lemma which will be used throughout the rest of the paper.

\begin{lemma}\label{lem: caste system} 
    Let $X$ be a locally finite graph and $E(\nu)$ an equivalence class of ends. Any $[f]\in \Maps(X)$ induces a homeomorphism on $E(\nu)$.
\end{lemma}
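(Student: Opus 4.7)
The plan is to use the homomorphism $\sigma\colon\Maps(X)\to\Homeo(\partial X,\partial X_g)$ of Proposition~\ref{AK-B 2.3}. Setting $\varphi:=\sigma([f])$, it suffices to show that $\varphi(E(\nu))=E(\nu)$: then $\varphi$ restricted to $E(\nu)$ is a bijection whose inverse is the restriction of the continuous map $\sigma([f]^{-1})$, hence a homeomorphism of $E(\nu)$ in the subspace topology inherited from $\partial X$.

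The first step is to observe that any homeomorphism of characteristic pairs preserves the preorder $\preceq$. Suppose $\nu_1\preceq\nu_2$, and let $V$ be any neighborhood of $\varphi(\nu_2)$. Then $\varphi^{-1}(V)$ is a neighborhood of $\nu_2$, so by definition of $\preceq$ it contains a homeomorphic copy of some neighborhood $U$ of $\nu_1$; applying $\varphi$ to this copy produces a homeomorphic copy of $U$ sitting inside $V$. Since $\varphi$ restricts to a characteristic-pair homeomorphism from $U$ to $\varphi(U)$, and $\varphi(U)$ is a neighborhood of $\varphi(\nu_1)$, the set $V$ contains a homeomorphic copy of a neighborhood of $\varphi(\nu_1)$. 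Hence $\varphi(\nu_1)\preceq\varphi(\nu_2)$. Applying the same argument to $\sigma([f]^{-1})=\varphi^{-1}$ gives the converse, so $\nu_1\sim\nu_2$ if and only if $\varphi(\nu_1)\sim\varphi(\nu_2)$; in particular $\varphi(E(\nu))=E(\varphi(\nu))$.

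The remaining step is to verify that $\varphi(\nu)\sim\nu$, so that $E(\varphi(\nu))=E(\nu)$. Given any neighborhood $V$ of $\varphi(\nu)$, the set $\varphi^{-1}(V)$ is a neighborhood of $\nu$, and $\varphi$ restricts to a characteristic-pair homeomorphism from $\varphi^{-1}(V)$ onto $V$; thus $V$ contains (namely, is) a homeomorphic copy of a neighborhood of $\nu$, giving $\nu\preceq\varphi(\nu)$. The symmetric argument applied to $\varphi^{-1}$ yields $\varphi(\nu)\preceq\nu$, and therefore $\varphi(\nu)\sim\nu$. Combining with the preceding paragraph, $\varphi(E(\nu))=E(\varphi(\nu))=E(\nu)$, as required.

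I do not anticipate a real obstacle: the lemma is essentially a bookkeeping consequence of Proposition~\ref{AK-B 2.3} together with the fact that the definition of $\preceq$ is intrinsic to the topology of the characteristic pair. The only mild subtlety is remembering that ``homeomorphic copy'' in the definition of $\preceq$ refers to characteristic-pair homeomorphism in the sense of the convention $U\cong V\Leftrightarrow (U,U_g)\cong(V,V_g)$, which is precisely the class of homeomorphisms in which $\sigma$ takes values.
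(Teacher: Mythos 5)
Your proposal is correct and follows essentially the same route as the paper: pull a neighborhood of $f(\nu)$ back through $f^{-1}$, push it forward to exhibit a homeomorphic copy inside the original neighborhood, conclude $\nu\sim f(\nu)$ by symmetry, and get continuity of the restriction from Proposition~\ref{AK-B 2.3}. Your first paragraph on preorder preservation is harmless but redundant, since applying $\varphi(\nu')\sim\nu'$ to every $\nu'\in E(\nu)$ already gives $\varphi(E(\nu))=E(\nu)$.
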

\begin{proof} 
    Let $\nu\in E(\nu)$. First, we show that $\nu\preceq f(\nu)$. Let $U$ be an open neighborhood of $f(\nu)$. Then $f^{-1}(U)$ is an open neighborhood of $\nu$. Because $f\circ f^{-1}(U)\subseteq U$, it follows from the definition of the order on ends that $\nu\preceq f(\nu)$. By a symmetric argument, letting $[g]=[f]^{-1}$ and $V$ be an open neighborhood of $\nu$, we get that $\nu\preceq f^{-1}(\nu)$, and so $f(\nu)\preceq\nu$. Therefore $\nu\sim f(\nu)$, and $[f]$ acts on $E(\nu)$. $[f]|_{E(\nu)}$ is continuous by Proposition~\ref{AK-B 2.3}, and it is a homeomorphism because $[f]^{-1}|_{E(\nu)}$ is an inverse map.
\end{proof}

Given the duality between ends and loops discussed above, Lemma~\ref{lem: caste system} can be thought of as the corresponding statement about ends of the fact that any mapping class in $\Maps(X)$ induces an outer automorphism on $\pi_1(X)$. We now prove the following theorem, which is Theorem~\ref{thm: obstructions to dense conjugacy classes} (2) from the introduction.

\begin{theorem}\label{thm: finite end type} 
    Let $X$ be a locally finite graph with an end type $E(\nu)$ such that $1<|E(\nu)|<\infty$. Then $\Maps(X)$ does not have a dense conjugacy class.
\end{theorem}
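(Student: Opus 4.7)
The plan is to construct a nontrivial continuous homomorphism from $\Maps(X)$ to the symmetric group $\mathrm{Sym}(E(\nu))$, whose kernel is then a proper open normal subgroup; Lemma~\ref{lem: proper open normal subgrp} immediately finishes the proof. Concretely, I define
\[
\Phi\colon \Maps(X)\longrightarrow \mathrm{Sym}(E(\nu)),\qquad \Phi([f]):=[f]|_{E(\nu)}.
\]
Lemma~\ref{lem: caste system} tells us exactly that every $[f]\in\Maps(X)$ permutes $E(\nu)$, so $\Phi$ is well defined, and it is a homomorphism because restriction respects composition. The target is a finite (hence discrete) group, so I only need continuity and nontriviality of $\Phi$.

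For continuity, I produce a compact subgraph $K\subset X$ such that $U_K\subseteq \ker\Phi$. Enumerate $E(\nu)=\{\nu_1,\dots,\nu_n\}$. Since $\partial X=\varprojlim_K \pi_0(X\setminus K)$, distinct ends eventually land in distinct components of the complement of a compact set, so by taking the union of the finitely many compact sets separating each pair $\nu_i,\nu_j$ I obtain a compact $K\subset X$ such that the components $\Omega_1,\dots,\Omega_n$ of $X\setminus K$ containing $\nu_1,\dots,\nu_n$ are pairwise distinct. Any $[f]\in U_K$ has a representative that fixes $K$ pointwise and preserves each component of $X\setminus K$, so it fixes each $\Omega_i$ setwise and hence fixes each $\nu_i$. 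Thus $U_K\subseteq \ker\Phi$, and by Proposition~\ref{AK-B 4.7} the kernel is open.

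For nontriviality, since $|E(\nu)|>1$ I pick two distinct ends $\nu_1,\nu_2\in E(\nu)$. By Theorem~\ref{MR 1.2}, there exists $[f]\in\Maps(X)$ with $[f](\nu_1)=\nu_2$, so $\Phi([f])$ is a nontrivial permutation of $E(\nu)$; in particular $\ker\Phi$ is a proper subgroup.

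Putting these pieces together, $\ker\Phi$ is a proper open normal subgroup of $\Maps(X)$, so Lemma~\ref{lem: proper open normal subgrp} yields that $\Maps(X)$ has no dense conjugacy class. The only nontrivial step is the separation of $E(\nu)$ by a compact set, and this is routine because $E(\nu)$ is \emph{finite}: the hypothesis $|E(\nu)|<\infty$ is doing exactly the work of ensuring $\mathrm{Sym}(E(\nu))$ is a discrete (in fact finite) group and that finitely many separating compacta suffice.
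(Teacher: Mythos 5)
Your proposal is correct and follows essentially the same route as the paper: both define the homomorphism to the symmetric group on $E(\nu)$ via the induced action on ends (using Lemma~\ref{lem: caste system} for well-definedness and Theorem~\ref{MR 1.2} for nontriviality), show the kernel is open by exhibiting a compact set separating the finitely many ends so that $U_K$ lies in the kernel, and conclude via Lemma~\ref{lem: proper open normal subgrp}. No gaps.
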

\begin{proof} 
    Fix an enumeration $E(\nu)=\{\nu_1,...,\nu_n\}$. By Lemma~\ref{lem: caste system}, any map $[f]\in\Maps(X)$ sends $\nu_i\in E(\nu)$ to some $\nu_j\in E(\nu)$. Define $\Phi\colon\Maps(X)\to S_n$, where $S_n$ is the symmetric group on $n$ letters, to be the map corresponding to this action. Namely, let $\Phi$ send $[f]\in\Maps(X)$ to the permutation sending $i\in\{1,...,n\}$ to $j$, where $[f](\nu_i)=\nu_j$. Proper homotopy equivalences inducing the same mapping class in $\Maps(X)$ induce the same homeomorphism on $\partial X$, and so $\Phi$ is well defined. That $\Phi$ is a group homomorphism follows from properties of group actions, and that $\Phi$ is non-trivial follows from Theorem~\ref{MR 1.2}. To see why $\ker(\Phi)$ is open, take a compact subset $K$ of $X$ large enough to separate each end of $E(\nu)$ into different complementary components, and consider the open set $U_K$ as defined in the discussion before Proposition~\ref{AK-B 4.7}. Then $[f]\in[f]U_K\subseteq\ker(\Phi)$ for any $[f]\in\ker(\Phi)$. Thus, the kernel of $\Phi$ is a proper open normal subgroup. By Lemma~\ref{lem: proper open normal subgrp}, $\Maps(X)$ does not contain a dense conjugacy class.
\end{proof}

Continuing the analogy between ends and loops, we next prove Theorem~\ref{thm: obstructions to dense conjugacy classes} (3). This proof follows a similar strategy to that of Theorem~\ref{thm: finite end type}.

\begin{theorem}\label{thm: finite genus}
    Let $X$ be a locally finite graph such that $0<g(X)<\infty$. Then $\Maps(X)$ does not have a dense conjugacy class.
\end{theorem}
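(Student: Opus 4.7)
The plan is to mimic the proof of Theorem~\ref{thm: finite end type}, exhibiting a continuous non-trivial homomorphism $\Phi$ from $\Maps(X)$ into a discrete group and then applying Lemma~\ref{lem: proper open normal subgrp} to $\ker(\Phi)$. The natural target here is $\Out(F_n)$, where $n = g(X)$: fix a basepoint $x_0$ in the core graph $X_g$, and send a mapping class $[f]$ to the outer automorphism it induces on $\pi_1(X, x_0) \cong F_n$. This is the same map alluded to in the discussion after Proposition~\ref{AK-B 2.3}, and it is a group homomorphism. Since $n$ is finite, $\Out(F_n)$ carries the discrete topology.

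Next I would verify continuity of $\Phi$. Because $g(X) < \infty$, the core graph $X_g$ is a finite subgraph of $X$. Choose a finite connected $K \subseteq X$ containing $X_g$ and $x_0$. Any $[f] \in U_K$ has a representative which is the identity on $K$, hence on $X_g$, and therefore fixes every based immersed loop of $X$ up to homotopy; so $\Phi([f])$ is trivial in $\Out(F_n)$. Thus $U_K \subseteq \ker(\Phi)$, and as $U_K$ is open by Proposition~\ref{AK-B 4.7}, $\ker(\Phi)$ is open.

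The heart of the argument is non-triviality of $\Phi$. By Propositions~\ref{AK-B 2.6} and~\ref{AK-B 4.5}, I may assume $X$ is in standard form, and by sliding each attached edge along the underlying tree via a proper homotopy equivalence, I may further assume every loop of $X$ is attached at a single vertex $v$, so that $X \cong T \vee R_n$, where $T$ is a tree with $\partial T = \partial X$. Now any homotopy equivalence of $R_n$ fixing $v$ extends by the identity on $T$ to a proper homotopy equivalence of $X$; since homotopy equivalences of the finite graph $R_n$ modulo homotopy realize all of $\Out(F_n)$, this shows $\Phi$ is surjective. As $n \geq 1$ and $\Out(F_n)$ is nontrivial for $n \geq 1$, $\Phi$ is nontrivial, hence $\ker(\Phi)$ is a proper open normal subgroup and Lemma~\ref{lem: proper open normal subgrp} gives the conclusion.

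The only mildly subtle point is the reduction to the configuration $T \vee R_n$, which justifies that every element of $\Out(F_n)$ is actually induced by some mapping class, but this is just a bookkeeping exercise with the standard form and proper homotopy equivalences. Conceptually, this theorem is the loop-analogue of Theorem~\ref{thm: finite end type}: a finite orbit of ends gives an action on a finite set, while a finite genus gives an action on a finitely generated free group, and in both cases the action has open kernel for the same reason, namely that large enough compact subgraphs contain everything being permuted.
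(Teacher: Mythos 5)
Your proposal is correct and follows essentially the same route as the paper: both use the induced map $\Phi\colon\Maps(X)\to\Out(\pi_1(X))$, observe that $U_K\subseteq\ker(\Phi)$ for $K$ a finite subgraph containing the core graph so the kernel is open, and conclude via Lemma~\ref{lem: proper open normal subgrp}. The only difference is that you spell out the properness of $\ker(\Phi)$ (surjectivity of $\Phi$ via the reduction to $T\vee R_n$), a point the paper leaves implicit; your verification is sound.
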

\begin{proof}
    Consider the map $\Phi\colon\Maps(X)\to\Out(\pi_1(X))$ by $[f]\mapsto[f]_*$ the induced map on $\pi_1(X)$. If $[f]\in\ker(\Phi)$, then $[f]\in[f]U_K$, where $K$ is a compact subset of $X$ containing the core graph and $U_K$ is as in Proposition~\ref{AK-B 4.7}. The set $[f]U_K$ is open and contained in $\ker(\Phi)$, and thus $\ker(\Phi)$ must be open. Applying Lemma~\ref{lem: proper open normal subgrp}, $\Maps(X)$ does not contain a dense conjugacy class.
\end{proof}

Any finite graph is either a tree or has positive but finite genus. If $X$ is a tree, then $\Maps(X)$ is the trivial group, hence trivially has a dense conjugacy class. Thus, we have the following corollary.

\begin{corollary}
    Let $X$ be a finite graph. $\Maps(X)$ contains a dense conjugacy class if and only if $X$ is a tree.
\end{corollary}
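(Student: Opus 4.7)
The plan is to observe that this corollary is essentially a direct combination of the two cases exhausted by the dichotomy ``$X$ is a tree'' versus ``$X$ has positive genus'', both of which have already been settled earlier in the paper. Since $X$ is finite, its genus $g(X)$ is a non-negative integer, so either $g(X) = 0$ (equivalently, $X$ is a tree) or $0 < g(X) < \infty$.

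In the tree case, I would invoke Proposition~\ref{AK-B Polish}, which identifies $\Maps(X)$ with $\Out(F_{g(X)}) = \Out(F_0)$, the trivial group. A trivial group vacuously has a dense conjugacy class, since its unique element is dense in itself. This is essentially already noted in the sentence preceding the corollary, so the proof here is a one-line observation.

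In the positive genus case, the hypothesis $0 < g(X) < \infty$ is exactly the hypothesis of Theorem~\ref{thm: finite genus}, which I would cite directly to conclude that $\Maps(X)$ does not have a dense conjugacy class. There is no new work to do: Theorem~\ref{thm: finite genus} was proved by producing the open normal subgroup $\ker(\Phi)$, where $\Phi \colon \Maps(X) \to \Out(\pi_1(X))$ is the natural induced map, and applying Lemma~\ref{lem: proper open normal subgrp}.

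The only conceivable obstacle would be making sure the dichotomy is clean: a finite connected graph has finite genus, and finite genus is either zero (tree) or positive, so the two cases above are exhaustive and mutually exclusive. Thus the corollary follows immediately, and the proof can be written in just a few lines.
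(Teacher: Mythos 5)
Your proposal is correct and follows exactly the paper's argument: the paper's own justification (given in the sentences immediately preceding the corollary) is the same dichotomy, with the tree case handled by noting $\Maps(X)$ is trivial and the positive-genus case handled by Theorem~\ref{thm: finite genus}. No gaps.
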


Theorems~\ref{thm: finite end type} and~\ref{thm: finite genus} can be interpreted as showing that finite non-trivial orbits are obstructions to the existence of a dense conjugacy class. Given the results in this section, the only locally finite graphs whose mapping class group possibly contains a dense conjugacy class are those where each maximal end type is homeomorphic to Cantor space or is a singleton, and where the locally finite graph has genus zero or infinity.

\subsection{Maximal ends of Cantor type}\label{sec: Cantor type}

This section discusses when $\Maps(X)$ has a dense conjugacy class in the case where every maximal end type is of Cantor type. Crucial to this section will be the following lemma.

\begin{lemma}\label{lem: swaps}
    Let $X$ be a locally finite graph such that every maximal end is stable and of Cantor type. Then $X\cong X\vee X$.
\end{lemma}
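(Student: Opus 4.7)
The plan is to reduce to the single-local-structure case via the wedge decomposition (Proposition~\ref{prop: wedge decomposition}), and then cleave each piece in half using the Cantor-type hypothesis together with the characterization of local structures in Proposition~\ref{prop: classification of local structures}. Since the conclusion $X\cong X\vee X$ forces $g(X)=2g(X)$, the statement tacitly requires $g(X)\in\{0,\infty\}$; I will proceed under this assumption, which also ensures Proposition~\ref{prop: wedge decomposition} applies.

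First I would decompose $X\cong X_1\vee\cdots\vee X_n$ via the wedge decomposition. Each $\partial X_i$ is a local structure, and by Proposition~\ref{prop: wedge decomposition}, the maximal end class in each $X_i$ is either a singleton or homeomorphic to Cantor space. Singletons are ruled out by hypothesis since every maximal end of $X$ is of Cantor type, so each $X_i$ has a Cantor set $E(\mu_i)\subseteq\partial X_i$ of maximal ends.

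Next I would show each $X_i\cong X_i\vee X_i$ by producing a splitting of $\partial X_i$ into two clopen copies of itself. By Proposition~\ref{prop: classification of local structures}(4), $\partial X_i$ is itself a stable neighborhood of each of its maximal ends. Pick two distinct points $\mu\neq\mu'$ in $E(\mu_i)$, which exist because $E(\mu_i)$ is a Cantor set. Since $\partial X_i$ is compact, Hausdorff, and totally disconnected (as a closed subset of Cantor space), we may find a clopen partition $\partial X_i=A\sqcup B$ with $\mu\in A$ and $\mu'\in B$. Both $A$ and $B$ are clopen subneighborhoods of the stable neighborhood $\partial X_i$ and contain a maximal end, so by Lemma~\ref{lem: 4.17} each is a stable neighborhood of its enclosed maximal end, and both are homeomorphic to $\partial X_i$. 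Thus $\partial X_i\cong\partial X_i\sqcup\partial X_i$, and Lemma~\ref{lem: wedges and clopens} yields $X_i\cong X_i\vee X_i$. Assembling and using commutativity and associativity of wedge up to proper homotopy, I would obtain $X\cong\bigvee_{i=1}^n(X_i\vee X_i)\cong\left(\bigvee_{i=1}^n X_i\right)\vee\left(\bigvee_{i=1}^n X_i\right)\cong X\vee X$.

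The main obstacle is the tacit genus assumption flagged above: one must verify $g(X)\in\{0,\infty\}$ for the statement to make sense, as otherwise $g(X\vee X)=2g(X)\neq g(X)$. Beyond that, the only subtle point is noticing that the clopens $A$ and $B$ produced by the partition are \emph{a priori} arbitrary — not obviously ``small'' enough to be stable neighborhoods — and that Proposition~\ref{prop: classification of local structures}(4) combined with Lemma~\ref{lem: 4.17} is precisely what lets us bypass any smallness requirement by treating $\partial X_i$ itself as the ambient stable neighborhood.
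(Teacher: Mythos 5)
Your proof is correct and follows essentially the same route as the paper's: reduce via the wedge decomposition to self-similar components, split the end space of each into two clopen pieces each containing a maximal end, and apply Proposition~\ref{prop: classification of local structures} together with Lemma~\ref{lem: 4.17} to see that each piece is homeomorphic to the whole. Your observation that the statement tacitly requires $g(X)\in\{0,\infty\}$ (both for the conclusion to be possible and for Proposition~\ref{prop: wedge decomposition} to apply) is a fair catch; the paper leaves this implicit and only invokes the lemma where finite positive genus has already been excluded.
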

\begin{proof}
    We first show that $X\cong X\vee X$. Suppose that $X$ is self-similar. Decompose $\partial X$ into two open sets $E_1\sqcup E_2$ such that each $E_i$ contains a maximal end. Proposition~\ref{prop: classification of local structures} in combination with Lemma~\ref{lem: 4.17} implies that each $E_i\cong\partial X$, and so $E_1\cong E_2$. Thus $X\cong X\vee X$ by Theorem~\ref{AK-B 2.2}, since they have isomorphic end spaces. If $X$ is not self-similar and $X_1\vee\cdots \vee X_n$ is the wedge decomposition of $X$, each $X_i$ is self-similar by Proposition~\ref{prop: classification of local structures}, and so $X_i\cong X_i\vee X_i$ for each $i$. Thus $X\cong X\vee X$ as desired.
\end{proof}

When all maximal ends are stable and of Cantor type, Lemma~\ref{lem: swaps} guarantees that there always exists a mapping class such that the induced maps on the end space and the fundamental groups have no fixed points. In the proposition to follow, we will use this lemma to show that $\Maps(X)$ does not have the JEP. We will construct the open sets $U$ and $V$ witnessing the failure of the JEP so that all elements in any conjugate of $U$ have a fixed point, but no element in $V$ has a fixed point. 

\begin{proposition}\label{prop: babel}
    Let $X$ be a locally finite graph with stable maximal ends. Then $\Maps(X)$ does not have a dense conjugacy class if either:
    \begin{enumerate}
        \item every maximal end dominating $\partial 1$ is of Cantor type; or
        \item every maximal end accumulated by genus is of Cantor type.
    \end{enumerate}
\end{proposition}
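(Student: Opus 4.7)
The plan is to show, in both cases, that $\Maps(X)$ fails the joint embedding property (JEP), which by \cite[Theorem~2.1]{KR} rules out a dense conjugacy class. The strategy indicated in the paragraph above the statement is: design an open set $U$ whose elements (and all conjugates of them) share a fixed ``canonical object''---an isolated end in case~(1), a free homotopy class of loop in case~(2)---and an open set $V$ whose every element moves every such canonical object. The engine of the argument is Lemma~\ref{lem: swaps}: for wedge components with maximal ends of Cantor type we can literally split $X_i$ in two and interchange the halves with a mapping class.

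For case~(1), I would first pick an isolated end $\nu_0\in\partial X$ (which must exist, as otherwise the hypothesis is vacuous and the conclusion fails, cf.\ the Cantor tree). Let $K\subset X$ be a finite connected subgraph that separates $\nu_0$ into its own component of $X\setminus K$, and set $U:=U_K$. Every element of $U$ fixes $\nu_0$, so by Lemma~\ref{lem: caste system} every element of every conjugate $[h']U[h']^{-1}$ fixes the isolated end $[h'](\nu_0)\in E(\nu_0)$. To construct $V$, apply Proposition~\ref{prop: wedge decomposition} to write $X=X_1\vee\cdots\vee X_n$; let $I$ be those $i$ for which $X_i$ contains an isolated end, and note that the hypothesis forces each such $X_i$ to have maximal ends of Cantor type. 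Lemma~\ref{lem: swaps} then provides secondary decompositions $X_i\cong X_i^1\vee X_i^2$ for each $i\in I$. Let $[h]\in\Maps(X)$ be the mapping class that swaps $X_i^1$ with $X_i^2$ for every $i\in I$ and is the identity on every other wedge component. Choose $K'\supseteq K$ a finite connected subgraph containing every wedge vertex from the primary and secondary decompositions, and set $V:=[h]U_{K'}$. For any $[g]=[h][f]\in V$ with $[f]\in U_{K'}$ and any isolated $\nu\in E(\nu_0)$, the ray realizing $\nu$ eventually lies in a component $C$ of $X\setminus K'$ contained in some $X_i^j$ with $i\in I$; since $[f]$ preserves $C$, $[f](\nu)\in X_i^j$, and hence $[g](\nu)=[h]([f](\nu))\in X_i^{3-j}$, which is disjoint from $X_i^j$ away from $K'$. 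Thus $[g](\nu)\ne\nu$, so no element of $V$ fixes any point of $E(\nu_0)$, while every element of every conjugate of $U$ does. This gives $V\cap[h']U[h']^{-1}=\emptyset$ for every $[h']$, and the JEP fails.

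Case~(2) follows the same blueprint with loops in place of isolated ends. Let $I'$ be the set of indices for which $\partial(X_i)_g\ne\emptyset$; by hypothesis each $X_i$ with $i\in I'$ has maximal ends of Cantor type, so Lemma~\ref{lem: swaps} again yields $X_i\cong X_i^1\vee X_i^2$. Pick a nontrivial embedded loop $\gamma_0\subset X_{i_0}^1$ for some $i_0\in I'$, set $K\supseteq\gamma_0$, $U:=U_K$, and let $[h]$ be the swap of $X_i^1$ and $X_i^2$ for each $i\in I'$ (identity elsewhere); form $V:=[h]U_{K'}$ for a finite connected $K'\supseteq K$ containing all wedge vertices. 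By Seifert--van Kampen, $\pi_1(X)$ is a free product in which each $\pi_1(X_i^j)$ is a distinct free factor, so a nontrivial loop in one factor is never conjugate to a loop in a different factor. Since $[f]\in U_{K'}$ preserves each component of $X\setminus K'$ and is the identity on $K'$, it carries loops supported in $X_i^j$ back into $X_i^j$, whereupon $[h][f]$ sends them into $X_i^{3-j}$, yielding a non-conjugate class. One concludes $[g](\gamma)\ne\gamma$ for every loop $\gamma$ supported in some single $X_i^j$ with $i\in I'$; this covers every loop in the $\Maps(X)$-orbit of $\gamma_0$ once one invokes the uniqueness clause of Proposition~\ref{prop: wedge decomposition} to check that mapping classes permute the genus-accumulated wedge components among themselves.

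The main technical obstacle is verifying that the ``universal swap'' $[h]$ is a bona-fide element of $\Maps(X)$ realizing the indicated component permutation; this is a routine but careful bookkeeping exercise using Proposition~\ref{prop: wedge decomposition} together with the standard form of Proposition~\ref{AK-B 2.6}. In case~(2) there is the additional subtlety of pinning down the $\Maps(X)$-orbit of $[\gamma_0]$ inside the free product $\pi_1(X)$, which forces a careful choice of $\gamma_0$ within one wedge half and a use of the uniqueness of the wedge decomposition to constrain how mapping classes can scramble the factors.
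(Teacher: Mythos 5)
Your case~(1) is correct and is essentially the paper's own argument. The only differences are cosmetic: the paper lumps every wedge component dominating $\partial 1$ into a single graph $Y$, uses Lemma~\ref{lem: swaps} to write $X\cong (Y\vee 1)\vee(Y\vee 1)\vee Z$, and lets the swap interchange the two lumped copies together with two freshly attached rays $\rho,\lambda$, whereas you split each relevant component $X_i$ individually as $X_i^1\vee X_i^2$ and track a pre-existing isolated end $\nu_0$. Both versions establish the same disjointness statement (every element of $[h']U[h']^{-1}$ fixes a point of $E(\nu_0)$, no element of $V$ fixes any point of $E(\nu_0)$, and $E(\nu_0)$ consists of isolated ends which all live in the swapped pieces), and your reading of the implicit non-vacuousness hypothesis agrees with how the paper invokes the proposition.

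Case~(2) has a genuine gap at its last step. You need that for every $[h']\in\Maps(X)$ no element of $V$ fixes the conjugacy class $[h']_*[\gamma_0]$, and you derive this from the claim that every class in the $\Maps(X)$-orbit of $[\gamma_0]$ is carried by a loop supported in a single half $X_i^j$, justified by the fact that mapping classes permute the wedge components. This conflates the action on the end space with the action on $\pi_1(X)$: the uniqueness clause of Proposition~\ref{prop: wedge decomposition} constrains the induced homeomorphism of $\partial X$, but it does not make the induced outer automorphism preserve the free factor decomposition $\pi_1(X)\cong \ast_{i,j}\,\pi_1(X_i^j)$. Concretely, take the swap $[h]$ to be an involution, let $\delta=h_*(\gamma_0)\subset X_{i_0}^2$, and let $[h']$ be the compactly supported Nielsen transvection with $[h']_*[\gamma_0]=[\gamma_0\delta]$; this is a bona fide proper homotopy equivalence (it is the identity outside a compact set), it acts trivially on $\partial X$ and so permutes no wedge components, yet $[\gamma_0\delta]$ is not supported in any single factor. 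Worse, $[h]_*[\gamma_0\delta]=[\delta\gamma_0]=[\gamma_0\delta]$, so the element $[h]\in V$ does fix $[h']_*[\gamma_0]$, and your fixed-class criterion cannot separate $V$ from $[h']U[h']^{-1}$. The desired disjointness is presumably still true (for this $[h']$ one can see it from the end-space action instead, since the swapped halves have nonempty end spaces), but that is a different argument from the one you give. For what it is worth, the paper disposes of case~(2) in one sentence by direct analogy with case~(1) and does not attempt the justification you offer, so your write-up makes explicit a step that genuinely requires a replacement invariant rather than the wedge-decomposition bookkeeping you propose.
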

\begin{proof}
    Let $X=X_1\vee\cdots \vee X_n$ be the wedge decomposition of $X$. Assume either that $\partial 1\prec\partial X_1,...,\partial X_k$ and that $\partial 1\not\prec\partial X_{k+1},...,\partial X_n$, or that $\partial X_1,...,\partial X_k$ are all accumulated by genus and that $\partial X_{k+1},...,\partial X_n$ are all trees. Define $Y=X_1\vee\cdots \vee X_k$ and $Z=X_{k+1}\vee\cdots\vee X_n$. By Lemma~\ref{lem: swaps}, the locally finite graph $Y$ is proper homotopy equivalent to $Y\vee Y$.

    Suppose first that every maximal end of $Y$ dominates $\partial 1$. Then $Y\cong Y\vee 1$, and so $X\cong (Y\vee 1)\vee (Y\vee 1)\vee Z$, and we may assume that there exists a compact subset $K$ of $X$ such that the connected components of $X\setminus K$ are two copies of $Y$, two copies of $1$, and $Z$. Let $[g]$ be a mapping class swapping the two copies of $Y\vee 1$. We show that $\Maps(X)$ does not have the JEP. Let $U=U_K$ as in Proposition~\ref{AK-B 4.7}, and let $V=[g]U$; both are open by construction. Let $\rho$ and $\lambda$ be the two distinguished ends of type $\partial 1$. See Figure~\ref{fig: spiky Cantor tree}. By construction of $[g]$, no element of $V$ induces a map containing a fixed point on $\partial X$. On the other hand, if $[h]\in\Maps(X)$ is arbitrary and $[f]\in U$, then $[h][f][h]^{-1}([h](\rho))=[h][f](\rho)=[h](\rho)$, and so all elements of $[h]U[h]^{-1}$ fix $[h](\rho)$ (and $[h](\lambda)$, too), as $[f]$ was arbitrary. This shows that $[h]U[h]^{-1}\cap V=\0$, and thus $\Maps(X)$ does not have the JEP nor a dense conjugacy class.

    \begin{figure}[h]
        \centering
\begingroup%
  \makeatletter%
  \providecommand\color[2][]{%
    \errmessage{(Inkscape) Color is used for the text in Inkscape, but the package 'color.sty' is not loaded}%
    \renewcommand\color[2][]{}%
  }%
  \providecommand\transparent[1]{%
    \errmessage{(Inkscape) Transparency is used (non-zero) for the text in Inkscape, but the package 'transparent.sty' is not loaded}%
    \renewcommand\transparent[1]{}%
  }%
  \providecommand\rotatebox[2]{#2}%
  \newcommand*\fsize{\dimexpr\f@size pt\relax}%
  \newcommand*\lineheight[1]{\fontsize{\fsize}{#1\fsize}\selectfont}%
  \ifx\svgwidth\undefined%
    \setlength{\unitlength}{143.82011834bp}%
    \ifx\svgscale\undefined%
      \relax%
    \else%
      \setlength{\unitlength}{\unitlength * \real{\svgscale}}%
    \fi%
  \else%
    \setlength{\unitlength}{\svgwidth}%
  \fi%
  \global\let\svgwidth\undefined%
  \global\let\svgscale\undefined%
  \makeatother%
  \begin{picture}(1,0.85444562)%
    \lineheight{1}%
    \setlength\tabcolsep{0pt}%
    \put(0,0){\includegraphics[width=\unitlength,page=1]{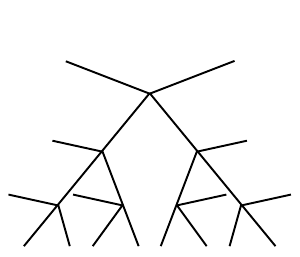}}%
    \put(0.14979113,0.66123472){\color[rgb]{0,0,0}\makebox(0,0)[t]{\smash{\begin{tabular}[t]{c}$\lambda$\end{tabular}}}}%
    \put(0.85073477,0.66384215){\color[rgb]{0,0,0}\makebox(0,0)[t]{\smash{\begin{tabular}[t]{c}$\rho$\end{tabular}}}}%
    \put(0,0){\includegraphics[width=\unitlength,page=2]{spiky_Cantor_tree.pdf}}%
    \put(0.49926729,0.78744623){\color[rgb]{0.8,0.8,1}\makebox(0,0)[t]{\smash{\begin{tabular}[t]{c}$g$\end{tabular}}}}%
    \put(0,0){\includegraphics[width=\unitlength,page=3]{spiky_Cantor_tree.pdf}}%
    \put(0.57846936,0.49906724){\color[rgb]{0.64705882,0.41176471,0.30980392}\makebox(0,0)[t]{\smash{\begin{tabular}[t]{c}$K$\end{tabular}}}}%
  \end{picture}%
\endgroup%

        \centering
        \caption{The ends $\lambda$ and $\rho$ are labeled. The proper homotopy equivalence $g$ reflects the graph across the periwinkle line, and $K$ is the portion of the graph in the brown circle.}
        \label{fig: spiky Cantor tree}
    \end{figure}

    Now suppose that every maximal end of $Y$ is accumulated by genus. A signature for $X$ is $(Y\vee R_1)\vee (Y\vee R_1)\vee Z$. Let $K$ be a compact subset of $X$ such that $K$ contains both copies of $R_1$ and the connected components of $X\setminus K$ are two copies of $Y$, and $Z$. Letting $[g]$ be a mapping class which swaps the two copies of $Y\vee R_1$, the argument above works here, replacing $\rho$ and $\lambda$ with two elements $r$ and $l$ of $\pi_1(X,x)$ which wrap around each distinguished copy of $R_1$ once so that $[g](r)=l$ and vice versa.
\end{proof}

\begin{remark}\label{rmk: generalized babel}
    The proof of Proposition~\ref{prop: babel} did not require the distinguished ends $\rho$ and $\lambda$ to be of type $\partial 1$. For example, in the graph $(\omega+1)\to C$, the proof would work if we distinguished two ends of type $\partial(\omega+1)$ instead of two ends of type $\partial 1$. We chose $\partial 1$ because it covers strictly more graphs, but we will use this observation later.
\end{remark}

\begin{example}
    Proposition~\ref{prop: babel} shows that the mapping class groups of the following locally finite graphs do not have dense conjugacy classes.
    \begin{itemize}
        \item Any locally finite graph with stable maximal ends such that every maximal end is of Cantor type (excluding $C$). This includes $o(C)$, $C\vee o(C)$, $(C\vee o(C))\to o(C)$, $1\to C$, and $((\omega^\omega+1)\to C)\vee o(C)\to o(C)$, among many others.
        \item Any self-similar locally finite graph with more than one maximal end which is not $C$.
        \item $(1\to C)\vee o(1)$. Note that Proposition~\ref{prop: babel} applies even if not all maximal ends are of Cantor type.
    \end{itemize}
\end{example}

We are now ready to prove Theorems~\ref{thm: main Rokhlin result} and~\ref{thm: Rokhlin nesting}, which we restate for the convenience of the reader.

\mRr*
\begin{proof}
    Let $X$ be a locally finite graph with self-similar end space. If $0<g(X)<\infty$, then $\Maps(X)$ does not contain a dense conjugacy class by Theorem~\ref{thm: finite genus}, and so we may assume that $g(X)\in\{0,\infty\}$. By Proposition~\ref{prop: classification of local structures}, $X$ has a unique maximal end type of size either one or infinity. If $X$ has a unique maximal end, then $\Maps(X)$ has a dense conjugacy class by Proposition~\ref{prop: unique maximal end}. On the other hand, suppose that $X$ has infinitely many maximal ends. If $X=C$, then $\Maps(X)$ has a dense conjugacy class by Proposition~\ref{prop: Cantor tree}. Otherwise, Lemma~\ref{lem: end classification} implies that the unique maximal end type of $X$ is either accumulated by genus or by $\partial 1$, in which case, by Proposition~\ref{prop: babel}, $\Maps(X)$ has no dense conjugacy class.
\end{proof}

\Rn*
\begin{proof}
    Let $X$ be a locally finite graph. Consider $Z^1$ and $Z^C$ as constructed in Lemma~\ref{lem: immediate successors}. As their end spaces are local structures, these two graphs are both self-similar by Proposition~\ref{prop: classification of local structures}, and, by construction, they both have genus either zero or infinity. By Theorem~\ref{thm: main Rokhlin result}, we have that $\Maps(Z^1)$ has a dense conjugacy class. On the other hand, the construction of $Z^C$ guarantees that $\partial 1\prec\partial Z^C$, and thus $\Maps(Z^C)$ does not have a dense conjugacy class by Proposition~\ref{prop: babel}.
    
    By construction of $Z^1$, $X\vee Z^1\cong Z^1$. Then the set of maps in $\Maps(Z^1)$ which induce the identity everywhere in $X\vee Z^1$ except for the distinguished copy of $X$ is equal to an intersection of clopen sets of the form $U_K$ (see Proposition~\ref{AK-B 4.7}) which fix compact sets $K\subset Z^1\setminus X$. This intersection is homeomorphic to $\Maps(X)$. The same reasoning shows that $\Maps(X)$ embeds as a closed subset into $\Maps(Z^C)$, and the moreover statement is clear from the constructions of $Z^1$ and $Z^C$.
\end{proof}

\subsection{Flux maps}\label{sec: flux maps}

In this section, we prove that a large class of mapping class groups do not contain a dense conjugacy class, proving Theorem~\ref{thm: obstructions to dense conjugacy classes} (1). Our strategy is to find a non-trivial continuous group homomorphism from $\Maps(X)$ to $\Z$, which we call a \emph{flux map}, similar to the flux maps defined in \cite[Section~7]{DHK}. The kernel of the flux map is a proper open normal subgroup of $\Maps(X)$, and, by Lemma~\ref{lem: proper open normal subgrp}, this implies that $\Maps(X)$ has no dense conjugacy class.

Intuitively, the maps we construct from $\Maps(X)$ to $\Z$ will ``count'' how many of an object, whether it is immersed loops or ends of a given type, pass from one location in $X$ to another under a given mapping class. We first construct flux maps which count ends and then those which count loops.

As a first example, consider the graph $o(1)\vee o(1)$, which is a line with a copy of $S^1$ attached to each vertex. A loop shift, formally defined in \cite[Section~3.4]{DHK}, is a proper homotopy equivalence of $o(1)\vee o(1)$ which sends each copy of $S^1$ to the adjacent one as in Figure~\ref{fig: loop shift}, and it's image under the flux map that counts loops is one. On the other hand, a similar map defined on the graph $o(\omega+1)\vee o(\omega+1)$ sending non-maximal ends to adjacent ones as in Figure~\ref{fig: end shift} has infinitely many loops passing between the two maximal ends, and thus, counting loops would not yield a map to $\Z$. Instead, we count ends in $E(\partial o(1))$. Near the beginning of Section~\ref{sec: Rokhlin}, we mentioned that the graph $o(1)$ can be thought of as a ray dominating copies of $S^1$ as motivation for considering immersed loops to be minimal ends, and this setting provides more motivation. Note that in both examples, the ends we count should satisfy a maximality condition to avoid the possibility of infinite ends or loops shifting towards an end. This becomes more apparent if one replaces the copies of $S^1$ in Figure~\ref{fig: end shift} by rays. In light of this, we define a \emph{gcd}, or \emph{greatest common divisor} of a pair of ends.

\begin{figure}[h]
    \centering
    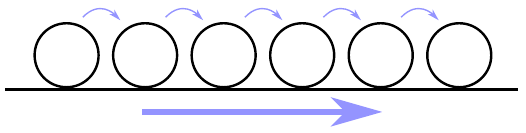
    \caption{A loop shift.}
    \label{fig: loop shift}
\end{figure}

\begin{figure}[h]
    \centering
    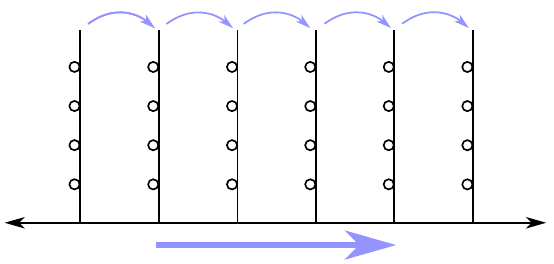
    \caption{There are infinitely many loops passing between the two maximal ends, but only finitely many ends in $E(\partial o(1))$.}
    \label{fig: end shift}
\end{figure}

\begin{definition}\label{def: gcd}
    Let $\mu_1,\mu_2\in\partial X$ be two maximal ends. We say that a stable end $\lambda\in\partial X$ \emph{is a gcd of $\mu_1$ and $\mu_2$} and write $\lambda\in$ gcd$(\mu_1,\mu_2)$, if $\lambda\prec\mu_1,\mu_2$, and for any non-maximal $\lambda'$ such that $\lambda\prec\lambda'$, the maximal end types which dominate $\lambda'$ are either $E(\mu_1)$ or $E(\mu_2)$, but not both. Similarly, we say that $R_1$ \emph{is a gcd of $\mu_1$ and $\mu_2$} and write $R_1\in\gcd(\mu_1,\mu_2)$, if $\mu_1,\mu_2\in\partial X_g$, and for any $\lambda'\in\partial X_g$, the maximal end types which dominate $\lambda'$ are either $E(\mu_1)$ or $E(\mu_2)$, but not both. We define gcds of end types with the analogous partial order.
\end{definition}

For example, we have that $R_1\in\gcd(o(1),o(1))$ in $o(1)\vee o(1)$ (see Figure~\ref{fig: loop shift}), and while $R_1$ is not a gcd of the two maximal ends in Figure~\ref{fig: end shift}, $\partial o(1)$ is. Additionally, $\partial 1$ is a gcd of the two maximal end types in both graphs in Figure~\ref{fig: flux map T}. Note that gcds need not be unique. In fact, the two maximal ends of the graph $(\{\bigvee_{i=0}^n X_i\}_n\to 1)\vee(\{\bigvee_{i=0}^n X_i\}_n\to 1)$, where the $X_i$ are as constructed in Lemma~\ref{lem: class of incomparables}, have countably infinitely many gcds.

\begin{proposition}\label{prop: flux map for ends}
    Let $X$ be a locally finite graph with stable maximal ends, let $\mu_1\not\sim\mu_2$ be two maximal ends, and let $\lambda\in\gcd(\mu_1,\mu_2)$. If $\lambda$ is not of Cantor type, then $\Maps(X)$ does not have a dense conjugacy class.
\end{proposition}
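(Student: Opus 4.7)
The plan is to construct a flux homomorphism $\Phi\colon \Maps(X)\to\Z$ that is continuous, non-trivial, and has open kernel, so that Lemma~\ref{lem: proper open normal subgrp} gives the conclusion. Morally, $\Phi([f])$ should count the signed number of ends of type $E(\lambda)$ that cross a chosen separating region under $[f]$. The non-Cantor hypothesis on $\lambda$ will be the crucial ingredient ensuring that this count is finite.

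First, I would use the wedge decomposition (Proposition~\ref{prop: wedge decomposition}) and the gcd hypothesis to isolate the structure between $\mu_1$ and $\mu_2$. The definition of $\lambda\in\gcd(\mu_1,\mu_2)$ says that every non-maximal end type strictly above $\lambda$ is dominated by exactly one of $E(\mu_1), E(\mu_2)$. Using Lemma~\ref{lem: small clopen neighborhoods}, I would produce disjoint clopen sets $U_1\supseteq E(\mu_1)$ and $U_2\supseteq E(\mu_2)$ such that the set $E(\lambda)$ partitions into $E_1:=E(\lambda)\cap U_1$ and $E_2:=E(\lambda)\cap U_2$, with all higher-than-$\lambda$ non-maximal ends lying in $U_1\sqcup U_2$. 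Since $\lambda$ is not of Cantor type, Lemma~\ref{lem: 4.17} and Theorem~\ref{MR 1.2} imply that $E(\lambda)$ is a discrete subset of $\partial X$, so both $E_1$ and $E_2$ are countable discrete sets. Choose a finite subgraph $K\subset X$ witnessing the separation of $U_1$ and $U_2$ in the end compactification.

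Next, for $[f]\in\Maps(X)$, I would define
\[
\Phi([f])\;=\;\bigl|\{\nu\in E_1:\,[f](\nu)\in U_2\}\bigr|\;-\;\bigl|\{\nu\in E_2:\,[f](\nu)\in U_1\}\bigr|.
\]
The key claim is that both counts are finite. Indeed, $[f]$ restricts to a self-bijection of $E(\lambda)$ by Lemma~\ref{lem: caste system}, and any representative of $[f]$ is proper; together with the fact that $E_1$ and $E_2$ only accumulate onto $E(\mu_1)$ and $E(\mu_2)$ respectively (not onto each other, by the gcd condition), a compactness argument on the induced homeomorphism of $\partial X$ forces all but finitely many points of $E_1$ to map back into $U_1$, and symmetrically for $E_2$. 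This is where non-Cantor type is essential: if $\lambda$ were of Cantor type, $E(\lambda)$ would be uncountable and could cross in an uncontrolled way, so finiteness would fail. I would then verify that $\Phi$ is independent of the choice of representative (two properly homotopic maps induce the same homeomorphism of $\partial X$) and of the auxiliary sets $U_1,U_2,K$ (different choices differ by finitely many ends), and that $\Phi$ is a group homomorphism by a standard bookkeeping argument balancing ends that cross and return.

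For continuity, I would show $U_K\subseteq\ker\Phi$, using that any $[f]\in U_K$ has a representative fixing $K$ and preserving each component of $X\setminus K$, hence fixes the partition $E_1\sqcup E_2$ setwise; this makes $\ker\Phi$ open in $\Maps(X)$. For non-triviality, I would construct an \emph{end-shift} that moves exactly one end of $E_1$ into $U_2$ and no end of $E_2$ into $U_1$, much as in the loop shift of \cite{DHK} and the schematic in Figure~\ref{fig: end shift}: using stability of $\mu_1,\mu_2$ and Lemma~\ref{lem: 4.17}, pick a chain of nested stable neighborhoods of $\mu_2$ and interchange one copy of a neighborhood of an $E_1$-end with a neighborhood disjoint from $E(\lambda)$ sitting on the $\mu_2$ side, then realize this by a proper homotopy equivalence via Theorem~\ref{AK-B 2.2}. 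The main obstacle I anticipate is precisely the finiteness argument for $\Phi$: one must check carefully that, although $\lambda$-type ends accumulate on both $\mu_1$ and $\mu_2$, the induced continuous bijection of $E(\lambda)$ cannot move infinitely many of them across the separation without contradicting continuity at $\mu_1$ or $\mu_2$ in $\partial X$. Once this is secured, the remaining verifications follow the template of \cite[Section~7]{DHK}.
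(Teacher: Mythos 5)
Your overall strategy is the paper's: build a continuous homomorphism $\Phi\colon\Maps(X)\to\Z$ counting the net number of $E(\lambda)$-ends crossing a separation between $\mu_1$ and $\mu_2$, observe that $\ker\Phi$ is a proper open normal subgroup, and invoke Lemma~\ref{lem: proper open normal subgrp}. The packaging differs: the paper follows \cite{DHK} and works with a basepoint $x_0$, nested subgraphs $X_n$, corank, and admissible pairs, whereas you define the crossing count directly on a clopen partition of the end space; your formula $\lvert f(E_1)\setminus E_1\rvert-\lvert E_1\setminus f(E_1)\rvert$ is the standard index cocycle and is indeed additive, so the homomorphism property is fine. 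You also correctly identify the crux (finiteness of the crossing sets) and the correct mechanism for it: an infinite crossing set would accumulate at an end $\eta\succ\lambda$ whose class meets both sides, contradicting the gcd condition; this is exactly the paper's K\H{o}nig's-lemma argument in Claims~\ref{cl: existance of admissible pairs} and~\ref{cl: finite coker}. One caution: for that accumulation argument you need that no end class strictly above $\lambda$ is split between $U_1$ and $U_2$, so $U_1,U_2$ must be taken to be the clopen partition coming from the wedge decomposition (as in the paper's $Y_1,Y_2$), not merely small clopen neighborhoods of $E(\mu_1)$ and $E(\mu_2)$ produced by Lemma~\ref{lem: small clopen neighborhoods} --- the latter will not contain all of $E(\lambda)$ and do not automatically satisfy the splitting condition.

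The genuine gap is your non-triviality witness, which is essential since Lemma~\ref{lem: proper open normal subgrp} needs $\ker\Phi$ to be \emph{proper}. You propose to ``interchange one copy of a neighborhood of an $E_1$-end with a neighborhood disjoint from $E(\lambda)$ sitting on the $\mu_2$ side.'' This cannot be realized: a homeomorphism of $\partial X$ preserves each end type by Lemma~\ref{lem: caste system}, so it cannot carry a clopen set containing a $\lambda$-end onto one containing none. Worse, \emph{any} swap of two homeomorphic clopen sets $A\subseteq U_1$, $B\subseteq U_2$ has flux exactly $\lvert E(\lambda)\cap A\rvert-\lvert E(\lambda)\cap B\rvert=0$, again by type-preservation. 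To get $\Phi\neq0$ you must use an infinite shift, as in Figure~\ref{fig: end shift}: choose a bi-infinite sequence of $\lambda$-ends with pairwise disjoint, pairwise homeomorphic clopen neighborhoods, accumulating to an end on the $\mu_1$ side in one direction and to an end on the $\mu_2$ side in the other, and shift each neighborhood to the next; the net flux of such a map is $\pm1$. (The paper also leaves this construction implicit, but your proposed replacement is provably flux-zero, so it must be repaired rather than merely elaborated.)
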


The proof of \cite[Theorem~7.5]{DHK}, whose outline we will follow to prove Proposition~\ref{prop: flux map for ends}, shows the existence of continuous group homomorphisms from P$\Maps(X)$ to $\Z$, where P$\Maps(X)$ is the subgroup of $\Maps(X)$ consisting of elements which induce the trivial homeomorphism on $\partial X$. Roughly speaking, the flux maps of \cite[Theorem~7.5]{DHK} count how many immersed loops pass between two given ends of $X$. Proposition~\ref{prop: flux map for ends} adapts this proof to define an analogous homeomorphism that counts ends and is defined on all of $\Maps(X)$. We then prove an analogous result concerning loops, again building a homeomorphism defined on all of $\Maps(X)$. 

Before we prove Proposition~\ref{prop: flux map for ends}, we first construct the flux homomorphism. There are three main steps: defining subgraphs $T$ and $X_n$ for any $n\in\Z$, defining corank and admissible pairs, and finally constructing the flux map. 

\subsubsection*{Step 1: constructing subgraphs}

\textbf{Constructing $Y_1$ and $Y_2$:} By Proposition~\ref{AK-B 4.5}, we may assume that $X$ is broken up into its wedge decomposition $W_1\vee\cdots \vee W_k$, i.e., that there is a vertex whose complementary components are each of the $W_i$. We may also assume that $\mu_i\in\partial W_i$ for $i=1,2$. Expand the wedge point of $W_1\vee\cdots \vee W_k$ to an edge via a proper homotopy equivalence so that $Y_1=W_1$ is connected to one end of that edge and $Y_2=W_2\vee\cdots \vee W_k$ is connected to the other, and let $x_0$ be the midpoint of this edge. Then $X$ is equal to the union of $Y_1$, $Y_2$, and the edge between them.
    
\textbf{Constructing $T$ and $Z_\nu$:} We define a subgraph $T$ of $X$ which contains no ends in $E(\lambda)$, similar to how an underlying tree consists of $X$ without loops. Because $\lambda$ is not of Cantor type, it follows from Lemma~\ref{lem: small clopen neighborhoods} that any stable neighborhood $U_\nu$ of each end $\nu\in E(\lambda)$ is such that $U_{\nu}\cap E(\lambda)=\{\nu\}$, i.e., such that $U_\nu$ is disjoint from all other ends in $E(\lambda)$. The neighborhoods $U_\nu$ can be made arbitrarily small, and thus we may assume that the $U_\nu$ are pairwise disjoint. Consider the clopen cover $\{U_\nu\}_{\nu\in E(\lambda)}$ of $E(\lambda)$. The complement of this cover in the end space, given by $\partial X\setminus(\bigcup_{\nu\in E(\lambda)}U_\nu)$, is closed in $\partial X$. Therefore, there exists a subgraph $T$ of $X$ whose end space is $\partial X\setminus(\bigcup_{\nu\in E(\lambda)}U_\nu)$. Intuitively, the graph $T$ is obtained from $X$ by removing all ends in $E(\lambda)$. See Figure~\ref{fig: flux map T} for examples when $X\cong1\to(1\vee C)$ and $(\omega^2+1)\vee((1\to C)\to o(1))$. Define $Z_\nu$ to be a graph with end space equal to $U_\nu$ for each $\nu\in E(\lambda)$. Note that for each $\nu\in E(\lambda)$, $\nu\sim\partial Z_{\nu}$. Assume, by potentially altering $X$ up to proper homotopy equivalence, that $X$ is a wedge product of $T$ with each $Z_\nu$, and for each $\nu\in E(\lambda)$, let $x_\nu$ be vertex in $T$ which is the wedge point connecting $Z_\nu$ to the rest of $X$.

\begin{figure}
    \centering
\begingroup%
  \makeatletter%
  \providecommand\color[2][]{%
    \errmessage{(Inkscape) Color is used for the text in Inkscape, but the package 'color.sty' is not loaded}%
    \renewcommand\color[2][]{}%
  }%
  \providecommand\transparent[1]{%
    \errmessage{(Inkscape) Transparency is used (non-zero) for the text in Inkscape, but the package 'transparent.sty' is not loaded}%
    \renewcommand\transparent[1]{}%
  }%
  \providecommand\rotatebox[2]{#2}%
  \newcommand*\fsize{\dimexpr\f@size pt\relax}%
  \newcommand*\lineheight[1]{\fontsize{\fsize}{#1\fsize}\selectfont}%
  \ifx\svgwidth\undefined%
    \setlength{\unitlength}{480.90674693bp}%
    \ifx\svgscale\undefined%
      \relax%
    \else%
      \setlength{\unitlength}{\unitlength * \real{\svgscale}}%
    \fi%
  \else%
    \setlength{\unitlength}{\svgwidth}%
  \fi%
  \global\let\svgwidth\undefined%
  \global\let\svgscale\undefined%
  \makeatother%
  \begin{picture}(1,0.3144429)%
    \lineheight{1}%
    \setlength\tabcolsep{0pt}%
    \put(0,0){\includegraphics[width=\unitlength,page=1]{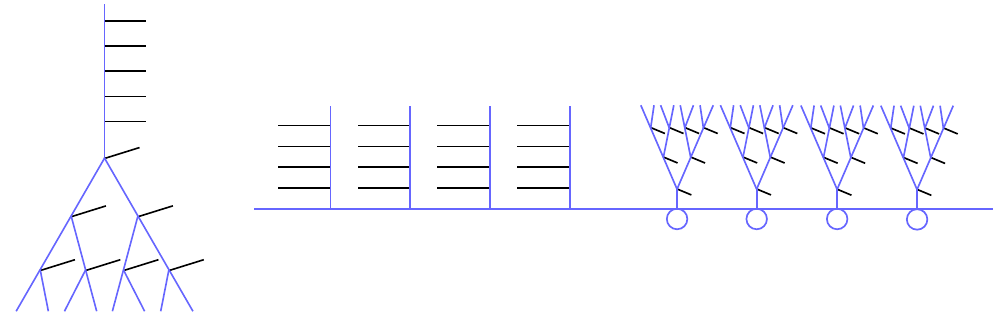}}%
    \put(0.15787876,0.23631745){\color[rgb]{0,0,0}\makebox(0,0)[t]{\smash{\begin{tabular}[t]{c}$\nu$\end{tabular}}}}%
    \put(0.69752135,0.11172392){\color[rgb]{0,0,0}\makebox(0,0)[t]{\smash{\begin{tabular}[t]{c}$\nu$\end{tabular}}}}%
    \put(0,0){\includegraphics[width=\unitlength,page=2]{flux_map_T.pdf}}%
    \put(0.05399955,0.24289837){\color[rgb]{0.4,0.4,1}\makebox(0,0)[t]{\smash{\begin{tabular}[t]{c}$x_\nu$\end{tabular}}}}%
    \put(0.62198656,0.12163747){\color[rgb]{0.4,0.4,1}\makebox(0,0)[t]{\smash{\begin{tabular}[t]{c}$x_\nu$\end{tabular}}}}%
    \put(0,0){\includegraphics[width=\unitlength,page=3]{flux_map_T.pdf}}%
  \end{picture}%
\endgroup%

    \caption{Above are the graphs $1\to(1\vee C)$ (left) and $(\omega^2+1)\vee((1\to C)\to o(1))$ (right). The unique gcd of the two maximal end types in both graphs is $\partial 1$. The corresponding trees $T$ are in periwinkle. An example of a subgraph $Z_\nu$ corresponding to an end $\nu$ is given in the brown boxes on each graph, with the corresponding $x_\nu$ labeled. Note that as with the graph on the right, the gcd of two maximal ends need not be an immediate predecessor of either of them.}
    \label{fig: flux map T}
\end{figure}

\textbf{Constructing subgraphs $X_n$:} If $n\geq0$, let $X_n$ be equal to $\overline{Y_1\cup B_n(x_0)}$ together with the graphs $Z_{\nu}$ wedged to each $x_\nu$ appearing in $B_n(x_0)$. If $n<0$, then form $X_n$ from $Y_1$ by removing each $Z_\nu$ wedged to any $x_\nu$ appearing in $B_{-n}(x_0)$. See Figure~\ref{fig: X_n example} for examples.

\begin{figure}[h]
    \centering
    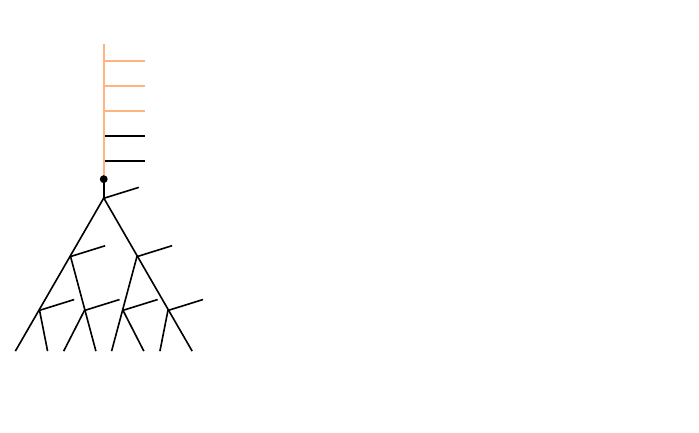
    \caption{The subgraphs $X_n$ for $n=-2,0,2$ are shown in orange.}
    \label{fig: X_n example}
\end{figure}

\begin{remark}\label{rmk: end space of X_n and f(X_n)}
    The following facts about the end spaces of the graphs defined above will be useful later in the proof. Let $n\in\Z$, and let $f\in\PHE(X)$.
    \begin{enumerate}
        \item $\partial Y_1$ and $\partial Y_2$ are clopen in $X$.
        \item $E(\mu_i)\subset\partial Y_i$ for $i=1,2$.
        \item Any end $\lambda'$ such that $\lambda\prec\lambda'$ is in either $\partial Y_1$ or $\partial Y_2$, but not both, because otherwise, $\lambda'$ would be dominated by multiple end types, contradicting that $\lambda$ is a gcd.
        \item $E(\mu_1)\subset\partial X_n$, and, by Lemma~\ref{lem: caste system}, $E(\mu_1)\subseteq \partial f(X_n)$ as well.
        \item Each end in $X_n$ is either in $\partial Y_1\cap\partial T$ or in $\partial Z_\nu$ for some $\nu\in E(\lambda)$.
        \item Both $\partial X_n$ and $\partial f(X_n)$ are clopen in $\partial X$.
        \item For all ends $\lambda'\in\partial Z_\nu$, $\lambda'\not\succ\lambda$ by Proposition~\ref{prop: classification of local structures}.
    \end{enumerate}
\end{remark}

\subsubsection*{Step 2: corank and admissible pairs}

The flux maps in \cite[Section~7]{DHK} are defined using a tool called corank, which calculates how many loops are in one subgraph containing $\mu_1$ but not another. We define corank in our setting in a similar vein.

\begin{definition}[corank]
    Let $W$ and $W'$ be two subgraphs of $X$ such that $E(\mu_1)\subseteq \partial W\cap\partial W'$. Define $\cork(W,W'):=|\{\nu\in\partial W\setminus\partial W'\,:\,\nu\in E(\lambda)\}|$.
\end{definition}

\begin{example}\label{ex: cork examples} By construction, the end space of each $X_n$ contains $E(\mu_1)$, and thus for $m,n\in\Z$, the quantity $\cork(X_m,X_n)$ is well defined. We provide the following examples and properties.
    \begin{itemize}
        \item In Figure~\ref{fig: X_n example}, $\cork(X_0,X_{-2})=2$, $\cork(X_2,X_0)=3$, and $\cork(X_2,X_{-2})=5$.
        \item $\cork(X_n,X_m)=0$ when $n<m$.
        \item The cork operation satisfies \emph{additivity}, that is, if $E(\mu_1)\subset\partial W\subseteq\partial W'\subseteq\partial W''$, then $\cork(W'',W)=\cork(W'',W')+\cork(W',W)$.
        \item The cork operation is invariant after applying proper homotopy equivalences. In other words, if $f\in\PHE(X)$ and $E(\mu_1)\subset\partial W\cap\partial W'$, then $\cork(W',W)=\cork(f(W),f(W'))$.
        \item Given $f\in\PHE(X)$, it follows from Remark~\ref{rmk: end space of X_n and f(X_n)} that $\cork(X_m,f(X_n))$ is well defined.
    \end{itemize}
\end{example}

The quantity $\cork(X_m,X_n)-\cork(X_m,f(X_n))$ counts how many ends in $\partial X_n$ which are equivalent to $\lambda$ move towards $\mu_2$ under a mapping class $f$ for some large enough $m$, hence reducing dynamics on the whole graph to a finite quantity. This quantity will yield the desired map to $\Z$, and we will show that it is independent of the choice $m$ and $n$, as long as $m$ is big enough. First we make precise the notion of ``large enough $m$'' through admissible pairs.

\begin{definition}
    We say that a pair $(m,n)\in(\Z_{\geq 0})^2$ forms an \emph{admissible pair} relative to $f\in\PHE(X)$ if $(\partial X_n\cup\partial f(X_n))\cap E(\lambda)\subseteq\partial X_m\cap E(\lambda)$.
\end{definition}

For example, if $X_n$ and $X_m$ are such that $\partial X_n\cup\partial f(X_n)\subseteq\partial X_m$, then $(m,n)$ is an admissible pair. Informally, the pair $(m,n)$ is admissible relative to $f$ if $X_m$ is a sufficiently large ambient space in which to understand the dynamics of the induced map on $\partial X_n$. We next show that for any $n$, there always exists such an $m$.

\begin{claim}\label{cl: existance of admissible pairs}
    Let $n\in\Z_{\geq 0}$ and let $f\in\PHE(X)$. There exists $m\in\Z_{\geq0}$ such that $(m,n)$ is an admissible pair relative to $f$.
\end{claim}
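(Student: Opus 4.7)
The plan is to reduce existence of admissible $m$ to a finiteness statement about how $f$ moves ends of $E(\lambda)\cap\partial Y_1$ across the cut point $x_0$, and then establish that finiteness via compactness together with the gcd hypothesis. By construction, $\partial X_n\cap E(\lambda)$ splits as the (possibly infinite) set $\{\nu\in E(\lambda):x_\nu\in Y_1\}$ together with the finite set $\{\nu\in E(\lambda):x_\nu\in B_n(x_0)\setminus Y_1\}$. Since $\partial f(X_n)=f(\partial X_n)$ and $f$ permutes $E(\lambda)$ by Lemma~\ref{lem: caste system}, the admissibility condition $(\partial X_n\cup\partial f(X_n))\cap E(\lambda)\subseteq\partial X_m\cap E(\lambda)$ translates to asking that every $\rho\in f(\partial X_n\cap E(\lambda))\cup(\partial X_n\cap E(\lambda))$ with $x_\rho\notin Y_1$ satisfy $x_\rho\in B_m(x_0)$. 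Because $\partial Y_1\subseteq\partial X_m$ for every $m\geq 0$ and $X$ is locally finite (so bounded sets of vertices are finite), such an $m$ exists precisely when this escaping set is finite. The finite pieces above contribute only finitely many escapes, so the problem reduces to showing that $S:=\{\nu\in E(\lambda)\cap\partial Y_1:f(\nu)\in\partial Y_2\}$ is finite.

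I would prove this by contradiction. If $S$ were infinite, then along a sequence of distinct $\nu_k\in S$ local finiteness of $X$ together with injectivity of $\rho\mapsto x_\rho$ on $E(\lambda)$ forces $x_{f(\nu_k)}$ to leave every compact subset of $Y_2$. By compactness of $\partial X$ and the clopenness of $\partial Y_1,\partial Y_2$, extract a subsequence with $\nu_k\to\sigma\in\partial Y_1$ and $f(\nu_k)\to\eta\in\partial Y_2$. Continuity of $f$ on the end space gives $f(\sigma)=\eta$, and Lemma~\ref{lem: caste system} then yields $\sigma\sim\eta$. The set $E(\lambda)$ is discrete in $\partial X$ by the choice of the clopen neighborhoods $U_\nu$, so neither $\sigma$ nor $\eta$ lies in $E(\lambda)$, and both strictly dominate $\lambda$.

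The contradiction comes from combining the gcd hypothesis with the wedge decomposition of $X$. Since $\partial Y_1=\partial W_1$ is a local structure whose only maximal end type is $E(\mu_1)$, the non-maximal $\sigma$ is dominated by $E(\mu_1)$, and applying the gcd hypothesis to $\sigma\succ\lambda$ forces $\sigma$ to be dominated \emph{only} by $E(\mu_1)$, not by $E(\mu_2)$. A short direct verification from the definition of the order on ends shows that the set of maximal end types dominating an end is an invariant of its type: if $\sigma\sim\eta$ and $\sigma\prec\mu$, then any neighborhood of $\mu$ contains a copy of a neighborhood of $\sigma$, which in turn contains a copy of a neighborhood of $\eta$ (using $\eta\preceq\sigma$), so $\eta\prec\mu$. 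Hence $\eta$ is also dominated only by $E(\mu_1)$. But $\eta$ lies in some wedge component $W_j$ of $Y_2$ with $j\geq 2$, whose single maximal end type $E(\nu_j)$ must then equal $E(\mu_1)$, placing a maximal end of type $\mu_1$ in $\partial Y_2$; this contradicts $E(\mu_1)\subseteq\partial Y_1$ recorded in Remark~\ref{rmk: end space of X_n and f(X_n)}(4).

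The main technical hurdle is this type-invariance-of-dominator-sets step combined with careful gcd bookkeeping at the level of end types (rather than individual ends); once that is in hand, the compactness extraction and the reduction are routine. Concretely, given finiteness of the escaping set, one takes
$$m:=\max\bigl(n,\,\max\{d(x_0,x_\rho):\rho\in f(\partial X_n\cap E(\lambda))\cup(\partial X_n\cap E(\lambda)),\,x_\rho\notin Y_1\}\bigr),$$
which is well-defined since the inner set is finite, and verifies directly that $(m,n)$ is admissible.
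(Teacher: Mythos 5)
Your proof is correct and takes essentially the same route as the paper's: both reduce admissibility to finiteness of the set of $E(\lambda)$-ends carried by $f$ from the $Y_1$-side to the $Y_2$-side, and both reach a contradiction with the gcd hypothesis by producing a limit end strictly dominating $\lambda$ whose equivalence class meets both $\partial Y_1$ and $\partial Y_2$ (the paper extracts that end via K\H{o}nig's lemma applied to the subgraph spanned by the wedge points $x_\nu$ rather than by sequential compactness of $\partial X$, and compresses your type-invariance endgame into a citation of Remark~\ref{rmk: end space of X_n and f(X_n)}). The one point you should add is the case where the limit end $\sigma$ is maximal, which your write-up silently excludes: there $\sigma$, being a maximal end of $X$ lying in $\partial W_1$, belongs to $E(\mu_1)$, so $\eta=f(\sigma)\in E(\mu_1)\cap\partial Y_2=\emptyset$, a contradiction.
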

\begin{proof}
    It suffices to show that 
    $$p=\sup\{d(x_0,x_{\nu})\,:\,\nu\in\partial f(X_n)\cap\partial Y_2\cap E(\lambda)\}$$ 
    is finite because then the pair $(m,n)$ would be admissible relative to $f$, where $m=\max\{\lceil p\rceil,n\}$. This is because $\partial X_m\cap E(\lambda)$ must contain both $\partial f(X_n)\cap E(\lambda)$ and $\partial X_n\cap E(\lambda)$ by construction. Suppose for contradiction that $p$ is not finite. Then the full subgraph generated by all of the $x_\nu\in\mathcal{V}(T\cap f(X_n)\cap Y_2)$ (where $\nu\in E(\lambda)$) is infinite. Thus, by K\H{o}nig's lemma \cite{K}, that subgraph contains an end $\lambda'$. By Remark~\ref{rmk: end space of X_n and f(X_n)}, both $\partial Y_2$ and $\partial f(X_n)$ are clopen in $\partial X$, and so $\lambda'\in\partial Y_2$ and $\lambda'\in\partial f(X_n)$. Also, $\lambda\prec\lambda'$, since a sequence of vertices $x_\nu$ converges to $\lambda'$. By Lemma~\ref{lem: caste system}, the intersection $E(\lambda')\cap\partial X_n$ is nonempty. But then by Remark~\ref{rmk: end space of X_n and f(X_n)}, $\partial Y_1\cap E(\lambda')\neq\0$, contradicting the assumption that $\lambda$ is a gcd. Thus the pair $(\lceil p\rceil,n)$ is admissible.
\end{proof}

\begin{claim}\label{cl: finite coker}
    For every admissible pair $(m,n)$ relative to $f$, the quantities $\cork(X_m,X_n)$ and $\cork(X_m,f(X_n))$ are finite.
\end{claim}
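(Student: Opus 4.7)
First I would decompose $\partial X_m \cap E(\lambda) = P \sqcup Q_m$, where $P := \partial Y_1 \cap E(\lambda)$ and $Q_m := \{\nu \in E(\lambda) : x_\nu \in B_m(x_0) \cap Y_2\}$, and likewise for $n$. The set $Q_m$ is finite because $B_m(x_0)$ is a finite subgraph of the locally finite graph $X$ and each vertex $x_\nu$ inside it contributes the single end $\nu \in E(\lambda)$ from $\partial Z_\nu$, using $U_\nu \cap E(\lambda) = \{\nu\}$; by contrast, $P$ may well be infinite. Admissibility translates directly to the inclusions $Q_n \subseteq Q_m$ and $f(P) \cup f(Q_n) \subseteq P \cup Q_m$.

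From this, finiteness of $\cork(X_m,X_n)$ is immediate, since the $E(\lambda)$-ends of $\partial X_m \setminus \partial X_n$ are exactly $Q_m \setminus Q_n$. For the second quantity, Lemma~\ref{lem: caste system} gives $\partial f(X_n) \cap E(\lambda) = f(P) \cup f(Q_n)$, and so
\[
(P \sqcup Q_m) \setminus (f(P) \cup f(Q_n)) \;\subseteq\; (P \setminus f(P)) \cup Q_m.
\]
Thus the task reduces to proving $|P \setminus f(P)| < \infty$. \textbf{This is the main obstacle}, and it is precisely where the hypothesis that $\lambda$ is not of Cantor type must enter, since for Cantor-type $\lambda$ one would expect $f$ to be able to permute the $E(\lambda)$-ends so freely that $P \setminus f(P)$ could be infinite.

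I would bound $|P \setminus f(P)|$ by contradiction, extracting a limit. Assume there is an infinite collection of distinct $\nu_i \in P$ with $f^{-1}(\nu_i) \in E(\lambda) \setminus P \subseteq \partial Y_2$. By compactness of $\partial Y_1$ and $\partial Y_2$ and continuity of $f^{-1}$, I can pass to a subsequence with $\nu_i \to \nu^* \in \partial Y_1$ and $f^{-1}(\nu_i) \to f^{-1}(\nu^*) \in \partial Y_2$. Because $\lambda$ is not of Cantor type, each end of type $\lambda$ has a neighborhood meeting $E(\lambda)$ only in itself, so $\nu^* \notin E(\lambda)$, while accumulation of the $\nu_i$ at $\nu^*$ forces $\lambda \prec \nu^*$ strictly. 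Lemma~\ref{lem: caste system} then gives $\nu^* \sim f^{-1}(\nu^*)$, and equivalent ends have the same set of dominating maximal end types (by transitivity of the preorder).

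The contradiction comes from the wedge decomposition $X \cong W_1 \vee \cdots \vee W_k$ together with Definition~\ref{def: gcd}. Since each $\partial W_i$ is a local structure with unique maximal end type $E(\mu_i)$, every non-maximal end of $\partial W_i$ is dominated by some $\mu \in E(\mu_i)$. Applied here, $\nu^* \in \partial W_1$ is dominated by some $\mu \in E(\mu_1)$, while $f^{-1}(\nu^*) \in \partial W_j$ for some $j \geq 2$ is dominated by some $\mu' \in E(\mu_j)$; by type-invariance, $\nu^*$ is then dominated by both $E(\mu_1)$ and $E(\mu_j)$. If $\nu^*$ is non-maximal, this contradicts $\lambda \in \gcd(\mu_1,\mu_2)$: either $j=2$, violating the ``not both'' clause, or $j \geq 3$, placing a maximal type other than $E(\mu_1),E(\mu_2)$ above $\lambda$. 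If $\nu^*$ is instead maximal, then $\nu^* \in E(\mu_i) \subseteq \partial W_i$ for a unique $i$; $\nu^* \in \partial Y_1$ forces $i = 1$, but then $f^{-1}(\nu^*) \in E(\mu_1) \subseteq \partial Y_1$ contradicts $f^{-1}(\nu^*) \in \partial Y_2$. In either case we reach a contradiction, so $|P \setminus f(P)| < \infty$ and hence $\cork(X_m, f(X_n)) < \infty$.
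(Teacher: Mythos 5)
Your proof is correct and takes essentially the same route as the paper's: both argue by contradiction that infinitely many $E(\lambda)$-ends crossing between the two sides would accumulate at an end strictly dominating $\lambda$ whose equivalence class meets both $\partial Y_1$ and $\partial Y_2$, which is then ruled out by the gcd hypothesis. The only differences are cosmetic: you make the finite bookkeeping explicit via the decomposition $P\sqcup Q_m$ and extract the limit end by compactness of $\partial X$, where the paper invokes K\H{o}nig's lemma on the subgraph spanned by the vertices $x_\nu$ and lands the final contradiction on $\lambda'\in\partial f(X_n)$ rather than directly on the gcd condition.
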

\begin{proof}
    From the definitions of $X_n$, $X_m$, and local finiteness of $X$, it follows that $\cork(X_m,X_n)$ is finite. Now suppose for contradiction that $\cork(X_m,f(X_n))$ is infinite. Then, as in the previous claim, K\H{o}nig's lemma implies that there is an end $\lambda'$ strictly dominating $\lambda$ in $\partial X_m\setminus \partial f(X_n)$. Thus, by Remark~\ref{rmk: end space of X_n and f(X_n)}, $\lambda'\not\in\partial Z_\nu$ for any $\nu\in E(\lambda)$, and so the end $\lambda'$ must be in $\partial Y_1$. If $E(\lambda')\cap Y_2\neq \0$, we would reach a contradiction on the assumption that $\lambda$ is a gcd, and so $E(\lambda')\subset \partial Y_1$. Since $Y_1\subseteq X_n$, Lemma~\ref{lem: caste system} implies that $E(\lambda')\subset\partial f(X_n)$, and so $\lambda'\in\partial f(X_n)$, which is a contradiction.
\end{proof}

\subsubsection*{Step 3: constructing the flux map}

Given an admissible pair $(m,n)$, define the map $\phi_{m,n}\colon\PHE(X)\to\Z$ to be given by 
$$f\mapsto\cork(X_m,X_n)-\cork(X_m,f(X_n)),$$
where $(n,m)$ is an admissible pair relative to $f\in\PHE(X)$.

\begin{claim}\label{cl: phi is continuous group homomorphism}
    The map $\phi_{m,n}$ constructed above is a well-defined continuous group homomorphism.
\end{claim}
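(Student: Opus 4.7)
I would first establish well-definedness on $\PHE(X)$ and then descent to $\Maps(X)$. For each $f \in \PHE(X)$, Claim~\ref{cl: existance of admissible pairs} produces an admissible pair and Claim~\ref{cl: finite coker} ensures both corank values are finite integers. To show the value is unchanged by enlarging the first coordinate, I would compare pairs $(m,n)$ and $(m',n)$ with $m' \geq m$ both admissible for $f$: by additivity of corank (Example~\ref{ex: cork examples}), using that $\partial f(X_n)\cap E(\lambda)\subseteq \partial X_m$ by admissibility, both $\cork(X_{m'},X_n)$ and $\cork(X_{m'},f(X_n))$ split as $\cork(X_{m'},X_m)$ plus the corresponding $\cork(X_m,\cdot)$, and the $\cork(X_{m'},X_m)$ terms cancel. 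Proper homotopy invariance is immediate, since properly homotopic maps induce the same homeomorphism of $\partial X$ and hence the same $\partial f(X_n)$; thus $\phi_{m,n}$ descends to a well-defined map on $\Maps(X)$.

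For the homomorphism property $\phi_{m,n}(fg) = \phi_{m,n}(f) + \phi_{m,n}(g)$, I would choose $m$ large enough to be simultaneously admissible for $f$, $g$, and $fg$, write $T := \partial X_m\cap E(\lambda)$, $B := \partial X_n\cap E(\lambda)$, and observe that since $B, f(B)\subseteq T$ with $T\setminus B$ and $T\setminus f(B)$ finite,
\[
\cork(X_m,X_n) - \cork(X_m,f(X_n)) = |f(B) \setminus B| - |B \setminus f(B)|.
\]
Calling the right-hand side $\psi(f,B)$, the key step is to verify that $\psi(f,B)$ is unchanged if $B$ is replaced by any subset $B'\subseteq E(\lambda)$ differing from $B$ by finitely many elements. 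Writing $\psi(f,B)=\sum_{s\in E(\lambda)}(\chi_{f(B)}(s)-\chi_B(s))$ and using that $f$ acts on $E(\lambda)$ by a bijection (Lemma~\ref{lem: caste system}), the claim reduces to $\sum(\chi_{f(B)}-\chi_{f(B')}) = \sum(\chi_B - \chi_{B'})$, which holds because $f(B)\triangle f(B') = f(B\triangle B')$ has the same signed cardinality as $B\triangle B'$. Then, decomposing
\[
\phi_{m,n}(fg) = \bigl[\cork(X_m,X_n)-\cork(X_m,g(X_n))\bigr] + \bigl[\cork(X_m,g(X_n)) - \cork(X_m,fg(X_n))\bigr],
\]
the first bracket is $\phi_{m,n}(g)$, while the second equals $\psi(f,g(B))=\psi(f,B)=\phi_{m,n}(f)$ by the invariance, since admissibility forces $g(B)\triangle B$ to be finite. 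The main obstacle is precisely this invariance: because $B$ may be infinite (it contains $\partial Y_1\cap E(\lambda)$), one cannot subtract cardinalities directly, and the whole argument must be performed at the level of signed counts on finite symmetric differences.

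For continuity, since $\Z$ is discrete and $\phi_{m,n}$ is a homomorphism, it suffices to display an open neighborhood of the identity in $\ker(\phi_{m,n})$. I would choose a compact subgraph $K\subset X$ containing the ball $B_m(x_0)$ in the underlying tree $T$, enlarged so that each $Z_\nu$ with $x_\nu\in B_m(x_0)\cap Y_2$ is a distinct connected component of $X\setminus K$ while $Y_1$ together with its attached $Z_\nu$'s forms a single component. Any $[f]\in U_K$ has a representative fixing $K$ and preserving components of $X\setminus K$, forcing $f$ to fix each of the finitely many ends $\nu\in E(\lambda)\cap\partial X_m\cap\partial Y_2$ and to send $\partial Y_1\cap E(\lambda)$ to itself setwise. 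Therefore $f(B)=B$ and $\cork(X_m,f(X_n))=\cork(X_m,X_n)$, giving $\phi_{m,n}([f])=0$, so $U_K\subseteq\ker(\phi_{m,n})$.
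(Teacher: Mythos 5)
Your proof is correct, and it is more self-contained than the paper's, which defers the homomorphism property to the algebraic manipulations of \cite[Lemma~7.11]{DHK} and deduces continuity from the automatic-continuity fact that any homomorphism from a Polish group to $\Z$ is continuous (\cite[Theorem~1]{D}). Your identification of $\phi_{m,n}(f)$ with the signed count $|f(B)\setminus B|-|B\setminus f(B)|$ on a finite symmetric difference, and the verification that this count is unchanged when $B$ is replaced by any $B'$ with $B\triangle B'$ finite (using that $[f]$ acts on $E(\lambda)$ by a bijection, Lemma~\ref{lem: caste system}), is a correct and explicit unpacking of the ``additivity of corank'' manipulations the paper only alludes to; working with signed counts also sidesteps the need to arrange nested subgraphs before invoking additivity, and your reduction to a common large $m$ is legitimized by the independence-of-$m$ computation in your first paragraph (which anticipates part of Claim~\ref{cl: phi desends to Phi}). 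The continuity argument is where you genuinely diverge: rather than citing automatic continuity, you exhibit an explicit clopen neighborhood $U_K$ of the identity inside $\ker(\phi_{m,n})$, which is more elementary and gives the concrete extra information that the kernel contains a basic clopen subgroup. One cosmetic point: since $K\supseteq B_m(x_0)$ meets $Y_1$, the set $Y_1\setminus K$ need not be a single component of $X\setminus K$; this does not affect the argument, because once $K$ contains the edge through $x_0$ every component of $X\setminus K$ lies entirely in $Y_1$ or in $Y_2$ and is preserved by any representative of $[f]\in U_K$, so $f$ still preserves $\partial Y_1\cap E(\lambda)$ setwise and fixes each of the finitely many $\nu$ with $x_\nu\in B_m(x_0)\cap Y_2$, yielding $f(B)=B$ as you claim.
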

\begin{proof}
    The proof of \cite[Lemma~7.11]{DHK} is sufficient for this claim, and we summarize it here. First, the quantity $\cork(X_m,X_n)-\cork(X_m,f(X_n))$ is indeed an element of $\Z$ by Claim~\ref{cl: finite coker}, and it is immediate that $\phi_{m,n}(id)=0$. For $f,g\in\PHE(X)$ and $n\in\Z$, there exists $m\in\Z$ such that $(m,n)$ is admissible relative to all three maps $f$, $g$, and $fg$ by Claim~\ref{cl: existance of admissible pairs}. The rest of the proof consists of algebraic manipulations and making use of the additivity of corank mentioned in Example~\ref{ex: cork examples}. Lastly, the continuity of $\phi_{m,n}$ follows as any homeomorphism to $\Z$ is continuous \cite[Theorem~1]{D}.
\end{proof}

\begin{claim}\label{cl: phi desends to Phi}
    If $f$ and $g$ are properly homotopic, then $\phi_{m,n}(f)=\phi_{m,n}(g)$. Moreover, if $(m,n)$ and $(m',n')$ are two admissible pairs for $f$, then $\phi_{m,n}(f)=\phi_{m',n'}(f)$.
\end{claim}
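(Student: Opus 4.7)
The plan is to handle the two statements separately. For the proper homotopy invariance, the key observation is that properly homotopic proper maps induce the same continuous map on the end space $\partial X$, a fact noted in the discussion preceding Proposition~\ref{AK-B 2.3}. Hence if $f\simeq g$ properly, then $\partial f(X_n)=\partial g(X_n)$ as subsets of $\partial X$. Since $\cork(X_m,\cdot)$ is defined purely in terms of the end space of its second input (restricted to $E(\lambda)$), this immediately forces $\cork(X_m,f(X_n))=\cork(X_m,g(X_n))$, and therefore $\phi_{m,n}(f)=\phi_{m,n}(g)$.

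For the second statement, the strategy is to prove independence of the admissible pair by separately showing independence in the first and second coordinates, and then chaining them together. For independence in $m$, I would take two admissible pairs $(m,n)$ and $(m',n)$ for $f$, assume $m\leq m'$ without loss of generality, and note that admissibility gives both $\partial X_n\cap E(\lambda)\subseteq \partial X_m\cap E(\lambda)$ and $\partial f(X_n)\cap E(\lambda)\subseteq \partial X_m\cap E(\lambda)$, while the assumption $m\leq m'$ gives $\partial X_m\subseteq \partial X_{m'}$. Applying the additivity of corank (Example~\ref{ex: cork examples}) to the nested triples $X_n\subseteq X_m\subseteq X_{m'}$ and $f(X_n)\subseteq X_m\subseteq X_{m'}$ (at the level of $E(\lambda)$-ends) separately, subtracting the two resulting equations cancels the common term $\cork(X_{m'},X_m)$ and yields $\phi_{m,n}(f)=\phi_{m',n}(f)$. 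For independence in $n$, I would take $(m,n)$ and $(m,n')$ admissible with $n\leq n'$. A parallel additivity argument gives $\cork(X_m,X_n)=\cork(X_m,X_{n'})+\cork(X_{n'},X_n)$ and $\cork(X_m,f(X_n))=\cork(X_m,f(X_{n'}))+\cork(f(X_{n'}),f(X_n))$, where the latter uses that $f$ induces a homeomorphism on ends to guarantee $\partial f(X_n)\subseteq \partial f(X_{n'})$. The crucial input here is the proper-homotopy invariance of corank listed in Example~\ref{ex: cork examples}, which gives $\cork(X_{n'},X_n)=\cork(f(X_{n'}),f(X_n))$; subtracting the two displayed identities then yields $\phi_{m,n}(f)=\phi_{m,n'}(f)$. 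To complete the moreover statement, given any two admissible pairs $(m,n)$ and $(m',n')$, I would use Claim~\ref{cl: existance of admissible pairs} to produce $M\geq \max(m,m')$ and $N\geq \max(n,n')$ large enough that the intermediate pairs $(M,n),(M,n'),(M,N)$ are all admissible, and chain the two independences to conclude $\phi_{m,n}(f)=\phi_{M,n}(f)=\phi_{M,N}(f)=\phi_{M,n'}(f)=\phi_{m',n'}(f)$.

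The main obstacle in this plan is the careful bookkeeping of the $E(\lambda)$-restricted end-space containments needed for each application of additivity, especially after pushing forward by $f$, since the subgraph $f(X_n)$ is not itself one of the $X_k$. Admissibility is precisely the technical condition that ensures these containments hold, and Lemma~\ref{lem: caste system} guarantees that $f$ preserves $E(\lambda)$ as a set, which is what lets corank behave equivariantly under $f$. Given those two tools, the proof reduces to routine additivity manipulations and the elementary observation that properly homotopic maps have identical induced maps on ends.
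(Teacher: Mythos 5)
Your proposal is correct and follows essentially the same route as the paper, which handles the first statement by noting that properly homotopic maps induce the same homeomorphism on $\partial X$ and establishes the moreover statement by splitting into independence in $m$ (via additivity of corank) and independence in $n$ (via additivity plus invariance of corank under proper homotopy equivalences), deferring the algebraic manipulations to \cite[Lemma~7.10]{DHK}. Your write-up simply makes explicit the nested-triple bookkeeping and the chaining through a common large admissible pair that the paper leaves implicit.
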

\begin{proof}
    The first statement follows from the fact that if $f$ and $g$ are properly homotopic, then they induce the same homeomorphism on $\partial X$. The moreover statement follows from the same proof as in \cite[Lemma~7.10]{DHK} which we briefly summarize. Additivity of corank (see Example~\ref{ex: cork examples}) gives that $\phi_{m',n}(f)=\phi_{m,n}(f)$ for any $f$ through algebraic manipulations similar to those in triangle inequality proofs, and thus it suffices to show that $\phi_{m,n}=\phi_{m,n'}$. This follows from additivity and invariance after applying proper homotopy equivalences (see Example~\ref{ex: cork examples}) through similar manipulations.
\end{proof}

\begin{definition}\label{def: flux map for ends}
    Let the \emph{flux map} $\Phi\colon\Maps(X)\to\Z$ be given by sending $[f]$ to $\phi_{m,n}(f)$ for some choice of representative $f\in[f]$ and choice of admissible pair $(m,n)$ relative to $f$. By Claims\ref{cl: phi is continuous group homomorphism} and~\ref{cl: phi desends to Phi}, this is a well-defined continuous group homomorphism.
\end{definition}

\begin{claim}\label{cl: nontrivial}
    The flux map $\Phi\colon\Maps(X)\to\Z$ is non-trivial.
\end{claim}
\begin{proof}
    An element of $\Maps(X)$ that is mapped non-trivially to $\Z$ under $\Phi$ can be constructed using \cite[Observation~4.9]{MRlong} on the collection $\{U_\nu\}_{\nu\in E(\lambda)}$.
\end{proof}

We are now ready to prove Proposition~\ref{prop: flux map for ends}.

\begin{proof}[Proof of Proposition~\ref{prop: flux map for ends}]
    Because $\Phi$ is a non-trivial continuous group homomorphism, the kernel of $\Phi$ is a proper open normal subgroup of $\Maps(X)$. By Lemma~\ref{lem: proper open normal subgrp}, $\Maps(X)$ has no dense conjugacy class.
\end{proof}

\begin{proposition}\label{prop: flux map for loops}
    Let $X$ be a locally finite graph with stable maximal ends. Let $\mu_1\not\sim\mu_2$ be two maximal ends. If $R_1\in\gcd(\mu_1,\mu_2)$, then $\Maps(X)$ does not have a dense conjugacy class.
\end{proposition}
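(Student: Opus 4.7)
The plan is to construct a continuous nontrivial flux homomorphism $\Phi\colon\Maps(X)\to\Z$ counting the net transfer of loops between the $\mu_1$- and $\mu_2$-sides, in direct analogy with Proposition~\ref{prop: flux map for ends}. The assumption $R_1\in\gcd(\mu_1,\mu_2)$ plays the role that a non-Cantor $\lambda\in\gcd(\mu_1,\mu_2)$ played there: it forbids any end of $\partial X_g$ from being dominated by both $E(\mu_1)$ and $E(\mu_2)$, which is exactly what keeps the flux count finite.

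By Propositions~\ref{AK-B 4.5} and~\ref{AK-B 2.6}, assume $X$ is in standard form so that each loop edge $\ell$ is attached to a unique vertex $x_\ell$ of the underlying tree $T$. The wedge decomposition (Proposition~\ref{prop: wedge decomposition}) plus expansion of the wedge point gives $X=Y_1\cup e\cup Y_2$ with $\mu_i\in\partial Y_i$ and midpoint $x_0\in e$. For $n\ge 0$, let $X_n$ consist of $Y_1$ together with $B_n(x_0)\cap T\cap Y_2$ and every loop edge $\ell$ with $x_\ell$ in that intersection; for $n<0$, let $X_n$ be $Y_1$ with the loop edges attached at vertices of $B_{-n}(x_0)\cap Y_1$ removed. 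Note $E(\mu_1)\subseteq\partial X_n$, and by Lemma~\ref{lem: caste system} also $E(\mu_1)\subseteq\partial f(X_n)$ for every $f\in\PHE(X)$. Define $\cork(W,W')$ as the number of loop edges of $W$ not contained in $W'$, call $(m,n)$ admissible for $f$ if every loop edge of $X_n$ and of $f(X_n)$ lies in $X_m$, and put $\phi_{m,n}(f):=\cork(X_m,X_n)-\cork(X_m,f(X_n))$.

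The main obstacle, mirroring Claims~\ref{cl: existance of admissible pairs} and~\ref{cl: finite coker}, is showing that admissible pairs exist for every $n$ and $f$ and that $\cork(X_m,f(X_n))$ is finite at every admissible pair. Suppose one of these failed. Then an infinite sequence of loop edges $\{\ell_k\}$ with $x_{\ell_k}\in T\cap Y_2$ and $d(x_0,x_{\ell_k})\to\infty$ either lies in $f(X_n)$ (obstructing admissibility) or in $X_m\setminus f(X_n)$ (obstructing finiteness). K\"onig's lemma applied to the full subgraph of $T\cap Y_2$ spanned by $x_0$ and the $x_{\ell_k}$ produces an end $\lambda'\in\partial Y_2$. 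Since loop edges accumulate on $\lambda'$, we have $\lambda'\in\partial X_g$, and $\mu_2\succeq\lambda'$ because every maximal end of $Y_2$ is of type $\mu_2$ by Proposition~\ref{prop: wedge decomposition}. In the admissibility case, $\lambda'\in\partial f(X_n)$, so Lemma~\ref{lem: caste system} yields an end of type $\lambda'$ in $\partial X_n\subseteq\partial Y_1$, and this matching end also lies in $\partial X_g$, forcing $\mu_1\succeq\lambda'$; this contradicts $R_1\in\gcd(\mu_1,\mu_2)$. The finiteness case proceeds identically, with $\lambda'$ also in $\partial X_m$.

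Granted admissibility and finiteness, the proofs that $\phi_{m,n}$ is a continuous group homomorphism, descends to proper homotopy classes, and is independent of the choice of admissible pair go through verbatim from Claims~\ref{cl: phi is continuous group homomorphism} and~\ref{cl: phi desends to Phi}, using the additivity of $\cork$ and its invariance under proper homotopy equivalences (noting $g(f(W))=g(W)$). A loop shift exchanging a loop near $\mu_1$ with one near $\mu_2$ (possible since both $\mu_i\in\partial X_g$ accumulate loops) has non-zero flux, so $\Phi\colon\Maps(X)\to\Z$ is a continuous non-trivial group homomorphism. Its kernel is a proper open normal subgroup, and Lemma~\ref{lem: proper open normal subgrp} then forbids a dense conjugacy class in $\Maps(X)$.
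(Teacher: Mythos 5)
Your overall strategy --- build a flux homomorphism to $\Z$ from the subgraphs $Y_1$, $Y_2$, $T$, and $X_n$, verify admissibility and finiteness via K\H{o}nig's lemma and the gcd hypothesis, and conclude with Lemma~\ref{lem: proper open normal subgrp} --- is exactly the paper's, and your K\H{o}nig's lemma argument correctly isolates why $R_1\in\gcd(\mu_1,\mu_2)$ is what keeps the count finite. The gap is in how you count loops. You define $\cork(X_m,f(X_n))$ as the number of loop edges of $X_m$ not contained in the set-theoretic image $f(X_n)$. That quantity is not an invariant of the mapping class $[f]$: two properly homotopic representatives can have very different images, so whether a given loop edge of $X$ lies inside $f(X_n)$ depends on the chosen representative, and the analogue of Claim~\ref{cl: phi desends to Phi} fails. (For ends this issue does not arise, because properly homotopic maps induce the same homeomorphism of $\partial X$, so $\partial f(X_n)$ is well defined; there is no analogous statement for which loop edges lie in an image.) Moreover, even for a fixed representative the edge count does not measure the homotopy-theoretic content of $f|_{X_n}$: the image of a loop edge is typically an immersed closed path traversing several loop edges of $X$, so ``loop edges contained in $f(X_n)$'' neither equals nor controls the rank of $f_*(\pi_1(X_n))$, and the additivity you invoke breaks down.

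The paper repairs this by going algebraic, following \cite{DHK}: set $A_n:=\pi_1(X_n,x_0)$, which is a free factor of $\pi_1(X,x_0)$ with $A_n$ a free factor of $A_m$ for $n<m$, define $\cork(A_m,A_n):=\rk(A_m/\langle\langle A_n\rangle\rangle)$, and call $(m,n)$ admissible for $f$ when both $A_n$ and $f_*(A_n)$ are free factors of $A_m$. This corank is additive, invariant under proper homotopy equivalence, and depends only on $[f]$, since properly homotopic maps induce the same outer automorphism and the corank of a free factor is conjugation-invariant. With that substitution your admissibility and finiteness argument, your nontriviality check via a loop shift, and the rest of your outline go through as in Claims~\ref{cl: existance of admissible pairs}--\ref{cl: phi desends to Phi}.
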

\begin{proof}
    In tandem with the analogy between ends and loops, this proof is almost identical to the proof of Proposition~\ref{prop: flux map for ends}. Assuming $X$ is in standard form, the graphs $Y_1$ and $Y_2$ can be defined in the same way, and the analogous $T$ is the underlying tree. Each $Z_\nu$ here is a copy of $R_1$, and the $x_\nu$ can be defined in the same way.

    The subgraphs $X_n$  can be defined analogously, but we describe them explicitly to avoid confusion. If $n\geq0$, let $X_n$ be equal to $\overline{Y_1\cup B_n(x_0)}$. Otherwise, let $X_n=(Y_1\setminus B_{n}(x_0))\cup (T\cap Y_1)$. See Figure~\ref{fig: X_n example for loops} for an example. Remark~\ref{rmk: end space of X_n and f(X_n)} holds in this setting, except that the end space of each $X_n$ is exactly equal to $\partial Y_1\cap\partial T$, because there are no ends in any $Z_\nu$ when $Z_\nu\cong R_1$.

    \begin{figure}[h]
        \centering
        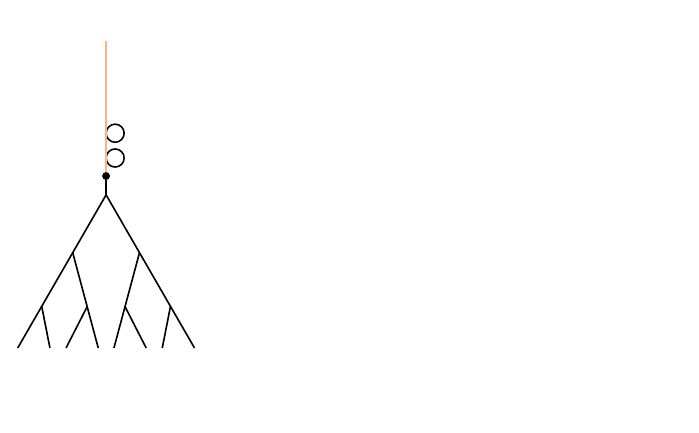
        \caption{The subgraphs $X_n$ for $n=-2,0,2$ are shown in orange.}
        \label{fig: X_n example for loops}
    \end{figure}

    Each $X_n$ corresponds to a subgroup $A_n:=\pi_1(X_n,x_0)$ of $\pi_1(X,x_0)$. Moreover, this subgroup is a \emph{free factor} of $\pi_1(X,x_0)$, namely, there exist subgroups $H_n<\pi_1(X,x_0)$ such that $\pi_1(X,x_0)\cong A_n*H_n$ for each $n$. Furthermore, for $n,m\in\Z$ with $n<m$, $A_n\leq A_m$, and $A_n$ is a free factor of $A_m$ \cite[Lemma~7.2]{DHK}. Let $$\cork(A_m,A_n):=\rk(A_m/\langle\langle A_n\rangle\rangle).$$ 
    Corank is additive and invariant after applying proper homotopy equivalences. A pair of integers $(m,n)$ with $n<m$ is \emph{admissible} relative to a proper homotopy equivalence $f$ if $A_n$ and $f_*(A_n)$ are free factors of $A_m$. The proofs of Claims~\ref{cl: existance of admissible pairs}, \ref{cl: finite coker}, \ref{cl: phi is continuous group homomorphism}, and~\ref{cl: phi desends to Phi} are the same after making the appropriate changes from ends to loops, and Claim~\ref{cl: nontrivial} follows from the existence of loop shifts \cite[Section~3.4]{DHK}.
\end{proof}

We now prove the following, which is Theorem~\ref{thm: obstructions to dense conjugacy classes} (1) in the introduction.

\begin{theorem}\label{thm: item 1}
    Let $X$ be a locally finite graph with stable maximal ends. If there exists distinct maximal ends $\mu_1$ and $\mu_2$ in $\partial X$ with a gcd which is not of Cantor type, then $\Maps(X)$ does not contain a dense conjugacy class.
\end{theorem}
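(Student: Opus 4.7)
The plan is to observe that Theorem~\ref{thm: item 1} is essentially a bookkeeping consequence of the two flux-map results already established, namely Propositions~\ref{prop: flux map for ends} and~\ref{prop: flux map for loops}. The only genuine content is to check that the two clauses of Definition~\ref{def: gcd} exhaust the possibilities for what ``a gcd which is not of Cantor type'' can refer to, and then to match each case with the appropriate proposition.

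First I would fix distinct maximal ends $\mu_1,\mu_2\in\partial X$ admitting a gcd which is not of Cantor type. By Definition~\ref{def: gcd}, such a gcd is either an element $\lambda\in\partial X$ or the symbol $R_1$, so I split into these two cases. In the first case, $\lambda\in\gcd(\mu_1,\mu_2)$ is a non-Cantor-type end of $\partial X$, and the hypotheses of Proposition~\ref{prop: flux map for ends} are satisfied directly: the assumption that $X$ has stable maximal ends is inherited from the theorem statement, so the flux homomorphism $\Phi\colon\Maps(X)\to\Z$ constructed there is a continuous non-trivial group homomorphism with open kernel, forcing $\Maps(X)$ to contain a proper open normal subgroup and hence (by Lemma~\ref{lem: proper open normal subgrp}) no dense conjugacy class.

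In the second case, $R_1\in\gcd(\mu_1,\mu_2)$. The phrase ``not of Cantor type'' applies vacuously here since $R_1$ is not an end at all, so this case is included in the hypothesis by design. Proposition~\ref{prop: flux map for loops} then applies verbatim, again producing a non-trivial continuous homomorphism $\Maps(X)\to\Z$ with open kernel, and the same appeal to Lemma~\ref{lem: proper open normal subgrp} finishes the argument.

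There is essentially no obstacle to carrying this out; all of the technical difficulty lay in setting up the flux homomorphisms and verifying their good properties (finiteness of corank, existence of admissible pairs, well-definedness on proper homotopy classes), which has been done in the two preceding propositions. The only point that warrants a sentence of explanation in the write-up is the reminder that Definition~\ref{def: gcd} treats ends and the loop symbol $R_1$ in parallel, so that the hypothesis ``a gcd which is not of Cantor type'' cleanly splits into exactly the two cases handled by the flux-map propositions.
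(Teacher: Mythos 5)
There is a genuine gap. The theorem hypothesizes only that $\mu_1$ and $\mu_2$ are \emph{distinct} maximal ends, whereas both Proposition~\ref{prop: flux map for ends} and Proposition~\ref{prop: flux map for loops} require $\mu_1\not\sim\mu_2$, i.e., maximal ends of \emph{different types}. Your two-case split (end gcd versus $R_1$ gcd) silently assumes the flux-map propositions apply, but when $\mu_1\sim\mu_2$ their hypotheses are not satisfied, and this is not a cosmetic issue: the flux construction begins by placing $E(\mu_1)$ and $E(\mu_2)$ into different wedge components $Y_1$ and $Y_2$, and uses (Remark~\ref{rmk: end space of X_n and f(X_n)}, via Lemma~\ref{lem: caste system}) that the entire equivalence class $E(\mu_1)$ stays inside $\partial X_n$ under every mapping class. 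If $\mu_2\in E(\mu_1)$, no such separation preserved by $\Maps(X)$ exists, since mapping classes act transitively on each end type by Theorem~\ref{MR 1.2}. So the case of two distinct but equivalent maximal ends genuinely cannot be fed into the flux machinery as stated.

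The paper's proof spends most of its effort on exactly this missing case. When $\mu_1\sim\mu_2$ and $E(\mu_1)$ is finite, one has $1<|E(\mu_1)|<\infty$ and Theorem~\ref{thm: finite end type} applies. When $E(\mu_1)$ is a Cantor set, one argues further: if some maximal end $\mu_3\notin E(\mu_1)$ dominates the gcd $\lambda$, then $\lambda\in\gcd(\mu_1,\mu_3)$ with $\mu_1\not\sim\mu_3$ and the flux proposition applies to that pair; if no such $\mu_3$ exists, the flux maps are of no use and one instead invokes the fixed-point/JEP argument of Proposition~\ref{prop: babel} in the generalized form of Remark~\ref{rmk: generalized babel}, distinguishing ends of type $\lambda$ rather than of type $\partial 1$. (An analogous split is needed when the gcd is $R_1$.) To repair your write-up you would need to add this entire branch of the argument.
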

\begin{proof}
    If $\mu_1\not\sim\mu_2$, then the result follows from Propositions~\ref{prop: flux map for ends} and~\ref{prop: flux map for loops}. Otherwise, suppose that $\mu_1\sim\mu_2$. If $E(\mu_1)$ is finite, then $|E(\mu_1)|>1$, so Theorem~\ref{thm: finite end type} implies the result.
    
    By Proposition~\ref{MR 4.7}, we assume that $E(\mu_1)$ is homeomorphic to Cantor space. If there exists an end $\lambda\in\gcd(\mu_1,\mu_2)$, then either there is another maximal end $\mu_3\not\in E(\mu_1)$ such that $\lambda\prec\mu_3$ or not. If not, then the only maximal ends dominating $\lambda$ are of type $E(\mu_1)$, and $\mu_1$ is of Cantor type, so Remark~\ref{rmk: generalized babel} shows the result, after distinguishing ends of type $\lambda$. If there is such a $\mu_3$, then as $\lambda\in\gcd(\mu_1,\mu_2)$, there is no $\nu$ such that $\lambda\prec\nu\prec\mu_1$, and so $\lambda\in\gcd(\mu_1,\mu_3)$. Thus, we may apply Proposition~\ref{prop: flux map for ends} to obtain the result. If $R_1\in\gcd(\mu_1,\mu_2)$, a similar logic follows.
\end{proof}

\begin{example}
    Theorem~\ref{thm: item 1} can be used to show that the mapping class groups of the following locally finite graphs do not contain a dense conjugacy class.
    \begin{enumerate}
        \item In the graph $(\omega^\alpha+1)\to (1\vee C)$, where $\alpha$ is a countable ordinal, the gcd of the two maximal end types is $\partial(\omega^\alpha+1)$.
        \item The graph $(\{\omega^n+1\}_n\to o(1))\vee((\omega^\alpha+1)\to C)$ where $\alpha$ is a countable ordinal, has two distinct maximal end types. If $\alpha<\omega$, then $\partial(\omega^\alpha+1)$ is a gcd of the two maximal end types. If $\alpha\geq\omega$, then $\partial(\omega^\alpha+1)$ is a gcd of any two ends in the maximal end type $\partial ((\omega^\alpha+1)\to C)$. The theorem applies even if the two maximal ends are equivalent.
        \item In the graph $(\omega^2+1)\vee((1\to C)\to o(1))$ from Figure~\ref{fig: flux map T}, the two maximal ends have a gcd $\partial 1$ even though $\partial 1$ is not an immediate successor of either maximal end.
        \item Consider the graph with wedge decomposition $(X\to1)\vee(X\to o(1))$, where $X$ is a locally finite tree with a unique maximal end which is not self-similar (and not stable) such as the one constructed in \cite{MRshort}. Because the maximal end in any copy of $X$ is not stable, ends of this type are not technically gcds. The stability assumption is only used in Claim~\ref{cl: nontrivial}, and by shifting copies of $X$ in this graph, it is clear that there still exists non-trivial flux maps. See Remark~\ref{rmk: nontrivial} for the general principle.
    \end{enumerate}
\end{example}

\begin{example}\label{ex: subtle non-example}
    Consider $(\omega^2+1)\vee(1\to C)\vee(1\to o(1))$. The end type $\partial 1$ is not a gcd of any pair of maximal ends  because $\partial 1$ is dominated by more than two maximal end types. As a result of the subtle requirement that an end is a gcd only if it is dominated by at most two maximal end types, this graph does not contain a gcd.
\end{example}

It is possible to define a flux map on the graph in Example~\ref{ex: subtle non-example} by generalizing flux maps as follows. A flux map on $X$ is specified by the following three pieces of information.

\begin{enumerate}
    \item The two ends of the flow: Two closed nonempty disjoint subsets $A_1$ and $A_2$ of $\partial X$ such that $A_1\subseteq E(\nu_1)$ and $A_2\subseteq E(\nu_2)$ for some $\nu_1,\nu_2\in \partial X$.
    \item The end type we will count: An end $\lambda\prec \nu_1, \nu_2$ that is not of Cantor type such that there exists a \emph{flux splitting}, which is a decomposition of $X$ into a wedge product $Y_1\vee Y_2$ such that the following two conditions are satisfied.
    \begin{enumerate}
        \item $A_i\subset \partial Y_i$ for $i=1,2$.
        \item For any point $\lambda'\in X$ such that $\lambda\prec \lambda'$, $E(\lambda')$ is in either $\partial Y_1$ or $\partial Y_2$, but not both.
    \end{enumerate}
    \item A subgroup $H$ of $\Maps(X)$ that set-wise stabilizes both $A_1$ and $A_2$ and does not set-wise fix $E(\lambda)\cap U$ for any neighborhood $U$ of $A_1$ disjoint from $A_2$ (respectively $A_2$ disjoint from $A_1$).
\end{enumerate}

If the data $(A_1,A_2,y,H)$ satisfies the three conditions above, then we obtain a flux map
$$\Phi_{(A_1,A_2,y,H)}\colon H\to\Z.$$

\begin{remark}\label{rmk: nontrivial}
    The flux map $\Phi$ is non-trivial if there exists a clopen, pairwise disjoint, and pairwise homeomorphic cover $\{U_\nu\}_{\nu\in E(\lambda)}$ of $E(\lambda)$ in $\partial X$ such that $U_{\nu}\cap E(\lambda)$ is a singleton for all $\nu$. Such a cover always exists if $\lambda$ is stable.
\end{remark}

The following is a generalization of Theorem~\ref{thm: obstructions to dense conjugacy classes} (1).

\begin{theorem}\label{thm: flux map general}
    Let $X$ be a locally finite graph, and suppose that $A_1,A_2\subset\partial X$, $\lambda\in\partial X$, and $H\leq\Maps(X)$ satisfy the three conditions above and the conditions of Remark~\ref{rmk: nontrivial}. Then $H$ does not have a dense conjugacy class.
\end{theorem}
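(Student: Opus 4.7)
The plan is to construct a continuous group homomorphism
$$\Phi_{(A_1,A_2,\lambda,H)}\colon H\to\Z$$
and show its kernel is a proper open normal subgroup of $H$; since $\Z$ is discrete, continuity will force $\ker\Phi$ to be open, and the conclusion will then follow from Lemma~\ref{lem: proper open normal subgrp}. The construction should run in close parallel with the proof of Proposition~\ref{prop: flux map for ends}, with the flux splitting $X\cong Y_1\vee Y_2$ playing the role of the wedge decomposition produced there and with the closed sets $A_1,A_2$ playing the role of the orbits $E(\mu_1),E(\mu_2)$.

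First I would carry out the setup exactly as in Steps~1 and~2 of the proof of Proposition~\ref{prop: flux map for ends}: after a proper homotopy equivalence we may assume $Y_1$ and $Y_2$ meet along an edge with midpoint $x_0$; since $\lambda$ is not of Cantor type, Lemma~\ref{lem: small clopen neighborhoods} provides pairwise disjoint clopen neighborhoods $U_\nu$ of each $\nu\in E(\lambda)$ with $U_\nu\cap E(\lambda)=\{\nu\}$, and we write $X$ as a wedge of the subgraph $T$ with $\partial T=\partial X\setminus\bigcup_\nu U_\nu$ together with subgraphs $Z_\nu$ attached at vertices $x_\nu\in T$. The subgraphs $X_n$ ($n\in\Z$) would be defined exactly as in Step~1, and
$$\cork(W,W'):=|\{\nu\in\partial W\setminus\partial W'\,:\,\nu\in E(\lambda)\}|$$
whenever $A_1\subseteq\partial W\cap\partial W'$. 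Condition~(1) on $A_1$ together with the requirement in~(3) that $H$ set-wise stabilizes $A_1$ ensures $A_1\subseteq\partial f(X_n)$ for every $f\in H$, so $\cork(X_m,f(X_n))$ makes sense. The analogues of Claims~\ref{cl: existance of admissible pairs}, \ref{cl: finite coker}, \ref{cl: phi is continuous group homomorphism}, and~\ref{cl: phi desends to Phi} should then go through with only cosmetic changes: the only substantive input used is condition~(2)(b) of the flux splitting, which (replacing the gcd hypothesis used in Proposition~\ref{prop: flux map for ends}) guarantees that no end $\lambda'\succ\lambda$ straddles the $Y_1/Y_2$ decomposition, which is precisely what the K\H{o}nig's-lemma step requires in order to produce admissible pairs and to keep the relevant cork values finite. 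Setting $\phi_{m,n}(f):=\cork(X_m,X_n)-\cork(X_m,f(X_n))$ and passing to proper-homotopy classes will then produce the desired continuous group homomorphism $\Phi_{(A_1,A_2,\lambda,H)}$.

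The hard part will be verifying non-triviality, and this is where condition~(3) on $H$ must come in. Note that if $f\in H$ preserves the clopen set $\partial Y_1$ set-wise, then the count of ends of $E(\lambda)$ inside the $Y_1$-portion of $\partial X_m$ is unchanged by $f$, forcing $\Phi(f)=0$; producing an element with nonzero flux therefore amounts to producing an $f\in H$ that genuinely transports some end of $E(\lambda)$ across the $Y_1/Y_2$ boundary. Since $A_1$ and $A_2$ are disjoint compact subsets with $A_1\subseteq\partial Y_1$ clopen, I would pick a descending sequence of clopen neighborhoods $U_k$ of $A_1$ contained in $\partial Y_1$ and disjoint from $A_2$, and apply condition~(3) to obtain $f_k\in H$ and $\nu_k\in E(\lambda)\cap U_k$ with $f_k(\nu_k)\in E(\lambda)\setminus U_k$; together with its symmetric counterpart near $A_2$, the resulting elements should collectively transport some end of $E(\lambda)$ across the $Y_1/Y_2$ boundary, after which a direct corank computation gives $\Phi(f)\neq 0$ for that element. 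The delicate technical step, which I expect to be the main obstacle, is ruling out the possibility that every $f_k$ only permutes ends of $E(\lambda)$ within $\partial Y_1$: this will require combining the $A_1$-side and $A_2$-side halves of condition~(3) with the set-wise stabilization of both $A_1$ and $A_2$, possibly after passing to compositions or inverses of the $f_k$. Once this is handled, $\ker\Phi$ is a proper open normal subgroup of $H$, and Lemma~\ref{lem: proper open normal subgrp} completes the proof.
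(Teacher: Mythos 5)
Your proposal follows the paper's proof exactly: the paper's argument for Theorem~\ref{thm: flux map general} is a two-line appeal to ``the same methods as in Propositions~\ref{prop: flux map for ends} and~\ref{prop: flux map for loops}'' followed by Lemma~\ref{lem: proper open normal subgrp}, which is precisely your plan, including the observation that condition (2)(b) replaces the gcd hypothesis in the K\H{o}nig's-lemma steps. You are in fact more careful than the paper on the one genuinely delicate point --- that condition (3) forces $\Phi$ to be non-trivial, hence $\ker\Phi$ proper --- which the paper asserts without comment; your concern there is legitimate but does not constitute a deviation from the paper's route.
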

\begin{proof}
    By the same methods as in Propositions~\ref{prop: flux map for ends} and~\ref{prop: flux map for loops}, the map $\Phi$ is a continuous group homomorphism, and so the kernel of $\Phi$ is a proper open normal subgroup of $H$. By Lemma~\ref{lem: proper open normal subgrp}, $H$ has no dense conjugacy class.
\end{proof}

Theorem~\ref{thm: flux map general} implies Propositions~\ref{prop: flux map for ends} and~\ref{prop: flux map for loops} by letting $\nu_1$ and $\nu_2$ be maximal ends of different types and setting $A_i=E(\nu_i)$ for $i=1,2$. Moreover, we have the following corollary.

\PMaps*
\begin{proof}
    This follows from Theorem~\ref{thm: flux map general} when $\nu_1$ and $\nu_2$ are distinct ends which are both accumulated by genus, $A_i=\{\nu_i\}$ for $i=1,2$, and when $H=$P$\Maps(X)$. In this case, the gcd $R_1$ will yield the result.
\end{proof}

\subsection{Trees and direct products}\label{sec: trees and direct products}

In this section, we show that an uncountable family of mapping class groups split into direct products of other mapping class groups and use this to obtain more results about dense conjugacy classes.

\begin{lemma}\label{lem: direct product of groups}
    Let $G=H\times K$ be a topological group. Then $G$ has a dense conjugacy class if and only if both $H$ and $K$ have dense conjugacy classes.
\end{lemma}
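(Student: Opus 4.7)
The plan is to reduce both directions to the elementary observation that the coordinate projections $\pi_H\colon G\to H$ and $\pi_K\colon G\to K$ are continuous, open, surjective group homomorphisms, and that such maps send dense sets to dense sets.

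For the forward implication I would proceed as follows. Suppose $(h_0,k_0)\in G$ has dense conjugacy class $C$ in $G$. Since $\pi_H$ is a group homomorphism, $\pi_H(C)$ is exactly the $H$-conjugacy class of $h_0$, because $\pi_H((h,k)(h_0,k_0)(h,k)^{-1})=hh_0h^{-1}$. Density of $\pi_H(C)$ in $H$ then follows from a preimage argument: for any nonempty open $U\subseteq H$, the set $\pi_H^{-1}(U)=U\times K$ is a nonempty open subset of $G$, hence must meet $C$, so $U$ meets $\pi_H(C)$. The same argument applied to $\pi_K$ shows the conjugacy class of $k_0$ is dense in $K$.

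For the backward implication, suppose $h_0$ has dense conjugacy class in $H$ and $k_0$ in $K$; I would show that $(h_0,k_0)$ has dense conjugacy class in $G$ by testing against a basis. Given a nonempty open set $W\subseteq G$, choose a basic open rectangle $U\times V\subseteq W$ with $U\subseteq H$ and $V\subseteq K$ nonempty and open. Density of the two given conjugacy classes yields $h\in H$ with $hh_0h^{-1}\in U$ and $k\in K$ with $kk_0k^{-1}\in V$, so $(h,k)(h_0,k_0)(h,k)^{-1}\in U\times V\subseteq W$, as required.

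There is no real obstacle: this is a standard fact about direct products of topological groups, and both directions reduce to elementary properties of the product topology and of projection homomorphisms. The argument does not require $H$ or $K$ to be Polish, so it applies uniformly in the setting of the paper (where $\Maps(X)$ is always Polish by Proposition~\ref{AK-B Polish}), and it makes no use of the Kechris--Rosendal JEP characterization, though one could alternatively phrase the proof in that language.
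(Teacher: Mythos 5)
Your proof is correct and takes essentially the same approach as the paper: both arguments rest on the observation that conjugation in $H\times K$ acts coordinatewise, so density of a conjugacy class in the product is equivalent to density of each coordinate's conjugacy class. The only cosmetic difference is that you verify density against basic open rectangles while the paper uses convergent sequences (harmless here since $\Maps(X)$ is Polish); your phrasing is marginally more general but the substance is identical.
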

\begin{proof}
    The conjugacy class of an element $g=(h,k)\in G$ is dense if and only if for any arbitrary element $(h',k')\in G$, there exists a sequence of elements $\{(h_n,k_n)\}_n$ such that $(h_n,k_n)(h,k)(h_n,k_n)^{-1}=(h_nhh_n^{-1},k_nkk_n^{-1})$ converges to $(h',k')$ as $n$ approaches infinity. This is true if and only if the conjugacy class of $h$ is dense in $H$ and the conjugacy class of $k$ is dense in $K$.
\end{proof}

The following proposition provides many examples of graphs whose mapping class group has a dense conjugacy class, in contrast to much of the rest of Section~\ref{sec: Rokhlin}. We first prove the following more general proposition and then prove Theorem~\ref{thm: trees and direct products}, which follows immediately.

\begin{proposition}\label{prop: trees and direct products}
    Let $X$ and $Y$ be locally finite trees such that $X\cong Y\vee C$. Then $\Maps(X)$ has a dense conjugacy class if and only if $\Maps(Y)$ has a dense conjugacy class.
\end{proposition}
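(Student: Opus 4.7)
The plan is to reduce the proposition to showing that $\Maps(X)$ splits as a topological direct product $\Maps(Y)\times\Maps(C)$. Once this is established, Lemma~\ref{lem: direct product of groups} combined with Proposition~\ref{prop: Cantor tree} (which gives that $\Maps(C)$ has a dense conjugacy class) will immediately yield the bi-implication. Since $X$ and $Y$ are trees, Proposition~\ref{AK-B 2.3} identifies $\Maps(X)$ with $\Homeo(\partial X)=\Homeo(\partial Y\sqcup\partial C)$, reducing everything to a question about self-homeomorphisms of the end space.

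I would split into two cases based on whether $\partial Y$ contains a clopen subset homeomorphic to Cantor space. In the first case, Lemma~\ref{lem: wedges and clopens} gives $Y\cong Y'\vee C$ for some locally finite tree $Y'$, so $X\cong Y'\vee C\vee C$. Since $\partial(C\vee C)$ is a disjoint union of two Cantor spaces, which is perfect, compact, metrizable, and totally disconnected, Brouwer's characterization identifies it with $\partial C$, and Theorem~\ref{AK-B 2.2} then yields $C\vee C\cong C$. Hence $X\cong Y'\vee C\cong Y$, so $\Maps(X)\cong\Maps(Y)$ and the bi-implication is trivial.

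The main case is when $\partial Y$ contains no clopen Cantor subset. The key observation is that $\partial C\subset\partial X$ is then the unique maximal clopen Cantor subset of $\partial X$: for any clopen Cantor subset $K\subseteq\partial X$, the intersection $K\cap\partial Y$ is clopen both in $\partial Y$ (as the intersection of two clopens in $\partial X$) and in $K$, and being a clopen subset of a Cantor space it is either empty or itself a Cantor space, which forces it to be empty by hypothesis on $\partial Y$. Applying this with $K=\phi(\partial C)$ for an arbitrary $\phi\in\Homeo(\partial X)$ gives $\phi(\partial C)\subseteq\partial C$, and the symmetric argument for $\phi^{-1}$ yields equality. Hence every homeomorphism preserves the clopen decomposition $\partial X=\partial Y\sqcup\partial C$, so $\Homeo(\partial X)\cong\Homeo(\partial Y)\times\Homeo(\partial C)$ as topological groups, completing the reduction.

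The main subtlety lies in the second case. Having an end of Cantor type in $\partial Y$, or even an uncountable $\partial Y$, does not by itself force the existence of a \emph{clopen} Cantor subset of $\partial Y$, so the case distinction cannot be phrased in terms of these coarser invariants. Phrasing the dichotomy in terms of clopen Cantor subsets is precisely what allows both cases to close uniformly, with the topological splitting in the main case emerging from the uniqueness of $\partial C$ as a maximal clopen Cantor subset of $\partial X$.
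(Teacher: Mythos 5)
Your proof is correct and follows essentially the same route as the paper: the same dichotomy on whether $\partial Y$ contains a clopen Cantor subset (equivalently, an end of type $\partial C$), with $X\cong Y$ in the first case and the topological splitting $\Maps(X)\cong\Maps(Y)\times\Maps(C)$ in the second, finished by Lemma~\ref{lem: direct product of groups} and Proposition~\ref{prop: Cantor tree}. The only cosmetic difference is that the paper justifies the invariance of $\partial C$ under homeomorphisms by citing Lemma~\ref{lem: caste system} (preservation of end types), whereas you give a direct argument via uniqueness of the maximal clopen Cantor subset; both are valid.
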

\begin{proof}
    Suppose that there is an end $\nu\in\partial Y$ such that $\nu\sim\partial C$. Then $Y\cong Y\vee C$, which implies that $X\cong Y$. On the other hand, suppose that there is no end of type $\partial C$ in $Y$. Then, by Lemma~\ref{lem: caste system}, any homeomorphism of $\partial X$ splits into a homeomorphism $g$ on $\partial Y$ which fixes $\partial C$, and a homeomorphism $h$ on $\partial C$ which fixes $\partial Y$. The supports of these maps are disjoint, and thus the maps commute. By Proposition~\ref{AK-B 2.3}, this implies that $\Maps(X)\cong\Maps(Y)\times\Maps(C)$. By Lemma~\ref{lem: direct product of groups}, $\Maps(X)$ contains a dense conjugacy class if and only if both $\Maps(Y)$ and $\Maps(C)$ do. By Proposition~\ref{prop: Cantor tree}, $\Maps(C)$ does, and thus $\Maps(X)$ does if and only if $\Maps(Y)$ does.
\end{proof}

There are many graphs whose mapping class group contains a dense conjugacy class but such that the corresponding infinite-type surface mapping class group does not, as we next prove.

\Tdp*
\begin{proof}
    The graph $X$ must be homeomorphic to $\omega^\alpha+1$ for some countable ordinal $\alpha$ by \cite{MS}. $\Maps(\omega^\alpha+1)$ contains a dense conjugacy class by Proposition~\ref{prop: unique maximal end}, and so the statement is a direct application of Proposition~\ref{prop: trees and direct products}.
\end{proof}

For all graphs of the form $(\omega^\alpha+1)\vee C$, where $\alpha$ is a countable ordinal, the corresponding infinite-type surface, which can be formed as the boundary of a tubular neighborhood of the graph, contains multiple maximal ends. Thus, the mapping class of these surfaces do not contain dense conjugacy classes by Theorem~\ref{thm: Rokhlin property for surfaces}, despite their graph counterparts containing them.

\section{A word on $\Homeo(\partial X,\partial X_g)$}\label{sec: homeo end space}

In this final section, we discuss dense conjugacy classes in $\Homeo(\partial X,\partial X_g)$ and obtain results paralleling those for $\Maps(X)$. This section is motivated by the observation that $\Maps(o(1\vee C))$ does not have a dense conjugacy class whereas $\Homeo(\partial o(1\vee C),\partial o(1\vee C)_g)\cong\Maps(1\vee C)\cong\Maps(C)$ does. We will make use of the surjective continuous and open group homomorphism $\sigma\colon\Maps(X)\to\Homeo(\partial X,\partial X_g)$ from Proposition~\ref{AK-B 2.3}.

\begin{proposition}\label{prop: Rokhlin pushforward}
    Let $X$ be a locally finite graph $X$.
    \begin{enumerate}[(1)]
        \item If $g(X)$ is infinite, then $\Homeo(\partial X,\partial X_g)$ has a dense conjugacy class if $\Maps(X)$ does. 
        \item If $g(X)$ is finite, then $\Homeo(\partial X,\partial X_g)$ has a dense conjugacy class if and only if the mapping class group of a spanning tree of $X$ does.
    \end{enumerate}
\end{proposition}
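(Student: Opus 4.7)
The plan is to leverage the surjective continuous and open group homomorphism $\sigma\colon\Maps(X)\to\Homeo(\partial X,\partial X_g)$ provided by Proposition~\ref{AK-B 2.3}. The proof splits cleanly along the two cases, and in both the mechanism is to transfer the dense conjugacy class property along $\sigma$.

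For part (1), I would first record the general fact that if $\phi\colon G\to H$ is a continuous surjective group homomorphism between topological groups and $g\in G$ has dense conjugacy class, then $\phi(g)$ has dense conjugacy class in $H$. The argument is: given any nonempty open $V\subseteq H$, continuity gives that $\phi^{-1}(V)$ is open in $G$, and surjectivity guarantees $\phi^{-1}(V)\neq\emptyset$; density of the conjugacy class of $g$ then yields some $g'\in G$ with $g'gg'^{-1}\in\phi^{-1}(V)$, so $\phi(g')\phi(g)\phi(g')^{-1}\in V$. Applying this to $\sigma$, which is continuous and surjective by Proposition~\ref{AK-B 2.3}, immediately gives (1).

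For part (2), the key observation is that when $g(X)<\infty$, the set $\partial X_g$ is empty, so $\Homeo(\partial X,\partial X_g)=\Homeo(\partial X)$. Let $T$ be a spanning tree of $X$; since $T$ is obtained from $X$ by deleting finitely many edges, the end space is unchanged, so $\partial T\cong\partial X$, and furthermore $\partial T_g=\emptyset$ because $T$ is a tree. In particular, this gives a canonical identification $\Homeo(\partial T,\partial T_g)\cong\Homeo(\partial X,\partial X_g)$. Now Proposition~\ref{AK-B 2.3} asserts that $\sigma\colon\Maps(T)\to\Homeo(\partial T,\partial T_g)$ is an isomorphism of topological groups because $T$ is a tree, so $\Maps(T)\cong\Homeo(\partial X,\partial X_g)$ as topological groups. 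The biconditional follows at once, since isomorphic topological groups have dense conjugacy classes simultaneously.

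No step poses a serious obstacle: the only subtle point is verifying that passing to a spanning tree really does preserve the end space and that $\partial T_g$ is empty, but both are immediate from the finiteness of the number of deleted edges and from $T$ being a tree. The argument uses nothing beyond Proposition~\ref{AK-B 2.3}, basic topological group manipulations, and the convention (stated in Section~\ref{sec: lfgs and phes}) that $\partial X_g=\emptyset$ exactly when $g(X)<\infty$. One could remark that part (1) in fact holds without the hypothesis $g(X)=\infty$; this hypothesis is included only to delineate the two cases cleanly, as part (2) supersedes it in the finite genus range.
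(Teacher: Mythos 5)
Your proposal is correct and follows essentially the same route as the paper: part (1) is the observation that a continuous surjective homomorphism (here $\sigma$ from Proposition~\ref{AK-B 2.3}) carries a dense conjugacy class to a dense conjugacy class, and part (2) reduces to the tree case of Proposition~\ref{AK-B 2.3} via a spanning tree, exactly as the paper does. Your closing remark that the hypothesis $g(X)=\infty$ is not actually needed for the implication in (1) is also accurate, since the argument uses only continuity and surjectivity of $\sigma$.
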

\begin{proof}
    Suppose that $g(X)$ is infinite, and suppose that the conjugacy class of $[f]\in\Maps(X)$, which we call Conj$([f])$, is dense. The set $\sigma($Conj$([f]))$ is dense in $\Homeo(\partial X,\partial X_g)$, and for all $\sigma([g])\in$ Conj$(\sigma([f]))$, there exists $[h]\in\Maps(X)$ such that $[h][g][h]^{-1}=[f]$. Then $\sigma([h])\sigma([g])(\sigma([h]))^{-1}=\sigma([f])$, and so $\sigma($Conj$([f]))=$Conj$(\sigma([f]))$. Thus, the conjugacy class of $\sigma([f])$ is dense in $\Homeo(\partial X,\partial X_g)$. The statement about when $g(X)$ is finite follows immediately from Proposition~\ref{AK-B 2.3} applied to trees.
\end{proof}

Proposition~\ref{prop: Rokhlin pushforward} implies that for any locally finite graph $X$, if $\Maps(X)$ has a dense conjugacy class, so does $\Homeo(\partial X,\partial X_g)$. For example, if $X$ is a locally finite graph with a unique maximal end, genus zero or infinite, and self-similar end space, then $\Homeo(\partial X,\partial X_g)$ has a dense conjugacy class by Proposition~\ref{prop: unique maximal end}. This is also true if $X$ has finite genus, for then, $\Homeo(\partial X,\partial X_g)$ is isomorphic as topological groups to the mapping class of a spanning tree of $X$ by Proposition~\ref{AK-B 2.3}. The converse of Proposition~\ref{prop: Rokhlin pushforward} (1) is false by the motivating observation above, but the following partial converse holds.

\begin{proposition}\label{prop: open normal subgroup pushforward}
    If $X$ is a locally finite graph with infinite genus such that $\Maps(X)$ has a proper open normal subgroup $H$. If $\sigma(H)$ is proper, then $\Homeo(\partial X,\partial X_g)$ does not have a dense conjugacy class.
\end{proposition}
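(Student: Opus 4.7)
The plan is to verify that $\sigma(H)$ is a proper open normal subgroup of $\Homeo(\partial X, \partial X_g)$ and then invoke Lemma~\ref{lem: proper open normal subgrp} to conclude that $\Homeo(\partial X, \partial X_g)$ has no dense conjugacy class. Three things must be checked: properness, openness, and normality of $\sigma(H)$.

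First, properness of $\sigma(H)$ is given directly by hypothesis. Next, openness of $\sigma(H)$ follows from the fact that $H$ is open in $\Maps(X)$ (being a proper open normal subgroup) together with the statement in Proposition~\ref{AK-B 2.3} that the map $\sigma\colon\Maps(X)\to\Homeo(\partial X, \partial X_g)$ is open. Hence the image $\sigma(H)$ of the open set $H$ is open in $\Homeo(\partial X, \partial X_g)$.

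The only step requiring a small argument is normality. Here I use that Proposition~\ref{AK-B 2.3} guarantees $\sigma$ is surjective. Given any $h \in \Homeo(\partial X, \partial X_g)$, pick $[g] \in \Maps(X)$ with $\sigma([g]) = h$. Since $\sigma$ is a group homomorphism and $H$ is normal in $\Maps(X)$,
$$h\,\sigma(H)\,h^{-1} = \sigma([g])\,\sigma(H)\,\sigma([g])^{-1} = \sigma([g]\,H\,[g]^{-1}) = \sigma(H),$$
so $\sigma(H)$ is normal. With all three properties verified, Lemma~\ref{lem: proper open normal subgrp} applies.

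There is essentially no hard step here; the content of the proposition lies entirely in the auxiliary hypothesis that $\sigma(H)$ is proper, which is the only condition that could fail (for instance, $H$ could be exactly the kernel of $\sigma$, in which case $\sigma(H)$ is trivial and hence normal and open but the conclusion becomes vacuous only if $\Homeo(\partial X, \partial X_g)$ is itself trivial). The infinite-genus hypothesis on $X$ does not play a direct role in the proof; it serves as the natural regime of interest, since when $g(X) < \infty$ the situation is already pinned down by Proposition~\ref{prop: Rokhlin pushforward}(2).
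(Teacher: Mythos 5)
Your proof is correct and follows essentially the same route as the paper: openness of $\sigma(H)$ from the openness of $\sigma$ (Proposition~\ref{AK-B 2.3}), normality from surjectivity of $\sigma$ together with normality of $H$, and then Lemma~\ref{lem: proper open normal subgrp}. The only difference is your added commentary on the role of the properness and infinite-genus hypotheses, which does not change the argument.
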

\begin{proof}
    By Proposition~\ref{AK-B 2.3}, $\sigma$ is open, and so $\sigma(H)$ must be open as well. To prove that $\sigma(H)$ is normal, let $\sigma([f])\in\Homeo(\partial X,\partial X_g)$ where $[f]\in\Maps(X)$. If $\sigma([h])\in\sigma(H)$, where $[h]\in H$, then $\sigma([f])\sigma([h])(\sigma([f]))^{-1}=\sigma([f][h][f]^{-1})\in\sigma(H)$, as $H$ is normal in $\Maps(X)$. By Lemma~\ref{lem: proper open normal subgrp}, $\Homeo(\partial X,\partial X_g)$ does not have a dense conjugacy class.
\end{proof}

The above two propositions imply the following corollary, which is an analogue of Theorem~\ref{thm: obstructions to dense conjugacy classes} for $\Homeo(\partial X,\partial X_g)$.

\begin{corollary}
    Let $X$ be a locally finite graph such that at least one of the following holds.
    \begin{enumerate}
        \item There exist two maximal ends with a gcd which is an end that is not of Cantor type; or
        \item There exists an end type $E(\mu)$ with $0<|E(\mu)|<\infty$.
    \end{enumerate}
    Then $\Homeo(\partial X,\partial X_g)$ does not have a dense conjugacy class.
\end{corollary}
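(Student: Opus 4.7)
The plan is to mirror the strategy of Proposition~\ref{prop: open normal subgroup pushforward}: for each case, construct a proper open normal subgroup of $\Homeo(\partial X,\partial X_g)$ by pushing forward the obstructing subgroup of $\Maps(X)$ through the continuous open surjection $\sigma$ of Proposition~\ref{AK-B 2.3}, and then invoke Lemma~\ref{lem: proper open normal subgrp}. The central observation is that the homomorphisms $\Phi$ from $\Maps(X)$ to discrete groups used in the proofs of Theorems~\ref{thm: finite end type} and~\ref{thm: item 1} depend only on the action on $\partial X$, so they factor through $\sigma$ as $\Phi = \bar\Phi \circ \sigma$ for some $\bar\Phi\colon \Homeo(\partial X,\partial X_g)\to K$. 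Nontriviality of $\bar\Phi$, together with continuity, openness, and surjectivity of $\sigma$, will then yield that $\ker\bar\Phi$ is a proper open normal subgroup of $\Homeo(\partial X,\partial X_g)$.

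For case (2), I would take $\Phi\colon \Maps(X) \to S_n$ to be the permutation homomorphism of Theorem~\ref{thm: finite end type}, sending $[f]$ to the induced permutation of the finite orbit $E(\nu) = \{\nu_1,\dots,\nu_n\}$. Since this permutation is by construction determined by $\sigma([f])$, the factorization is immediate, and the openness of $\ker\bar\Phi = \sigma(\ker\Phi)$ follows because $\sigma$ is both open and surjective.

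For case (1), I would use the flux map $\Phi\colon \Maps(X) \to \Z$ of Definition~\ref{def: flux map for ends}. The quantity $\cork(X_m, f(X_n))$ counts ends of $E(\lambda)$ lying in $\partial X_m \setminus \partial f(X_n)$; since $\partial f(X_n) = \sigma([f])(\partial X_n)$, the flux depends only on $\sigma([f])$, so the factorization again holds. The main obstacle will be that the proof of Theorem~\ref{thm: item 1} breaks into subcases when $\mu_1 \sim \mu_2$: if $E(\mu_1)$ is finite this reduces to case (2), and if $\mu_1 \not\sim \mu_2$ or if $\lambda$ has another dominating maximal end type, Proposition~\ref{prop: flux map for ends} supplies the desired flux map. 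The remaining subcase, when $E(\mu_1)$ is of Cantor type and no other maximal end type dominates $\lambda$, is handled in the original proof via the JEP argument of Remark~\ref{rmk: generalized babel}. Since that argument is phrased entirely in terms of fixed points of the induced homeomorphism on $\partial X$---conjugates of elements of a suitable open set $U_K$ necessarily fix the distinguished ends of type $\lambda$, while no element of $[g] U_K$ does---it transfers directly to produce an open conjugation-invariant obstruction in $\Homeo(\partial X,\partial X_g)$, completing the plan.
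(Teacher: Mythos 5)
Your proposal is correct and follows essentially the same route as the paper, which deduces the corollary from Proposition~\ref{prop: open normal subgroup pushforward} applied to the kernels of the homomorphisms constructed in Theorem~\ref{thm: finite end type} and Proposition~\ref{prop: flux map for ends}; your observation that these homomorphisms factor through $\sigma$ is exactly what verifies the properness hypothesis on $\sigma(H)$ that the paper's one-line proof leaves implicit. Your extra care with the subcase where the gcd is dominated only by a single Cantor-type maximal end class, handled by transferring the JEP argument of Remark~\ref{rmk: generalized babel} to $\Homeo(\partial X,\partial X_g)$, matches the paper as well, which records that transfer as its final corollary.
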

\begin{proof}
    These follow immediately by Proposition~\ref{prop: open normal subgroup pushforward} by considering the kernel of maps constructed in Proposition~\ref{prop: flux map for ends} and Theorem~\ref{thm: finite end type}.
\end{proof}

We also obtain the following corollary, which is an analogue of Proposition~\ref{prop: babel}.

\begin{corollary}
    Let $X$ be a locally finite graph with stable maximal ends. Then $\Maps(X)$ does not have a dense conjugacy class if every maximal end dominating $\partial 1$ is of Cantor type.
\end{corollary}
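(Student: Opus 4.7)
The statement is word-for-word the conclusion of Proposition~\ref{prop: babel} (1) applied to the same hypotheses. So my plan is not to give a new argument but to re-derive it by direct invocation, and to use the proposal as an opportunity to summarize the proof strategy that already works.

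Concretely, the first step is to apply the wedge decomposition from Proposition~\ref{prop: wedge decomposition}: write $X \cong X_1 \vee \cdots \vee X_n$, and let $Y$ collect those wedge factors whose (unique) maximal end type dominates $\partial 1$, and $Z$ collect the rest. By hypothesis every maximal end of $Y$ is of Cantor type, so Lemma~\ref{lem: swaps} gives $Y \cong Y \vee Y$. Since every maximal end of $Y$ also dominates $\partial 1$, we can absorb a ray and write $Y \cong Y \vee 1$, from which $X \cong (Y \vee 1) \vee (Y \vee 1) \vee Z$. This manufactures two distinguished ends $\rho, \lambda$ of type $\partial 1$ lying in symmetric positions.

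The second step is the standard JEP obstruction used in Proposition~\ref{prop: babel}. I would pick a finite subgraph $K \subset X$ separating $\rho$ and $\lambda$ from the two copies of $Y$ and from $Z$, take $U = U_K$ from Proposition~\ref{AK-B 4.7}, and let $V = [g]U$, where $[g] \in \Maps(X)$ swaps the two copies of $Y \vee 1$ and therefore swaps $\rho$ and $\lambda$. The key observation is that every element of any conjugate $[h]U[h]^{-1}$ fixes both $[h](\rho)$ and $[h](\lambda)$, whereas every element of $V$ interchanges $\rho$ with $\lambda$; hence $[h]U[h]^{-1}\cap V = \emptyset$ for all $[h]$, the JEP fails, and $\Maps(X)$ cannot have a dense conjugacy class by the equivalence \cite[Theorem~2.1]{KR}.

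Since the hypotheses of the stated corollary coincide exactly with hypothesis (1) of Proposition~\ref{prop: babel}, no additional argument is needed: the main point is simply to check that nothing in the proof above implicitly used the genus branch (2) of Proposition~\ref{prop: babel}, and indeed the entire chain — wedge decomposition, $Y \cong Y\vee Y$ via Lemma~\ref{lem: swaps}, absorption of a ray, separating compact subgraph, swap map — uses only the Cantor-type assumption on maximal ends dominating $\partial 1$. There is therefore no genuine obstacle; the corollary is a restatement used here to make the parallel with the subsequent $\Homeo(\partial X,\partial X_g)$ discussion explicit.
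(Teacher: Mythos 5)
Your reading of the literal statement is accurate: as printed, the corollary is word-for-word the conclusion of Proposition~\ref{prop: babel}(1), and your recap of that argument---wedge decomposition via Proposition~\ref{prop: wedge decomposition}, $Y\cong Y\vee Y$ from Lemma~\ref{lem: swaps}, absorbing a ray to get two distinguished ends $\rho,\lambda$ of type $\partial 1$, the separating compact set $K$, and the swap $[g]$ defeating the JEP---is faithful to the paper's proof of that proposition. If the statement really were about $\Maps(X)$, nothing more would be needed.

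However, the paper's own proof of this corollary is not a restatement: it reads ``the same proof as in Proposition~\ref{prop: babel} shows that $\Homeo(\partial X,\partial X_g)$ does not have the JEP,'' and the corollary is introduced in the final section as the analogue of Proposition~\ref{prop: babel} for $\Homeo(\partial X,\partial X_g)$. So the occurrence of $\Maps(X)$ in the statement is almost certainly a typo for $\Homeo(\partial X,\partial X_g)$, and the intended content is the end-space version, which your proposal does not address. The missing (small but genuine) step is to observe that the obstruction in Proposition~\ref{prop: babel}(1) lives entirely at the level of the induced action on ends and therefore descends through $\sigma$: since $\sigma$ is an open surjection (Proposition~\ref{AK-B 2.3}), $\sigma(U_K)$ is a nonempty open subset of $\Homeo(\partial X,\partial X_g)$ all of whose elements fix $\rho$ and $\lambda$, every element of $\sigma([g])\sigma(U_K)$ interchanges them, and hence every conjugate of $\sigma(U_K)$ misses $\sigma([g])\sigma(U_K)$, so $\Homeo(\partial X,\partial X_g)$ fails the JEP. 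This cannot be deduced formally from the $\Maps(X)$ statement, because the absence of a dense conjugacy class does not automatically push forward along a quotient; that is exactly why the paper reruns the argument downstairs. Note also that the genus branch (2) of Proposition~\ref{prop: babel} does \emph{not} descend---the distinguished loops $r,l$ are invisible in $\Homeo(\partial X,\partial X_g)$---which is presumably why the corollary retains only hypothesis (1).
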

\begin{proof}
    The same proof as in Proposition~\ref{prop: babel} shows that $\Homeo(\partial X,\partial X_g)$ does not have the JEP.
\end{proof}

\bibliography{bibliography}
\bibliographystyle{alpha}

\end{document}